\documentclass{article}

\usepackage[english]{babel}
\usepackage{graphicx} 
\usepackage{amsmath, amsthm, amsfonts, amssymb, amscd}
\usepackage[all]{xy}
\usepackage[a4paper]{geometry}
\usepackage{enumerate}
\usepackage{url}
\usepackage{indentfirst}
\usepackage{latexsym}
\usepackage{mathrsfs}
\usepackage{hyperref}
\usepackage{tikz}
\usepackage{subcaption}
\usepackage{caption}
\captionsetup[table]{position=bottom}
\usepackage{placeins}
\usepackage{pst-func}
\usepackage{array}
\newcolumntype{M}[1]{>{\centering\arraybackslash}m{#1}}
\newcolumntype{N}{@{}m{0pt}@{}}


%
%





\usepackage[all]{xy}
\usepackage{enumerate}
\usepackage{indentfirst}

 \def\Hom{\operatorname{Hom}}
 \def\coker{\operatorname{Coker}}

 \def\Div{\operatorname{div}}  
  \def\Supp{\operatorname{Supp}}

  \def\der{\operatorname{Der}}  

 \def\Aut{\operatorname{Aut}}

\def\spec{\operatorname{Spec}}
\def\Inv{\operatorname{Inv}}




\newcommand{\Ou}{\mathcal O}

 \newcommand{\Py}{\mathbb P}
  
\newcommand{\then}{\Rightarrow}

\newcommand{\m}{\mathfrak{m}}


\usepackage{amsmath,amssymb,amsthm}

\newtheorem{theorem}{thm}[section]
\newtheorem{thm}[theorem]{Theorem}
\newtheorem{lemm}[theorem]{Lemma}     
\newtheorem{cor}[theorem]{Corollary}
\newtheorem{defn}[theorem]{Definition}
\newtheorem{prop}[theorem]{Proposition}
\newtheorem{exm}[theorem]{Example}
\newtheorem{Remark}[theorem]{Remark}


%
%


\usepackage[english]{babel}    


%
%


%
%

\pagestyle{plain}

%
%

\usepackage{lipsum}
\usepackage{kantlipsum}

%
%

\title{Non-smooth regular curves via a descent approach}

\author{G. Borelli, C. D. D. Moreira and R. Salomão}

\begin{document}


\bibliographystyle{plain}
\maketitle

\begin{abstract}
This paper aims to continue the classification of non-smooth regular curves, but over fields of characteristic three. These curves were originally introduced by Zariski as generic fibers of counterexamples to Bertini's theorem on the variation of singular points of linear series. Such a classification has been introduced by Stöhr, taking advantage of the equivalent theory of non-conservative function fields, which in turn occurs only over non-perfect fields $K$ of characteristic $p>0$. We propose here a different way of approach, relying on the fact that a non-smooth regular curve in $\Py^n_K$ provides a singular curve when viewed inside $\Py^n_{K^{1/p}}$. Hence we were naturally induced to the question of characterizing singular curves in $\Py^n_{K^{1/p}}$ coming from regular curves in $\Py^n_K$. To understand this phenomenon we consider the notion of integrable connections with zero $p$-curvature to extend Katz's version of Cartier's theorem for purely inseparable morphisms, where we solve the above characterization for the slightly general setup of coherent sheaves. Moreover, we also had to introduce some new local invariants attached to non-smooth points, as the differential degree. As an application of the theory developed here, we classify complete, geometrically integral, non-smooth regular curves $C$ of genus $3$, over a  separably closed field $K$ of characteristic $3$, whose base extension $C \times_{\spec K}{\spec \overline{K}}$ is  non-hyperelliptic with normalization having geometric genus $1$.
\end{abstract}

\vspace{1cm}
\textit{Keywords:} 
Non-smooth regular curves, Bertini's theorem, non-conservative function fields, integrable connections, $p$-curvature zero connections. 

\section{Introduction}

Bertini's theorem (or Bertini-Sard theorem) on variable of singular points in linear systems is no longer true when we pass from a field of characteristic zero to a field of positive characteristic, as announced by Zariski in \cite{Z}. More precisely, he found fibrations by singular curves between smooth varieties, or equivalently, regular (in the sense of points with regular local rings) and non-smooth (meaning that the usual Jacobian criterion is not satisfied after extending the base field to its algebraic closure) curves over non-perfect fields, playing as their generic fibers. A basic example, working in every positive characteristic, is the fibration from $\mathbb{A}^2=\spec k[x,y]$ to  $\mathbb{A}^1=\spec k[t]$, given by the assignment $f(x,y)=y^2+x^p$, where $k$ is an algebraically closed field of characteristic $p>0$. Moreover, its generic fiber is the regular and non-smooth curve $y^2+x^p-t$  over $K=k(t)$.

Such fibrations are often associated with interesting positive characteristic phenomena, suggesting that their classification is an interesting problem to tackle. For instance, the extension of Enriques' classification of surfaces to positive characteristic (see \cite{BM}),  counterexamples to Kodaira's vanishing theorem (see \cite{Zh} and \cite{Muk}), a class of isolated hypersurface singularities having infinite Milnor number (see \cite{HRS}) and coverings of $\Py^2$ by families of singular, strange and non-classical curves (see \cite{Salomon} and \cite{hilario2024fibrations}).

Taking advantage of that ``non-smooth regular curves'' and ``non-conservative function fields'' reflect the phenomenon of genus droop and are equivalent objects, many authors started to use the literature about the second one (\cite{Bedoya} and \cite{Sthor2}) to classify the first one (cf.   \cite{Stohr} and many subsequent papers of Sthör's students). 
Roughly speaking the main idea for the classification that appears in the literature so far can be divided into two steps. The first one is similar to the classification of elliptic curves and consists of finding a basis of certain vector spaces of global sections associated with some divisors, like obtaining a Weierstrass form. The second one consists in finding a basis of the space of regular/holomorphic differential forms to construct an embedding from such curves in a projective space, when they are non-hyperelliptic. Actually, such an embedding can be obtained after a base extension of the field of constants to its algebraic closure.  

In this work we propose a different way to approach this phenomenon by using techniques similar to Galois descent theory, that is, we use the category of schemes over algebraic extensions of a field $K$ to deduce properties of non-smooth regular curves defined over $K$. To do this, we will restrict ourselves to the classification of geometrically integral, non-smooth regular curves $C$ over a separably closed field $K$ of positive characteristic $p$. The starting point for understanding the beginning of our approach is the ``local version of Kimura's theorem'' (see Corollary 3.2 in \cite{Sthor2}), which says that $C$ is a non-smooth regular curve if and only if the extended curve $C^{K^{1/p}}:=C\times_{\spec K} \spec K^{1/p}$ is non-regular. In this way, our strategy is to classify geometrically integral, non-smooth regular curves $C$ embedded in the projective space $\Py^n_{K}$, by classifying geometrically integral and non-regular (or singular) curves in  $\Py^n_{K^{1/p}}$ which are obtained from base extensions of regular curves in  $\Py^n_{K}$.

 To do this we have an important issue to overcome, because an embedding on a subvariety of $\Py^n_{K^{1/p}}$ may not be a base extension of a subvariety of $\Py^n_{K}$. For instance, $K^{1/p}$-rational points that are not $K$-rational points.  
 Thus, we need to use an appropriate language to approach this phenomenon, which will be the category of $\Py^n_{K}$-invariant sheaves over a geometrically integral $\Py^n_{K}$-subschemes of $\Py^n_{K^{1/p}}$.

We then have three main goals/steps in this work, which are detailed in the following.
\begin{enumerate}
    \item To establish and explore the categorical language of  $X$-invariant subschemes of $Y$, where $Y\to X$ is a purely inseparable morphism, factorizing the absolute Frobenius morphism $F\colon X \to X^{(1)}$, between geometrically integral varieties over $K$ with localy free sheaf of relative Kähler differentials $\Omega_{Y/X}$;
    \item The study of local and global properties at singular points lying over non-smooth and regular points of algebraic varieties over $K$;
    \item The classification of complete, geometrically integral, non-smooth regular curves $C$ of genus $3$ over a non-perfect and separably closed field $K$, where $C^{\overline{K}}:=C \times_{\spec K} \spec \overline{K}$ is a non-hyperelliptic curve of geometric genus $1$.
\end{enumerate}

According to the notations of step 1, Section \ref{chapter2} is devoted to the characterization of subvarieties of $Y$ arising from pulling-back subvarieties of $X$, which will be briefly sketched below.

In Subsection \ref{section invariant sheaves} we study a somewhat more general question than the one treated above. Indeed, we investigate $\Ou_Y$-coherent sheaves $\mathcal{S}$ coming from base extensions $\mathcal{T}\otimes_{\Ou_X} \Ou_Y$ of $\Ou_X$-coherent sheaves $\mathcal{T}$. For this purpose it is helpful to use the notion of integrable connections with $p$-curvature zero 
$\nabla:\mathcal{S}\to \Omega_{Y/X}\otimes_{\Ou_Y}\mathcal{S}$ introduced by Katz in \cite{Katz} (See Definition \ref{def integrable connections}), which induces a $\Ou_X$-coherent sheaf  $\mathcal{S}^{\nabla}=\{x\in \mathcal{S} \, | \, \nabla(x)=0\}$
(See Definition \ref{kernel of connections}). In the case where $Y$ is a smooth $X$-scheme, Katz's version of Cartier's Theorem (Theorem 5.1 in \cite{Katz}) says that the $\Ou_Y$-coherent sheaves $\mathcal{S}$, coming from base extensions of $\Ou_X$-coherent sheaves, correspond precisely to $\mathcal{S}^{\nabla}\otimes_{\Ou_X} \Ou_Y$, for some connection $\nabla$. On the other hand, in our situation, $Y$ is not required to be a smooth $X$-scheme anymore, as we can see in Lemma \ref{pi1 is not smooth}, but we are able to show an analogue version of Cartier's Theorem, which can be found in Theorem \ref{Funtor igualdad de ida} and Theorem \ref{funtor igualdad de vuelta}.

In Subsection \ref{invariant subvarieties of Y} we take advantage of the isomorphism of functors in Theorem \ref{Funtor igualdad de ida} and Theorem \ref{funtor igualdad de vuelta} to characterize
ideal sheaves of $Y$ obtained from base change of ideal sheaves of $X$ (see Proposition \ref{Ideais Invarantes}) and, consequently, subvarieties of $Y$ coming from pull-back of subvarieties of $X$. It is interesting to rephrase the theory developed in Subsection \ref{invariant subvarieties of Y} in the language of foliation theory. As we can see in Proposition 1.9 of \cite{Miy}, there is a $p$-closed foliation $\mathcal{F}$ such that $X=Y^{\mathcal{F}}$, that is, $\Ou_X=\operatorname{Ann}(\mathcal{F})=\{a \in \Ou_Y\,;\, D (a)=0 \text{ for every } D \in \mathcal{F}\}$. Therefore, Proposition \ref{Ideais Invarantes} says that $X$-invariant subvarieties of $Y$ correspond exactly to subvarieties of $Y$ which are invariant by $\mathcal{F}$.   

Our second aim in this work is the study of local invariants of curves in order to support the obtaining of normal forms of curves in $\Py^n_{K^{1/p}}$, coming from curves in $\Py^n_{K}$, with prescribed data. In this way,  Section \ref{local invariants at non-smooth points} is dedicated to the study of local invariants associated at non-smooth  points $P$ of  complete and geometrically integral curves $C$, over a separably closed field $K$, and at the respective singularity $P^{\overline{K}}$ of $C^{\overline{K}}:=C\times_{\spec K} \spec\overline{K}$. For instance, in Subsection \ref{subsection 3.1} we relate in Proposition \ref{pre grado emb} the embedding dimension at $P^{\overline{K}}$ and the dimension of the relative Kähler differentials $\Omega_{\textbf{k}(C,P)/K}$, where $\textbf{k}(C,P)$ is the residue field at $P$. Moreover we also give a criterion in Proposition \ref{semigrupo de un punto regular como se ve} to know when the non-smooth point $P$ is regular.

In Subsection \ref{section 3.2} we introduce the differential degree at a regular point $P$ of $C$, which is defined by
\[d(C,P)=1+\dim_{\overline{K}} \coker (\alpha)\]
where  
$\alpha:\Ou^{\overline{K}}_P\Omega_{ \overline{K}\Ou_P /\overline{K}}\to  \Omega_{\Ou^{\overline{K}}_P/\overline{K}}$ is the natural homomorphism,
$\Ou_P$ is the local ring of $C$ at $P$, $\Ou^{\overline{K}}_P$ is the normalization of the local ring $\overline{K}\Ou_P$ in $F^{\overline{K}}=\overline{K}F$ - that is, it is exactly the local ring at the unique point  $\widetilde{P^{\overline{K}}}$ of the normalization of $C^{\overline{K}}$ lying over $P$ -
 and 
$\Omega_{\Ou/\overline{K}}$ is the module of K\"ahler differentials of the $\overline{K}$-algebra $\Ou$ over $\overline{K}$. We highlight here that the differential degree is a new invariant measuring the smoothness at a point, as we can see in Remark \ref{differential degree and smoothness 1} and Proposition \ref{rL=1 imply smooth}.  

By using such invariant, we compute in Theorem \ref{descomposicion simple del semigrupo} the semigroup $\Gamma_{C^{\overline{K}},P^{\overline{K}}}=\{l\in \mathbb{Z}| l=\nu(x) \mbox{ for some } x\in \overline{K}\Ou_{P} \}$ at the singularity $P^{\overline{K}}$ of $C^{\overline{K}}$, where $\nu \colon F^{\overline{K}} \to \mathbb{Z}$ is the discrete valuation with valuation ring $\Ou^{\overline{K}}_P$. Indeed, we show in a suitable situation that $\Gamma_{C^{\overline{K}},P^{\overline{K}}}=d(C,P)\mathbb{Z} + p\mathbb{Z}$. Consequently, we conclude that the singularity degree (or $\delta$-invariant) 
$\delta_{P^{\overline{K}}}:=\dim_{\overline{K}}\widetilde{\overline{K}\Ou_{P}}/\overline{K}\Ou_{P}$ at $P^{\overline{K}}$ is equal to $\frac{(d(C,P)-1)(p-1)}{2}$. 

We also show in Subsection \ref{section 3.2} that the degree at $P$ divides $c(C,P)+d(C,P)-1$ where $c(C,P)$ is the conductor of the semigroup $\Gamma_{C,P}=\{l\in \mathbb{Z}| l=\nu(x) \mbox{ for some } x\in \Ou_{P} \}$ at $P$ (see Theorem \ref{cota para o grau do ponto}). We notice that it provides a sharp upper bound for the degree at $P$, as we can see in the case classified in Section \ref{section 6}.   

Still in step $2$, Section \ref{Sectionn 4} is dedicated to the study of properties of sheaves of a regular and non-smooth curve $C$. Following the central spirit of this work we find, in Theorem \ref{Lema a cerca de igualdad de divisores}, a curve (the normalization of a base extension of $C$) and a morphism to $C$ in which isomorphisms between invertible sheaves of $C$ are guaranteed by isomorphisms between their pullbacks. Moreover, we also compare in Theorem \ref{segundo lema de divisores canonicos} three standard sheaves  of $C$ in order to characterize its extrinsic geometry after embedding $C$ in a projective space over $K$.

To support our strategy for the classification done in this work, we restrict the characteristic to three and we study a geometric structure data associated at $K^{1/3}$-rational points of $\Py^2_{K^{1/3}}$ in Section \ref{Sectionn 5}. In this way we define the $1$-type attached to these points, which is the minimum degree of an $\Py^2_{K}$-invariant irreducible curve passing through them. In Proposition  \ref{tipo 1 tipo 2 prueba} we prove, under certain suitable hypothesis on the residue field at these points, that the  $1$-type is always $1$ or $2$. By using some invertible sheaves, associated with the intersection between plane curves and the quotients of cubic powers of lines in $\Py^2_K$ - bringing us back to the notion of angles between curves - we can characterize up to automorphisms of $\Py^2_{K^{1/3}}$, who are these points and the respective $X$-invariant curves of minimum degree through them (see Proposition \ref{proposicion que ahora si resuelve el problema de los puntos}). In particular we use this result to characterize geometrically when a binary system of $1$-type two points, associated to non-smooth points of a regular plane quartic, belong to the same $\Py^2_K$-invariant conic.

The last section of this work is dedicated to the implementation of the theory constructed in the last three sections to give a different approach to the classification of complete, geometrically integral, non-smooth regular curves $C$ of genus $3$ over a non-perfect and separably closed field $K$ with $C^{\overline{K}}$ being non-hyperelliptic.  If $p_g(C)$ is the geometric genus of $C$ and $p$ is the characteristic of $K$, then the upper bound proved by Tate in \cite{tate1952genus}, says that $p\leq 2 p_g(C) +1$. Consequently, the phenomenon proposed to be classified here may occur in characteristic smaller or equal to seven. The cases of characteristic seven and five were developed, respectively, by Stichtenoth in \cite{stichtenoth1978konservativitat} and by Stöhr and Villela in \cite{stohr1990non}. The advances in characteristic three and two were obtained by Salomão in \cite{Salomon} and \cite{Salomon2} and by Stöhr and Hilario in \cite{hilario2023fibrations} and \cite{hilario2024fibrations}, all of them assuming $p_g(C^{\overline{K}})=0$. In this paper we will treat the case  $p_g(C^{\overline{K}})=1$ - where $C$ will be called by a geometrically elliptic curve - focusing in the specific case of characteristic three.

After using the comparison of sheaves in Theorem \ref{segundo lema de divisores canonicos} and the obtaining of a $\Py^2_K$-invariant conic in $\Py^2_{K^{1/3}}$ through the points $P^1$ and $Q^1$ (lying over the non-smooth points $P$ and $Q$ of $C$), as discussed above, we conclude Subsection \ref{subsection 61} by showing that $C$ is defined by $L_1L^3-F^2$, as a curve embedded in $\Py^2_{K}$, where $F, L \in K[x,y,z]$ is a conic and a line, respectively, and $L_1 \in K^{1/3}[x,y,z] \setminus K[x,y,z]$ is a line, with $P, Q \in F\cap L_1^3$ and $P,Q \not\in L$ (see Proposition \ref{Ecuacion de C en genero 1}).

In Subsection \ref{subsection 5.2}, we allow the action of some $\Py^2_K$-automorphisms in the equation $L_1L^3-F^2$ to conclude that $C$ is isomorphic to a plane curve living in one of the following families of quartics (see Theorem \ref{Primer Teorema central de este trabajo}).

\begin{itemize}
\item[{\bf$\mathcal{C}_0 \, :$}] $a(x^2-yz)^2+((t_1^2-t_2^2)x+(t_2-t_1)y+(t_1t_2^2-t_2t_1^2)z)^3x$ where $t_1\neq t_2\in K^{1/3}\setminus K$ and $a\in K\setminus\{-(t_1^2-t_2^2)^3\}$.
\item[{\bf$\mathcal{C}_1 \, :$}] $a(x^2-yz)^2+((t_1^2-t_2^2)x+(t_2-t_1)y+(t_1t_2^2-t_2t_1^2)z)^3z$ where $t_1\neq t_2\in K^{1/3}\setminus K$ and $a\in K$.
\item[{\bf$\mathcal{C}_2 \, :$}]
$a(xy)^2+(t_1x+t_2y-t_1t_2z)^3z$ where $t_1\neq t_2\in K^{1/3}\setminus K$  and $a\in K$.
\end{itemize}

Moreover, each curve belonging to one of these families is a geometrically elliptic curve of genus $3$ as stated in Theorem \ref{tercer Teorema central de este trabajo}. We finish this work studying (in Theorem \ref{tabla de isomorfismos}) when two curves in $\mathcal{C}_0 \cup \mathcal{C}_1 \cup \mathcal{C}_2$ are equivalent via an automorphism of $\Py_{K}^2$. Indeed, it is precisely what we need to study when two geometrically elliptic curves of genus $3$ are isomorphic, because, if two such curves are isomorphic, then their canonical classes must be preserved and, consequently, we will have an isomorphism between the global sections of both canonical sheaves.

\section{Invariant objects under purely inseparable base changes.} \label{chapter2}

Let  $K$ be a non-perfect and separably closed field of characteristic $p>0$. As we have seen earlier, the characterization of subvarieties of $\Py^n_{K^{1/p}}$ coming from pulling-back (or base extensions) of subvarieties of $\Py^n_{K}$ is a relevant problem in our context. We may see in Example \ref{pi1 is not smooth} that our situation is a particular setup of the following one, which will be assumed for the rest of the section. Let $X$ and $Y$ be two varieties defined over $K$ and $L$, respectively, where $L|K$ is a finite and purely inseparable field extension satisfying: there is a purely inseparable morphism $\pi\colon Y \to X$, factorizing the absolute Frobenius morphism of $Y$, such that $\Omega_{Y/X}$ is locally free. Therefore, this section aims to characterize subvarieties of $Y$ coming from pulling-back of subvarieties of $X$.

\subsection{The category of $X$-invariant sheaves over $Y$} \label{section invariant sheaves}

In this subsection we will characterize $\Ou_Y$-coherent sheaves coming from base extensions of $\Ou_X$-coherent sheaves. Such characterization follows the idea of Cartier's Theorem (cf. Theorem 5.1.1 in \cite{Katz}) with the difference that in our situation the morphism $\pi\colon Y \to X$ is not required to be smooth, as we can see in the next example. Before that we must recall the definition of a $p$-basis of a field extension $K_1|K$, which is a set of elements $s_1,\dots, s_r$ in $K_1$ satisfying the following two conditions: 
\begin{itemize}
    \item 
$(K_1)^p(K,s_1,\dots,s_r)=K_1$; 
\item There are derivations $\delta_1,\dots,\delta_r\in \der_K(K_1)$ such that $\delta_i(s_j)=\delta_{ij}$, where $\delta_{i,j}$ is the Kronecker delta and  $\der_K(K_1)$ is the $K$-module of $K$-derivations of $K^{1/p}$.
\end{itemize}

As a consequence of Theorem 26.5 in \cite{Mats}, we may see that there are $p$-bases for the field extensions $L|K$ and $K(X)|K$.

\begin{exm} \label{pi1 is not smooth}
 Suppose that $X$ is a geometrically integral scheme  over $K$, $Y=X\times \spec(K^{1/p})$ and $\pi$ is the morphism of base change. Then: \begin{enumerate}
     \item The purely inseparable morphism  $\pi \colon Y \to X$ factorizes the absolute Frobenius morphism of $Y$ and is non-smooth.
     \item Every derivation $\delta\in \der_K(K^{1/p})$ can be uniquely extended to a derivation $D_{\delta}$ in  $H^0(Y,\der_{\Ou_X}(\Ou_Y))$.
     \item $\Omega_{Y/X}$ is a free $\Ou_Y$-coherent sheaf.
 \end{enumerate} 
\end{exm}
\begin{proof}
\begin{enumerate}
    \item Naturally, we only need to care about the non-smoothness of $\pi$. After applying some basic properties of fiber products we may see that the fiber $Y_x$ over each $x\in X$ is given by   
$\spec \left( K^{1/p}\otimes_{K}\kappa(x) \right)$
where $\kappa(x)$ is the residue field of $x$. If $s_1,\ldots,s_r$ is a $p$-basis of $\kappa(x)|K$ then $\kappa(x)$ is given by $L[s_1,\ldots,s_r]$ (with $L=K\kappa(x)^p$). Since $K \subset \kappa(x) \subset K^{1/p^n}$ for some $n$, we have that $\kappa(x)=K\kappa(x)^{p^n}[s_1,\ldots,s_r]$ and that $Y_x$ is given by the ideal
$\left\langle X_1^{p^n}-s_1^{p^n},\ldots, X_r^{p^n}-s_r^{p^n}\right\rangle$ of $K^{1/p}[X_1,\ldots,X_r]$  
 where $X_1,\ldots,X_r$ are algebraically independent over $K^{1/p}$. Hence $Y_x\times_{\spec K^{1/p}} \spec \overline{K}$ is given by the ideal  $\left\langle (X_1-s_1)^{p^n},\ldots, (X_r-s_r)^{p^n}\right\rangle$ of $\overline{K}[X_1,\ldots,X_r]$ and, consequently,
 is singular.

    \item Let $s_1,\ldots, s_r$ be a $p$-basis of $K^{1/p}$ over $K$ and let $t_1,\dots, t_n$ be a $p$-basis of $K(X)$ over $K$. Since $X$ is geometrically integral then $X$ is geometrically reduced and, consequently, $K(Y)|K^{1/p}$ is separable in the sense of \S{26} in \cite{Mats}.  As $X$ is geometrically integral then $K(X)$ and $K^{1/p}$ are linearly disjoint over $K$.  So $s_1,\dots, s_r,t_1,\dots, t_n$ is a $p$-basis of $K(Y)|K$. Given $D_{\delta}\in \der_K(K(Y))$ satisfying $D_{\delta}(s_i)=\delta(s_i)$ and $D_{\delta}(t_i)=0$, we have  $(D_{\delta}){\mid_{K^{1/p}}}=\delta$ and $D(K(X))=0$. If $D'$ is another extension of $\delta$ such that $D'(K(X))=0$, then $D_{\delta}(s_i)=D'(s_i)$ for $i=1,\ldots, r$ and $D_{\delta}(t_j)=D'(t_j)=0$ for $j=1,\ldots, n$, implying that $D_{\delta}=D'$.

If $U$ is an affine open subset of $X$ and  $V=\pi^{-1}(U)$ then $\Ou_Y(V)=\Ou_X(U)\otimes_K K^{1/p}$. But $D(\Ou_X(U))=0$, together with Leibniz's rule, implies that $D_{\delta}(\Ou_Y(V))\subset \Ou_Y(V)$. Therefore $D_{\delta}\in \der_{\Ou_X(U)}(\Ou_Y(V))$. Since it is true for every affine open subset $V$ in $Y$, then $D\in H^0(Y, \der_{\Ou_X}(\Ou_Y))$.

\item Since $\Ou_Y(V)=\Ou_X(U)\otimes_K K^{1/p}$ then $\Omega_{Y/X}(V)$ is generated by $ds_1,\ldots,ds_r$, which are also global sections in $H^0(Y,\Omega_{Y/X})$. Take $\delta_1,\ldots,\delta_r$ the dual basis of $s_1,\ldots,s_r$ and $D_{\delta_i}$ the extension of $\delta_i$ to $\Ou_Y$, which factors into $d:\Ou_Y\to \Omega_{Y/X}$ followed by a $\Ou_Y$-morphism $\widetilde{D}_{\delta_i}\colon \Omega_{Y/X} \to  \Ou_Y$. If $x=\sum a_ids_i=0$ with $a_i\in \Ou_Y(V)$, then $\widetilde{D}_{\delta_j}(x)=a_j=0$, so $ds_1,\ldots,ds_r$ form a basis of $\Omega_{Y/X}(V)$.

\end{enumerate}

\end{proof}

Now we are going toward obtaining the characterization announced at the beginning of the subsection. It will be done using connections of sheaves as the principal ingredient and we need to define what it means.

Given an open subset $U$ of $X$, $V=\pi^{-1}(U)$ and a derivation
 $D$ of $\Ou_Y(V)$ vanishing on $\Ou_X(U)$, it follows from the universal property of $\Omega_{Y/X}(V)$, that $D$ factors into $d:\Ou_Y(V)\to \Omega_{Y/X}(V)$ followed by a $\Ou_Y$-morphism $D_d:\Omega_{Y/X}(V) \to \Ou_Y(V)$.

\begin{defn} \label{definition of connections}
A $X$-{\it connection} on a $\Ou_Y$-coherent sheaf $\mathcal{S}$ is a $\Ou_X$-morphism  $\nabla:\mathcal{S}\to \Omega_{Y/X}\otimes_{\Ou_Y}\mathcal{S}$ satisfying
\[\nabla(ax)=d(a)\otimes x+ a\nabla(x)\]
 for every $a\in \Ou_Y(V)$ and $x\in \mathcal{S}(V)$, where $V$ is any open subset of $Y$.  
Moreover, if $\nabla:\mathcal{S}\to \Omega_{Y/X}\otimes_{\Ou_Y}\mathcal{S}$ is a $\Ou_X$-morphism and  $D \in \der_{\Ou_X(U)} (\Ou_Y(V))$,  then we define $\nabla_{D}\in\Hom_{\Ou_X} (\mathcal{S}(V),\mathcal{S}(V))$ by $\nabla_{D} = (D_d\otimes_{\Ou_Y} 1_{\mathcal{S}})\circ\nabla$, where $1_{\mathcal{S}}$ is the identity of $\mathcal{S}$,  $U$ is any open subset of $X$ and $V=\pi^{-1}(U)$.  
\end{defn}


\begin{lemm}\label{igualdad de conexiones}
Let us consider $\mathcal{S}$ be a $\Ou_Y$-coherent sheaf,  $\nabla:\mathcal{S}\to \Omega_{Y/X}\otimes_{\Ou_Y}\mathcal{S}$ be a $\Ou_X$-morphism and $y\in \mathcal{S}(V)$, where $V$ is an open subset of $Y$ such that $\Omega_{Y/X}(V)$ is free. If $\nabla_{D}(y)= 0$ for every derivation $D\in  \der_{\Ou_X(U)}(\Ou_Y(V))$ then $\nabla(y)=0$, where $U=\pi(V)$. 
\end{lemm}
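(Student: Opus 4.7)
The plan is to exploit the universal-property duality between derivations and Kähler differentials. Concretely, the universal property of $\Omega_{Y/X}(V)$ identifies the $\Ou_X(U)$-module $\der_{\Ou_X(U)}(\Ou_Y(V))$ with $\Hom_{\Ou_Y(V)}(\Omega_{Y/X}(V),\Ou_Y(V))$ via the assignment $D\mapsto D_d$ already introduced before Definition \ref{definition of connections}. Since $\Omega_{Y/X}(V)$ is free over $\Ou_Y(V)$ by hypothesis, the $\Ou_Y(V)$-dual is free of the same rank.

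First I would fix a basis $\omega_1,\dots,\omega_r$ of $\Omega_{Y/X}(V)$ over $\Ou_Y(V)$ and, using the duality above, produce derivations $D_1,\dots,D_r\in\der_{\Ou_X(U)}(\Ou_Y(V))$ whose associated maps satisfy $(D_j)_d(\omega_i)=\delta_{ij}$, i.e.\ the dual basis. Because $\Omega_{Y/X}(V)\otimes_{\Ou_Y(V)}\mathcal{S}(V)$ is the tensor product of a free module of rank $r$ with an arbitrary module, every element admits a unique expansion along the $\omega_i$; in particular one can write
\[
\nabla(y)\;=\;\sum_{i=1}^{r}\omega_i\otimes y_i
\]
for uniquely determined sections $y_i\in\mathcal{S}(V)$.

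Then the conclusion is obtained by evaluating $\nabla_{D_j}$ directly from its definition:
\[
\nabla_{D_j}(y)\;=\;\bigl((D_j)_d\otimes 1_{\mathcal{S}}\bigr)(\nabla(y))\;=\;\sum_{i=1}^{r}(D_j)_d(\omega_i)\,y_i\;=\;y_j.
\]
The hypothesis $\nabla_D(y)=0$ for every $D\in\der_{\Ou_X(U)}(\Ou_Y(V))$, applied to each $D_j$, forces $y_j=0$ for all $j$, hence $\nabla(y)=0$.

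I do not anticipate a genuine obstacle here: the argument is essentially a bookkeeping exercise once the existence of the dual basis of derivations is in hand, and the latter is automatic from the freeness of $\Omega_{Y/X}(V)$. The only subtlety worth verifying is that uniqueness of the expansion $\nabla(y)=\sum\omega_i\otimes y_i$ persists after tensoring with the coherent, possibly non-free, sheaf $\mathcal{S}$; this is precisely where the freeness assumption on $\Omega_{Y/X}(V)$, rather than on $\mathcal{S}(V)$, is used in an essential way.
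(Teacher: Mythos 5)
Your argument is correct and is essentially the same as the paper's: expand $\nabla(y)$ in a free basis of $\Omega_{Y/X}(V)$, apply the dual basis of derivations (obtained from the universal-property duality $D\mapsto D_d$) to extract the coefficients $y_j$, and conclude they all vanish. The only cosmetic difference is that the paper takes the basis in the form $ds_1,\dots,ds_r$ with $D_i(s_j)=\delta_{ij}$, whereas you work with an arbitrary free basis $\omega_i$ and its dual; both versions are equally valid.
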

\begin{proof}
Suppose that $\Omega_{Y/X}(V)$ is free. Then there is a $\Ou_Y(V)$-basis $ds_1,\dots,ds_r$ of $\Omega_{Y/X}(V)$ and there is a basis $D_1,\dots,D_r$  of $\der_{\Ou_X(U)}(\Ou_Y(V))$ such that $D_i(s_j)=\delta_{ij}$. So $\nabla(y)=ds_1\otimes y_1+\ldots+ ds_r\otimes y_r$ for some $y_1,\ldots, y_r\in \mathcal{S}(V)$. Therefore 
$0=\nabla_{D_i}(y)=\sum_j  ((D_i)(s_j) \cdot y_j)= y_i$ implies the result.  
\end{proof}

\begin{defn} \label{kernel of connections}
If $\mathcal{S}$ is a $\Ou_Y$-coherent sheaf and $\nabla:\mathcal{S}\to \Omega_{Y/X}\otimes_{\Ou_Y} \mathcal{S}$ is a $X$-connection, then we define the $\Ou_X$-coherent sheaf $\mathcal{S}^{\nabla}$ by $$\mathcal{S}^{\nabla}(U)=\{x\in \mathcal{S}(\pi^{-1}(U)) \, | \, \nabla(x)=0\}$$ for every  open subset $U$ of $X$.
\end{defn}

\begin{lemm}\label{Calculo de conexxiones}
Let $\mathcal{S}$ be a $\Ou_Y$-coherent sheaf, $\nabla$ be a $X$-connection on $\mathcal{S}$ and $D\in \der_{\Ou_X(U)}(\Ou_Y(V))$, where $U$ is an open subset of $X$ and $V=\pi^{-1}(U)$. If $\nabla_{D}(x)=0$ and $a\in \Ou_Y(V)$, then $\nabla_{D}(ax)=D(a)x$.
\end{lemm}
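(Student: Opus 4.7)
The plan is to compute $\nabla_{D}(ax)$ directly from the defining formula of $\nabla_{D}$ and the Leibniz rule built into the notion of an $X$-connection, using that the problem is local so we may work on sections over $V$. The identity to exploit is
\[
\nabla_{D} = (D_{d}\otimes_{\Ou_Y} 1_{\mathcal{S}})\circ \nabla,
\]
together with $D_{d}\circ d = D$, which is simply the characterizing property of $D_{d}$ coming from the universal property of $\Omega_{Y/X}(V)$ that was recorded just before Definition \ref{definition of connections}.

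First I would apply $\nabla$ to $ax$ and use the Leibniz-type identity from Definition \ref{definition of connections} to obtain
\[
\nabla(ax) \;=\; d(a)\otimes x + a\,\nabla(x).
\]
Then I would apply $D_{d}\otimes_{\Ou_Y} 1_{\mathcal{S}}$ to this equality. On the first summand, the computation collapses because
\[
(D_{d}\otimes 1_{\mathcal{S}})(d(a)\otimes x) \;=\; D_{d}(d(a))\cdot x \;=\; D(a)\cdot x.
\]
For the second summand I would invoke the $\Ou_{Y}$-linearity of $D_{d}$ (and hence of $D_{d}\otimes 1_{\mathcal{S}}$) to pull the scalar $a$ out, producing $a\,\nabla_{D}(x)$, which vanishes by hypothesis. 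Adding the two contributions gives the desired equality $\nabla_{D}(ax)=D(a)\,x$.

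There is no real obstacle here: the statement is essentially the observation that on the kernel of a directional component $\nabla_{D}$, the connection behaves like the derivation $D$ itself. The only point that deserves a line of justification is the $\Ou_{Y}$-linearity of $D_{d}\otimes 1_{\mathcal{S}}$ — this is why pulling $a$ past the map is legitimate, even though $\nabla$ itself is only $\Ou_{X}$-linear. Everything else is a direct application of the Leibniz rule and the factorization of $D$ through $d$.
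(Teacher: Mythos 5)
Your proof is correct and follows essentially the same route as the paper's: apply the Leibniz rule to $\nabla(ax)$, then apply $D_d\otimes_{\Ou_Y} 1_{\mathcal{S}}$, using $D_d\circ d=D$ on the first term and the hypothesis $\nabla_D(x)=0$ (via $\Ou_Y$-linearity) to kill the second. In fact you make explicit the intermediate term $a\,\nabla(x)$ that the paper's one-line computation silently discards.
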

\begin{proof}
We have that $\nabla_{D}(ax)=(D_d\otimes_{\Ou_Y} 1_{\mathcal{S}})\circ \nabla(ax)=(D_d\otimes_{\Ou_Y} 1_{\mathcal{S}})(da\otimes x)=D(a)x$.
\end{proof}
\begin{defn}\label{def integrable connections}
We say that a $X$-connection $\nabla:\mathcal{S}\to \Omega_{Y/X}\otimes_{\Ou_Y} \mathcal{S}$ on  a $\Ou_Y$-coherent sheaf $\mathcal{S}$  is integrable if for every $D,D'\in \der_{\Ou_X(U)}(\Ou_Y(V))$ (where $U$ is an open subset of $X$ and $V=\pi^{-1}(U)$) we have that \[  \nabla_{[D,D']}-[\nabla_{D},\nabla_{
D'}]=0.\]
Moreover, we say that $\nabla$ has $p$-curvature zero when
\[
\nabla_{D^{p}}-(\nabla_{D})^p=0. 
\]
When $\nabla$ is integrable with $p$-curvature zero, we will say that $\nabla$ is $X$-invariant.
\end{defn}

\begin{defn}\label{definicion MIC}
We denote by $\Inv(Y/X)$ the category whose objects are pairs $(\mathcal{S},\nabla)$ where $\mathcal{S}$ is a $\Ou_Y$-coherent sheaf and $\nabla$ is an $X$-invariant connection on $\mathcal{S}$. 

Let $(\mathcal{S}_1,\nabla_1)$ and $(\mathcal{S}_2, \nabla_2)$  be two pairs in  $\Inv(Y/X)$. We say that a morphism $f\colon \mathcal{S}_1\to \mathcal{S}_2$ belongs to the category $\Inv(Y/X)$ when 
$$f\circ (\nabla_1)_{D}= (\nabla_2)_{D}\circ f$$ for any derivation $D\in \der_{\Ou_X(U)}(\Ou_Y(V))$ and any open subset $U$ of $X$, where $V=\pi^{-1}(U)$.
\end{defn}
\begin{Remark}
Notice that any $f:\mathcal{S}_1 \to \mathcal{S}_2$ in $\Inv(Y/X)$ restricts itself to a $\Ou_X$-morphism from $(\mathcal{S}_1)^{\nabla_1}$ to $(\mathcal{S}_2)^{\nabla_2}$.
\end{Remark}

The next two propositions provide useful examples of elements in $\Inv(Y/X)$.

\begin{prop}
\label{d integrable}
The pair $(\Ou_Y,d)$ belongs to  $\Inv(Y/X)$ where $d\colon \Ou_Y\to \Omega_{Y/X}$ is the natural $\Ou_X$-derivation. 
\end{prop}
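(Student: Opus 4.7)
The plan is to unwind the definitions and observe that, for $\mathcal{S}=\Ou_Y$, the operator $d_D$ associated to a derivation $D$ is literally $D$ itself; integrability and $p$-curvature zero then become tautologies.

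First I would verify that $d\colon \Ou_Y \to \Omega_{Y/X}$ is an $X$-connection in the sense of Definition \ref{definition of connections}. After using the canonical isomorphism $\Omega_{Y/X}\otimes_{\Ou_Y}\Ou_Y \cong \Omega_{Y/X}$, the defining identity $\nabla(ax)=d(a)\otimes x + a\nabla(x)$ for $a,x\in\Ou_Y(V)$ reduces to $d(ax)=x\,d(a)+a\,d(x)$, which is the Leibniz rule satisfied by the universal $X$-derivation. Hence $d$ is indeed an $X$-connection.

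Next I would compute $d_D$ for $D\in\der_{\Ou_X(U)}(\Ou_Y(V))$. By the factorization $D = D_d\circ d$ guaranteed by the universal property of $\Omega_{Y/X}$, and again using $\Omega_{Y/X}\otimes_{\Ou_Y}\Ou_Y \cong \Omega_{Y/X}$, one has
\[
d_D(a) \;=\; (D_d\otimes_{\Ou_Y} 1_{\Ou_Y})\circ d(a) \;=\; D_d(da) \;=\; D(a)
\]
for every $a\in\Ou_Y(V)$, so $d_D=D$ as $\Ou_X$-endomorphisms of $\Ou_Y(V)$.

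With this identification both remaining conditions are automatic. Integrability reads $d_{[D,D']} - [d_D,d_{D'}] = [D,D']-[D,D']=0$, and $p$-curvature zero reads $d_{D^p}-(d_D)^p = D^p - D^p = 0$ (here $D^p$ is again a derivation in characteristic $p$ by the standard Hochschild/Jacobson identity, so the first term is well defined). Thus $(\Ou_Y,d)\in\Inv(Y/X)$. There is really no hard step; the only thing worth being careful about is the canonical identification $\Omega_{Y/X}\otimes_{\Ou_Y}\Ou_Y\cong \Omega_{Y/X}$ that makes $d_D=D$ literally true, which is what collapses both integrability and the vanishing of the $p$-curvature to trivial identities.
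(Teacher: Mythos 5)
Your proof is correct and follows essentially the same route as the paper: identify $d_D$ with $D$ (the paper gets this from Lemma \ref{Calculo de conexxiones} applied with $x=1$, you get it directly from the factorization $D=D_d\circ d$, which amounts to the same computation), after which integrability and $p$-curvature zero collapse to the identities $[D,D']=[D,D']$ and $D^p=D^p$.
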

\begin{proof}
Clearly $d:\Ou_Y\to \Omega_{Y/X}$ is a $X$-connection satisfying $\Ou_Y^{d}=\Ou_X$.
Let  $D,D'\in \der_{\Ou_X(U)}(\Ou_Y(V))$, where $U$ is an open subset of $X$  and $V=\pi^{-1}(U)$. Lemma \ref{Calculo de conexxiones} says that $d_{D}=D$ and $d_{D'}=D'$. Thus $(d_{D})^p=D^p=(d_{D^p})$ and  $(d_{[D,D']})=[D,D']=[d_{D},d_{D'}]$, concluding the $X$-invariance of $d$. 
\end{proof}

\begin{Remark}
If $\mathcal{S}_1$ is a $\Ou_X$-coherent sheaf and $\mathcal{S}=\Ou_Y\otimes_{\Ou_X}\mathcal{S}_1$, then we can identify $\Omega_{Y/X}\otimes_{\Ou_Y} \mathcal{S} \simeq \Omega_{Y/X}\otimes_{\Ou_X} \mathcal{S}_1$. Therefore the $\Ou_X$-morphism   $\nabla= d\otimes_{\Ou_X} 1_{\mathcal{S}_1}$ can be seen as a $\Ou_X$-morphism from $\mathcal{S}$ to $\Omega_{Y/X}\otimes_{\Ou_Y} \mathcal{S}$.
\end{Remark}

\begin{prop}\label{definicion del funtor}
If $\mathcal{S}_1$ is a $\Ou_X$-coherent sheaf, $\mathcal{S}=\Ou_Y\otimes_{\Ou_X}\mathcal{S}_1$ and $\nabla= d\otimes_{\Ou_X} 1_{\mathcal{S}_1}$, then $(\mathcal{S},\nabla)$  belongs to the category $\Inv(Y/X)$.
\end{prop}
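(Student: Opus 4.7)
The plan is to verify the three conditions defining an $X$-invariant connection---the Leibniz rule, integrability, and vanishing $p$-curvature---by reducing each one to the corresponding property of the canonical derivation $d$ already established in Proposition \ref{d integrable}. The key computational input is an explicit formula for $\nabla_D$: on a simple tensor $a \otimes s$ with $a \in \Ou_Y(V)$ and $s \in \mathcal{S}_1(U)$, we have $\nabla(a\otimes s) = da \otimes s$, so applying $D_d \otimes 1_{\mathcal{S}}$ and using Lemma \ref{Calculo de conexxiones} (which gives $d_D = D$) yields
\[
\nabla_D(a\otimes s) = D(a)\otimes s.
\]
In other words, under the identification in the Remark preceding the statement, $\nabla_D$ is simply $D \otimes_{\Ou_X} 1_{\mathcal{S}_1}$.

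First I would check that $\nabla$ is a well-defined $X$-connection by verifying the Leibniz rule. For $b \in \Ou_Y(V)$, using $d(ba) = b\, da + a\, db$ and the fact that $a$ can be moved across the tensor in $\Omega_{Y/X}\otimes_{\Ou_Y}\mathcal{S}$ (i.e.\ $a\, db \otimes (1\otimes s) = db \otimes (a\otimes s)$), one computes
\[
\nabla(b(a\otimes s)) = d(ba)\otimes s = b\,da\otimes s + a\,db\otimes s = db\otimes(a\otimes s) + b\,\nabla(a\otimes s),
\]
so $\nabla$ is a genuine $X$-connection on $\mathcal{S}$.

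Next, from the explicit formula $\nabla_D(a\otimes s) = D(a)\otimes s$, iterating gives $\nabla_D\circ \nabla_{D'}(a\otimes s) = D(D'(a))\otimes s$, and therefore
\[
[\nabla_D,\nabla_{D'}](a\otimes s) = [D,D'](a)\otimes s = \nabla_{[D,D']}(a\otimes s),
\]
which establishes integrability, while $(\nabla_D)^p(a\otimes s) = D^p(a)\otimes s = \nabla_{D^p}(a\otimes s)$ establishes that the $p$-curvature is zero. Since both sides of each identity are $\Ou_X$-linear morphisms of $\Ou_Y$-coherent sheaves and agree on the simple tensors that generate $\mathcal{S}$ locally, the identities hold on all of $\mathcal{S}$, proving $(\mathcal{S},\nabla)\in\Inv(Y/X)$.

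The only substantive difficulty---hardly an obstacle---is keeping the tensor product bookkeeping straight when descending the $\Ou_X$-morphism $d\otimes 1_{\mathcal{S}_1}$ to a well-defined map on $\Ou_Y\otimes_{\Ou_X}\mathcal{S}_1$ with values in $\Omega_{Y/X}\otimes_{\Ou_Y}\mathcal{S}$. Once this is clean, the whole proposition is essentially the formal consequence of Proposition \ref{d integrable} obtained by applying $-\otimes_{\Ou_X} 1_{\mathcal{S}_1}$.
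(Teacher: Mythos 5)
Your proof is correct and follows essentially the same route as the paper: verify the Leibniz rule on (sums of) simple tensors $a\otimes s$, then reduce integrability and $p$-curvature zero to the corresponding properties of $d$ established in Proposition \ref{d integrable}. You are somewhat more explicit than the paper on the last step, where the paper simply asserts that the $X$-invariance of $d$ transfers to $\nabla$, whereas you make the transfer concrete via the formula $\nabla_D(a\otimes s)=D(a)\otimes s$; this is a welcome clarification but not a different argument.
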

\begin{proof}
Let $U$ be an open subset of $X$ and $V=\pi_1^{-1}(U)$. From hypothesis, any $x\in \mathcal{S}(V)$ can be written by $x=\sum b_i\otimes y_i$ with $b_i\in \Ou_Y(V)$ and $y_i\in \mathcal{S}_1(U)$. Thus for $a\in \Ou_Y(V)$ we obtain $\nabla(a x)=\nabla(\sum ab_i\otimes y_i)=\sum d(ab_i)\otimes y_i=da\sum b_i\otimes y_i+a\sum d(b_i)\otimes y_i=(da)x+a\nabla(x)$, which says that   $\nabla$ is a $X$-connection.  Since $d$ is $X$-invariant then the same happens for $\nabla$ and therefore $(\mathcal{S},\nabla)$ belongs to the category $\Inv(Y/X)$. 
\end{proof}
\begin{prop}\label{morphism in MIC} 
Suppose that $\mathcal{S}_1$ and $\mathcal{T}_1$ are $\Ou_X$-coherent sheaves and $f_1:\mathcal{S}_1\to \mathcal{T}_1$ is a $\Ou_X$-morphism. If $f=1_{\Ou_Y}\otimes_{\Ou_X}f_1 $ is the $\Ou_Y$-morphism from $\mathcal{S}=\Ou_Y \otimes_{\Ou_X} \mathcal{S}_1$ to $\mathcal{T}=\Ou_Y\otimes_{\Ou_X} \mathcal{T}_1$, then it belongs to the category $\Inv(Y/X)$.
\end{prop}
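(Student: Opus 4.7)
The plan is to verify directly the commutation condition required by Definition \ref{definicion MIC}: for every affine open $U\subseteq X$, every $D\in \der_{\Ou_X(U)}(\Ou_Y(V))$ with $V=\pi^{-1}(U)$, and every section $x\in \mathcal{S}(V)$, one has $f\circ \nabla_{D}(x)=\nabla'_{D}\circ f(x)$, where $\nabla=d\otimes_{\Ou_X}1_{\mathcal{S}_1}$ and $\nabla'=d\otimes_{\Ou_X}1_{\mathcal{T}_1}$ are the connections supplied by Proposition \ref{definicion del funtor}. Both pairs $(\mathcal{S},\nabla)$ and $(\mathcal{T},\nabla')$ are already known to be objects of $\Inv(Y/X)$, so only the intertwining of the induced $D$-actions remains.

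The key is a computation on a simple tensor $b\otimes y\in \Ou_Y(V)\otimes_{\Ou_X(U)}\mathcal{S}_1(U)$. Using the identification $\Omega_{Y/X}\otimes_{\Ou_Y}\mathcal{S}\simeq \Omega_{Y/X}\otimes_{\Ou_X}\mathcal{S}_1$, the definition of $\nabla$ gives $\nabla(b\otimes y)=d(b)\otimes y$. Applying $D_d\otimes_{\Ou_Y}1_{\mathcal{S}}$ and using $D_d\circ d=D$ (see the paragraph preceding Definition \ref{definition of connections}), I get $\nabla_{D}(b\otimes y)=D(b)\otimes y$. The same formula with $\mathcal{T}_1$ in place of $\mathcal{S}_1$ yields $\nabla'_{D}(b\otimes z)=D(b)\otimes z$ for $z\in \mathcal{T}_1(U)$. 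Since $f=1_{\Ou_Y}\otimes_{\Ou_X}f_1$, one finds
\[
\nabla'_{D}\bigl(f(b\otimes y)\bigr)=\nabla'_{D}(b\otimes f_1(y))=D(b)\otimes f_1(y)=f(D(b)\otimes y)=f\bigl(\nabla_{D}(b\otimes y)\bigr),
\]
so the two $\Ou_X(U)$-linear maps agree on all simple tensors.

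To finish, I extend by $\Ou_X(U)$-linearity: every section of $\mathcal{S}(V)$ is locally a finite sum of simple tensors on an affine cover where $\mathcal{S}(V)=\Ou_Y(V)\otimes_{\Ou_X(U)}\mathcal{S}_1(U)$, and both $f\circ \nabla_{D}$ and $\nabla'_{D}\circ f$ are additive. Hence the two morphisms coincide on $\mathcal{S}(V)$ for every such $V$, which, after sheafifying and varying $U$, gives $f\circ \nabla_{D}=\nabla'_{D}\circ f$ for arbitrary $D$ as required. There is no substantial obstacle here; the only mild care is in handling the identification $\Omega_{Y/X}\otimes_{\Ou_Y}\mathcal{S}\simeq \Omega_{Y/X}\otimes_{\Ou_X}\mathcal{S}_1$ already invoked in the Remark before Proposition \ref{definicion del funtor}, and in restricting to an affine cover so that sections of the relevant tensor product sheaves genuinely decompose as tensors. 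Everything else is formal linearity once the identity $\nabla_{D}(b\otimes y)=D(b)\otimes y$ is established.
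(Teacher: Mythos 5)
Your proof is correct and follows essentially the same route as the paper: both compute $\nabla_{D}(b\otimes y)=D(b)\otimes y$ on simple tensors, apply $f=1_{\Ou_Y}\otimes_{\Ou_X}f_1$, and extend by linearity to arbitrary sections written as finite sums of tensors over an affine open. The only difference is presentational, in that you isolate the identity on simple tensors as an explicit intermediate step.
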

\begin{proof}
By definition we need to prove that $f\circ(\nabla_{\mathcal{S}})_{D}=(\nabla_{\mathcal{T}})_{D}\circ f$ for every
 $D\in \der_{\Ou_X(U)}(\Ou_Y(V))$ and every open subset $U$ of $X$, where $V=\pi^{-1}(U)$,  $\nabla_{\mathcal{S}}=d\otimes_{\Ou_X}1_{\mathcal{S}_1}$ and $\nabla_{\mathcal{T}}=d\otimes_{\Ou_X}1_{\mathcal{T}_1}$. Since each $x\in \mathcal{S}(V)$ can be written by $x=\sum a_i\otimes x_i$ where $x_i\in \mathcal{S}_1(U)$ and $a_i\in \Ou_Y(V)$, it follows that $f((\nabla_{\mathcal{S}})_{D}(x))= f\left(\sum D(a_i)\otimes x_i\right)=\sum D(a_i)\otimes f_1(x_i)=(\nabla_{\mathcal{T}})_{D}(f(x))$  
 for every $D\in \der_{\Ou_X(U)}\Ou_Y(V)$, proving the proposition.
\end{proof}

Now we define the functor $\mathfrak{F}$, from the category $\Inv(Y/X)$ to the category of coherent  $\Ou_X$-coherent sheaves,  and the functor $\mathfrak{G}$, going in the opposite direction.

\begin{defn}
Given a pair $(\mathcal{S},\nabla)$ in $\Inv(Y/X)$ we define $\mathfrak{F}(\mathcal{S},\nabla)= \mathcal{S}^{\nabla}$. Moreover if we have two pairs $(\mathcal{S}_1,\nabla_1)$ and $(\mathcal{S}_2,\nabla_2)$ in $\Inv(Y/X)$ and if $f\colon \mathcal{S}_1 \to \mathcal{S}_2$ is a morphism in $\Inv(Y/X)$, we define $\mathfrak{F}(f)=f|_{(\mathcal{S}_1)^{\nabla_1}} \colon (\mathcal{S}_1)^{\nabla_1} \to (\mathcal{S}_2)^{\nabla_2}$.
\end{defn}

\begin{defn}\label{la definicion del functor G}
 Given a $\Ou_X$-coherent sheaf $\mathcal{S}_1$ we define $\mathfrak{G}(\mathcal{S}_1)=(\Ou_Y \otimes_{\Ou_X} \mathcal{S}_1 ,d\otimes_{\Ou_X}1_{\mathcal{S}_1})$. Moreover if $f_1:\mathcal{S}_1\to \mathcal{T}_1$ is a $\Ou_X$-morphism we define $\mathfrak{G}(f_1)=1_{\Ou_Y} \otimes_{\Ou_X} f_1$.
\end{defn}

\begin{lemm}\label{pullback belongs to inv}
    With the notations of the previous definition, we have that $\mathfrak{G}(f_1)$ belongs to the category $\Inv(Y/X)$ 
\end{lemm}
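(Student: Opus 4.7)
The plan is straightforward: this lemma is essentially a restatement of Proposition \ref{morphism in MIC} in the functorial language just introduced, so the proof should reduce to assembling the two preceding propositions.

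First I would verify that the source and target of $\mathfrak{G}(f_1) = 1_{\Ou_Y} \otimes_{\Ou_X} f_1$ are well-defined objects of $\Inv(Y/X)$. By Definition \ref{la definicion del functor G}, these are $\mathfrak{G}(\mathcal{S}_1) = (\Ou_Y \otimes_{\Ou_X} \mathcal{S}_1, d\otimes_{\Ou_X} 1_{\mathcal{S}_1})$ and $\mathfrak{G}(\mathcal{T}_1) = (\Ou_Y \otimes_{\Ou_X} \mathcal{T}_1, d\otimes_{\Ou_X} 1_{\mathcal{T}_1})$, and Proposition \ref{definicion del funtor} guarantees that both pairs indeed lie in $\Inv(Y/X)$.

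Second, I would observe that $\mathfrak{G}(f_1)$ is automatically an $\Ou_Y$-morphism since it is obtained by tensoring the $\Ou_X$-morphism $f_1$ with $1_{\Ou_Y}$. To conclude that it belongs to $\Inv(Y/X)$, by Definition \ref{definicion MIC} it only remains to check the intertwining condition
\[
\mathfrak{G}(f_1) \circ (\nabla_{\mathcal{S}})_D = (\nabla_{\mathcal{T}})_D \circ \mathfrak{G}(f_1)
\]
for every derivation $D \in \der_{\Ou_X(U)}(\Ou_Y(V))$ with $U$ open in $X$ and $V = \pi^{-1}(U)$, where $\nabla_{\mathcal{S}} = d \otimes_{\Ou_X} 1_{\mathcal{S}_1}$ and $\nabla_{\mathcal{T}} = d \otimes_{\Ou_X} 1_{\mathcal{T}_1}$. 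But this is exactly the statement proved in Proposition \ref{morphism in MIC}, whose argument expands a local section $x = \sum a_i \otimes x_i$ and computes both sides using the Leibniz rule together with the $\Ou_X$-linearity of $f_1$.

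I do not anticipate any real obstacle: all the analytic content has already been carried out in Proposition \ref{definicion del funtor} (to produce the invariant connection on a pulled-back sheaf) and Proposition \ref{morphism in MIC} (to verify the intertwining). The sole role of the lemma is to record that these two facts together make $\mathfrak{G}$ a well-defined functor landing in $\Inv(Y/X)$, which will be essential in the next subsection when comparing $\mathfrak{F}$ and $\mathfrak{G}$.
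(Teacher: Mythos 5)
Your proposal is correct and matches the paper's approach: the lemma is indeed just Proposition \ref{morphism in MIC} restated for $\mathfrak{G}(f_1)=1_{\Ou_Y}\otimes_{\Ou_X}f_1$ (together with Proposition \ref{definicion del funtor} for the objects), and the paper's own one-line proof simply recomputes the same intertwining identity, writing both compositions as $D\otimes_{\Ou_X}f_1$. Citing the earlier proposition, as you do, is entirely sufficient.
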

\begin{proof}
If $U$ is an affine open subset of $X$,  $V=\pi^{-1}(U)$ and $D\in \der_{\Ou_X(U)}\Ou_Y(V)$, then, over these open subsets, we have that 
$\mathfrak{G}(f_1)\circ(d\otimes_{\Ou_X}1_{\mathcal{S}_1})_D=D \otimes_{\Ou_X} f_1=(d\otimes_{\Ou_X}1_{\mathcal{T}_1})_D\circ\mathfrak{G}(f_1)$.
\end{proof}

From now on we will proceed to prove that these two functors establish an isomorphism between the above two categories, which gives the announced characterization in the beginning of the section. To this end, we will need the following important tool.

\begin{defn} \label{definicion de la tate trace}
   
Suppose that $V$ is an open subset of $Y$ such that $\Omega_{Y/X}(V)$ is free and take $U=\pi(V)$. We say that $s_1,\dots,s_r \in \Ou_Y(V)$ is a differential basis of $\Ou_Y(V)$ over $\Ou_X(U)$ when $ds_1,\ldots,ds_r$ is a $\Ou_Y(V)$-basis of $\Omega_{Y/X}(V)$. 

In this case, let  $D_1,\ldots,D_r\in \der_{\Ou_X(U)}\Ou_Y(V)$ be a dual basis of $ds_1,\ldots,ds_r$ (i.e. $D_i(s_j)=\delta_{ij}$). If $(\mathcal{S}, \nabla)$  is a pair in  $\Inv(Y/X)$,
then we define the 
trace $\tau_{s_1,\ldots,s_r}$ with respect to $s_1\ldots,s_r$ and $\nabla$
by  
$$\tau_{s_1,\ldots,s_r}=\nabla_{D_1}^{p-1}\circ\dots\circ\nabla_{D_r}^{p-1}\colon \mathcal{S}(V) \to \mathcal{S}(V).$$
\end{defn}

\begin{Remark}
We claim that if $\Omega_{Y/X}$ is locally free $\Ou_Y$-coherent sheaf, then there is a cover $\{ V_P \}_{P \in Y}$ of $Y$ by open subsets such that $\Ou_Y(V_P)$ has a differential basis over $\Ou_X(U_P)$, where $U_P=\pi(V_P)$.

Indeed, for each $P\in Y$, if $\m_{Y,P}$ is the the maximal ideal of the local ring $\Ou_{Y,P}$, then $(\Omega_{Y/X})_P/\m_{Y,P}(\Omega_{Y/X})_P$ is a $\textbf{k}(P)$-vector space of finite dimension, namely $r$, where $\textbf{k}(P)$ is the residue field at $P$. It is easy to see that, if $\sum_{i=1}^{m}y_{i,j}dx_i$ ($j=1,\ldots, r$) is basis of the above vector space, then we also may collect, from the generators $dx_1, \ldots, dx_m$, another basis of such vector space of the form $ds_1, \ldots, ds_r$. Now from Nakayama's Lemma we obtain that $(\Omega_{Y/X})_P = \Ou_{Y,P}ds_1 \oplus \cdots \oplus \Ou_{Y,P}ds_r$ and  we only need to consider the open neighborhood $V_P$ at $P$ such that $\Omega_{Y/X}(V_P)$ is free of rank $r$ and $s_1,\ldots,s_r\in \Ou_Y(V_P)$.
\end{Remark}

\begin{Remark}\label{propiedad de la traza}
Notice that, with the notation introduced in the above definition,   the pushforward  under $\pi$ of the image of  $\tau_{s_1,\ldots,s_r}$  is a sub-sheaf of $\mathcal{S}^{\nabla}$. To see this, from Proposition \ref{igualdad de conexiones} we only need to prove that   $\nabla_{D_i}\circ \tau_{s_1,\ldots,s_r}=0$, which in turn can be obtained if we prove that $\nabla_{D_i}\circ \nabla_{D_j} - \nabla_{D_j}\circ \nabla_{D_i}=\nabla_{[D_i,D_j]}=0$ and $\nabla_{D_i}^p=\nabla_{D_i^p}=0$. To show these two identities, it is enough to prove that $[D_i,D_j]=0=D_i^p$ for every $i$ and $j$ such that $i\neq j$. Indeed, they follow from the fact that for every derivation $D$, the identity $D=D_d\circ d$ implies that $D=0$ if and only if $D(s_i)=0$ for every $i$. 
\end{Remark}

\begin{lemm}\label{Igualdad de la imagen de nabla}
Let $V$ be an open subset of $Y$, $U=\pi(V)$, $s_1,\ldots,s_r$ be a differential basis of $\Ou_Y(V)$ over $\Ou_X(U)$, $(\mathcal{S},\nabla)$ be a pair in $\Inv(Y/X)$ and $\tau_{s_1,\ldots,s_r}$ be the trace with respect to $s_1,\ldots,s_r$.
If we denote by $s^{I}=s_1^{i_1}\ldots s_r^{i_r}$, for each $I=(i_1,\ldots,i_r) \in \{0,\ldots,p-1\}^r$, then 
\[
\tau_{s_1,\ldots,s_r}(s^Ix)= \left\{
\begin{array}{ccc}
    0 & \mbox{ for } & I\neq(p-1,\ldots,p-1)\\
    (-1)^rx & \mbox{ for } & I=(p-1,\ldots,p-1)
\end{array} \right.
\]
for all $x\in \mathcal{S}^{\nabla}(U)$. In particular, the image of $\mathcal{S}(V)$ by $\tau_{s_1,\ldots,s_r}$ is $\mathcal{S}^{\nabla}(U)$. 

\end{lemm}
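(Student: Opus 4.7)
The plan is to reduce the computation of $\tau_{s_1,\ldots,s_r}(s^I x)$ to iterated applications of the ordinary derivations $D_i$ on the monomial $s^I$, exploiting the hypothesis $x \in \mathcal{S}^{\nabla}(U)$. Since $\nabla(x)=0$ and $\Omega_{Y/X}(V)$ is free, Lemma \ref{igualdad de conexiones} gives $\nabla_{D_i}(x)=0$ for every $i$. Lemma \ref{Calculo de conexxiones} then yields $\nabla_{D_i}(ax) = D_i(a)\,x$ for every $a \in \Ou_Y(V)$, and a straightforward induction on $k \geq 1$ shows
\[
\nabla_{D_i}^k(ax) = D_i^k(a)\,x,
\]
because each intermediate expression is again an element of $\Ou_Y(V)$ times the flat section $x$, so Lemma \ref{Calculo de conexxiones} reapplies.

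Next, I would compute $D_i^{p-1}(s^I)$ explicitly. Since $D_i(s_j) = \delta_{ij}$, the Leibniz rule in $\Ou_Y(V)$ gives
\[
D_i^{p-1}(s^I) = \left(\prod_{j \neq i} s_j^{i_j}\right) D_i^{p-1}\bigl(s_i^{i_i}\bigr),
\]
and $D_i^{p-1}(s_i^{i_i}) = i_i (i_i-1)\cdots(i_i-p+2)\,s_i^{i_i-p+1}$ is the usual falling factorial, which vanishes unless $i_i \geq p-1$. Under the restriction $I \in \{0,\ldots,p-1\}^r$, this forces $i_i = p-1$, and then $D_i^{p-1}(s_i^{p-1}) = (p-1)! \equiv -1 \pmod{p}$ by Wilson's theorem.

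Combining these, I would apply $\nabla_{D_r}^{p-1}$ first to $s^I x$: by the above it produces $0$ unless $i_r = p-1$, in which case it yields $-\,s_1^{i_1}\cdots s_{r-1}^{i_{r-1}} x$, which is again a monomial in $s_1,\ldots,s_{r-1}$ times an element of $\mathcal{S}^{\nabla}(U)$. The same reduction then applies to $\nabla_{D_{r-1}}^{p-1}$, and inductively to $\nabla_{D_{r-2}}^{p-1},\ldots,\nabla_{D_1}^{p-1}$. This forces every exponent $i_j$ to equal $p-1$ (otherwise the trace annihilates $s^I x$) and accumulates the sign $(-1)^r$, giving the stated formula. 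For the \emph{in particular} clause, Remark \ref{propiedad de la traza} provides the inclusion of (the pushforward of) $\Img \tau_{s_1,\ldots,s_r}$ into $\mathcal{S}^{\nabla}$, while the reverse inclusion is immediate from the formula, since any $x \in \mathcal{S}^{\nabla}(U)$ satisfies $x = (-1)^r \tau_{s_1,\ldots,s_r}\bigl(s_1^{p-1}\cdots s_r^{p-1} x\bigr)$.

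The only real subtlety I foresee is the bookkeeping in the iteration: one must verify at each step that the surviving expression is still of the form (element of $\Ou_Y(V)$)\,$\cdot$\,(element of $\mathcal{S}^{\nabla}(U)$), so that Lemma \ref{Calculo de conexxiones} continues to apply and the computation never leaves the flat locus. Once this is in place, the proof is a combination of the falling-factorial computation and Wilson's theorem.
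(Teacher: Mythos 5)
Your proposal is correct and follows essentially the same route as the paper: the paper's proof rests on exactly the identities $D_i^{p-1}(s_i^k)=0$ for $k<p-1$ and $=(p-1)!\equiv -1$ for $k=p-1$, iterates them to get $D_1^{p-1}(\cdots D_r^{p-1}(s^I)\cdots)=(-1)^r$ or $0$, and then handles the image statement via Remark \ref{propiedad de la traza} plus the formula $x=\tau_{s_1,\ldots,s_r}((-1)^r s^{(p-1,\ldots,p-1)}x)$, just as you do. Your explicit justification that $\nabla_{D_i}^k(ax)=D_i^k(a)x$ for flat $x$ via repeated use of Lemma \ref{Calculo de conexxiones} is the step the paper leaves implicit, and your bookkeeping of the iteration is sound.
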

\begin{proof}
To prove the first claim. To do this, we observe that if $D_1,\ldots,D_r\in \der_{\Ou_X(U)}\Ou_Y(V)$ is the dual basis of $ds_1,\ldots,ds_r$, then the identities
\begin{equation}\label{eq1}
    (D_i)^{p-1}(s_i^k)= \left\{
\begin{array}{ccc}
    0 & \mbox{ for } & k< p-1\\
    (p-1)! & \mbox{ for } & k=p-1
\end{array} \right.
\end{equation}

provide that
\begin{equation}\label{eq2}
    D_1^{p-1}(\ldots(D_r^{p-1}(s^I)\ldots)= \left\{
\begin{array}{ccc}
    0 & \mbox{ for } & I\neq (p-1,\ldots,p-1)\\
    (-1)^r & \mbox{ for } & I= (p-1,\ldots,p-1)
\end{array} \right. 
\end{equation}
which in turn, implies the stated identity.

From Remark \ref{propiedad de la traza}, we only need to show that $\mathcal{S}^{\nabla}(U)\subset \tau_{s_1,\ldots,s_r}(\mathcal{S}(V))$, in order to obtain the second claim. Indeed, if $x\in \mathcal{S}^{\nabla}(U)$ then it follows from (\ref{eq2}) that  $x=\tau_{s_1,\ldots,s_r}((-1)^rs^{(p-1,\ldots,p-1)}x)$. 
\end{proof}
\begin{prop}\label{Prefacio a eigualdad clave}\label{igualdad clave de haces invariantes}
 If we consider a pair $(\mathcal{S},\nabla)$  in $\Inv(Y/X)$ and a differential basis $s_1,\ldots,s_r$  of $\Ou_Y(V)$ over $\Ou_X(U)$, where $V$ is an open subset of $Y$ and $U=\pi(V)$, then \[\mathcal{S}(V) = \bigoplus_{I\in \{0,\ldots,p-1\}^r} s^I(\mathcal{S}^{\nabla}(U)).\]
 Moreover, every $x\in \mathcal{S}(V)$ can be written uniquely by \[x=(-1)^r\sum_{I\in \{0,\ldots,p-1\}^r} s^I \tau_{s_1,\ldots,s_r}(s^{(p-1,\ldots,p-1)-I}x).\]
\end{prop}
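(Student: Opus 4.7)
The plan is to prove the explicit formula for $x$, from which both the decomposition and its directness follow. First I translate the statement into an operator identity: write $M_i$ for multiplication by $s_i$ and $N_i$ for $\nabla_{D_i}$ acting on $\mathcal{S}(V)$, where $D_1,\ldots,D_r$ is the dual basis of $ds_1,\ldots,ds_r$. As observed in Remark \ref{propiedad de la traza}, the derivations $[D_i,D_j]$ and $D_i^p$ vanish identically, because a derivation that is zero on every $s_k$ is zero. Integrability and $p$-curvature zero of $\nabla$ then give $[N_i,N_j]=0$ and $N_i^p=0$, while the Leibniz rule for a connection yields the Weyl-type relation $[N_i,M_j]=\delta_{ij}$. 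With $\tau=N_1^{p-1}\cdots N_r^{p-1}$, the statement reduces to the operator identity
\[
\Phi := (-1)^r \sum_{I\in\{0,\ldots,p-1\}^r} M^I\, N_1^{p-1}\cdots N_r^{p-1}\, M^{(p-1,\ldots,p-1)-I} \;=\; \mathrm{id}_{\mathcal{S}(V)}.
\]

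Since for $i\neq j$ the operators $M_i, N_i$ commute with $M_j, N_j$, the sum $\Phi$ factors as $(-1)^r \prod_{k=1}^{r}\bigl(\sum_{i_k=0}^{p-1} M_k^{i_k} N_k^{p-1} M_k^{p-1-i_k}\bigr)$, and it suffices to treat the single-variable case. For a pair $(M,N)$ with $[N,M]=1$, a routine induction gives $N^{p-1}M^J = \sum_{l=0}^{p-1}\binom{p-1}{l}(J)_l\, M^{J-l}N^{p-1-l}$ with $(J)_l = J(J-1)\cdots(J-l+1)$. Multiplying on the left by $M^{p-1-J}$, summing over $J$, and swapping the sums turns the coefficient of $M^{p-1-l}N^{p-1-l}$ into $\binom{p-1}{l}\,l!\,\binom{p}{l+1}$ by the hockey-stick identity $\sum_{J=l}^{p-1}\binom{J}{l}=\binom{p}{l+1}$. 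Since $\binom{p}{l+1}\equiv 0\pmod p$ for $0\leq l\leq p-2$, only $l=p-1$ survives, and that term equals $(p-1)!=-1$ by Wilson's theorem. Thus each inner factor is $-1$, so $\Phi = (-1)^r\cdot(-1)^r\cdot\mathrm{id}_{\mathcal{S}(V)} = \mathrm{id}_{\mathcal{S}(V)}$, which is the claimed formula. Since each term $\tau(s^{(p-1,\ldots,p-1)-I}x)$ lies in $\mathcal{S}^{\nabla}(U)$ by Remark \ref{propiedad de la traza}, the formula realises $x$ as an element of $\sum_I s^I \mathcal{S}^{\nabla}(U)$.

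For the directness of the sum, suppose $\sum_I s^I x_I=0$ with $x_I\in \mathcal{S}^{\nabla}(U)$. Iterating Lemma \ref{Calculo de conexxiones} and using $\nabla_{D_l}(x_I)=0$ gives $\nabla_{D_l}^{n}(a\, x_I)=D_l^{n}(a)\, x_I$ for every $a$ and $n\geq 1$, hence $\tau(s^K x_I)=\prod_l D_l^{p-1}(s_l^{k_l})\cdot x_I$ for any multi-index $K=(k_1,\ldots,k_r)$. Fixing $J$ and applying $\tau(s^{(p-1,\ldots,p-1)-J}\cdot\,)$ to the relation, the $l$-th exponent becomes $k_l=p-1+I_l-J_l\in[0,2p-2]$, and the scalar $D_l^{p-1}(s_l^{k_l})=k_l(k_l-1)\cdots(k_l-p+2)\, s_l^{k_l-p+1}$ vanishes mod $p$ both for $k_l<p-1$ (a zero factor appears) and for $k_l\in[p,2p-2]$ (one of the $p-1$ consecutive factors equals $p$), while $D_l^{p-1}(s_l^{p-1})=-1$. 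Only the multi-index $I=J$ survives, giving $(-1)^r x_J=0$, so $x_J=0$ for every $J$. The main obstacle is the operator identity $\Phi=\mathrm{id}$, where the combined effect of the hockey-stick sum and Wilson's theorem modulo $p$ is essential.
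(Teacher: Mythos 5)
Your proof is correct, but it takes a genuinely different route from the paper's. The paper first establishes the direct sum decomposition by an inductive filtration argument: it sets $\mathcal{S}_i=\{x\,:\,\nabla_{D_j}(x)=0 \text{ for } j\geq i\}$ and shows $\mathcal{S}_{i+1}=\sum_{k=0}^{p-1}s_i^k\mathcal{S}_i$ one variable at a time, using the nilpotence $\nabla_{D_i}^p=0$ together with an explicit antiderivative $y'=-\sum_k s_i^{k+1}y_k/(k+1)$ to run a descent on the least $j$ with $\nabla_{D_i}^j(x)\in\mathcal{T}_i$; the closed formula and uniqueness are then read off from Lemma \ref{Igualdad de la imagen de nabla}. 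You instead prove the closed formula first, as the operator identity $\Phi=\mathrm{id}$ in the ring generated by the $M_i$ and $N_i$, which by the relations $[N_i,M_j]=\delta_{ij}$, $[N_i,N_j]=0$, $N_i^p=0$ (all legitimately available from integrability, $p$-curvature zero and $[D_i,D_j]=D_i^p=0$, as in Remark \ref{propiedad de la traza}) reduces to a rank-one Weyl-algebra computation settled by normal ordering, the hockey-stick identity and Wilson's theorem; spanning is then immediate, and you obtain directness by extending the trace computation of Lemma \ref{Igualdad de la imagen de nabla} to exponents in $[0,2p-2]$. Your approach buys a one-shot, self-contained derivation that exhibits the operators $(-1)^r s^I\tau_{s_1,\ldots,s_r}s^{(p-1,\ldots,p-1)-I}$ explicitly as the projectors onto the summands, while the paper's filtration is more incremental and reuses its lemmas as stated. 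One cosmetic remark: in the directness step you write $\tau(s^Kx_I)=\prod_l D_l^{p-1}(s_l^{k_l})\cdot x_I$; strictly $D_l^{p-1}(s_l^{k_l})$ is a multiple of $s_l^{k_l-p+1}$ rather than a scalar, but since the integer coefficient vanishes mod $p$ unless every $k_l=p-1$, the only surviving term is the scalar $(-1)^r x_J$ and your conclusion is unaffected.
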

\begin{proof}
For each $i=1,\ldots,r$ let us fix $\mathcal{S}_i:=\{x\in \mathcal{S}(V)\; ; \;\nabla_{D_j}(x)=0 \mbox{ for }j\geq i\}$ and $\mathcal{T}_{i}:= \sum_{k=0}^{p-1} s_i^k\mathcal{S}_i \subseteq \mathcal{S}_{i+1}$, where $D_1,\ldots,D_r\in \der_{\Ou_X(U)}\Ou_Y(V)$ is the dual basis of $ds_1,\ldots,ds_r$, and we define $S_{r+1}:=S(V)$. By using Lemma \ref{igualdad de conexiones}, we have that $\mathcal{S}_1=\mathcal{S}^{\nabla}(U)$ and, therefore, we only need to prove that $\mathcal{T}_{i}=\mathcal{S}_{i+1}$ for $i=1,\ldots,r$. If we take $x\in \mathcal{S}_{i+1}$, then (\ref{eq1}) says that $D_i^p=0$ and, hence, $\nabla_{D_i}^p(x)=\nabla_{D_i^p}(x)=0\in \mathcal{T}_i$. Let us suppose that the minimum $j$ such that $(\nabla_{D_i})^j(x)=y\in  \mathcal{T}_i$ is bigger than zero. From the definition of $\mathcal{T}_i$ we may write 
$y=\sum_{k=0}^{p-1} s_i^k y_k$ with $y_k \in \mathcal{S}_i$. By applying Lemma  \ref{Calculo de conexxiones} recursively we have
$\nabla^{p-1}_{D_i}(s_i^ky_k)=\nabla^{p-2}_{D_i}(D_i(s_i^k)y_k)=\ldots=D_i^{p-1}(s_i^{k})y_k$, 
which implies that $\nabla^{p-1}_{D_i}(s_i^ky_k)=0$ for $k\neq p-1$, $\nabla^{p-1}_{D_i}(s_i^{p-1}y_{p-1})=(p-1)!y_{p-1}$ and so\[\frac{\nabla_{D_i}^{p-1}(y)}{(p-1)!}=y_{p-1}.\]
Using the assumption $j>0$ we have $\nabla_{D_i}^{p-1}(y)=\nabla_{D_i}^{p-1+j}(x)=0$, that is, $y_{p-1}=0$ and $y=\sum_{k=0}^{p-2} s_i^k y_k$. If we consider \[y'=-\sum_{k=0}^{p-2} \frac{s_i^{k+1}}{k+1} y_{k}\]
then $\nabla_{D_i}(y')=-y$ and  $\nabla_{D_i}(\nabla_{D_i}^{j-1}(x)+y')=\nabla_{D_i}^j(x)-y=0$.
 On the other hand $\nabla_{D_l}(\nabla_{D_i}^{j-1}(x)+y')=0$ for all $l\geq i$ implies that $\nabla_{D_i}^{j-1}(x)+y'\in \mathcal{S}_i \subset \mathcal{T}_i$. In this way, we conclude that $\nabla_{D_i}^{j-1}(x)\in \mathcal{T}_i$,  contradicting the assumption on $j$. So $j=0$ and $x\in \mathcal{T}_i$. Since  $\mathcal{S}_i\subset \mathcal{S}_{i+1}$ we can use Lemma \ref{Calculo de conexxiones} to conclude that $\mathcal{T}_i= \mathcal{S}_{i+1}$.

 Now, from the identity that we have obtained above, any $x \in S(V)$ can be written by   $x=\sum_{I\in \{0,\ldots,p-1\}^r} s^I x_I$ where $x_I\in \mathcal{S}^{\nabla}(U)$. Therefore, to finish the proof, we only need to observe that  Lemma \ref{Igualdad de la imagen de nabla} implies:  $(-1)^r \tau_{s_1,\ldots,s_r}(s^{(p-1,\ldots,p-1)-I}x)=x_I$.
\end{proof}

\begin{prop}\label{igualdad espacios}
If $(\mathcal{S},\nabla)$ is a  pair in the category $\Inv(Y/X)$, then the natural $\Ou_Y$-morphism $\phi: \Ou_Y\otimes_{\Ou_X}\mathcal{S}^{\nabla}\to \mathcal{S}$ is an isomorphism.
\end{prop}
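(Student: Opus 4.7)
The plan is to reduce to a local computation on open subsets where $\Omega_{Y/X}$ has a differential basis, and then read off the isomorphism from two applications of Proposition \ref{igualdad clave de haces invariantes}: one to the pair $(\Ou_Y, d)$, and one to $(\mathcal{S}, \nabla)$.

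First, since $\phi$ is a morphism of $\Ou_Y$-modules, being an isomorphism can be checked locally. By the remark following Definition \ref{definicion de la tate trace}, the hypothesis that $\Omega_{Y/X}$ is locally free lets us cover $Y$ by open subsets $V$ admitting a differential basis $s_1,\ldots,s_r$ of $\Ou_Y(V)$ over $\Ou_X(U)$, where $U=\pi(V)$. So it suffices to verify the isomorphism over each such $V$.

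Next, I would apply Proposition \ref{igualdad clave de haces invariantes} twice. Applied to $(\Ou_Y, d)$, which lies in $\Inv(Y/X)$ by Proposition \ref{d integrable} and satisfies $\Ou_Y^d = \Ou_X$, it yields
\[
\Ou_Y(V) \;=\; \bigoplus_{I\in\{0,\ldots,p-1\}^r} s^{I}\,\Ou_X(U),
\]
so $\Ou_Y(V)$ is a free $\Ou_X(U)$-module with basis $\{s^{I}\}$. Applied to $(\mathcal{S},\nabla)$, it yields the analogous decomposition
\[
\mathcal{S}(V) \;=\; \bigoplus_{I\in\{0,\ldots,p-1\}^r} s^{I}\,\mathcal{S}^{\nabla}(U).
\]
Tensoring the first decomposition against $\mathcal{S}^{\nabla}(U)$ over $\Ou_X(U)$ gives
\[
\Ou_Y(V)\otimes_{\Ou_X(U)}\mathcal{S}^{\nabla}(U) \;=\; \bigoplus_{I} s^{I}\otimes \mathcal{S}^{\nabla}(U),
\]
and $\phi$ sends this summand-by-summand onto $\bigoplus_I s^{I}\mathcal{S}^{\nabla}(U) = \mathcal{S}(V)$. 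The uniqueness clause of Proposition \ref{igualdad clave de haces invariantes} ensures the map is injective, while the decomposition of $\mathcal{S}(V)$ furnishes surjectivity.

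The main technical point I anticipate is passing from this sectionwise identification to a genuine sheaf isomorphism: the presheaf tensor product $V\mapsto \Ou_Y(V)\otimes_{\Ou_X(U)}\mathcal{S}^{\nabla}(U)$ need not already be a sheaf, so strictly speaking I would either check the statement on stalks (using that $\Ou_{Y,P}$ is free over $\Ou_{X,\pi(P)}$ with basis the images of $\{s^I\}$, and invoking the stalk version of Proposition \ref{igualdad clave de haces invariantes}) or restrict to an affine cover where $\mathcal{S}^{\nabla}$ and $\Ou_Y$ are given by modules and the tensor product is computed as expected. Either way, the local isomorphisms are clearly compatible with restrictions, so they glue to the global statement.
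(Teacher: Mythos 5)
Your argument is correct and follows essentially the same route as the paper: both reduce to an open set admitting a differential basis and then lean entirely on Proposition \ref{igualdad clave de haces invariantes}, the paper by writing down the explicit two-sided inverse $\psi(x)=(-1)^r\sum_{I} s^I\otimes \tau_{s_1,\ldots,s_r}(s^{(p-1,\ldots,p-1)-I}x)$ and you by matching the two direct-sum decompositions of $\Ou_Y(V)\otimes_{\Ou_X(U)}\mathcal{S}^{\nabla}(U)$ and $\mathcal{S}(V)$ summand by summand, which is the same information packaged differently. Your additional care about the presheaf tensor product (checking on stalks or on an affine cover where the pullback is computed by the naive tensor product) addresses a point the paper silently elides.
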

\begin{proof}
Suppose that $s_1,\ldots, s_r$ is a differential basis of $\Ou_Y(V)$ over $\Ou_X(U)$, where $V$ is an open subset of $Y$ and $U=\pi(V)$. Then we can define $\psi:\mathcal{S}(V)\to \Ou_Y(V) \otimes\mathcal{S}^{\nabla}(U) $ by $$\psi(x)=(-1)^r\sum_{I\in \{0,\ldots,p-1\}^r} s^I\otimes \tau_{s_1,\ldots,s_r}(s^{(p-1,\ldots,p-1)-I}x) .$$
Clearly Proposition \ref{igualdad clave de haces invariantes} says  that $\phi\circ \psi=1_{\mathcal{S}}$. On the other hand, by using  Lemma \ref{Igualdad de la imagen de nabla} we conclude that $\psi\circ \phi(s^J\otimes  x)= s^J \otimes x $ for every $x\in \mathcal{S}^{\nabla}(U)$  and every $J\in \{0,\ldots,p-1\}^r$. Therefore, $\psi\circ \phi=1_{ \Ou_Y \otimes \mathcal{S}^{\nabla}}(V)$.
\end{proof}

The next two theorems will obtain the claimed isomorphism between objects from the categories $\Inv(Y/X)$ and ``$\Ou_X$-coherent sheaves", which is an alternative to Cartier's theorem  (cf. Theorem 5.1, \cite{Katz}) for purely inseparable morphisms. 

\begin{thm}\label{Funtor igualdad de ida}
If  $(\mathcal{S},\nabla)$ is a  pair in the category $\Inv(Y/X)$,   then $\mathfrak{G}(\mathfrak{F}(\mathcal{S},\nabla))\simeq (\mathcal{S},\nabla)$.
\end{thm}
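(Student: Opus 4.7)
The plan is to exhibit the natural $\Ou_Y$-morphism
\[
\phi \colon \Ou_Y \otimes_{\Ou_X} \mathcal{S}^{\nabla} \longrightarrow \mathcal{S}, \qquad \phi(a \otimes x)=ax,
\]
as the required isomorphism in $\Inv(Y/X)$, and to verify that it intertwines the two connections. The bulk of the work is already done: Proposition \ref{igualdad espacios} says that $\phi$ is an isomorphism of $\Ou_Y$-coherent sheaves, so the only remaining point is to check that $\phi$ is a morphism in the category $\Inv(Y/X)$, i.e.~that
\[
\phi \circ (d\otimes_{\Ou_X} 1_{\mathcal{S}^{\nabla}})_{D} \;=\; \nabla_{D}\circ \phi
\]
for every $D\in \der_{\Ou_X(U)}(\Ou_Y(V))$ and every open $U\subset X$ with $V=\pi^{-1}(U)$. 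Once this is established, the inverse of $\phi$ also commutes with the connections, which turns $\phi$ into an isomorphism in $\Inv(Y/X)$.

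To verify the intertwining relation, I would work locally over an open cover $\{V_P\}$ of $Y$ admitting a differential basis of $\Ou_Y(V_P)$ over $\Ou_X(U_P)$ (such a cover exists by the remark following Definition \ref{definicion de la tate trace}). On such a $V$, every element of $(\Ou_Y \otimes_{\Ou_X} \mathcal{S}^{\nabla})(V)$ is a finite sum of elementary tensors $a\otimes x$ with $a\in \Ou_Y(V)$ and $x\in \mathcal{S}^{\nabla}(U)$, so it suffices to check the identity on such tensors. On the one hand,
\[
\phi\bigl( (d\otimes 1_{\mathcal{S}^{\nabla}})_{D}(a\otimes x) \bigr) \;=\; \phi(D(a)\otimes x) \;=\; D(a)\, x,
\]
using that $(d\otimes 1)_{D}=(D_{d}\otimes 1)\circ(d\otimes 1)$ collapses to $D\otimes 1_{\mathcal{S}^{\nabla}}$ on elementary tensors. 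On the other hand, since $x\in \mathcal{S}^{\nabla}(U)$ we have $\nabla(x)=0$, and Lemma \ref{Calculo de conexxiones} yields
\[
\nabla_{D}(\phi(a\otimes x)) \;=\; \nabla_{D}(ax) \;=\; D(a)\,x,
\]
which matches the previous expression. Thus the diagram commutes locally, hence globally by sheafification.

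The verification above is essentially formal once Proposition \ref{igualdad espacios} is granted; the serious content of the isomorphism theorem lies upstream in Proposition \ref{igualdad clave de haces invariantes}, whose trace decomposition is what makes $\phi$ bijective on sections. The only place where one must be a little careful is the identification $\Omega_{Y/X}\otimes_{\Ou_Y}(\Ou_Y\otimes_{\Ou_X}\mathcal{S}^{\nabla})\simeq \Omega_{Y/X}\otimes_{\Ou_X}\mathcal{S}^{\nabla}$ used implicitly when computing $(d\otimes 1_{\mathcal{S}^{\nabla}})_{D}$, but this is the canonical identification already fixed in the remark preceding Proposition \ref{definicion del funtor}. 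Consequently $\phi$ is an isomorphism $\mathfrak{G}(\mathfrak{F}(\mathcal{S},\nabla))\xrightarrow{\sim} (\mathcal{S},\nabla)$ in $\Inv(Y/X)$, as claimed.
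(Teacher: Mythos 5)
Your proposal is correct and follows essentially the same route as the paper: both rest on Proposition \ref{igualdad espacios} for the underlying sheaf isomorphism $\phi$ and then use Lemma \ref{Calculo de conexxiones} together with the decomposition of sections into sums $\sum a_i x_i$ with $x_i\in\mathcal{S}^{\nabla}(U)$ to check that the two connections agree (the paper phrases this as $\nabla=\nabla_2$ via Lemma \ref{igualdad de conexiones}, you phrase it as $\phi$ intertwining $\nabla_D$ and $(d\otimes 1)_D$, which is the same verification).
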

\begin{proof}
From the definition of the functor $\mathfrak{G}$ and from Proposition \ref{igualdad espacios} we only need to prove that $\nabla_2:=d\otimes_{\Ou_X} 1_{\mathcal{S}^{\nabla}}$ is equal to $\nabla$, where such identity must take into account the isomorphism from the above proposition. As in the previous proofs, our assumption is that $\Ou_Y(V)$ admits a differential basis over $\Ou_X(U)$. Notice that, given $x\in S(V)$, Proposition \ref{Prefacio a eigualdad clave} provides the written $x=\sum a_ix_i$, with $x_i\in \mathcal{S}^{\nabla}(U)$ and $a_i\in \Ou_Y(V)$.  For each $D\in \der_{\Ou_X(U)}(\Ou_Y(V))$,  we have $\nabla_{D}(x)=\nabla_{D}\left(\sum a_ix_i\right)=\sum D(a_i)x_i=(\nabla_2)_{D}\left(\sum a_ix_i\right)=(\nabla_2)_{D}(x)$, from Lemma \ref{Calculo de conexxiones}, because $\nabla_2(x_i)= d(1)\otimes x_i=0=\nabla(x_i)$. Therefore using Lemma \ref{igualdad de conexiones} for the $\Ou_X$-morphism $\nabla -\nabla_2$ we conclude that $\nabla =\nabla_2$.
\end{proof}

\begin{thm}\label{funtor igualdad de vuelta}
 If   $\mathcal{S}_1$ is a $\Ou_X$-coherent sheaf, then  $\mathfrak{F}(\mathfrak{G}(\mathcal{S}_1))\simeq \mathcal{S}_1$.
\end{thm}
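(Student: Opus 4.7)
The natural candidate for the isomorphism is the $\mathcal{O}_X$-morphism $\eta\colon \mathcal{S}_1 \to \mathfrak{F}(\mathfrak{G}(\mathcal{S}_1))=(\Ou_Y\otimes_{\Ou_X}\mathcal{S}_1)^{\nabla}$ defined by $x\mapsto 1\otimes x$, where $\nabla=d\otimes_{\Ou_X}1_{\mathcal{S}_1}$. This is well defined because $\nabla(1\otimes x)=d(1)\otimes x=0$. The plan is to verify that $\eta$ is an isomorphism locally on an open $V\subset Y$ admitting a differential basis $s_1,\ldots,s_r$ of $\Ou_Y(V)$ over $\Ou_X(U)$, where $U=\pi(V)$; such $V$ cover $Y$ by the remark following Definition \ref{definicion de la tate trace}, and the local isomorphisms will automatically glue since $\eta$ is already defined globally.

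The key step is to compare two direct sum decompositions of $\mathcal{S}(V):=(\Ou_Y\otimes_{\Ou_X}\mathcal{S}_1)(V)$. First, apply Proposition \ref{Prefacio a eigualdad clave} to the pair $(\Ou_Y,d)$, which belongs to $\Inv(Y/X)$ by Proposition \ref{d integrable} and satisfies $\Ou_Y^{d}=\Ou_X$. This yields
\[
\Ou_Y(V) = \bigoplus_{I\in\{0,\ldots,p-1\}^r} s^{I}\Ou_X(U)
\]
as $\Ou_X(U)$-modules, so that $\Ou_Y(V)$ is free over $\Ou_X(U)$ on the monomials $s^{I}$. Tensoring with $\mathcal{S}_1(U)$ gives the decomposition
\[
\mathcal{S}(V) = \bigoplus_{I} s^{I}\otimes\mathcal{S}_1(U).
\]
On the other hand, applying Proposition \ref{Prefacio a eigualdad clave} to $(\mathcal{S},\nabla)=\mathfrak{G}(\mathcal{S}_1)$ directly yields
\[
\mathcal{S}(V) = \bigoplus_{I} s^{I}\,\mathcal{S}^{\nabla}(U).
\]

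The final step is to compare these two direct sum decompositions of the same module. Since $1\otimes\mathcal{S}_1(U)\subseteq \mathcal{S}^{\nabla}(U)$, we get term-by-term inclusions $s^{I}\otimes\mathcal{S}_1(U)\subseteq s^{I}\mathcal{S}^{\nabla}(U)$ for every $I$. As the two summations coincide with $\mathcal{S}(V)$, the uniqueness of expression in the second decomposition forces each of these inclusions to be an equality; in particular, for $I=(0,\ldots,0)$ we obtain $\mathcal{S}^{\nabla}(U) = 1\otimes\mathcal{S}_1(U)$, proving surjectivity of $\eta$. Injectivity is immediate from the first decomposition: if $1\otimes x = 0$, then the $I=(0,\ldots,0)$-component already gives $x=0$. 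I do not expect any serious obstacle, since the hard analytic work has already been done in Proposition \ref{Prefacio a eigualdad clave}; the argument here is a clean bookkeeping comparison of two compatible decompositions, followed by gluing.
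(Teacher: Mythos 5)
Your proof is correct, but it takes a genuinely different route from the paper's. The paper constructs an explicit local inverse to $\gamma\colon x\mapsto 1\otimes x$ by means of the trace operator: it sets $\beta(y)=(-1)^r\tau_{s_1,\ldots,s_r}\bigl(s^{(p-1,\ldots,p-1)}y\bigr)$ and verifies both compositions by direct computation with Lemma \ref{Igualdad de la imagen de nabla}, then argues that these locally defined inverses glue. You instead apply Proposition \ref{Prefacio a eigualdad clave} twice --- once to $(\Ou_Y,d)$ to get $\Ou_Y(V)=\bigoplus_I s^I\Ou_X(U)$ and hence $\mathcal{S}(V)=\bigoplus_I s^I\otimes\mathcal{S}_1(U)$, and once to $\mathfrak{G}(\mathcal{S}_1)$ to get $\mathcal{S}(V)=\bigoplus_I s^I\mathcal{S}^{\nabla}(U)$ --- and then use the elementary fact that two direct-sum decompositions of the same module with termwise inclusions $s^I\otimes\mathcal{S}_1(U)\subseteq s^I\mathcal{S}^{\nabla}(U)$ must agree termwise; the $I=(0,\ldots,0)$ component gives surjectivity of $\eta$, and injectivity falls out of the first (free) decomposition. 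Both arguments lean on the same hard input (Proposition \ref{Prefacio a eigualdad clave} and its trace machinery), but your bookkeeping comparison avoids any further computation with $\tau_{s_1,\ldots,s_r}$ and sidesteps the gluing discussion entirely, since your candidate map is global from the outset and only needs to be checked to be a local isomorphism. What you give up is the explicit closed formula for the inverse that the paper's $\eta\circ\beta$ provides, though nothing downstream in the paper appears to depend on that formula. One small point worth making explicit in a final write-up: the first decomposition is a direct sum because tensoring the free $\Ou_X(U)$-module decomposition of $\Ou_Y(V)$ with $\mathcal{S}_1(U)$ commutes with direct sums, and membership of $\mathfrak{G}(\mathcal{S}_1)$ in $\Inv(Y/X)$ (needed to invoke Proposition \ref{Prefacio a eigualdad clave} the second time) is exactly Proposition \ref{definicion del funtor}.
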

\begin{proof}
From definition, $\mathfrak{G}(\mathcal{S}_1)=(\Ou_Y \otimes_{\Ou_X} \mathcal{S}_1 ,d\otimes_{\Ou_X}1_{\mathcal{S}_1})$ and  $\nabla = d\otimes_{\Ou_X}1_{\mathcal{S}_1}$.  
Let $\gamma: \mathcal{S}_1\to \Ou_Y\otimes_{\Ou_X}\mathcal{S}_1$ be the natural morphism given by $\gamma(x)=1\otimes x$. Since $\nabla\circ\gamma=0$ we can induce a $\Ou_X$-morphism $\gamma:\mathcal{S}_1\to (\Ou_Y\otimes_{\Ou_X}\mathcal{S}_1)^{\nabla}$.
As we can find a cover $\{ V_i \}$ of $Y$ by affine open subsets such that every $\Ou_Y(V_i)$ admit a differential basis over $\Ou_X(\pi(V_i))$, then it is enough to show that $\gamma\mid_{ \pi(V)}$ is an isomorphism whenever $\Ou_Y(V)$ admits a differential basis over $\Ou_X(\pi(V))$. Such claim follows from the three properties/facts: firstly we will show that the inverse of $\gamma\mid_{ \pi(V)}$ will be defined by using a differential basis of $\Ou_Y(V)$; secondly we notice that every differential basis of $\Ou_Y(V)$ over $\Ou_X(\pi(V))$ can be restricted to a differential basis for each open subset of $V$; the inverse of $\gamma\mid_{ \pi(V)}$ is uniquely determined. Indeed, with these three properties we obtain that the local inverses can be glued to produce a global inverse for $\gamma$. 

To show the first property observed above, we take an element $y\in (\Ou_Y\otimes_{\Ou_X}\mathcal{S}_1)^{\nabla}(\pi(V))$ and we define $\beta(y):=(-1)^r\tau_{s_1,\ldots,s_r}(s^{(p-1,\ldots,p-1)}y) \in (\Ou_Y\otimes_{\Ou_X}\mathcal{S}_1)(V)$, where $s_1,\ldots,s_r$ is a differential basis of $\Ou_Y(V)$ over $\Ou_X(\pi(V))$. From Lemma \ref{Igualdad de la imagen de nabla}, for each $x \in \mathcal{S}_1(\pi(V))$, we have   $\beta\circ\gamma(x)= \beta(1\otimes x) = (-1)^r\tau_{s_1,\ldots,s_r}(s^{(p-1,\ldots,p-1)}(1\otimes x))) = 1\otimes x \in (\Ou_X \otimes_{\Ou_X} \mathcal{S}_1)(\pi(V))$. Therefore  $(\eta \circ \beta) \circ \gamma\mid_{ \pi(V)} = 1_{\mathcal{S}_1}$, where $\eta \colon \Ou_X \otimes_{\Ou_X} \mathcal{S}_1 \to \mathcal{S}_1$ is defined by $\eta(a\otimes x)=ax$.

If $x\in (\Ou_Y\otimes_{\Ou_X}\mathcal{S}_1)^{\nabla}( \pi(V))$ we may write $x=\sum   s^I \otimes x_I$ where $x_I\in\mathcal{S}_1( \pi(V))$ and, from
 Lemma \ref{Igualdad de la imagen de nabla}, we obtain that $\tau_{s_1,\ldots,s_r}(s^{(p-1,\ldots,p-1)-J}x)=\tau_{s_1,\ldots,s_r}(s^{(p-1,\ldots,p-1)}(1\otimes x_J)) = (-1)^r (1\otimes x_J). $
Moreover, the same lemma says that $\tau_{s_1,\ldots,s_r}(s^{(p-1,\ldots,p-1)-J}x)=0$ for $J\neq (0,\ldots,0)$. Then $x=1\otimes x_{(0,\ldots,0)}$ and $\eta\circ\beta(x)=x_{(0,\ldots,0)}$ and, therefore, $\eta\circ\beta$ is the inverse of $\gamma\mid_{ \pi(V)}$.
\end{proof}

\begin{Remark}
    It is also possible to prove that the functors $\mathfrak{F}$ and $\mathfrak{G}$ provide an equivalence between the categories $\Inv(Y/X)$ and the category of $\Ou_X$-coherent sheaves, but we decided to omit the proof because it is not useful for the rest of this paper.
\end{Remark}

\subsection{$X$-invariant ideal sheaves and $X$-invariant subvarieties of $Y$} \label{invariant subvarieties of Y}

In this subsection, we will take advantage of the isomorphism of functors in Subsection \ref{section invariant sheaves} to characterize ideal sheaves of $Y$ (respectively, subvarieties of $Y$) obtained from pulling-back ideal sheaves of $X$ (respectively, subvarieties of $X$). In this way, we will keep the notations introduced up to now.

\begin{defn}\label{def invarant ideal}
 Given an ideal sheaf $\mathcal{I}$ of $Y$, let $\nabla$ be the restriction to $\mathcal{I}$ of the connection $d$ in $\Ou_Y$, constructed in Proposition \ref{d integrable}, in which $(\Ou_Y,d) \in \Inv(Y/X)$. 
 We say that the ideal sheaf $\mathcal{I}$ is $X$-invariant if 
 $(\mathcal{I},\nabla)$ and $i:\mathcal{I}\to \Ou_Y$ belong to  $\Inv(Y/X)$, were  $i$ is the natural inclusion.
\end{defn}

Next proposition says that $X$-invariant ideal sheaves are precisely the ones that we have been looking for.

\begin{prop}\label{Ideais Invarantes}
Let $\mathcal{I}$ be an ideal sheaf of $Y$. The following conditions are equivalent.
\begin{enumerate}
    \item The sheaf $\mathcal{I}$ is $X$-invariant;
    \item The sheaf $\mathcal{I}$ is equal to the base change $\mathcal{J}\otimes_{\Ou_X}\Ou_Y$ of an ideal sheaf $\mathcal{J}$ of $X$;
    \item For every affine open subset $U$ of $X$, for $V=\pi_1^{-1}(U)$ and for every derivation $D\in \der_{\Ou_X(U)}(\Ou_Y(V))$ we have  $D(\mathcal{I}(V))\subset \mathcal{I}(V)$;
    \item If $d$ is the natural connection of $\Ou_Y$, as in Proposition \ref{d integrable}, then $d(\mathcal{I})\subset \mathcal{I}\otimes_{\Ou_Y} \Omega_{Y/X}$. 
\end{enumerate}
\end{prop}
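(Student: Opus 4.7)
The plan is to treat (3) $\Leftrightarrow$ (4) $\Leftrightarrow$ (1) as the routine piece and then exploit the functorial isomorphism $\mathfrak{G} \circ \mathfrak{F} \simeq \mathrm{id}$ from Theorem \ref{Funtor igualdad de ida} to pass between (1) and (2). For (3) $\Leftrightarrow$ (4), since $\Omega_{Y/X}$ is locally free we may cover $Y$ by open sets $V$ admitting a differential basis $s_1,\ldots,s_r$ of $\Ou_Y(V)$ over $\Ou_X(\pi(V))$, with dual basis $D_1,\ldots,D_r$ of derivations. The identity $d(a) = \sum_i D_i(a)\,ds_i$ shows that both (3) and (4) reduce to the condition $D_i(\mathcal{I}(V)) \subset \mathcal{I}(V)$ for each $i$, since every $D \in \der_{\Ou_X(U)}(\Ou_Y(V))$ is an $\Ou_Y(V)$-linear combination of the $D_i$. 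For (4) $\Leftrightarrow$ (1), condition (4) is exactly what is needed for the restriction $\nabla := d|_{\mathcal{I}}$ to take values in $\mathcal{I} \otimes_{\Ou_Y} \Omega_{Y/X}$ and so to define an $X$-connection on $\mathcal{I}$; the Leibniz rule, integrability, and vanishing of $p$-curvature are inherited directly from the same properties for $d$ established in Proposition \ref{d integrable}, and the inclusion $i\colon \mathcal{I} \hookrightarrow \Ou_Y$ automatically commutes with these two connections.

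For (2) $\Rightarrow$ (1), suppose $\mathcal{I} = \mathcal{J} \otimes_{\Ou_X} \Ou_Y$ for some ideal sheaf $\mathcal{J}$ of $X$. Proposition \ref{definicion del funtor} gives $(\mathcal{I}, d \otimes_{\Ou_X} 1_{\mathcal{J}}) \in \Inv(Y/X)$, and Proposition \ref{morphism in MIC}, applied to the inclusion $\mathcal{J} \hookrightarrow \Ou_X$, shows that $i\colon \mathcal{I} \to \Ou_Y$ is a morphism in $\Inv(Y/X)$. A brief local check on a differential basis identifies $d \otimes_{\Ou_X} 1_{\mathcal{J}}$ with the restriction of $d$ to $\mathcal{I}$, so $\mathcal{I}$ is $X$-invariant in the sense of Definition \ref{def invarant ideal}.

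The heart of the argument is (1) $\Rightarrow$ (2). Assuming $\mathcal{I}$ is $X$-invariant, the functor $\mathfrak{F}$ applied to the morphism $i\colon (\mathcal{I}, \nabla) \to (\Ou_Y, d)$ produces an inclusion $\mathcal{J} := \mathcal{I}^{\nabla} \hookrightarrow \Ou_Y^{d} = \Ou_X$, the last identification coming from Proposition \ref{d integrable}. Hence $\mathcal{J}$ is an ideal sheaf of $X$, and Theorem \ref{Funtor igualdad de ida} supplies an isomorphism $\Ou_Y \otimes_{\Ou_X} \mathcal{J} \simeq \mathcal{I}$ in $\Inv(Y/X)$. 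The main obstacle I foresee is ensuring that this isomorphism is compatible with the inclusions into $\Ou_Y$, so that $\mathcal{I}$ is literally the base change of $\mathcal{J}$ rather than merely abstractly isomorphic to $\mathcal{J} \otimes_{\Ou_X} \Ou_Y$. This should follow from the naturality of the isomorphism in Theorem \ref{Funtor igualdad de ida}: applying $\mathfrak{G} \circ \mathfrak{F}$ to $i$ yields a commutative square whose vertical arrows are the canonical isomorphisms and whose bottom arrow is the natural map $\Ou_Y \otimes_{\Ou_X} \mathcal{J} \to \Ou_Y$ induced by $\mathcal{J} \hookrightarrow \Ou_X$, forcing the identification $\mathcal{I} = \mathcal{J} \otimes_{\Ou_X} \Ou_Y$ as subsheaves of $\Ou_Y$.
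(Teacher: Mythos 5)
Your proposal is correct and follows essentially the same route as the paper: the equivalence of (1), (3), (4) via a local differential basis and inheritance of integrability and $p$-curvature zero from $d$, the implication from (2) via Propositions \ref{definicion del funtor} and \ref{morphism in MIC}, and the key direction $(1)\Rightarrow(2)$ via Theorem \ref{Funtor igualdad de ida} applied to $\nabla=d|_{\mathcal{I}}$. The only difference is organizational (the paper runs the single cycle $1\Rightarrow2\Rightarrow3\Rightarrow4\Rightarrow1$), and your extra care that the isomorphism $\Ou_Y\otimes_{\Ou_X}\mathcal{I}^{\nabla}\simeq\mathcal{I}$ is the natural multiplication map into $\Ou_Y$ --- so that one gets literal equality of subsheaves rather than abstract isomorphism --- is a point the paper leaves implicit.
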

\begin{proof}
($1\then 2$) Follows from Theorem \ref{Funtor igualdad de ida}. ($2\then 3$) If $\mathcal{I}=\mathcal{J}\otimes_{\Ou_X}\Ou_Y$ were $\mathcal{J}$ is an ideal sheaf on $\Ou_X$, then for each $x\in \mathcal{I}(V)$, we can write $x=\sum a_ix_i$ with $x_i\in \mathcal{J}(U)\subset \Ou_X(U)$ and $a_i \in \Ou_Y(V)$. Therefore $D(x)=\sum D(a_i)x_i\in\mathcal{I}(V)$ for every $D\in \der_{\Ou_X(U)}\Ou_Y(V)$. ($3\then 4$) 
Suppose that $\Omega_{Y/X}(V)$ is free. Let $s_1,\dots,s_r$ be a differential basis of $\Ou_Y(V)$ over $\Ou_X(U)$ and  $D_i\in \der_{\Ou_X(U)}\Ou_Y(V)$ such that $D_i(s_j)=\delta_{ij}$. Given $x\in \mathcal{I}(V)$ we will show that $dx\in (\mathcal{I}\otimes_{\Ou_Y} \Omega_{Y/X})(V)$. Using the proof of Lemma \ref{igualdad de conexiones} (without assuming the hypothesis there) we can see that $dx=\sum d_{D_i}(x)\otimes ds_i$, where $d_{D_i}$ is constructed in Definition \ref{definition of connections}. From hypothesis  $d_{D_i}(x)\in \mathcal{I}(V)$, which implies that  $dx\in (\mathcal{I}\otimes_{\Ou_Y}\Omega_{Y/X})(V)$. ($4\then 1$) In this case the hypothesis says that $d_{\mid_{\mathcal{I}}}$ is a connection from $\mathcal{I}$ to $\mathcal{I}\otimes_{\Ou_X}\Omega_{Y/X}$ and its integrability and zero $p$-curvature are inherited from that of $d$. Clearly $i$ belongs to $\Inv(Y/X)$.
\end{proof}

From Definition \ref{def invarant ideal} and Proposition \ref{Ideais Invarantes} we are able to define the objects that we have promised to study since the beginning of this chapter.

\begin{defn}
We say that a subvariety $Z$ of $Y$ is $X$-invariant  if $\mathcal{I}_Z$ is $X$-invariant.
\end{defn}

\begin{prop}\label{i categoria Mic}
Given an $X$-invariant subvariety $Z$ of $Y$ with  the natural inclusion $i:Z\to Y$, then there is a subvariety $Z_1$ of $X$ such that $Z=Z_1\times_X Y \simeq \pi^* (Z_1)$. Moreover there is a connection $\nabla$ on $\Ou_Z$ such that $(\Ou_Z,\nabla)$ and  $i^{\ast}\colon \Ou_Y\to \Ou_{Z}$ belong to the category $\Inv(Y/X)$.
\end{prop}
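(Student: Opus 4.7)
The plan is to invoke Proposition \ref{Ideais Invarantes} to descend the ideal sheaf of $Z$ to an ideal sheaf on $X$, and then apply the functor $\mathfrak{G}$ from Subsection \ref{section invariant sheaves} to produce the required connection. Concretely, since $Z$ is an $X$-invariant subvariety, $\mathcal{I}_Z$ is an $X$-invariant ideal sheaf by definition, so Proposition \ref{Ideais Invarantes} gives an ideal sheaf $\mathcal{J}$ of $X$ with $\mathcal{I}_Z \simeq \mathcal{J}\otimes_{\Ou_X}\Ou_Y$. I would then let $Z_1$ be the closed subscheme of $X$ cut out by $\mathcal{J}$.

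From the right-exactness of $-\otimes_{\Ou_X}\Ou_Y$ applied to $0\to \mathcal{J}\to \Ou_X\to \Ou_{Z_1}\to 0$, I get
\[
\Ou_Z \;=\; \Ou_Y/\mathcal{I}_Z \;\simeq\; (\Ou_X/\mathcal{J})\otimes_{\Ou_X}\Ou_Y \;=\; \Ou_{Z_1}\otimes_{\Ou_X}\Ou_Y,
\]
which is exactly the assertion $Z\simeq Z_1\times_X Y \simeq \pi^{\ast}(Z_1)$. The fact that $Z_1$ is actually a subvariety (an integral closed subscheme), rather than merely a closed subscheme, follows from the integrality of $Z$ combined with the injectivity of $\Ou_X\to \Ou_Y$ on affine sections that holds in the standing setup (cf. Example \ref{pi1 is not smooth}), which forces $\Ou_{Z_1}$ to inject into $\Ou_Z$.

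For the connection, I would apply the functor $\mathfrak{G}$ to $\Ou_{Z_1}$: by Proposition \ref{definicion del funtor},
\[
\mathfrak{G}(\Ou_{Z_1}) \;=\; \bigl(\Ou_Y\otimes_{\Ou_X}\Ou_{Z_1},\, d\otimes_{\Ou_X} 1_{\Ou_{Z_1}}\bigr)
\]
lies in $\Inv(Y/X)$. Transporting along the $\Ou_Y$-module isomorphism $\Ou_Y\otimes_{\Ou_X}\Ou_{Z_1}\simeq \Ou_Z$ from the first step yields a connection $\nabla$ on $\Ou_Z$ with $(\Ou_Z,\nabla)\in\Inv(Y/X)$. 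Finally, the quotient $p\colon \Ou_X\to \Ou_{Z_1}$ is an $\Ou_X$-morphism, so Proposition \ref{morphism in MIC} shows that $\mathfrak{G}(p)=1_{\Ou_Y}\otimes_{\Ou_X} p$ belongs to $\Inv(Y/X)$; under the identification $\Ou_Y\otimes_{\Ou_X}\Ou_{Z_1}\simeq \Ou_Z$ this is precisely $i^{\ast}$.

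The main subtlety I anticipate is the verification that $Z_1$ is genuinely a subvariety (rather than a closed subscheme that could fail to be integral after the purely inseparable base change), and the careful check that the $\Ou_Y$-module identification $\Ou_Z \simeq \Ou_Y\otimes_{\Ou_X}\Ou_{Z_1}$ is compatible with both the connection $d\otimes 1_{\Ou_{Z_1}}$ and the morphism $1_{\Ou_Y}\otimes p$, so that the $\Inv(Y/X)$-structure really matches $(\Ou_Z,\nabla)$ and $i^{\ast}$. Once these identifications are pinned down, everything else is a formal consequence of the results already established in Subsections \ref{section invariant sheaves} and \ref{invariant subvarieties of Y}.
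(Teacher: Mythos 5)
Your proposal is correct and follows essentially the same route as the paper: descend $\mathcal{I}_Z$ via Proposition \ref{Ideais Invarantes}, identify $\Ou_Z$ with $\Ou_{Z_1}\otimes_{\Ou_X}\Ou_Y$ by tensoring the defining exact sequence (the paper invokes flatness of $\Ou_Y$ over $\Ou_X$, from Proposition \ref{Prefacio a eigualdad clave}, where right-exactness already suffices for the quotient identification), and then obtain the connection from $\mathfrak{G}$ and the membership of $i^{\ast}$ in $\Inv(Y/X)$ from Proposition \ref{morphism in MIC}. Your added remark on why $Z_1$ is integral is a point the paper leaves implicit, but it does not change the argument.
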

\begin{proof}
We will prove that there exists a connection $\nabla$ on $\Ou_Z$ such that $(\Ou_Z,\nabla)$ and  $i^{\ast}:\Ou_Y\to \Ou_Z$ belong to  $\Inv(Y/X)$.
Since $(\mathcal{I}_Z,d_{\mid_{\mathcal{I}_Z}})$ belongs to $\Inv(Y/X)$ then there is an ideal sheaf  $\mathcal{I}_{Z_1}$ of $\Ou_X$ such that $\mathcal{I}_Z=\mathcal{I}_{Z_1}\otimes_{\Ou_X}\Ou_Y$. By tensoring the exact sequence $0\to \mathcal{I}_{Z_1}\to \Ou_X\to \Ou_{Z_1}\to 0$ with $\Ou_Y$ we obtain the exact sequence $ 0 \to \mathcal{I}_{Z}\to \Ou_Y\to \Ou_{Z_1}\otimes_{\Ou_X}\Ou_Y\to 0$, since $\Omega_{Y/X}$ locally free implies the flatness of $\Ou_Y$ from Proposition \ref{Prefacio a eigualdad clave}.  Therefore $\Ou_{Z_1}\otimes_{\Ou_X}\Ou_{Y}$ is isomorphic to $\Ou_Z$ (so $Z= Z_1\times_X Y$) and, in particular, there is an integrable connection $\nabla$ over $\Ou_Z$  such that  $(\Ou_Z,\nabla)$ belongs to the category $\Inv(Y/X)$. Notice that $ i^{\ast}:\Ou_{Y}\to \Ou_Z$ can be written by $i^{\ast}=i_1^{\ast}\otimes_{\Ou_X} 1_{\Ou_Y}$ where $i_1^{\ast}:\Ou_X\to \Ou_{Z_1}$ is the natural projection. Hence $i^{\ast}$ belong to  $\Inv(Y/X)$ from Proposition \ref{morphism in MIC}.
\end{proof}

We conclude this subsection by reformulating our results in the context of the foliation setting. As we can see in Proposition 1.9 of \cite{Miy}, there is a foliation $\mathcal{F}=\der_{\Ou_X} \Ou_Y$ such that $X=Y^{\mathcal{F}}$, that is, $\Ou_X=\operatorname{Ann}(\mathcal{F})=\{a \in \Ou_Y\,;\, D (a)=0 \text{ for every } D \in \mathcal{F}\}$. A subvariety $Z$ of $Y$ is said to be invariant by $\mathcal{F}$ when $D(\mathcal{I}_Z) \subset \mathcal{I}_Z$, for every $D \in \mathcal{F}$, where $\mathcal{I}_Z$ is the ideal sheaf of $Z$. Therefore, Proposition \ref{Ideais Invarantes} says that $X$-invariant subvarieties of $Y$ correspond exactly to subvarieties of $Y$ which are invariant by $\mathcal{F}$.

\section{Local properties at non-smooth regular points of geometrically integral curves} \label{local invariants at non-smooth points}

Let $Z$ be a geometrically integral scheme over $K$ and $P$ be a point of $Z$, where $K$ is a non-perfect and separably closed field of characteristic $p>0$. We denote the local ring of $Z$ at $P$ by $\Ou_{Z,P}$, its maximal ideal by $\m_{Z,P}$ and its residue field by $\textbf{k}(Z,P)$.

Suppose that $L$ is an algebraic extension of $K$. Since $Z$ is geometrically integral we notice that $\Ou_{Z,P}$ is a subring of the domain $\Ou_{Z^L,P^L}$, where $Z^L:=Z\times_{\spec K}\spec L$ and $P^L$ is the unique point of $Z^L$ lying over $P$. If $x\in \Ou_{Z,P}$ and $\alpha\in L$ we can  identify $\alpha x$ with $\alpha\otimes x$. With such identification, $\Ou_{Z^L,P^L}$ is given by $\Ou_{Z,P}L$. It is natural to expect non-regularity at $P^L$ and, in this case, we will denote the point of the normalization $\widetilde{Z^L}$ of $Z^L$, lying over it, by $\widetilde{P^L}$.

In this section we will study numerical properties at non-smooth points of a curve $C$, over $K$, and at the singularities of $C^{\overline{K}}$, lying over them.

\subsection{A criteria for regularity of geometrically integral curves} \label{subsection 3.1}

Let $C$ be a geometrically integral curve over $K$ and $P$ be a non-smooth point of $C$. In this subsection we will determine conditions to guarantee the regularity at $P$.

\begin{lemm}\label{generadores de los m_i}
The maximal ideal $\m_{{Z^{\overline{K}},P^{\overline{K}}}}$ is generated by elements of the form $x-a_x$ where $x\in \Ou_{Z,P}$ and $a_x\in \textbf{k}(Z,P)$.
\end{lemm}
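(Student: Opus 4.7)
The plan is to use the identification $\Ou_{Z^{\overline{K}},P^{\overline{K}}}=\Ou_{Z,P}\cdot\overline{K}$ noted in the introduction to this section and reduce the claim to a linear-algebra decomposition over $\overline{K}$. Assuming $P$ (and hence $P^{\overline{K}}$) is a closed point, as is the case at non-smooth points of a curve, the first step is to identify $\textbf{k}(Z^{\overline{K}},P^{\overline{K}})$ with $\overline{K}$ itself. This follows because $\overline{K}$ is algebraically closed, so by Hilbert's Nullstellensatz the residue field at any closed point equals $\overline{K}$, and moreover the composition $\overline{K}\hookrightarrow \Ou_{Z^{\overline{K}},P^{\overline{K}}}\twoheadrightarrow \overline{K}$ is the identity. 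In particular, $\overline{K}\cap \m_{Z^{\overline{K}},P^{\overline{K}}}=\{0\}$.

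Next I would give a precise meaning to $a_x\in \textbf{k}(Z,P)$ as an element of $\Ou_{Z^{\overline{K}},P^{\overline{K}}}$. Since $K$ is separably closed, the finite extension $\textbf{k}(Z,P)|K$ is purely inseparable and admits a unique $K$-embedding into $\overline{K}$; concretely, composing $\Ou_{Z,P}\hookrightarrow \Ou_{Z^{\overline{K}},P^{\overline{K}}}\twoheadrightarrow \overline{K}$ yields a ring map with kernel $\m_{Z,P}$, thereby inducing the natural inclusion $\textbf{k}(Z,P)\hookrightarrow \overline{K}\subset \Ou_{Z^{\overline{K}},P^{\overline{K}}}$. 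With this identification, for every $x\in \Ou_{Z,P}$ the element $x-a_x$ lies in $\m_{Z^{\overline{K}},P^{\overline{K}}}$.

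The remaining step is a direct computation. Take $\xi\in \m_{Z^{\overline{K}},P^{\overline{K}}}$ and, using $\Ou_{Z^{\overline{K}},P^{\overline{K}}}=\Ou_{Z,P}\cdot\overline{K}$, write $\xi=\sum_i \alpha_i x_i$ with $\alpha_i\in \overline{K}$ and $x_i\in \Ou_{Z,P}$; let $a_{x_i}\in \textbf{k}(Z,P)$ denote the residue of $x_i$. Then
\[
\xi \;=\; \sum_i \alpha_i(x_i-a_{x_i}) \;+\; \sum_i \alpha_i a_{x_i}.
\]
The first summand lies in $\m_{Z^{\overline{K}},P^{\overline{K}}}$ (as each $x_i-a_{x_i}$ does) and the second lies in $\overline{K}$; since $\xi\in \m_{Z^{\overline{K}},P^{\overline{K}}}$, we conclude $\sum_i\alpha_i a_{x_i}\in \overline{K}\cap \m_{Z^{\overline{K}},P^{\overline{K}}}=\{0\}$, so $\xi=\sum_i \alpha_i(x_i-a_{x_i})$ lies in the ideal generated by the prescribed elements.

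The only delicate point, and probably the main obstacle, is ensuring that the identification of $\textbf{k}(Z,P)$ as a subring of $\overline{K}\subset \Ou_{Z^{\overline{K}},P^{\overline{K}}}$ is compatible with the projection onto residues — i.e.\ that the embedding coming from separable closedness of $K$ coincides with the one induced by the base-change map on local rings, so that the formula $\xi=\sum\alpha_i(x_i-a_{x_i})$ actually makes sense. Once this compatibility is in place, the lemma reduces to the one-line decomposition above.
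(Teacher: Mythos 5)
Your argument is correct and is essentially the paper's own proof: the same decomposition $\xi=\sum_i\alpha_i(x_i-a_{x_i})+\sum_i\alpha_i a_{x_i}$ combined with the observation that $\overline{K}\cap\m_{Z^{\overline{K}},P^{\overline{K}}}=\{0\}$. The paper simply leaves implicit the identification $\textbf{k}(Z,P)\subset\overline{K}\subset\Ou_{Z^{\overline{K}},P^{\overline{K}}}$ that you spell out, so your extra care on that point is a harmless elaboration rather than a different route.
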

\begin{proof}
Since $\Ou_{Z,P}/\m_{Z,P}\subset \overline{K}$ then for every $x\in \Ou_{Z,P}$ there exists $a_x\in \textbf{k}(Z,P)$ such that  $x-a_x\in \m_{Z^{\overline{K}},P^{\overline{K}}}$. On the other hand, if $y\in \m_{{Z^{\overline{K}},P^{\overline{K}}}}$ then $y=\sum b_jx_j$ where $b_j\in  \overline{K}$ and $x_j\in \Ou_{Z,P}$. If we write $y=\sum b_j(x_j-a_{x_j})+\sum b_ja_{x_j}$ where $x_j-a_j\in\m_{{Z^{\overline{K}},P^{\overline{K}}}}$ we obtain $\sum b_ja_{x_j}\in \overline{K}\cap \m_{{Z^{\overline{K}},P^{\overline{K}}}}$, that is,  $\sum b_ja_{x_j}=0$ and $y=\sum b_j(x_j-a_{x_j}).$
\end{proof}


\begin{lemm}\label{equaliti of e imly2}
If $L'|L$ is an algebraic extension such that $L'\otimes_L\overline{K}$ is a field then $L'=L$.
\end{lemm}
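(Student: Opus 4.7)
The strategy is to reduce to the finitely generated case and exploit the fact that $\overline{K}$ has no nontrivial finite algebraic extensions. Concretely, I plan to show that every element $\alpha \in L'$ lies in $L$, so fix such an $\alpha$ and consider the intermediate field $L(\alpha)$; since $L'\,|\,L$ is algebraic, $[L(\alpha):L]$ is finite.

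The first step is to observe that $L(\alpha) \otimes_L \overline{K}$ embeds as a subring of $L' \otimes_L \overline{K}$. This is immediate because $\overline{K}$ is a (free, hence) flat $L$-module, so tensoring the $L$-linear injection $L(\alpha) \hookrightarrow L'$ preserves injectivity. By hypothesis the ambient ring $L' \otimes_L \overline{K}$ is a field, and therefore $L(\alpha)\otimes_L \overline{K}$ is a domain.

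Next, I will use that $L(\alpha) \otimes_L \overline{K}$ is a $\overline{K}$-algebra of dimension $[L(\alpha):L]$, so it is a finite-dimensional $\overline{K}$-algebra that is also a domain. A finite-dimensional domain over a field is itself a field (multiplication by a nonzero element is an injective, hence surjective, $\overline{K}$-linear endomorphism). Consequently $L(\alpha)\otimes_L \overline{K}$ is a finite algebraic extension of $\overline{K}$. But $\overline{K}$ is algebraically closed, so the only finite extension of $\overline{K}$ is $\overline{K}$ itself; thus $[L(\alpha)\otimes_L\overline{K}:\overline{K}] = 1$, which forces $[L(\alpha):L] = 1$ and $\alpha \in L$. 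Since $\alpha$ was arbitrary, $L' = L$.

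There is no real obstacle: the argument is essentially a three-line consequence of flatness of field extensions together with algebraic closedness of $\overline{K}$. The only point to be a little careful about is making sure the tensor product is taken over $L$ (not $K$) so that finite dimensionality over $\overline{K}$ is literally $[L(\alpha):L]$, and that one invokes flatness/freeness of $\overline{K}$ as an $L$-module to get the subring inclusion into $L'\otimes_L\overline{K}$.
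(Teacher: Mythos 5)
Your proof is correct, but it runs on a different engine than the paper's. The paper assumes $L'\neq L$, uses that $K$ is separably closed to conclude $L'|L$ is purely inseparable, writes a minimal counterexample as $L[x]=L[T]/\langle T^{p^n}-y\rangle$, and then observes that $L[x]\otimes_L\overline{K}=\overline{K}[T]/\langle T^{p^n}-y\rangle$ contains an explicit nilpotent (the class of $T-y^{1/p^n}$), contradicting the fact that it sits inside the field $L'\otimes_L\overline{K}$. You instead take an arbitrary $\alpha\in L'$, note that $L(\alpha)\otimes_L\overline{K}$ is a subring of a field (justified by flatness of $\overline{K}$ over $L$, a point the paper leaves implicit), hence a domain of $\overline{K}$-dimension $[L(\alpha):L]$, hence a field finite over $\overline{K}$, hence equal to $\overline{K}$, forcing $[L(\alpha):L]=1$. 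Both arguments reduce to a finite subextension and exploit that a subring of a field has no pathology, but the contradiction mechanisms differ: the paper detects non-reducedness coming from pure inseparability, while you count dimensions against the fact that $\overline{K}$ admits no proper finite extensions. Your route is slightly more general — it never uses that $K$ is separably closed or that the characteristic is positive, only that $L$ embeds in the algebraically closed field $\overline{K}$ — at the cost of being less explicit about where the failure of the tensor product to be a field actually comes from in the purely inseparable situation the paper cares about.
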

\begin{proof}
 Suppose that $L\neq L'$. Hence there exists $x\in L'$ such that $x\not \in L$. Since $K$ is separably closed then $L'|L$ is purely inseparable and we can take the smallest positive integer $n$ such that $x^{p^n}=y\in L$, that is, $L[x]= L[T]/\langle T^{p^n}-y\rangle$ is a field between $L$ and  $L'$. Since $L[x]\otimes_L\overline{K}= \overline{K}[T]/\langle T^{p^n}-y\rangle$ is a subring of the field $L'\otimes_L\overline{K}$, then it is reduced, contradicting the nilpotence of the element $\overline{T-x}$.  \end{proof}

\begin{prop}\label{pre grado emb}
If the classes of $x_1-a_1,\ldots,x_l-a_l$ form a basis for the $\overline{K}$-vector space $$\frac{\m_{Z^{\overline{K}},P^{\overline{K}}}}{\m_{Z^{\overline{K}},P^{\overline{K}}}^2+\overline{K}\m_{{Z,P}}}$$ with $x_i\in \Ou_{Z,P}$ and $a_i \in \textbf{k}(Z,P)$, then $\textbf{k}(Z,P)=K(a_1,\ldots,a_l)$. Moreover $da_1,\ldots,da_l$ is a basis of $\Omega_{\textbf{k}(Z,P)/K}$.
\end{prop}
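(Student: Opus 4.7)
The plan is to construct a natural $\overline{K}$-linear isomorphism
\[
\tilde{\Psi}\colon V := \frac{\m_{Z^{\overline{K}},P^{\overline{K}}}}{\m_{Z^{\overline{K}},P^{\overline{K}}}^2+\overline{K}\m_{Z,P}} \;\xrightarrow{\sim}\; \Omega_{k/K}\otimes_k \overline{K},
\]
where $k := \textbf{k}(Z,P)$, characterized by $\overline{x-a_x} \mapsto da_x \otimes 1$ for $x \in \Ou_{Z,P}$ with residue $a_x \in k$. Once this is established, the hypothesized basis $\{\overline{x_i-a_i}\}_{i=1}^l$ of $V$ maps to a $\overline{K}$-basis $\{da_i \otimes 1\}$ of the target, which is equivalent to $\{da_i\}$ being a $k$-basis of $\Omega_{k/K}$; this gives the second claim. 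For the first claim, I apply the Jacobi--Zariski sequence $\Omega_{L/K}\otimes_L k \to \Omega_{k/K} \to \Omega_{k/L} \to 0$ to the tower $K \subseteq L := K(a_1,\dots,a_l) \subseteq k$: the image of the first map contains the spanning set $\{da_i\}$ of $\Omega_{k/K}$, so $\Omega_{k/L}=0$; combined with $k/L$ being finite purely inseparable (since $K$ is separably closed and $P$ is a closed point of the curve), this forces $k=L$.

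To construct $\tilde{\Psi}$, first define a $K$-derivation $\Psi\colon \Ou_{Z,P} \to \Omega_{k/K}\otimes_k \overline{K}$ by $x \mapsto d\overline{x}\otimes 1$, viewing the target as an $\Ou_{Z,P}$-module via the composition $\Ou_{Z,P} \twoheadrightarrow k \hookrightarrow \overline{K}$; the Leibniz rule is immediate from $\overline{xy}=\overline{x}\cdot\overline{y}$. Extending by $\overline{K}$-linearity to $\Ou_{Z,P}\otimes_K \overline{K}$ and localizing at $P^{\overline{K}}$ produces a $\overline{K}$-derivation on $\Ou_{Z^{\overline{K}},P^{\overline{K}}}$; it vanishes on $\overline{K}\m_{Z,P}$ (elements of $\m_{Z,P}$ have zero residue) and on $\m_{Z^{\overline{K}},P^{\overline{K}}}^2$ (it is a derivation into a module on which the maximal ideal acts trivially through the residue), so descends to the desired $\tilde{\Psi}$. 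Surjectivity follows since $\Omega_{k/K}$ is $k$-generated by $\{d\overline{x}\}_{x \in \Ou_{Z,P}}$, as $\Ou_{Z,P} \twoheadrightarrow k$, and these are the images of the generating set of $V$ provided by Lemma \ref{generadores de los m_i}.

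The main obstacle is injectivity of $\tilde{\Psi}$, which I will obtain by duality. Given $\phi \in V^*$, lift it to a $\overline{K}$-linear map $\tilde{\phi}\colon \m_{Z^{\overline{K}},P^{\overline{K}}} \to \overline{K}$ vanishing on $\m_{Z^{\overline{K}},P^{\overline{K}}}^2+\overline{K}\m_{Z,P}$, and extend it to $D\colon \Ou_{Z^{\overline{K}},P^{\overline{K}}} \to \overline{K}$ via $D(\lambda+m) := \tilde{\phi}(m)$, using the unique decomposition $a=\lambda+m$ with $\lambda \in \overline{K}$ (the residue) and $m \in \m_{Z^{\overline{K}},P^{\overline{K}}}$. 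Since $\tilde{\phi}$ annihilates squares, a direct computation shows $D$ is a $\overline{K}$-derivation; its restriction to $\Ou_{Z,P}$ is then a $K$-derivation killing $\m_{Z,P}$, hence factors through a unique $\delta \in \der_K(k,\overline{K})$ satisfying $\phi(\overline{x-a_x}) = \delta(a_x)$ for every $x \in \Ou_{Z,P}$. Under the canonical identification $(\Omega_{k/K}\otimes_k\overline{K})^* \cong \der_K(k,\overline{K})$ this says $\phi = \tilde{\Psi}^*(\delta)$, so the dual map $\tilde{\Psi}^*$ is surjective; since $V$ is finite-dimensional, $\tilde{\Psi}$ is injective.
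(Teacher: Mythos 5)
Your proof is correct, but it takes a genuinely different route from the paper's. You establish the second claim first, by exhibiting a canonical isomorphism $\tilde{\Psi}\colon \m_{Z^{\overline{K}},P^{\overline{K}}}/(\m_{Z^{\overline{K}},P^{\overline{K}}}^2+\overline{K}\m_{Z,P})\to\Omega_{\textbf{k}(Z,P)/K}\otimes_{\textbf{k}(Z,P)}\overline{K}$ --- surjectivity coming from Lemma \ref{generadores de los m_i}, injectivity from realizing every functional on the source as a $K$-derivation of the residue field --- and you then deduce the first claim from the first fundamental exact sequence of K\"ahler differentials together with the fact that a finite purely inseparable extension with vanishing relative differentials is trivial. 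The paper argues in the opposite order and more computationally: it first proves $\textbf{k}(Z,P)=K(a_1,\ldots,a_l)$ by base-changing to $L=K(a_1,\ldots,a_l)$ and invoking Lemma \ref{equaliti of e imly2} (an algebraic extension $L'|L$ with $L'\otimes_L\overline{K}$ a field must equal $L$), and then proves linear independence of the $da_i$ via a $p$-basis and Taylor-expansion argument (Proposition 26.5 of Matsumura) showing that a proper subfamily of the $x_i-a_i$ would already generate the quotient space, contradicting the basis hypothesis. Your route buys a stronger, basis-free statement --- the canonical isomorphism $\tilde{\Psi}$ itself, which the paper never makes explicit --- at the cost of the duality step; the paper's route avoids dualizing but is tied to the chosen basis. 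One small caveat: your parenthetical justification that $\textbf{k}(Z,P)/L$ is finite purely inseparable assumes $P$ is a closed point of a finite-type $K$-scheme, an assumption the proposition's statement leaves implicit (and without which the statement fails, e.g.\ at the generic point); the paper's own proof relies on the same implicit hypothesis, since Lemma \ref{equaliti of e imly2} requires the extension to be algebraic, and it holds in every application made of the result.
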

\begin{proof}
Firstly we prove that if $L= K(a_1,\ldots,a_l)$, then the inclusion of fields $L\subseteq \textbf{k}(Z,P) \subseteq \textbf{k}(Z^L,P^L)$ become an identity. Indeed, from hypothesis, the local ring $\Ou_{Z^{\overline{K}},P^{\overline{K}}}=\Ou_{Z^L,P^L}\otimes_L\overline{K}$ has maximal ideal $\m_{Z^{\overline{K}},P^{\overline{K}}}=\m_{Z^L,P^L}\otimes_L\overline{K}$  and residue field $\textbf{k}(Z^{\overline{K}},P^{\overline{K}})=\textbf{k}(Z^L,P^L)\otimes_L \overline{K}$. By applying Lemma \ref{equaliti of e imly2} we obtain $\textbf{k}(Z^L,P^L)=L$.

Before proving the second claim, we notice that $\textbf{k}(Z,P)=K(a_1,\ldots,a_l)$ implies that
the $K$-vector space $\Omega_{\textbf{k}(Z,P)/K}$ is spanned by $da_1,\ldots,da_{l}$ and thus, after reordering $a_1,\ldots,a_l$, we can assume that  
$da_1,\ldots,da_{l'}$ is a basis of $\Omega_{\textbf{k}(Z,P)/K}$ with $l'\leq l$. Now we will prove that $l=l'$. From Proposition 26.5 in \cite{Mats}, $a_1,\ldots,a_{l'}$ is a $p$-basis for $\textbf{k}(Z,P)$. Hence $\textbf{k}(Z,P)= K(a_1,\ldots,a_{l'})$ provides that for any element in $y\in \Ou_{Z,P}$, its residue class $a_y$ in $\textbf{k}(Z,P)$ is given by $a_y=F(a_1,\ldots,a_{l'})$ where $F\in K[X_1,\ldots,X_{l'}]$. In particular $z:=y-F(x_1,\ldots,x_{l'}) \in \m_{Z,P}$.  From the identity in $\Ou_{Z^{\overline{K}},P^{\overline{K}}}$,  $F(x_1,\ldots, x_{l'})=F(a_1,\ldots,a_{l'})-\sum_{i=1}^{l'} \frac{\partial F}{\partial X_i}(a_1,\ldots,a_{l'})(x_i-a_i)+r$, with $r\in \m_{Z^{\overline{K}},P^{\overline{K}}}^2$, we conclude that 
$y-a_y= -\sum_{i=1}^{l'} \frac{\partial F}{\partial X_i}(a_1,\ldots,a_{l'})(x_i-a_i)+r+z.$ Therefore, Lemma \ref{generadores de los m_i} says that $x_1-a_1,\ldots,x_{l'}-a_{l'}$ generate the space $\m_{Z^{\overline{K}},P^{\overline{K}}} / (\m_{Z^{\overline{K}},P^{\overline{K}}}^2+\overline{K}\m_{{Z,P}})$,
implying, from hypothesis, that $l=l'$.\end{proof}
\begin{prop}\label{basis canonica de embebimento}
Let $C$ be a geometrically integral curve over $K$ and $P \in C$ be a point lying under a singularity $P^{\overline{K}}$ of $C^{\overline{K}}$. If  $C$ is regular at $P$, then
 \[
\dim_{\textbf{k}(C,P)} \Omega_{\textbf{k}(C,P)/K}\leq \dim_{\overline{K}}\frac{\m_{C^{\overline{K}},P^{\overline{K}}}}{\m_{C^{\overline{K}},P^{\overline{K}}}^2}\leq  \dim_{\textbf{k}(C,P)} \Omega_{\textbf{k}(C,P)/K}+1. 
  \]
\end{prop}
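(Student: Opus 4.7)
My plan is to place the embedding dimension inside the tautological short exact sequence of $\overline{K}$-vector spaces
\[
0 \longrightarrow \frac{\overline{K}\m_{C,P}+\m^2}{\m^2} \longrightarrow \frac{\m}{\m^2} \longrightarrow \frac{\m}{\overline{K}\m_{C,P}+\m^2} \longrightarrow 0,
\]
where $\m:=\m_{C^{\overline{K}},P^{\overline{K}}}$. This reduces the statement to two independent checks: identify the rightmost dimension with $\dim_{\textbf{k}(C,P)}\Omega_{\textbf{k}(C,P)/K}$ (which produces the lower inequality for free) and bound the leftmost dimension by $1$ (which produces the upper inequality).

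For the right-hand identification I would first invoke Lemma \ref{generadores de los m_i} to pick a basis of the quotient $\m/(\m^2+\overline{K}\m_{C,P})$ of the form $\{\overline{x_i-a_i}\}_{i=1}^{l}$ with $x_i\in\Ou_{C,P}$ and $a_i\in\textbf{k}(C,P)$; such representatives are always available because $\m$ itself is already generated by such elements. Proposition \ref{pre grado emb} then immediately yields that $da_1,\ldots,da_l$ is a basis of $\Omega_{\textbf{k}(C,P)/K}$, so the common dimension is $d:=\dim_{\textbf{k}(C,P)}\Omega_{\textbf{k}(C,P)/K}$, and additivity of dimensions in the sequence gives $d\leq e$, where $e$ is the embedding dimension.

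For the upper bound I would use that regularity of $\Ou_{C,P}$ at the one-dimensional point $P$ makes it a discrete valuation ring; fix a uniformizer $t$ with $\m_{C,P}=t\Ou_{C,P}$. Invoking the identification $\Ou_{C^{\overline{K}},P^{\overline{K}}}=\overline{K}\Ou_{C,P}$ set up at the beginning of Section \ref{local invariants at non-smooth points}, any element of $\overline{K}\m_{C,P}$ can be written as $\sum_i \alpha_i(b_i t) = \bigl(\sum_i\alpha_i b_i\bigr)t$ with $\alpha_i\in\overline{K}$ and $b_i\in\Ou_{C,P}$, so $\overline{K}\m_{C,P}=t\cdot\Ou_{C^{\overline{K}},P^{\overline{K}}}$ is a principal ideal. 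Its image in $\m/\m^2$ is therefore spanned by the single class $\overline{t}$ and has dimension at most one, finishing $e\leq d+1$.

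I do not expect a genuine obstacle; the only mildly delicate step is the last one, where one must be careful that $\overline{K}\m_{C,P}$ really is an ideal of $\Ou_{C^{\overline{K}},P^{\overline{K}}}$ (not merely a $\overline{K}$-submodule), so that its image in $\m/\m^2$ may be generated by a single element rather than by an infinite family. This is precisely what the identification $\Ou_{C^{\overline{K}},P^{\overline{K}}}=\overline{K}\Ou_{C,P}$ buys us, and once it is invoked the rest is essentially a dimension count inside a DVR extended by scalars.
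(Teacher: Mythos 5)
Your proof is correct and follows the same skeleton as the paper's: the same short exact sequence for $\m/\m^2$ with $\m=\m_{C^{\overline{K}},P^{\overline{K}}}$, and the same appeal to Lemma \ref{generadores de los m_i} and Proposition \ref{pre grado emb} to identify the dimension of the rightmost quotient with $\dim_{\textbf{k}(C,P)}\Omega_{\textbf{k}(C,P)/K}$. Where you genuinely diverge is the upper bound on the leftmost term. The paper bounds $\dim_{\overline{K}}\bigl(\overline{K}\m_{C,P}/(\m^2\cap\overline{K}\m_{C,P})\bigr)$ by $\dim_{\overline{K}}\bigl(\overline{K}\m_{C,P}/\overline{K}\m_{C,P}^2\bigr)$ and asserts the latter equals $1$ by regularity; but with $\m_{C,P}=t\Ou_{C,P}$ one computes $\overline{K}\m_{C,P}/\overline{K}\m_{C,P}^2\cong\overline{K}\otimes_K\textbf{k}(C,P)$, whose $\overline{K}$-dimension is $[\textbf{k}(C,P):K]$, not $1$ (and $[\textbf{k}(C,P):K]>1$ is exactly the case of interest here). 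Your argument sidesteps this: since $\overline{K}\m_{C,P}=t\,\Ou_{C^{\overline{K}},P^{\overline{K}}}$ is a principal ideal and the residue field of $\Ou_{C^{\overline{K}},P^{\overline{K}}}$ is $\overline{K}$, every $ta$ is congruent to $\alpha t$ modulo $\m^2$ with $\alpha\in\overline{K}$, so the image of $\overline{K}\m_{C,P}$ in $\m/\m^2$ is $\overline{K}\cdot\overline{t}$ and has dimension at most $1$. This is the sound version of the estimate, and it in fact repairs the one questionable step in the paper's own proof; the rest of your write-up (including the identification $\Ou_{C^{\overline{K}},P^{\overline{K}}}=\overline{K}\Ou_{C,P}$ needed to view $\overline{K}\m_{C,P}$ as an ideal) is exactly as in the paper.
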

\begin{proof}
 Let $x_1-a_1,\ldots,x_l-a_l$ be a basis of  $\m_{C^{\overline{K}},P^{\overline{K}}}/(\m_{C^{\overline{K}},P^{\overline{K}}}^2+\overline{K}\m_{C,P})$ as in the above proposition. Therefore the exact sequence 
 \[
 0\to \frac{\overline{K}\m_{C,P}+\m_{C^{\overline{K}},P^{\overline{K}}}^2}{\m_{C^{\overline{K}},P^{\overline{K}}}^2} \to \frac{
\m_{C^{\overline{K}},P^{\overline{K}}}}{\m_{C^{\overline{K}},P^{\overline{K}}}^2} \to \frac{\m_{C^{\overline{K}},P^{\overline{K}}}}{\m_{C^{\overline{K}},P^{\overline{K}}}^2+\overline{K}\m_{C,P}} \to 0
\]
and the last proposition implies that $\dim_{\overline{K}}\frac{\m_{C^{\overline{K}},P^{\overline{K}}}}{\m_{C^{\overline{K}},P^{\overline{K}}}^2}=l+\dim_{\overline{K}}\frac{\overline{K}\m_{C,P}+\m_{C^{\overline{K}},P^{\overline{K}}}^2}{\m_{C^{\overline{K}},P^{\overline{K}}}^2}.$
On the other hand, from one of the isomorphism theorems 
$$\dim_{\overline{K}}\frac{\overline{K}\m_{C,P}+\m_{C^{\overline{K}},P^{\overline{K}}}^2}{\m_{C^{\overline{K}},P^{\overline{K}}}^2}=\dim_{\overline{K}}\frac{\overline{K}\m_{C,P}}{\m_{C^{\overline{K}},P^{\overline{K}}}^2\cap \overline{K}\m_{C,P}}\leq\dim_{\overline{K}}\frac{\overline{K}\m_{C,P}}{\overline{K}\m_{C,P}^2}=1$$ because $C$ is regular at $P$, which concludes the proof. 
\end{proof}
\begin{defn}
Let $C$ be a geometrically integral curve over $K$ and $P \in C$. Suppose that $\Ou$ is a subring of $\Ou_{\widetilde{C^{\overline{K}}},\widetilde{P^{\overline{K}}}}$ (where $\widetilde{C^{\overline{K}}}$ is the normalization of $C^{\overline{K}}$ and $\widetilde{P^{\overline{K}}}$ is the point of $\widetilde{C^{\overline{K}}}$ lying over $P$).  If $\nu_0$ is the discrete valuation of $K(C)\overline{K}|\overline{K}$ whose valuation ring is $\Ou_{\widetilde{C^{\overline{K}}},\widetilde{P^{\overline{K}}}}$, then we define  the semigroup of $\Ou$ by \[\Gamma_{\Ou}=\{l\in \mathbb{Z}\,|\, l=\nu_0(x) \mbox{ for some } x\in \Ou\}.\] We denote $\Gamma_{\Ou_{C,P}}$ by $\Gamma_{C,P}$  and $\Gamma_{\Ou_{C^{\overline{K}},P^{\overline{K}}}}$ by $\Gamma_{C^{\overline{K}},P^{\overline{K}}}$.
\end{defn}

\begin{prop}\label{semigrupo de un punto regular como se ve}
Let $C$ be a geometrically integral curve over $K$  and $P\in C$ be a point lying under a singularity $P^{\overline{K}}$ of $C^{\overline{K}}$.  If $[\textbf{k}(C,P):K]$ belongs to $\Gamma_{C,P}$ then $C$ is regular at $P$.
\end{prop}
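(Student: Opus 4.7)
The plan is to exhibit a uniformizer of $\Ou_{C,P}$ directly from the hypothesis. Let $n:=[\textbf{k}(C,P):K]$ and let $y\in \Ou_{C,P}$ be an element with $\nu_0(y)=n$, whose existence is granted by the hypothesis. Since $\Ou_{C,P}$ is a one-dimensional Noetherian local domain, it is regular as soon as its maximal ideal is principal, so it is enough to show $\m_{C,P}=(y)$. Because every composition factor of the finite-length $\Ou_{C,P}$-module $\Ou_{C,P}/(y)$ is isomorphic to $\textbf{k}(C,P)$, we have
\[
\dim_K \Ou_{C,P}/(y)\;=\;n\cdot \mathrm{length}_{\Ou_{C,P}}\bigl(\Ou_{C,P}/(y)\bigr),
\]
so it suffices to prove the numerical identity $\dim_K \Ou_{C,P}/(y)=n$. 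Once we have this, the length is forced to be $1$, i.e. $\Ou_{C,P}/(y)=\textbf{k}(C,P)$, which is exactly $\m_{C,P}=(y)$.

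The quantity $\dim_K \Ou_{C,P}/(y)$ is then computed in two reductions, following the chain $\Ou_{C,P}\subset \Ou_{C^{\overline{K}},P^{\overline{K}}}\subset \Ou_{\widetilde{C^{\overline{K}}},\widetilde{P^{\overline{K}}}}$ introduced at the start of Section~\ref{local invariants at non-smooth points}. Using that $P^{\overline{K}}$ is the unique point of $C^{\overline{K}}$ above $P$ and that $C$ is geometrically integral, the description $\Ou_{C^{\overline{K}},P^{\overline{K}}}=\Ou_{C,P}\overline{K}$ can be promoted to $\Ou_{C^{\overline{K}},P^{\overline{K}}}=\Ou_{C,P}\otimes_K\overline{K}$; faithful flatness of $\overline{K}|K$ then gives
\[
\dim_K \Ou_{C,P}/(y)\;=\;\dim_{\overline{K}} \Ou_{C^{\overline{K}},P^{\overline{K}}}/(y).
\]
For the second reduction I apply the snake lemma to the diagram obtained by multiplying the short exact sequence
\[
0\longrightarrow \Ou_{C^{\overline{K}},P^{\overline{K}}}\longrightarrow \Ou_{\widetilde{C^{\overline{K}}},\widetilde{P^{\overline{K}}}}\longrightarrow M\longrightarrow 0
\]
by $y$, where $M:=\Ou_{\widetilde{C^{\overline{K}}},\widetilde{P^{\overline{K}}}}/\Ou_{C^{\overline{K}},P^{\overline{K}}}$ is a finite-dimensional $\overline{K}$-vector space (finiteness of the normalization of a curve over $\overline{K}$ makes the $\delta$-invariant finite). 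Since $y$ is a non-zero-divisor in both rings, the snake sequence is $0\to M[y]\to \Ou_{C^{\overline{K}},P^{\overline{K}}}/(y)\to \Ou_{\widetilde{C^{\overline{K}}},\widetilde{P^{\overline{K}}}}/(y)\to M/yM\to 0$, and the standard equality $\dim_{\overline{K}}M[y]=\dim_{\overline{K}}M/yM$ for a linear endomorphism of a finite-dimensional vector space forces the outer terms to cancel. Hence $\dim_{\overline{K}}\Ou_{C^{\overline{K}},P^{\overline{K}}}/(y)=\dim_{\overline{K}}\Ou_{\widetilde{C^{\overline{K}}},\widetilde{P^{\overline{K}}}}/(y)$, and since $\Ou_{\widetilde{C^{\overline{K}}},\widetilde{P^{\overline{K}}}}$ is a discrete valuation ring with residue field $\overline{K}$ and valuation $\nu_0$, the right-hand side equals $\nu_0(y)=n$.

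Chaining the two identities gives $\dim_K \Ou_{C,P}/(y)=n$, and the opening paragraph then finishes the proof. The main obstacle will not be any single arithmetic step but rather the careful justification of the two identifications underlying the argument: that $\Ou_{C^{\overline{K}},P^{\overline{K}}}$ actually coincides with the tensor product $\Ou_{C,P}\otimes_K\overline{K}$ (invoking uniqueness of $P^{\overline{K}}$ above $P$ together with geometric integrality of $C$), and that the cokernel $M$ has finite $\overline{K}$-dimension so that the kernel-cokernel cancellation in the snake sequence is legitimate; both are standard but deserve explicit verification in the write-up.
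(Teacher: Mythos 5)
Your proof is correct, but it follows a genuinely different route from the paper's. The paper stays over $K$: it passes to the normalization $\widetilde{C}$ of $C$ itself, invokes Lemma 1.5 of St\"ohr--Bedoya to get $[\textbf{k}(\widetilde{C},\widetilde{P}):K]=\nu_0(t)$ for a uniformizer $t$ of $\Ou_{\widetilde{C},\widetilde{P}}$, squeezes $[\textbf{k}(C,P):K]\leq[\textbf{k}(\widetilde{C},\widetilde{P}):K]=\nu_0(t)\leq\nu_0(y)=[\textbf{k}(C,P):K]$ to conclude that the residue fields agree and $y$ is a uniformizer of $\Ou_{\widetilde{C},\widetilde{P}}$, and then applies Nakayama to the finite module $\Ou_{\widetilde{C},\widetilde{P}}/\Ou_{C,P}$ to get $\Ou_{C,P}=\Ou_{\widetilde{C},\widetilde{P}}$. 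You instead work entirely over $\overline{K}$, computing the colength of $(y)$ via flat base change and the $\delta$-invariant cancellation $\dim_{\overline{K}}M[y]=\dim_{\overline{K}}M/yM$, and then read off regularity from the composition-series identity $\dim_K\Ou_{C,P}/(y)=[\textbf{k}(C,P):K]\cdot\mathrm{length}(\Ou_{C,P}/(y))$. Your snake-lemma step is essentially the same dimension juggling the paper performs in the proof of Corollary \ref{Criterio para la regularidad de puntos}, so in effect you have folded that corollary's computation (run in reverse) into the proposition and made the argument self-contained, at the price of re-justifying the identification $\Ou_{C^{\overline{K}},P^{\overline{K}}}=\Ou_{C,P}\otimes_K\overline{K}$, which the paper already sets up at the start of Section \ref{local invariants at non-smooth points}. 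The paper's route buys the stronger explicit conclusion $\Ou_{C,P}=\Ou_{\widetilde{C},\widetilde{P}}$ and the equality of residue fields; yours avoids the external citation and the intermediate normalization over $K$. Both are complete; just make sure in the write-up to note that $\nu_0(y)>0$ forces $y\in\m_{C,P}$ (the valuation ring dominates $\Ou_{C,P}$), so that $y$ is a nonunit and the length argument applies.
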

\begin{proof}
    Let $t\in \Ou_{\widetilde{C},\widetilde{P}}$ be a uniformizing parameter at $\widetilde{P}$ (were $\widetilde{C}$ is the normalization of $C$ and $\widetilde{P}$ is the point of $\widetilde{C}$ lying over $P$). We recall that the proof of Lemma 1.5 in \cite{Bedoya} says that $[\textbf{k}(\widetilde{C},\widetilde{P}):K]=\nu_0(t)$. Since $[\textbf{k}(C,P):K]$ belongs to $\Gamma_{C,P}$ then there exist $y\in \Ou_{C,P}$ such that $\nu_0(y)=[\textbf{k}(C,P):K]$. But $y$ is a multiple of a power of $t$ in $\Ou_{\widetilde{C},\widetilde{P}}$ and, hence,
    $[\textbf{k}(C,P):K]$ $\leq [\textbf{k}(\widetilde{C},\widetilde{P}):K]= \nu_0(t)\leq \nu_0(y)$.     
Therefore $\textbf{k}(C,P)=\textbf{k}(\widetilde{C},\widetilde{P})$, $y$ is an uniformization parameter of $\Ou_{\widetilde{C},\widetilde{P}}$ and
$\Ou_{\widetilde{C},\widetilde{P}}=\Ou_{C,P}+y\Ou_{\widetilde{C},\widetilde{P}}$. Thus from Nayakama's Lemma, applied to the finitely generated $\Ou_{C,P}$-module $\Ou_{\widetilde{C},\widetilde{P}}$ (because $\dim_K \Ou_{\widetilde{C},\widetilde{P}}/\Ou_{C,P} < \infty$ from Theorem 1 of \cite{Ros}), we obtain that $\Ou_{\widetilde{C},\widetilde{P}}=\Ou_{C,P}$.    
\end{proof}

\begin{cor}\label{Criterio para la regularidad de puntos}
Let $C$ be a geometrically integral curve over $K$ and $P \in C$ be a point lying under a singularity $P^{\overline{K}}$ of $C^{\overline{K}}$. If there exists an invertible ideal sheaf $\mathcal{I}$ of $\Ou_C$  such that 
\[\dim_{\overline{K}}\frac{\Ou_{C^{\overline{K}},P^{\overline{K}}}}{\Ou_{C^{\overline{K}},P^{\overline{K}}}\mathcal{I}_{P}}=[\textbf{k}(C,P):K]\]
then $C$ is a regular at $P$.
\end{cor}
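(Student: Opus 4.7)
The plan is to reduce the statement to Proposition \ref{semigrupo de un punto regular como se ve} by showing that $[\textbf{k}(C,P):K]$ lies in the semigroup $\Gamma_{C,P}$. Since $\mathcal{I}$ is an invertible ideal sheaf, its stalk $\mathcal{I}_P$ is principal, so I would write $\mathcal{I}_P = y\,\Ou_{C,P}$ for some nonzero $y\in\Ou_{C,P}$. Then $\Ou_{C^{\overline{K}},P^{\overline{K}}}\mathcal{I}_P = y\,\Ou_{C^{\overline{K}},P^{\overline{K}}}$, and the hypothesis reads
\[
\dim_{\overline{K}}\!\left( \Ou_{C^{\overline{K}},P^{\overline{K}}}\big/\,y\,\Ou_{C^{\overline{K}},P^{\overline{K}}} \right) = [\textbf{k}(C,P):K].
\]
The goal is therefore to identify the left-hand side with $\nu_0(y)$, which automatically lies in $\Gamma_{C,P}$ since $y\in\Ou_{C,P}$.

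To see this, set $A' = \Ou_{C^{\overline{K}},P^{\overline{K}}}$ and $\widetilde{A'}=\Ou_{\widetilde{C^{\overline{K}}},\widetilde{P^{\overline{K}}}}$. Because $K$ is separably closed, the finite extension $\textbf{k}(C,P)|K$ is purely inseparable, so $\textbf{k}(C,P)\otimes_K \overline{K}$ is a local Artin ring with residue field $\overline{K}$. Consequently the residue field of $A'$ is $\overline{K}$, and likewise the DVR $\widetilde{A'}$ has residue field $\overline{K}$. It follows that for any finite-length $A'$-module $M$ one has $\ell_{A'}(M)=\dim_{\overline{K}}(M)$, and in particular $\dim_{\overline{K}}(\widetilde{A'}/y\widetilde{A'})=\nu_0(y)$, because $\widetilde{A'}$ is a DVR over $\overline{K}$ with $\nu_0$ as its valuation.

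Next I would compare the two lengths. The $A'$-module $\widetilde{A'}/A'$ has finite length (by the finiteness of the normalization, as also invoked in the proof of Proposition \ref{semigrupo de un punto regular como se ve} via Theorem~1 of \cite{Ros}). Applying the snake lemma to multiplication by $y$ on the exact sequence $0\to A'\to \widetilde{A'}\to \widetilde{A'}/A'\to 0$, and using that $A'$ and $\widetilde{A'}$ are domains (so multiplication by $y$ is injective on each) together with the fact that the kernel and cokernel of multiplication by $y$ on the finite-length module $\widetilde{A'}/A'$ have the same length, I obtain
\[
\dim_{\overline{K}}(A'/yA') \;=\; \dim_{\overline{K}}(\widetilde{A'}/y\widetilde{A'}) \;=\; \nu_0(y).
\]

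Combining with the hypothesis gives $\nu_0(y)=[\textbf{k}(C,P):K]$, hence $[\textbf{k}(C,P):K]\in\Gamma_{C,P}$, and Proposition \ref{semigrupo de un punto regular como se ve} yields the regularity of $C$ at $P$. I do not anticipate a serious obstacle: the only slightly delicate point is the snake-lemma equality of lengths, but this is a standard calculation once the residue fields of $A'$ and $\widetilde{A'}$ are identified with $\overline{K}$, which itself is a consequence of $K$ being separably closed.
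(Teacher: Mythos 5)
Your proposal is correct and follows essentially the same route as the paper: write $\mathcal{I}_P=y\,\Ou_{C,P}$, show $\dim_{\overline{K}}\bigl(\Ou_{C^{\overline{K}},P^{\overline{K}}}/y\,\Ou_{C^{\overline{K}},P^{\overline{K}}}\bigr)=\nu_0(y)$ by comparing lengths with the normalization, and then invoke Proposition \ref{semigrupo de un punto regular como se ve}. The only difference is cosmetic: the paper establishes the length equality by a direct tower computation using $y\Ou_{\widetilde{C^{\overline{K}}},\widetilde{P^{\overline{K}}}}/y\Ou_{C^{\overline{K}},P^{\overline{K}}}\simeq \Ou_{\widetilde{C^{\overline{K}}},\widetilde{P^{\overline{K}}}}/\Ou_{C^{\overline{K}},P^{\overline{K}}}$, whereas you use the snake lemma; both are standard and equivalent.
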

\begin{proof}
    We may assume that $\mathcal{I}_{P}$ is generated by $y \in \Ou_{C,P}$ and, then, we have
    \[
    \begin{array}{lll}
    \dim_{\overline{K}}\frac{\Ou_{\widetilde{C^{\overline{K}}},\widetilde{P^{\overline{K}}}}}{y\Ou_{\widetilde{C^{\overline{K}}},\widetilde{P^{\overline{K}}}}}    & =& \dim_{\overline{K}}\frac{\Ou_{\widetilde{C^{\overline{K}}},\widetilde{P^{\overline{K}}}}}{y\Ou_{C^{\overline{K}},P^{\overline{K}}}}- \dim_{\overline{K}}\frac{y\Ou_{\widetilde{C^{\overline{K}}},\widetilde{P^{\overline{K}}}}}{y\Ou_{C^{\overline{K}},P^{\overline{K}}}}   \\
        & =&\dim_{\overline{K}}\frac{\Ou_{\widetilde{C^{\overline{K}}},\widetilde{P^{\overline{K}}}}}{\Ou_{C^{\overline{K}},P^{\overline{K}}}}+\dim_{\overline{K}}\frac{\Ou_{C^{\overline{K}},P^{\overline{K}}}}{y\Ou_{C^{\overline{K}},P^{\overline{K}}}}- \dim_{\overline{K}}\frac{\Ou_{\widetilde{C^{\overline{K}}},\widetilde{P^{\overline{K}}}}}{\Ou_{C^{\overline{K}},P^{\overline{K}}}} \\
        &=&[\textbf{k}(C,P):K],
   \end{array}\]
    implying that $\nu_{0}(y)=[\textbf{k}(C,P):K]$, i.e.,  $[\textbf{k}(C,P):K]\in \Gamma_{C,P}$. Hence, the result follows from Proposition \ref{semigrupo de un punto regular como se ve}.
\end{proof}

\subsection{Semigroups at singularities lying over non-smooth regular points} \label{section 3.2}
In this subsection we keep using the notation of the last subsection and we will compute some invariants associated to non-smooth points (and their base extensions) of a geometrically integral and regular curve $C$ over $K$. To this end, we will need to use the relative Frobenius morphism $F_{C/K}:C\to C^{(p)}$ of $C$ whose definition can be found in \cite{Liu} page 94. 
For each $k\in \mathbb{N}$, we denote by $F_{C/K}^k$ the $k^{\mbox{th}}$-iterate of $F_{C/K}$. The image $C^{(p^k)}$ of $C$ under $F_{C/K}^k$ is a singular curve over $K$ and we need to study its normalization $C_k=\widetilde{C^{(p^k)}}$ to obtain properties of $C$. In this way we denote the point of $C_k$ lying over $F_{C/K}^k(P)$ by $P_k$. Finally we denote the discrete valuation of $\overline{K}K(C_k)|\overline{K}$ associated to $\Ou_{\widetilde{C_k^{\overline{K}}},\widetilde{P_k^{\overline{K}}}}$ by $\nu_k$.

Notice that $[K(C):K(C_1)]=p$ (see \cite{GoldSchmith}, Theorem 2.4.6, pg. 55) and notice also that there exists $x_0\in \Ou_{C,P}$ such that $\Ou_{C_1,P_1}[x_0]=\Ou_{C,P}$ (from Theorems 1.1.23 and 1.1.24 in \cite{GoldSchmith}). 

\begin{defn}
  Let us consider the map $\alpha:\Ou_{\widetilde{C^{\overline{K}}},\widetilde{P^{\overline{K}}}}\Omega_{ \overline{K}\Ou_{C,P} /\overline{K}}\to  \Omega_{\Ou_{\widetilde{C^{\overline{K}}},\widetilde{P^{\overline{K}}}}/\overline{K}}$
 where $\Omega_{\Ou/\overline{K}}$ is the module of K\"ahler differentials of the $\overline{K}$-algebra $\Ou$ over $\overline{K}$. The number  \[d(C,P)=1+\dim_{\overline{K}} \coker (\alpha)\]
will be called by the differential degree at $P$.
\end{defn}

\begin{Remark} \label{differential degree and smoothness 1} 
We can directly see from the definition that the smoothness at $P$ implies $d(C,P)=1$. Actually, the converse is also true, as we shall see in Proposition \ref{rL=1 imply smooth}. 
\end{Remark}

\begin{defn}
We denote the conductor of $\Gamma_{\Ou}$ by $c(\Ou)$, which is the largest positive integer $c\in \Gamma_{\Ou}$ such that $c-1$ is a gap of $\Gamma_{\Ou}$. We denote $c(C,P)$ for the conductor of $\Gamma_{C^{\overline{K}},P^{\overline{K}}}$.  \end{defn}

\begin{Remark}
It is proved in Lemma 2.1 pg. 313 of \cite{Bedoya} that $P_n$ is rational for some positive integer $n$.  Now we will use  Proposition \ref{conexion con el trabajo de Bedoya} to see the relation between our results in this section and some of their results in \cite{Bedoya}. Indeed we will show that 
$d(C,P)-1=\nu_n(dx_0^{p^n})$ where $x_0 \in \Ou_{C,P}$ is such that  $\Ou_{C_1,P_{1}}[x_0]=\Ou_{C,P}$. Moreover, it is important to emphasize that the number $\nu_{n}(dx_0^{p^n})$ is an important ingredient to compute the conductor $c(C,P)$ (cf. Theorem 2.3 in \cite{Bedoya}), or equivalently, to compute the
singularity degree (or $\delta$-invariant) $\delta(C,P):=\dim_{\overline{K}} \Ou_{\widetilde{C^{\overline{K}}},\widetilde{P^{\overline{K}}}}/\Ou_{C^{\overline{K}},P^{\overline{K}}}$ at $P$, since $2\delta(C,P)=c(C,P)$ (cf. Corollary 1.4 pg. 312 in \cite{Bedoya}).
\end{Remark}
\begin{prop}\label{conexion con el trabajo de Bedoya}
If $P_n$ is rational, then 
$d(C,P)-1=\nu_0(dx_0)=\nu_{n}(dx_0^{p^n})=\nu_{P_n}(dx_0^{p^n})$, where  $x_0 \in \Ou_{C,P}$ is such that  $\Ou_{C_1,P_{1}}[x_0]=\Ou_{C,P}$ and $\nu_{P_n}$ is the discrete valuation of the function field $K(C_n)|K$ with valuation ring $\Ou_{C_n,P_n}$.
\end{prop}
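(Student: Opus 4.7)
The plan is to prove the three equalities in sequence, each relying on a slightly different technical ingredient.

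For the first equality $d(C,P)-1=\nu_0(dx_0)$, I would set $S=\overline{K}\Ou_{C,P}$ and $R=\Ou_{\widetilde{C^{\overline{K}}},\widetilde{P^{\overline{K}}}}$ and apply the right-exact base change sequence
\[
R\otimes_S \Omega_{S/\overline{K}} \xrightarrow{\alpha} \Omega_{R/\overline{K}} \to \Omega_{R/S}\to 0,
\]
so that $\coker(\alpha)\cong \Omega_{R/S}$. Since the normalization $\widetilde{C^{\overline{K}}}$ is a smooth curve over the algebraically closed field $\overline{K}$, $\Omega_{R/\overline{K}}$ is free of rank one. The key point is that the image of $\alpha$ is precisely $R\cdot dx_0$. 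To see this, any $s\in S$ decomposes, using $\Ou_{C,P}=\Ou_{C_1,P_1}[x_0]$, as $s=\sum_{i=0}^{p-1}s_i^{(1)}x_0^i$ with $s_i^{(1)}\in\overline{K}\Ou_{C_1,P_1}$; since $\overline{K}$ is perfect one has $\overline{K}(C_1)=\overline{K}(C)^p$, so each $s_i^{(1)}$ is a $p$-th power of an element of $R$ and therefore $ds_i^{(1)}=0$ in $\Omega_{R/\overline{K}}$. Consequently $\dim_{\overline{K}}\coker(\alpha)$ equals the length of $R/(dx_0)$, which is $\nu_0(dx_0)$.

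For the second equality $\nu_0(dx_0)=\nu_n(d(x_0^{p^n}))$, I would pass to the $t$-adic completions. Since $\overline{K}(C)/\overline{K}(C_n)$ is purely inseparable of degree $p^n$ with trivial residue-field extension, the ramification index at the unique place is $p^n$, so $\nu_0|_{\overline{K}(C_n)}=p^n\nu_n$. Hence $u:=t^{p^n}$, where $t$ is a uniformizer of $R$, satisfies $\nu_n(u)=1$ and lies in $R_n=R\cap\overline{K}(C_n)$; thus $u$ is a uniformizer of $R_n$, and the induced map $\widehat{R_n}\to\widehat{R}$ sends $u$ to $t^{p^n}$. Expanding $x_0=\sum_{i\geq m}a_it^i$ in $\widehat{R}\cong\overline{K}[[t]]$ with $m=\nu_0(x_0)$, one gets $x_0^{p^n}=\sum_i a_i^{p^n}t^{ip^n}=\sum_i a_i^{p^n}u^i$ in $\widehat{R_n}$, whence
\[
\nu_n(d(x_0^{p^n}))=\min\{i-1 : a_i^{p^n}\neq 0,\ p\nmid i\}=\min\{i-1 : a_i\neq 0,\ p\nmid i\}=\nu_0(dx_0).
\]

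The third equality $\nu_n(d(x_0^{p^n}))=\nu_{P_n}(d(x_0^{p^n}))$ reduces to smoothness of $C_n$ at $P_n$: being the normalization of $C^{(p^n)}$, the curve $C_n$ is regular, and together with the rationality of $P_n$ this gives smoothness, so $\Omega_{\Ou_{C_n,P_n}/K}$ is free of rank one and a uniformizer of $\Ou_{C_n,P_n}$ remains a uniformizer of $R_n$ after base change to $\overline{K}$. Flat base change of Kähler differentials then preserves the valuation of $d(x_0^{p^n})$.

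The main obstacle is the middle step, where one must carefully track compatibility between the two completions and justify the embedding $\widehat{R_n}\hookrightarrow \widehat{R}$ sending $u\mapsto t^{p^n}$. The reason this step succeeds is the perfection of $\overline{K}$: raising the $a_i$ to the $p^n$-th power preserves exactly which of them vanish, so the set of indices controlling the order of $d(x_0^{p^n})$ with respect to $u$ coincides with the set controlling the order of $dx_0$ with respect to $t$.
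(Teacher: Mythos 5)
Your proof is correct and follows essentially the same three-step route as the paper: identifying $\coker(\alpha)$ with $\Omega_{R/\overline{K}}/R\,dx_0$ via the decomposition $S=\bigoplus_i \overline{K}\Ou_{C_1,P_1}x_0^i$ and the fact that $\overline{K}\Ou_{C_1,P_1}\subset R^p$, using total ramification of $\nu_0$ over $\nu_n$ to compare $dx_0$ with $d(x_0^{p^n})$, and using rationality of $P_n$ to identify $\nu_n$ with $\nu_{P_n}$. The only cosmetic difference is that your middle step is carried out by power-series expansion in the completion, whereas the paper proves the equivalent derivation identity $(D_z(x_0))^{p^n}=D_{z^{p^n}}(x_0^{p^n})$ directly from the free-module decomposition of $R$ over $R^{p}$.
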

\begin{proof}
Suppose that $z$ is an uniformizing parameter of $\Ou_{\widetilde{C^{\overline{K}}},\widetilde{P^{\overline{K}}}}$. Since $\overline{K}$ is algebraically closed, it follows from the fundamental identity (cf.Theorem 3.1.11 in \cite{stichtenoth2009algebraic}) that $\nu_0$ is totally ramified over $\nu_n$ and hence $z^{p^n}$ is a uniformizing parameter of $\Ou_{\widetilde{C_n^{\overline{K}}},\widetilde{P_n^{\overline{K}}}}=(\Ou_{\widetilde{C^{\overline{K}}},\widetilde{P^{\overline{K}}}})^{p^n}$.
Let $D_z$ and $D_{z^{p^n}}$ be the derivations of 
$\Ou_{\widetilde{C^{\overline{K}}},\widetilde{P^{\overline{K}}}}$ and 
$\Ou_{\widetilde{C_n^{\overline{K}}},\widetilde{P_n^{\overline{K}}}}$, respectively, such that $D_z(z)=D_{z^{p^n}}(z^{p^n})=1$. From the definition, 
$\nu_0(dx_0)=\nu_0(D_z(x_0))=\dim_{\overline{K}} \coker (\alpha)=d(C,P)-1$.
Since $dx_0^{p^n}=D_{z^{p^n}}(x_0^{p^n})dz^{p^n}$ and $\nu_0$ is totally ramified over $\nu_n$ we just need to prove that $(D_z(x_0))^{p^n}=D_{z^{p^n}}(x_0^{p^n})$.

To do this we notice that 
$\Ou_{\widetilde{C^{\overline{K}}},\widetilde{P^{\overline{K}}}}=\Ou_{\widetilde{C_1^{\overline{K}}},\widetilde{P^{\overline{K}}}}[z]=\bigoplus^{p-1}_{i=0}\Ou_{\widetilde{C_1^{\overline{K}}},\widetilde{P^{\overline{K}}}}z^i$ implies that 
$\Ou_{\widetilde{C_n^{\overline{K}}},\widetilde{P_n^{\overline{K}}}}=(\Ou_{\widetilde{C^{\overline{K}}},\widetilde{P^{\overline{K}}}})^{p^n}=
\bigoplus^{p-1}_{i=0}(\Ou_{\widetilde{C^{\overline{K}}},\widetilde{P^{\overline{K}}}})^{p^{n+1}}(z^{p^n})^i = \bigoplus^{p-1}_{i=0}(\Ou_{\widetilde{C_n^{\overline{K}}},\widetilde{P_n^{\overline{K}}}})^{p}(z^{p^n})^i$ and, consequently, if $x_0=\sum_{i=0}^{p-1}a_i^pz^i$, then \[D_{z^{p^n}}(x_0^{p^n})= D_{z^{p^n}}\left(\sum_{i=0}^{p-1}(a_i^{p^n})^p(z^{p^n})^i\right) = 
\sum_{i=1}^{p^n-1}i(a_i^p)^{p^n}(z^{p^n})^{i-1} = D_z(x_0)^{p^n}.\]
To finish the proof we just need to observe that $\nu_n(f)=\nu_{P_n}(f)$ for every $f \in K(C_n)$. Indeed, for such $f$, we have $\nu_n(f)=e(\nu_n|\nu_{P_n})\nu_{P_n}(f)$, where $e(\cdot\,|\,\cdot)$ denotes the ramification index. But, from the proof of Lemma 1.5 in \cite{Bedoya}, $e(\nu_n|\nu_{P_n})=\deg P_n = 1$. 
\end{proof}


Now we will see how Theorem 2.3 in \cite{Bedoya} can be used to produce a recursive expression to compute the conductor in terms of differential degrees.  

\begin{thm}\label{Formula para el conductor}
Suppose that $n$ is a positive integer such that $P_n$ is rational.
Then \[c(C,P)= \sum_{0\leq i< n}(p-1)(d(C_i,P_i)-1)p^{i}\]
\end{thm}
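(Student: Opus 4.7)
The plan is to prove the formula by induction on $n$, extracting from Theorem 2.3 of \cite{Bedoya} a one-step recursion of the form
\[
c(C, P) = (p-1)(d(C, P) - 1) + p \cdot c(C_1, P_1),
\]
valid whenever $P_n$ is rational for some $n \geq 1$. Bedoya's theorem is phrased in terms of $\nu_1(dx_0^{p})$ (plus the conductor at the next Frobenius level), and Proposition \ref{conexion con el trabajo de Bedoya}, applied at level $1$, identifies $\nu_1(dx_0^{p})$ with $d(C,P)-1$. Thus the recursion is essentially a rewriting of Theorem 2.3 of \cite{Bedoya} in our differential-degree language.

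For the base case $n = 1$, I would first observe that $P_1$ being rational forces $P_1^{\overline{K}}$ to be smooth on $C_1^{\overline{K}}$. Indeed, $[\textbf{k}(C_1,P_1):K]=1$ implies $\Ou_{C_1^{\overline{K}},P_1^{\overline{K}}} = \Ou_{C_1,P_1} \otimes_K \overline{K}$, and this ring inherits from the DVR $\Ou_{C_1,P_1}$ a regular local structure (its maximal ideal is generated by the image of $\m_{C_1,P_1}$). Hence $c(C_1,P_1)=0$, the recursion collapses to $c(C,P) = (p-1)(d(C,P)-1)$, which matches the stated sum when $n=1$.

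For the inductive step, fix $n \geq 2$ and assume the formula for length $n-1$. Since $(P_1)_{n-1} = P_n$ is rational, the induction hypothesis applies to $(C_1, P_1)$ and yields
\[
c(C_1, P_1) = \sum_{0 \leq i < n-1}(p-1)\bigl(d((C_1)_i,(P_1)_i)-1\bigr)p^{i}.
\]
Using $(C_1)_i = C_{i+1}$ and $(P_1)_i = P_{i+1}$, reindexing via $j = i+1$, and multiplying by $p$, the recursion gives
\[
c(C,P) = (p-1)(d(C,P)-1) + \sum_{1 \leq j < n}(p-1)(d(C_j,P_j)-1)p^{j} = \sum_{0 \leq j < n}(p-1)(d(C_j,P_j)-1)p^{j},
\]
which is the desired identity.

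The main obstacle lies in the translation step: correctly extracting the recursion $c(C,P) = (p-1)(d(C,P)-1) + p\,c(C_1,P_1)$ from Theorem 2.3 of \cite{Bedoya}. The ramification bookkeeping between $\nu_0$, $\nu_1$, and $\nu_{P_1}$ (done carefully in Proposition \ref{conexion con el trabajo de Bedoya}) needs to be repeated at each level to ensure the identification $\nu_1(dx_0^{p}) = d(C,P)-1$ fits into Bedoya's formula with the correct normalization. Once this translation is established, the induction is a straightforward geometric-series telescoping argument.
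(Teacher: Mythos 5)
Your proposal is correct and follows essentially the same route as the paper: both extract the one-step recursion $c(C_k,P_k)=p\,c(C_{k+1},P_{k+1})+(p-1)(d(C_k,P_k)-1)$ from Theorem 2.3 of Bedoya via Proposition \ref{conexion con el trabajo de Bedoya}, iterate it, and kill the remainder term using that the rational point $P_n$ is smooth, hence has conductor zero. Casting the iteration as a formal induction on $n$ rather than an explicit telescoping with remainder $p^{n}c(C_n,P_n)$ is only a cosmetic difference.
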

\begin{proof}
From Theorem 2.3 in \cite{Bedoya} and Proposition \ref{conexion con el trabajo de Bedoya}, applied to $P_k$ instead of $P$, we have $c(C_k,P_k)=pc(C_{k+1},P_{k+1})+(p-1)(d(C_k,P_k)-1)$. By iterating this equality we obtain inductively $c(C,P)= \sum_{0\leq i< n}(p-1)(d(C_i,P_i)-1)p^{i}+p^{n}c(C_n,P_n).$ Notice that $P_n$ is smooth since it is rational. Therefore $c(C_n,P_n)=0$ from Corollary 1.4 in \cite{Bedoya}.
\end{proof}

\begin{lemm}\label{WrittingWritinn2}
If $x\in \Ou_{C,P}$ and $z$ is a uniformizing parameter of $\Ou_{\widetilde{C^{\overline{K}}},\widetilde{P^{\overline{K}}}}$, then for every $s>0$ there exists $Q(T)\in \overline{K}[T]$ of degree smaller than $s$, such that $\nu_0(x-Q(z))\geq s$.
\end{lemm}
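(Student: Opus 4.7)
The plan is to exploit the fact that $\Ou_{\widetilde{C^{\overline{K}}},\widetilde{P^{\overline{K}}}}$ is a discrete valuation ring with residue field $\overline{K}$ and uniformizer $z$, so every element admits a formal power series expansion in $z$ with coefficients in $\overline{K}$, and then truncate at degree $s-1$.

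More concretely, first I would observe that the natural inclusions
\[
\Ou_{C,P} \subseteq \Ou_{C^{\overline{K}},P^{\overline{K}}} \subseteq \Ou_{\widetilde{C^{\overline{K}}},\widetilde{P^{\overline{K}}}}
\]
allow us to view $x$ as an element of the last DVR, whose valuation is $\nu_0$, whose uniformizer is $z$, and whose residue field is $\overline{K}$ (since $\overline{K}$ is algebraically closed and the curve is geometrically integral). Then I would argue recursively: given $y_0 := x$, there is a unique $a_0 \in \overline{K}$ with $y_0 - a_0 \in \m_{\widetilde{C^{\overline{K}}},\widetilde{P^{\overline{K}}}}$, so we may write $y_0 = a_0 + z y_1$ with $y_1 \in \Ou_{\widetilde{C^{\overline{K}}},\widetilde{P^{\overline{K}}}}$. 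Iterating $s$ times produces coefficients $a_0, a_1, \ldots, a_{s-1} \in \overline{K}$ and an element $y_s \in \Ou_{\widetilde{C^{\overline{K}}},\widetilde{P^{\overline{K}}}}$ with
\[
x = a_0 + a_1 z + \cdots + a_{s-1} z^{s-1} + z^s y_s.
\]

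Finally, I would set $Q(T) = a_0 + a_1 T + \cdots + a_{s-1} T^{s-1} \in \overline{K}[T]$, which has degree strictly less than $s$, and note that $\nu_0(x - Q(z)) = \nu_0(z^s y_s) \geq s$ since $\nu_0(y_s) \geq 0$. There is no real obstacle here; the only subtlety is to justify that the residue field of the normalization equals $\overline{K}$, which is immediate because $\overline{K}$ is algebraically closed and the integral closure of a one-dimensional local domain over $\overline{K}$ in its field of fractions is a DVR whose residue field is a finite, hence trivial, algebraic extension of $\overline{K}$.
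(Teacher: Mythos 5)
Your proposal is correct and follows essentially the same route as the paper: both arguments amount to expanding $x$ $z$-adically in the complete-enough sense available in the DVR $\Ou_{\widetilde{C^{\overline{K}}},\widetilde{P^{\overline{K}}}}$ (whose residue field is $\overline{K}$) and truncating the expansion below degree $s$. The paper phrases this by writing $x=uz^n$ with $u$ a unit expanded as a power series, while you iterate the residue-field splitting directly, but the content is identical.
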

\begin{proof}
If $n:=\nu_0(x)$, we have one of the following possibilities: $n\geq s$ or $n<s$. For the first case $Q(T)$ must be chosen equal to $0$. For the second case, we write $x=uz^n$ with $u=a_0+a_1z+\cdots+a_{s-n}z^{s-n}+\cdots$ being a unit of $\Ou_{\widetilde{C^{\overline{K}}},\widetilde{P^{\overline{K}}}}$ and hence $Q(T)$ must be $a_0T^n+a_1T^{n+1}+\cdots+a_{s-n-1}T^{s-1}$.
\end{proof}

\begin{lemm}\label{Lema de la existencia de x}
With the notation of Lemma \ref{WrittingWritinn2}, if $\nu_0(dx)=r-1$ then $x=Q(z^p)+z^ru$ with $Q(T)\in \overline{K}[T]$ of degree smaller than $r$ and $u\in \Ou_{C,P}^{\ast}$. In particular, $p$ does not divide $r$.
\end{lemm}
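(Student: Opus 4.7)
The plan is to read off the required decomposition by applying Lemma~\ref{WrittingWritinn2} at a slightly larger order than $r$ and then using the hypothesis on $\nu_0(dx)$ to force the coefficients into the shape of a polynomial in $z^p$ plus a leading term in $z^r$.

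First I would apply Lemma~\ref{WrittingWritinn2} with $s=r+1$ to write $x = P(z) + z^{r+1}w$, where $P(T)=\sum_{i=0}^{r} a_i T^i \in \overline{K}[T]$ has degree at most $r$ and $w\in \Ou_{\widetilde{C^{\overline{K}}},\widetilde{P^{\overline{K}}}}$. Differentiating gives
\[
dx = \Bigl(\sum_{i=1}^{r} i\,a_i z^{i-1} + (r+1)z^{r}w\Bigr)dz + z^{r+1}dw,
\]
so the valuation of $dx$ is controlled by the smallest exponent $i-1$ for which $p\nmid i$ and $a_i\neq 0$. The hypothesis $\nu_0(dx)=r-1$ thus forces two conclusions simultaneously: $a_i=0$ for every $i<r$ with $p\nmid i$ (else $\nu_0(dx)<r-1$), and both $a_r\neq 0$ and $p\nmid r$ (else the contribution from the polynomial would have valuation at least $r$ and the remaining terms $(r+1)z^{r}w\,dz$ and $z^{r+1}dw$ would push $\nu_0(dx)$ strictly above $r-1$).

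With these constraints in hand the polynomial $P$ collapses to $P(z)=\sum_{0\le pj<r} a_{pj}z^{pj} + a_r z^{r}$, and setting
\[
Q(T):=\sum_{0\le pj<r} a_{pj}T^{j}\in\overline{K}[T], \qquad u:=a_r+zw,
\]
one obtains $\deg Q=\lfloor (r-1)/p\rfloor<r$, the identity $x=Q(z^p)+z^{r}u$, and $\nu_0(u)=0$ since $a_r\neq 0$; in particular $u$ is a unit of $\Ou_{\widetilde{C^{\overline{K}}},\widetilde{P^{\overline{K}}}}$. The conclusion $p\nmid r$ is already part of what was proved in step one.

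The step I expect to be the main obstacle is upgrading ``$u$ is a unit of the normalized local ring upstairs'' to the sharper statement $u\in \Ou_{C,P}^{\ast}$. Since $u=z^{-r}(x-Q(z^p))$ and $x\in\Ou_{C,P}$, this amounts to arranging that the polynomial piece $Q(z^p)$ lies in $\Ou_{C,P}$. Here I would exploit the freedom in the decomposition: the coefficients $a_{pj}$ for those $j$ with $r\le pj<r+p$ (if any) are not pinned down by the valuation condition, so I would use that $z^p$ is a uniformizer of $\Ou_{\widetilde{C_1^{\overline{K}}},\widetilde{P_1^{\overline{K}}}}$ (from the proof of Proposition~\ref{conexion con el trabajo de Bedoya}) together with the relation $\Ou_{C_1,P_1}[x_0]=\Ou_{C,P}$ to realise $Q(z^p)$ as an element of $\Ou_{C_1,P_1}\subseteq \Ou_{C,P}$, after which $u\in\Ou_{C,P}$ follows automatically and its being a unit there is forced by $\nu_0(u)=0$.
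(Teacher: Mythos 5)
Your first two paragraphs are correct and follow essentially the same route as the paper: apply Lemma \ref{WrittingWritinn2} (the paper takes $s=r$ rather than $s=r+1$, which is immaterial), differentiate, and compare valuations to kill every coefficient $a_i$ with $p\nmid i$ and $i<r$, and to force $a_r\neq 0$ and $p\nmid r$. Your version is, if anything, more explicit than the paper's.

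The problem is your last paragraph. What your computation (and the paper's own proof) actually delivers is $\nu_0(u)=0$, i.e. $u\in \Ou_{\widetilde{C^{\overline{K}}},\widetilde{P^{\overline{K}}}}^{\ast}$; the ``$u\in\Ou_{C,P}^{\ast}$'' in the statement is evidently a slip, since $Q$ has coefficients in $\overline{K}$ while $\Ou_{C,P}$ does not even contain $\overline{K}$, and every subsequent use of the lemma (Corollary \ref{p no divide al grado diferenciasl}, Proposition \ref{rL=1 imply smooth}, Theorem \ref{descomposicion simple del semigrupo}) only invokes $p\nmid r$ and $\nu_0(z^{r}u)=r$. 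Your proposed upgrade does not work as written: even granting that one could arrange $Q(z^p)\in\Ou_{C,P}$ (already doubtful for the reason above), the inference ``$x-Q(z^p)\in\Ou_{C,P}$, hence $u\in\Ou_{C,P}$'' is a non-sequitur, because $u=z^{-r}\bigl(x-Q(z^p)\bigr)$ and $\Ou_{C,P}$ is not closed under division by $z^{r}$ (indeed $z\notin\Ou_{C,P}$ in general). The correct move is simply to stop once $\nu_0(u)=0$ is established, which is all that is ever used.
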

\begin{proof}
Lamma \ref{WrittingWritinn2} and hypothesis provide that there exists $Q(T)\in \overline{K}[T]$ of degree smaller than $r$ such that $x=Q(z)+z^{r}u$ with $u\in \Ou_{\widetilde{C^{\overline{K}}},\widetilde{P^{\overline{K}}}}^{\ast}$ and $dQ(z)= 0$. Hence   $Q(z)\in \overline{K}[z^p]$. On the other hand, if $p$ divides $r$, then $\nu_0(dx)=\nu_0(D_z(x))=\nu_0(z^rD_z(u))>r-1$.
\end{proof}

\begin{cor}\label{p no divide al grado diferenciasl}
    The differential degree $d(C,P)$ is not divided by $p$.
\end{cor}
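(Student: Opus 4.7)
The plan is to apply Lemma \ref{Lema de la existencia de x} directly to the specific element $x_0 \in \Ou_{C,P}$ that satisfies $\Ou_{C_1,P_1}[x_0]=\Ou_{C,P}$, whose existence is guaranteed in the paragraph preceding the definition of the differential degree.

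First, I would invoke the remark that precedes Proposition \ref{conexion con el trabajo de Bedoya}, which (citing Lemma 2.1 of \cite{Bedoya}) guarantees that some iterate $P_n$ is rational. This makes Proposition \ref{conexion con el trabajo de Bedoya} applicable, giving the identification
\[
d(C,P)-1 \;=\; \nu_0(dx_0).
\]

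Next, I would set $r := d(C,P)$ so that $\nu_0(dx_0) = r-1$, and feed $x=x_0$ into Lemma \ref{Lema de la existencia de x}. The ``in particular'' clause of that lemma states precisely that $p \nmid r$ under this hypothesis on $\nu_0(dx_0)$, which is the desired conclusion.

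So the whole argument is a one-line application; there is no real obstacle because the nontrivial content is already packaged into the preceding lemma (whose proof exploits that a nonzero element of $\overline{K}[z^p]$ contributes trivially to $dx$, forcing any nonzero contribution to come from a monomial $z^r u$ with $p \nmid r$). The only thing a reader must check is that the particular $x_0$ used in the definition of $d(C,P)$ via Proposition \ref{conexion con el trabajo de Bedoya} indeed lies in $\Ou_{C,P}$, which holds by construction.
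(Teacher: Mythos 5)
Your proposal is correct and follows exactly the paper's own argument: the paper's proof is the one-liner ``Follows from the above lemma and Proposition \ref{conexion con el trabajo de Bedoya},'' i.e.\ use Proposition \ref{conexion con el trabajo de Bedoya} to write $d(C,P)-1=\nu_0(dx_0)$ and then apply the ``in particular'' clause of Lemma \ref{Lema de la existencia de x}. Your additional care in noting that the rationality of some $P_n$ (from Lemma 2.1 of \cite{Bedoya}) is what makes Proposition \ref{conexion con el trabajo de Bedoya} applicable is a correct and welcome precision, not a deviation.
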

\begin{proof}
    Follows from the above lemma and Proposition \ref{conexion con el trabajo de Bedoya}.
\end{proof}

Another consequence of the last lemma is the converse mentioned in Remark \ref{differential degree and smoothness 1}.

\begin{prop}\label{rL=1 imply smooth}
If $d(C,P)=1$, then $P$ is smooth.
\end{prop}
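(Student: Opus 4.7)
The strategy is to show that the inclusion $\Ou_{C^{\overline{K}},P^{\overline{K}}} \subseteq \Ou_{\widetilde{C^{\overline{K}}},\widetilde{P^{\overline{K}}}}$ is actually an equality; since the right-hand side is a DVR, this is exactly the statement that $C^{\overline{K}}$ is regular at $P^{\overline{K}}$, i.e. that $C$ is smooth at $P$. The key point is to exhibit a uniformizer of the normalization already lying inside $\Ou_{C^{\overline{K}},P^{\overline{K}}}$, and then kill the cokernel of the inclusion by Nakayama's lemma.

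By Proposition \ref{conexion con el trabajo de Bedoya} the hypothesis $d(C,P)=1$ translates to $\nu_0(dx_0)=0$, where $x_0\in \Ou_{C,P}$ is chosen so that $\Ou_{C_1,P_1}[x_0]=\Ou_{C,P}$. Applying Lemma \ref{Lema de la existencia de x} with $r=1$, the polynomial $Q$ must have degree strictly less than $1$, so it reduces to a constant $c\in\overline{K}$; hence
\[
x_0 = c + zu
\]
for some uniformizer $z$ of $\Ou_{\widetilde{C^{\overline{K}}},\widetilde{P^{\overline{K}}}}$ and some unit $u$ of that ring. Consequently $t:=x_0-c$ is itself a uniformizer of $\Ou_{\widetilde{C^{\overline{K}}},\widetilde{P^{\overline{K}}}}$, and since $x_0\in\Ou_{C,P}$ and $c\in\overline{K}$, it already lies in $A:=\Ou_{C^{\overline{K}},P^{\overline{K}}}=\overline{K}\Ou_{C,P}$.

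Set $B:=\Ou_{\widetilde{C^{\overline{K}}},\widetilde{P^{\overline{K}}}}$. By Theorem 1 of \cite{Ros} (already invoked in the proof of Proposition \ref{semigrupo de un punto regular como se ve}) $B/A$ is finite-dimensional over $\overline{K}$, hence a finitely generated $A$-module. Because $t\in\m_A$ and $tB=\m_B$, we get
\[
B = \overline{K}+\m_B = \overline{K}+tB \subseteq A+\m_A B,
\]
so $\m_A\cdot(B/A)=B/A$. Nakayama's lemma then yields $B/A=0$, i.e.\ $A=B$, which is exactly the desired smoothness.

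The argument has no substantial obstacle: the whole content is already packaged in Lemma \ref{Lema de la existencia de x}, which in the critical case $r=1$ forces a uniformizer of the normalization to be visible inside $\Ou_{C^{\overline{K}},P^{\overline{K}}}$ up to an $\overline{K}$-constant. The only mildly delicate point is to recognize that Nakayama applies, which relies on the classical finiteness of $B$ over $A$ and on the fact that $A$ contains the full residue field $\overline{K}$ so that $B=\overline{K}+\m_B$ passes to the required inclusion $B\subseteq A+\m_A B$.
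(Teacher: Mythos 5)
Your proof is correct and follows essentially the same route as the paper's: use Proposition \ref{conexion con el trabajo de Bedoya} to translate $d(C,P)=1$ into $\nu_0(dx_0)=0$, apply Lemma \ref{Lema de la existencia de x} with $r=1$ to exhibit a uniformizer of $\Ou_{\widetilde{C^{\overline{K}}},\widetilde{P^{\overline{K}}}}$ already lying in $\overline{K}\Ou_{C,P}$, and conclude that the local ring equals its normalization. The only difference is that you spell out, via $B=\overline{K}+tB\subseteq A+\m_A B$ and Nakayama (with finiteness from Rosenlicht), the final step that the paper compresses into a single ``consequently''; this is a welcome clarification rather than a deviation.
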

\begin{proof}
If $d(C,P)=1$ then  Lemma \ref{Lema de la existencia de x} together with Proposition \ref{conexion con el trabajo de Bedoya} say that there exists $x\in \Ou_{C,P}$ such that $x=a_0+z$, where $z$ is an uniformizing parameter of $\Ou_{\widetilde{C^{\overline{K}}},\widetilde{P^{\overline{K}}}}$ and $a_0\in \overline{K}$. Therefore $z\in \overline{K}\Ou_{C,P}$ and, consequently,  $\Ou_{C^{\overline{K}},P^{\overline{K}}}=\overline{K}\Ou_{C,P}=\Ou_{\widetilde{C^{\overline{K}}},\widetilde{P^{\overline{K}}}}$. \end{proof}

\begin{lemm}\label{lemma prolegomeno a forma linear de karl}
Let us consider $x_0\in \overline{K}\Ou_{C,P}$ such that $\overline{K}\Ou_{C,P}=\overline{K}\Ou_{C_1,P_1}[x_0]$. Then the following properties hold.
\begin{enumerate}
    \item There is a derivation $D_{x_0}:\overline{K}\Ou_{C,P}\to \overline{K}\Ou_{C,P}$ such that   $D_{x_0}^{p-1}(\overline{K}\Ou_{C,P})\subset\overline{K}\Ou_{C_1,P_1}$ and $D_{x_0}(x_0)=1$;
    \item For every $w\in \overline{K}\Ou_{C,P}$ we have $\nu_0(D_{x_0}(w))\geq \nu_0(w)-d(C,P)$, with equality holding if and only if $p$ does not divide $\nu_0(w)$;
   \end{enumerate}

\end{lemm}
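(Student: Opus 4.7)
The plan is to build $D_{x_0}$ from an explicit description of $\overline{K}\Ou_{C,P}$ as a free module over $\overline{K}\Ou_{C_1,P_1}$, and then to compare it with the standard derivation $D_z$ of the DVR $\Ou_{\widetilde{C^{\overline{K}}},\widetilde{P^{\overline{K}}}}$, for which the valuation behaviour is classical.

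For (1), since the extension $K(C)/K(C_1)$ is purely inseparable of degree $p$, the hypothesis $\overline{K}\Ou_{C,P}=\overline{K}\Ou_{C_1,P_1}[x_0]$ forces $x_0^p\in \overline{K}\Ou_{C_1,P_1}$, so
\[
\overline{K}\Ou_{C,P}\;\cong\; \overline{K}\Ou_{C_1,P_1}[T]/(T^p-x_0^p).
\]
The formal derivative $d/dT$ on $\overline{K}\Ou_{C_1,P_1}[T]$ kills the relation $T^p-x_0^p$ (since $pT^{p-1}=0$), so it descends to a derivation $D_{x_0}$ of $\overline{K}\Ou_{C,P}$ which is $\overline{K}\Ou_{C_1,P_1}$-linear and satisfies $D_{x_0}(x_0)=1$. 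On the free basis $1,x_0,\ldots,x_0^{p-1}$ one computes directly $D_{x_0}^{p-1}\!\left(\sum_{i=0}^{p-1}a_i x_0^i\right)=(p-1)!\,a_{p-1}\in \overline{K}\Ou_{C_1,P_1}$.

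For (2), the key is to relate $D_{x_0}$ to the derivation $D_z$ of $\Ou_{\widetilde{C^{\overline{K}}},\widetilde{P^{\overline{K}}}}$ determined by $D_z(z)=1$, where $z$ is a uniformizer. I would first establish the identification $\overline{K}K(C_1)=(\overline{K}K(C))^p$: from the description of the relative Frobenius one has $K(C_1)=K\cdot K(C)^p$, and base-changing to the perfect field $\overline{K}$ yields the equality. Since $D_z$ extends to a nonzero $\overline{K}$-derivation of $\overline{K}K(C)$ and $[\overline{K}K(C):(\overline{K}K(C))^p]=p$, its kernel is exactly $(\overline{K}K(C))^p=\overline{K}K(C_1)$. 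Hence $D_z$ annihilates $\overline{K}\Ou_{C_1,P_1}$, so $D_z|_{\overline{K}\Ou_{C,P}}$ is $\overline{K}\Ou_{C_1,P_1}$-linear; agreement at $x_0$ then forces
\[
D_z|_{\overline{K}\Ou_{C,P}}=D_z(x_0)\cdot D_{x_0}.
\]

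Applying $\nu_0$ gives $\nu_0(D_{x_0}(w))=\nu_0(D_z(w))-\nu_0(D_z(x_0))$. The value $\nu_0(D_z(x_0))=d(C,P)-1$ follows exactly as in the proof of Proposition \ref{conexion con el trabajo de Bedoya}: because $d$ kills $\overline{K}\Ou_{C_1,P_1}$, the image of $\alpha$ is the principal submodule generated by $dx_0=D_z(x_0)\,dz$, so $\coker\alpha$ has $\overline{K}$-dimension $\nu_0(D_z(x_0))$. Combining this with the classical DVR estimate obtained by writing $w=uz^n$ with $u$ a unit and $n=\nu_0(w)$, so $D_z(w)=z^{n-1}(nu+D_z(u)z)$ with $\nu_0(D_z(w))\geq n-1$ and equality iff $p\nmid n$, yields precisely $\nu_0(D_{x_0}(w))\geq \nu_0(w)-d(C,P)$, with equality iff $p\nmid \nu_0(w)$. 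The main obstacle is establishing the identification $\overline{K}K(C_1)=(\overline{K}K(C))^p$, which is what converts the abstract $D_{x_0}$ into the concrete $D_z$ amenable to standard valuation computations; once this is in hand both parts follow from essentially formal manipulations.
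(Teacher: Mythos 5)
Your proof is correct and follows essentially the same route as the paper: both arguments reduce everything to the normalized DVR derivation via the identity $D_z|_{\overline{K}\Ou_{C,P}}=D_z(x_0)\cdot D_{x_0}$, the fact that $\nu_0(D_z(x_0))=\nu_0(dx_0)=d(C,P)-1$, and the classical estimate $\nu_0(D_z(w))\geq \nu_0(w)-1$ with equality iff $p\nmid\nu_0(w)$. The only (cosmetic) difference is in part (1): the paper defines $D_{x_0}$ outright as $D_z/D_z(x_0)$ and checks stability of $\overline{K}\Ou_{C,P}$ via the basis $1,x_0,\dots,x_0^{p-1}$, whereas you build it as the formal derivative on $\overline{K}\Ou_{C_1,P_1}[T]/(T^p-x_0^p)$ and identify it with $D_z/D_z(x_0)$ afterwards.
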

\begin{proof}
(1) If $z$ is an uniformizing parameter of $\Ou_{\widetilde{C^{\overline{K}}},\widetilde{P^{\overline{K}}}}$, we define $D_{x_0}=D
_{z}/D_{z}(x_0)$ where $D_{z}$ is the unique derivation of  $\Ou_{\widetilde{C^{\overline{K}}},\widetilde{P^{\overline{K}}}}$ such that $D_z(z)=1$. 
Notice that $D_{x_0}(\overline{K}\Ou_{C_1,P_1})=0$ and $D_{x_0}(x_0)=1$. Given $w\in \Ou_{C,P}$ we may write $w=\sum_{i=0}^{p-1} a_ix_0^i$ where $a_i\in \overline{K}\Ou_{C_1,P_1}$. Therefore  $D_{x_0}(w)=\sum_{i=1}^{p-1} ia_ix_0^{i-1}$, which in turn implies that $D_{x_0}(\overline{K}\Ou_{C,P})\subset \overline{K}\Ou_{C,P}$ and  $D_{x_0}^{p-1}(\overline{K}\Ou_{C,P})\subset \overline{K}\Ou_{C_1,P_1}$.  

(2) From the definition of $D_{x_0}$ and since $\nu_0(dx_0)=d(C,P)-1$ we have $\nu_0(D_{x_0}(w))=\nu_0(D_{z}(w))-d(C,P)+1.$ Therefore $ \nu_0(D_{x_0}(w)) = \nu_0(dw) -d(C,P)+1 \geq \nu_0(w) -d(C,P)$, and the identity holds if and only if $\nu_0(w)$ is coprime with $p$. 
\end{proof}

\begin{thm}\label{descomposicion simple del semigrupo}
If $d(C_1,P_1)=1$ then \[\Gamma_{C^{\overline{K}},P^{\overline{K}}}=d(C,P)\mathbb{Z}_{\geq 0}+p\mathbb{Z}_{\geq 0}.\]
\end{thm}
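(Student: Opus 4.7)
The plan is to use the hypothesis $d(C_1,P_1)=1$ to put $\overline{K}\Ou_{C,P}$ into a rank-$p$ module description over a DVR, and then read off $\Gamma_{C^{\overline{K}},P^{\overline{K}}}$ from the coprimality of $d(C,P)$ with $p$.

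First, by Proposition \ref{rL=1 imply smooth} the hypothesis $d(C_1,P_1)=1$ forces $P_1$ to be smooth, so $\overline{K}\Ou_{C_1,P_1}=\Ou_{\widetilde{C_1^{\overline{K}}},\widetilde{P_1^{\overline{K}}}}$ is a DVR. As noted in the proof of Proposition \ref{conexion con el trabajo de Bedoya}, $\nu_0$ is totally ramified over $\nu_1$ and a uniformizer of $\overline{K}\Ou_{C_1,P_1}$ is $z^p$, where $z$ is any uniformizer of $\Ou_{\widetilde{C^{\overline{K}}},\widetilde{P^{\overline{K}}}}$; in particular $\Gamma_{\overline{K}\Ou_{C_1,P_1}}=p\mathbb{Z}_{\geq 0}$.

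Next, I would pick $x_0\in\Ou_{C,P}$ with $\Ou_{C_1,P_1}[x_0]=\Ou_{C,P}$ (available from the discussion preceding the definition of $d(C,P)$), so after extending scalars $\overline{K}\Ou_{C,P}=\overline{K}\Ou_{C_1,P_1}[x_0]$. Since $\nu_0(dx_0)=d(C,P)-1$ by Proposition \ref{conexion con el trabajo de Bedoya}, Lemma \ref{Lema de la existencia de x} writes $x_0=Q(z^p)+z^{d(C,P)}u$ with $Q\in\overline{K}[T]$ of degree less than $d(C,P)$ and $u$ a unit. Because $Q(z^p)\in \overline{K}[z^p]\subset \overline{K}\Ou_{C_1,P_1}$, replacing $x_0$ by $x_0-Q(z^p)$ preserves the equality $\overline{K}\Ou_{C,P}=\overline{K}\Ou_{C_1,P_1}[x_0]$ and arranges $\nu_0(x_0)=d(C,P)$.

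With this normalization, the inclusion $d(C,P)\mathbb{Z}_{\geq 0}+p\mathbb{Z}_{\geq 0}\subseteq \Gamma_{C^{\overline{K}},P^{\overline{K}}}$ is immediate: $x_0$ and $z^p$ both lie in $\overline{K}\Ou_{C,P}$ with valuations $d(C,P)$ and $p$, and a semigroup is closed under sums. For the reverse inclusion, any $w\in \overline{K}\Ou_{C,P}$ can be written $w=\sum_{i=0}^{p-1}a_i x_0^i$ with $a_i\in\overline{K}\Ou_{C_1,P_1}$, exactly as in the proof of Lemma \ref{lemma prolegomeno a forma linear de karl}. Then $\nu_0(a_i x_0^i)=\nu_0(a_i)+i\,d(C,P)\in i\,d(C,P)+p\mathbb{Z}_{\geq 0}\cup\{+\infty\}$; by Corollary \ref{p no divide al grado diferenciasl} $\gcd(d(C,P),p)=1$, so the nonzero summands have valuations pairwise incongruent modulo $p$, hence pairwise distinct. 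Thus $\nu_0(w)$ equals one of the $\nu_0(a_i x_0^i)$ and lies in $\bigcup_{i=0}^{p-1}(id(C,P)+p\mathbb{Z}_{\geq 0})=d(C,P)\mathbb{Z}_{\geq 0}+p\mathbb{Z}_{\geq 0}$.

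The delicate step is the normalization $\nu_0(x_0)=d(C,P)$ in the second paragraph: without absorbing $Q(z^p)$ into $\overline{K}\Ou_{C_1,P_1}$, the powers $x_0^i$ would not cleanly sit in distinct residue classes modulo $p$ and the clean min-rule for $\nu_0(w)$ would fail. Once this adjustment is in place the rest is bookkeeping, with the coprimality $\gcd(d(C,P),p)=1$ doing the main work.
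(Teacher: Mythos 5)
Your proof is correct, and while the easy inclusion $d(C,P)\mathbb{Z}_{\geq 0}+p\mathbb{Z}_{\geq 0}\subseteq \Gamma_{C^{\overline{K}},P^{\overline{K}}}$ is obtained exactly as in the paper (normalize $x_0$ via Lemma \ref{Lema de la existencia de x} so that $\nu_0(x_0)=d(C,P)$, and note $z^p\in\overline{K}\Ou_{C_1,P_1}\subset\overline{K}\Ou_{C,P}$ using $d(C_1,P_1)=1$), your argument for the reverse inclusion is genuinely different. The paper proves it by iterating the derivation $D_{x_0}$ of Lemma \ref{lemma prolegomeno a forma linear de karl}: for $x$ with $p\nmid\nu_0(x)$ it applies $D_{x_0}$ exactly $n_x$ times, dropping the valuation by $d(C,P)$ at each step until a non-negative multiple of $p$ is reached. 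You instead bypass the derivation entirely and read the semigroup off the free module decomposition $\overline{K}\Ou_{C,P}=\bigoplus_{i=0}^{p-1}\overline{K}\Ou_{C_1,P_1}\,x_0^i$ over the DVR $\overline{K}\Ou_{C_1,P_1}$ (value group $p\mathbb{Z}_{\geq 0}$), using that the normalized $x_0^i$ have valuations $i\,d(C,P)$ hitting all residues mod $p$ by Corollary \ref{p no divide al grado diferenciasl}, so the ultrametric minimum rule applies with no cancellation. Your route is more elementary and makes the structural reason for $\Gamma_{C^{\overline{K}},P^{\overline{K}}}=\langle d(C,P),p\rangle$ transparent (an Ap\'ery-set style argument); the paper's derivation-based descent has the advantage of reusing machinery ($D_{x_0}$ and its valuation-drop property) that is already set up and exploited elsewhere in Section \ref{section 3.2}. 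The one step you rightly flag as delicate — that $Q(z^p)$ can be absorbed into $\overline{K}\Ou_{C_1,P_1}$ so that the modified $x_0$ still generates and has valuation exactly $d(C,P)$ — is valid precisely because $d(C_1,P_1)=1$ forces $\overline{K}\Ou_{C_1,P_1}=\Ou_{\widetilde{C_1^{\overline{K}}},\widetilde{P_1^{\overline{K}}}}\ni z^p$, and in characteristic $p$ one still has $(x_0-Q(z^p))^p\in\overline{K}\Ou_{C_1,P_1}$, so the rank-$p$ decomposition persists.
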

\begin{proof}
As we have already seen, if $x_0\in \Ou_{C,P}$ is such that $\Ou_{C,P}=\Ou_{C_1,P_1}[x_0]$, then $\nu_0(dx_0)=d(C,P)-1$ and, from Lemma \ref{Lema de la existencia de x},    $x_0=Q+y$ with $\nu_0(y)=d(C,P)$ and $Q\in \Ou_{\widetilde{C_1^{\overline{K}}},\widetilde{P_1^{\overline{K}}}}$. Since $d(C_1,P_1)=1$ we obtain $\left(\Ou_{\widetilde{C^{\overline{K}}},\widetilde{P^{\overline{K}}}}\right)^p=\Ou_{\widetilde{C_1^{\overline{K}}},\widetilde{P_1^{\overline{K}}}}=\overline{K}\Ou_{C_1,P_1}$ and, consequently, if $z$ is the uniformizing parameter of $\Ou_{\widetilde{C^{\overline{K}}},\widetilde{P^{\overline{K}}}}$, then  $z^p, y\in \overline{K}\Ou_{C,P} \mbox{ and } d(C,P)\mathbb{Z}_{\geq 0}+p\mathbb{Z}_{\geq 0}\subset \Gamma_{C^{\overline{K}},P^{\overline{K}}}.$ 

Let $D_{x_0}:\overline{K}\Ou_{C,P}\to \overline{K}\Ou_{C,P}$ be the derivation defined in Lemma \ref{lemma prolegomeno a forma linear de karl}.
Given $x\in \overline{K}\Ou_{C,P}$ if $p$ does not divide $\nu_{0}(x)$, then $\nu_{0}(x)=n_xd(C,P)\mod(p)$ for some $n_x\in \{1,\ldots,p-1\}$. Therefore, after applying item (2) of the previous lemma $n_x$ times, we get that $\nu_{0}(D_{x_0}^{n_x}(x))=\nu_{0}(x)-n_xd(C,P)$ is a multiple of $p$ and we conclude that $\nu_0(x)\in d(C,P)\mathbb{Z}_{\geq 0}+p\mathbb{Z}_{\geq 0}$.
\end{proof}

\begin{prop}\label{grado de un punto divide deferencia de grados diferenciales}
We have that 
$[\textbf{k}(C_i,P_i):K]\mid d(C,P)-d(C_i,P_i)$, for every positive integer $i$.
\end{prop}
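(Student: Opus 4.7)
The plan is to reduce the divisibility claim to a single valuation computation on $K(C_n)$ for $n\geq i$ chosen so that $P_n$ is rational (such $n$ exists by Lemma 2.1 of \cite{Bedoya}, as recalled above), and then to exploit the ramification behaviour of the relevant valuations.

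First, pick $x_0\in\Ou_{C,P}$ with $\Ou_{C,P}=\Ou_{C_1,P_1}[x_0]$ and $y_i\in\Ou_{C_i,P_i}$ with $\Ou_{C_i,P_i}=\Ou_{C_{i+1},P_{i+1}}[y_i]$. Applying Proposition \ref{conexion con el trabajo de Bedoya} both to $(C,P)$ with index $n$ and to $(C_i,P_i)$ with index $n-i$ (noting $(C_i)_{n-i}=C_n$ with its rational point $P_n$) yields
\[
d(C,P)-1=\nu_{P_n}(dx_0^{p^n})\qquad\text{and}\qquad d(C_i,P_i)-1=\nu_{P_n}(dy_i^{p^{n-i}}),
\]
with differentials viewed in the one-dimensional $K(C_n)$-vector space $\Omega_{K(C_n)/K}$.

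Second, since the latter valuation is finite, $y_i^{p^{n-i}}$ is a separating variable for $K(C_n)/K$; writing $dx_0^{p^n}=h\cdot dy_i^{p^{n-i}}$ with $h\in K(C_n)$ gives, by subtraction,
\[
d(C,P)-d(C_i,P_i)=\nu_{P_n}(h).
\]
Similarly, write $dx_0^{p^i}=g\cdot dy_i$ in $\Omega_{K(C_i)/K}$ with $g\in K(C_i)$. The Frobenius--derivation identity $h=g^{p^{n-i}}$ must now be verified. To this end, choose a uniformizer $z_i$ of $\Ou_{\widetilde{C_i^{\overline{K}}},\widetilde{P_i^{\overline{K}}}}$ and note that $\zeta_n:=z_i^{p^{n-i}}$ is a uniformizer of $\Ou_{\widetilde{C_n^{\overline{K}}},\widetilde{P_n^{\overline{K}}}}$ (by total ramification of $\nu_0$ over $\nu_n$ with index $p^n$). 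Expanding $x_0^{p^i}$ and $y_i$ as power series in $z_i$ over $\overline{K}$, raising to the $p^{n-i}$-th power (which distributes over sums in characteristic $p$), and using the fact that $k^{p^{n-i}}\equiv k\pmod p$ for $k\in\mathbb{Z}$, one directly verifies that $D_{\zeta_n}(x_0^{p^n})=\bigl(D_{z_i}(x_0^{p^i})\bigr)^{p^{n-i}}$ and analogously for $y_i$; the identifications $h=D_{\zeta_n}(x_0^{p^n})/D_{\zeta_n}(y_i^{p^{n-i}})$ and $g=D_{z_i}(x_0^{p^i})/D_{z_i}(y_i)$ (both being the intrinsic coefficients of the respective differentials, computed via the $z_i$-basis of the $\overline{K}$-extensions) then give $h=g^{p^{n-i}}$.

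Third, $g^{p^{n-i}}\in K(C_i)^{p^{n-i}}\subseteq K(C_n)$ (the inclusion holds because every purely inseparable degree-$p$ step satisfies $K(C_j)^p\subseteq K(C_{j+1})$). Then
\[
\nu_{P_n}\bigl(g^{p^{n-i}}\bigr)=\nu_n\bigl(g^{p^{n-i}}\bigr)=\nu_i(g)=[\textbf{k}(C_i,P_i):K]\cdot\nu_{P_i}(g),
\]
using successively: the rationality of $P_n$ (so $\nu_{P_n}=\nu_n$ on $K(C_n)$); total ramification of $\nu_0$ over both $\nu_i$ and $\nu_n$, whence $\nu_i=p^{n-i}\nu_n$ on $\overline{K}K(C_n)$ and $\nu_n(g^{p^{n-i}})=\nu_i(g^{p^{n-i}})/p^{n-i}=\nu_i(g)$; and the identity $\nu_i(t)=[\textbf{k}(C_i,P_i):K]$ for a uniformizer $t$ of $\Ou_{C_i,P_i}$, from the proof of Lemma 1.5 in \cite{Bedoya} applied to $C_i$ (which implies $\nu_i=[\textbf{k}(C_i,P_i):K]\cdot\nu_{P_i}$ on $K(C_i)$). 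Consequently $d(C,P)-d(C_i,P_i)=[\textbf{k}(C_i,P_i):K]\cdot\nu_{P_i}(g)$, which is divisible by $[\textbf{k}(C_i,P_i):K]$, as required.

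The main technical obstacle is the Frobenius--derivation identity in the second step, as it requires careful tracking of how uniformizers of $C_i^{\overline{K}}$ and $C_n^{\overline{K}}$ relate through the $(n-i)$-fold Frobenius descent; the remaining steps are then standard discrete valuation bookkeeping.
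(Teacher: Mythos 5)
Your proof is correct and rests on the same key mechanism as the paper's: the ratio $g$ (the paper's $f$) relating $dx_0^{p^i}$ to $dy_i$ lives in $K(C_i)$, and the divisibility comes from $\nu_i=[\textbf{k}(C_i,P_i):K]\cdot\nu_{P_i}$ on $K(C_i)$, i.e.\ from the ramification statement in Lemma 1.5 of Bedoya et al. The only difference is that you detour through a level $n$ where $P_n$ is rational and verify the Frobenius identity $h=g^{p^{n-i}}$, whereas the paper works directly at level $i$, using the rationality-free part of Proposition \ref{conexion con el trabajo de Bedoya} to get $\nu_i(dx_0^{p^i})=d(C,P)-1$ and thereby avoiding that extra verification.
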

\begin{proof}
Take $x\in \Ou_{C,P}$ and $x_i\in \Ou_{C_i,P_i}$ such that $\nu(dx)=d(C,P)-1$ and  $\nu_i(dx_i)=d(C_i,P_i)-1$. With the techniques developed in the proof of Proposition \ref{conexion con el trabajo de Bedoya} we may see that $dx^{p^i}=fdx_i$ with $f\in \Ou_{C_i,P_i}$. From the proof of Lemma 1.5 in \cite{Bedoya} we can deduce that $\nu_i(\Ou_{C_i,P_i})=[\textbf{k}(C_i,P_i):K]\mathbb{Z}$, which in turn implies  that $[\textbf{k}(C_i,P_i):K]\mid \nu_i(f)=\nu_i(dx^{p^i})-\nu_i(dx_i)=d(C,P)-d(C_i,P_i)$. 
\end{proof}
\begin{cor}\label{caso muy general de uso}
If $d(C,P)-1$ is not divided by $p$ and $m$ is an integer such that $P_m$ is smooth, then  $P_m$ is also rational.
\end{cor}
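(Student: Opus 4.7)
The plan is to combine the differential-degree divisibility of Proposition \ref{grado de un punto divide deferencia de grados diferenciales} with the characterisation of smoothness via $d(C_m,P_m)=1$ in Proposition \ref{rL=1 imply smooth}, and then use that residue field extensions over a separably closed field of characteristic $p$ are purely inseparable.

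First, since $P_m$ is assumed smooth, Proposition \ref{rL=1 imply smooth} (applied to the curve $C_m$ at the point $P_m$) gives $d(C_m,P_m)=1$. Plugging $i=m$ into Proposition \ref{grado de un punto divide deferencia de grados diferenciales}, we obtain
\[
[\textbf{k}(C_m,P_m):K] \;\bigm|\; d(C,P)-d(C_m,P_m) \;=\; d(C,P)-1.
\]

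Next, recall that $K$ is a separably closed field of characteristic $p$. Consequently, every finite extension of $K$ is purely inseparable, and its degree is a power of $p$. In particular $[\textbf{k}(C_m,P_m):K]=p^s$ for some non-negative integer $s$. By hypothesis $p\nmid d(C,P)-1$, so the only power of $p$ dividing $d(C,P)-1$ is $p^0=1$. Therefore $[\textbf{k}(C_m,P_m):K]=1$, i.e.\ $P_m$ is a rational point.

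There is no real obstacle: once Propositions \ref{rL=1 imply smooth} and \ref{grado de un punto divide deferencia de grados diferenciales} are in hand, the assertion is essentially a numerical observation combining the divisibility $[\textbf{k}(C_m,P_m):K]\mid d(C,P)-1$ with the fact that residue field degrees over a separably closed field are powers of $p$. The only subtle point to double-check is the use of ``$K$ separably closed'' (a standing hypothesis throughout Section \ref{local invariants at non-smooth points}) to conclude that $[\textbf{k}(C_m,P_m):K]$ is a $p$-power; this is exactly what makes coprimality of $d(C,P)-1$ with $p$ force the degree down to $1$.
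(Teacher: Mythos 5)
Your argument is correct and is essentially the paper's own proof: smoothness at $P_m$ gives $d(C_m,P_m)=1$, Proposition \ref{grado de un punto divide deferencia de grados diferenciales} gives $[\textbf{k}(C_m,P_m):K]\mid d(C,P)-1$, and the $p$-power degree forces rationality. The only nitpick is that the implication ``smooth $\Rightarrow d=1$'' is the content of Remark \ref{differential degree and smoothness 1} (immediate from the definition), whereas Proposition \ref{rL=1 imply smooth} is the converse.
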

\begin{proof}
We have that $d(C_m,P_m)=1$ and then $[\textbf{k}(C_m,P_m):K]\mid d(C,P)-1$, from Proposition \ref{grado de un punto divide deferencia de grados diferenciales}. Since $p$ does not divide  $d(C,P)-1$ then the $p$-power $[\textbf{k}(C_m,P_m):K]$ is coprime with $p$, that is, $[\textbf{k}(C_m,P_m):K]=1$ and $P_m$ is rational.
\end{proof}
\begin{thm} \label{cota para o grau do ponto}
We have that
 $[\textbf{k}(C,P):K]\mid c(C,P)+d(C,P)-1$
\end{thm}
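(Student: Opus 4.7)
The plan is to reduce the divisibility to a telescoping identity obtained by iterating the recursion behind Theorem \ref{Formula para el conductor}, and then to verify divisibility term by term using Proposition \ref{grado de un punto divide deferencia de grados diferenciales}.

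First, I would fix $n$ large enough that $P_n$ is rational (which exists by Lemma 2.1 of \cite{Bedoya}), so that $d(C_n,P_n)=1$ and $c(C_n,P_n)=0$. From the recursion
\[c(C_k,P_k)=p\,c(C_{k+1},P_{k+1})+(p-1)(d(C_k,P_k)-1),\]
used in the proof of Theorem \ref{Formula para el conductor}, a direct rearrangement gives
\[c(C_k,P_k)+d(C_k,P_k)-1 \;=\; p\bigl(d(C_k,P_k)-d(C_{k+1},P_{k+1})\bigr)+p\bigl(c(C_{k+1},P_{k+1})+d(C_{k+1},P_{k+1})-1\bigr).\]
Iterating this identity from $k=0$ down to $k=n$, and using $c(C_n,P_n)+d(C_n,P_n)-1=0$, I obtain the clean telescoping formula
\[c(C,P)+d(C,P)-1 \;=\; \sum_{i=0}^{n-1} p^{\,i+1}\bigl(d(C_i,P_i)-d(C_{i+1},P_{i+1})\bigr).\]

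Next, I would show that each summand on the right is divisible by $e:=[\textbf{k}(C,P):K]$. Applying Proposition \ref{grado de un punto divide deferencia de grados diferenciales} with $C_i$ in place of $C$ (and iterate $1$) gives
\[e_{i+1} \mid d(C_i,P_i)-d(C_{i+1},P_{i+1}), \qquad \text{where } e_j:=[\textbf{k}(C_j,P_j):K].\]
On the other hand, the inclusion of local rings $\Ou_{C_{i+1},P_{i+1}}\hookrightarrow \Ou_{C,P}$ arising from the $(i{+}1)$-fold iterated factor of Frobenius is finite of degree at most $[K(C):K(C_{i+1})]=p^{i+1}$. Passing to residue fields gives $[\textbf{k}(C,P):\textbf{k}(C_{i+1},P_{i+1})]\le p^{i+1}$, hence
\[e=[\textbf{k}(C,P):\textbf{k}(C_{i+1},P_{i+1})]\cdot e_{i+1} \;\big|\; p^{\,i+1}e_{i+1}.\]
Since all these numbers are powers of $p$ (because $K$ is separably closed of characteristic $p$), combining the two divisibilities yields
\[e \;\big|\; p^{\,i+1}\bigl(d(C_i,P_i)-d(C_{i+1},P_{i+1})\bigr)\]
for every $i=0,\ldots,n-1$, and summing gives the claim.

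The main technical step is the second displayed inequality: that the residue-field degree $[\textbf{k}(C,P):\textbf{k}(C_{i+1},P_{i+1})]$ is bounded by the function-field degree $p^{i+1}$. This is the standard bound for finitely generated modules, applied to the fact (already used throughout Section \ref{local invariants at non-smooth points}) that $\Ou_{C,P}$ is generated by a single element over $\Ou_{C_1,P_1}$ with minimal polynomial of degree $\le p$, iterated $i+1$ times; everything else is bookkeeping built on the recursion of Theorem \ref{Formula para el conductor} and the purely inseparable nature of the Frobenius factorization, which force all relevant degrees to be powers of $p$.
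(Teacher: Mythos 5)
Your argument is correct, and it reaches the conclusion by a genuinely different decomposition than the paper's. The paper applies Proposition \ref{grado de un punto divide deferencia de grados diferenciales} ``globally'', getting $[\textbf{k}(C,P):K]\mid (p-1)p^i(d(C,P)-d(C_i,P_i))$ for each $i$, sums these, and then uses Theorem \ref{Formula para el conductor} to rewrite the sum as $p^{n+1}(d(C,P)-1)-(c(C,P)+d(C,P)-1)$; the leftover term $p^{n+1}(d(C,P)-1)$ is discarded because $[\textbf{k}(C,P):K]\mid p^{n}$. You instead rearrange the underlying recursion $c(C_k,P_k)=p\,c(C_{k+1},P_{k+1})+(p-1)(d(C_k,P_k)-1)$ into the exact telescoping identity $c(C,P)+d(C,P)-1=\sum_{i=0}^{n-1}p^{i+1}\bigl(d(C_i,P_i)-d(C_{i+1},P_{i+1})\bigr)$ and verify divisibility term by term, applying the proposition ``locally'' with $C_i$ as the base curve and one Frobenius step (which is legitimate, since $(C_i)_1=C_{i+1}$ and the hypotheses of Subsection \ref{section 3.2} persist for the normalizations $C_i$, as the paper itself exploits in the proof of Theorem \ref{Formula para el conductor}). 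The two ingredients are the same, but your version needs no final correction step, and it makes explicit --- via the iterated single-generator structure $\Ou_{C_1,P_1}[x_0]=\Ou_{C,P}$ and the fact that all residue degrees are $p$-powers over a separably closed field --- the divisibility $[\textbf{k}(C,P):\textbf{k}(C_{i+1},P_{i+1})]\mid p^{i+1}$, which the paper asserts without comment. The paper's route, in exchange, only ever invokes the key proposition in the exact form in which it is stated.
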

\begin{proof}
    Since $[\textbf{k}(C_i,P_i):K]$ divides $d(C,P)-d(C_i,P_i)$ and $[\textbf{k}(C,P):\textbf{k}(C_i,P_i)]$ divides $p^i$ then 
$[\textbf{k}(C,P):K]=[\textbf{k}(C,P):\textbf{k}(C_i,P_i)][\textbf{k}(C_i,P_i):K]\mid (d(C,P)-d(C_i,P_i))p^i.$ Therefore:
$$[\textbf{k}(C,P):K]\mid \sum_{i=0}^n (p-1)(d(C,P)-d(C_i,P_i))p^i=d(C,P)(p^{n+1}-1)-\sum_{i=0}^n (p-1)d(C_i,P_i)p^i$$
where $n$ is such that $P_n$ is rational.
    Notice that Theorem \ref{Formula para el conductor}, applied to $P_{n+1}$, says that $$p^{n+1}-1-\sum_{i=0}^n (p-1)d(C_i,P_i)p^i=\sum_{i=0}^n (p-1)(1-d(C_i,P_i))p^i=-c(C,P)$$
    Then $[\textbf{k}(C,P):K]\mid p^{n+1}(d(C,P)-1) -(c(C,P)+d(C,P)-1)$ and the proof follows from the fact that $[\textbf{k}(C,P):K]\mid p^n$.
\end{proof}
\begin{cor}\label{estudio del grado de singularidad}
    If the singularity degree at the regular point $P$ of $C$ is smaller than $(p-1)^2/2$ then:
\begin{enumerate}
    \item $1<d(C,P)<p$ and $d(C_1,P_1)=1$;
    \item $P_1$ is a rational point of $C_1$;
    \item $[\textbf{k}(C,P):K]=p$.
    \item $\dim_{\overline{K}}\m_{C^{\overline{K}},P^{\overline{K}}}/\m_{C^{\overline{K}},P^{\overline{K}}}^2=2$
    \item $\Gamma_{C^{\overline{K}},P^{\overline{K}}}$ is generated by $d(C,P)$ and $p$.
\end{enumerate}
\end{cor}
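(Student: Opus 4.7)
The plan is to convert the hypothesis $c(C,P)=2\delta(C,P)<(p-1)^2$ into numerical constraints on the sequence $d(C_i,P_i)$ via the recursive formula in Theorem \ref{Formula para el conductor}. The statement is substantive only when $P^{\overline{K}}$ is singular, so I tacitly assume $\delta(C,P)>0$; then Proposition \ref{rL=1 imply smooth} gives $d(C,P)>1$ and Corollary \ref{p no divide al grado diferenciasl} gives $p\nmid d(C,P)$. Keeping only the first term in Theorem \ref{Formula para el conductor} yields $(p-1)(d(C,P)-1)\leq c(C,P)<(p-1)^2$, hence $d(C,P)<p$. If additionally $d(C_1,P_1)\geq 2$, then the first two terms give $c(C,P)\geq (p-1)+p(p-1)=p^2-1$, contradicting $c(C,P)<(p-1)^2$. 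Hence $d(C_1,P_1)=1$, proving (1). Proposition \ref{rL=1 imply smooth} then yields $P_1$ smooth, and since $1\leq d(C,P)-1\leq p-2$ is coprime to $p$, Corollary \ref{caso muy general de uso} with $m=1$ gives (2).

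For (3), the formula in Theorem \ref{Formula para el conductor} collapses to $c(C,P)=(p-1)(d(C,P)-1)$, whence $c(C,P)+d(C,P)-1=p(d(C,P)-1)$. Theorem \ref{cota para o grau do ponto} then forces $[\textbf{k}(C,P):K]\mid p(d(C,P)-1)$; since $K$ is separably closed, $[\textbf{k}(C,P):K]$ is a power of $p$, and since $\gcd(d(C,P)-1,p)=1$ we are left with $[\textbf{k}(C,P):K]\in\{1,p\}$. The case $[\textbf{k}(C,P):K]=1$ would make $P$ a rational regular point whose base change to $\overline{K}$ remains a one-dimensional regular local ring, so that $P^{\overline{K}}$ would be smooth, contradicting $\delta(C,P)>0$. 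Hence $[\textbf{k}(C,P):K]=p$.

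For (4), since $K$ is separably closed, $[\textbf{k}(C,P):K]=p$ means $\textbf{k}(C,P)=K(a)$ for some $a$ with $a^p\in K$, which gives $\dim_{\textbf{k}(C,P)}\Omega_{\textbf{k}(C,P)/K}=1$ (the element $a$ forms a $p$-basis). Proposition \ref{basis canonica de embebimento} then restricts $\dim_{\overline{K}}\m_{C^{\overline{K}},P^{\overline{K}}}/\m_{C^{\overline{K}},P^{\overline{K}}}^2$ to $\{1,2\}$, and the value $1$ would again make $P^{\overline{K}}$ smooth, so (4) follows. Finally, (5) is a direct application of Theorem \ref{descomposicion simple del semigrupo}, which is legitimate because we have already established $d(C_1,P_1)=1$.

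The main subtlety is the elimination of $[\textbf{k}(C,P):K]=1$ in (3) (and, similarly, of $\dim_{\overline{K}}\m/\m^2=1$ in (4)): both rely on the fact that a DVR with residue field $K$ remains one-dimensional and regular after tensoring with $\overline{K}$, contradicting the singularity of $P^{\overline{K}}$. Once this observation is in place, the remainder of the proof is numerical bookkeeping with Theorem \ref{Formula para el conductor}, Proposition \ref{grado de un punto divide deferencia de grados diferenciales}, and Theorem \ref{cota para o grau do ponto}.
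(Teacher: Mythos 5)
Your proof is correct and follows essentially the same route as the paper: bounding the terms of the conductor formula (Theorem \ref{Formula para el conductor}) to get (1), then Corollary \ref{caso muy general de uso} for (2), Theorem \ref{cota para o grau do ponto} plus the fact that $[\textbf{k}(C,P):K]$ is a $p$-power for (3), Proposition \ref{basis canonica de embebimento} for (4), and Theorem \ref{descomposicion simple del semigrupo} for (5). Your only deviations are cosmetic improvements — you make explicit the tacit assumption $\delta(C,P)>0$ and spell out why $[\textbf{k}(C,P):K]=1$ is impossible, which the paper leaves implicit.
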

\begin{proof}
   $(1)$ and $(2)$ Theorem \ref{Formula para el conductor} says that $0<\sum_i (p-1)(d(C_i,P_i)-1)p^i<(p-1)^2$ and, consequently, $d(C_1,P_1)=1$ and $1<d(C,P)<p$. In particular $p$ is coprime with $d(C,P)-1$ and Corollary \ref{caso muy general de uso} together with Proposition \ref{rL=1 imply smooth} imply that $P_1$ is rational. $(3)$ The above theorem provides that $[\textbf{k}(C,P):K]\leq (p-1)p$ and, hence, $[\textbf{k}(C,P):K]=p$, since $[\textbf{k}(C,P):K]>1$ is a power of $p$. $(4)$ Follows from
Proposition \ref{basis canonica de embebimento}, because $[\textbf{k}(C,P):K]=p$ and 
$P^{\overline{K}}$ singular imply, respectively, that $\dim_{\textbf{k}(C,P)} \Omega_{\textbf{k}(C,P)/K}=1$  and  $\dim_{\overline{K}}\m_{C^{\overline{K}},P^{\overline{K}}}/\m_{C^{\overline{K}},P^{\overline{K}}}^2>1$. $(5)$ Follows from $(1)$ and Theorem \ref{descomposicion simple del semigrupo}.
\end{proof}

\section{Invertible sheaves on non-smooth regular curves}\label{Sectionn 4}

In this section we will describe some properties of suitable sheaves on non-smooth regular curves, which will be important to explore the geometry of the singularities appearing after their base changes. To simplify the notation, we will use $C^{k}$ and $P^k$ instead of the previously introduced notation $C^{K^{p^{-k}}}$ and $P^{K^{p^{-k}}}$, respectively, where $k$ is any positive integer.

Notice that the absolute Frobenious morphism provides the isomorphism $\widetilde{C^k}\simeq C_k$ of curves defined over $K^{p^{-k}}$ and $K$, respectively, for every $k\geq 0$.  Let $\mathfrak{b}_k:\widetilde{C^k}\to C$ (resp. $\mathfrak{b}_{\infty}:\widetilde{C^{\overline{K}}}\to C$) be the composition of the normalization morphism $\mathfrak{n}_k:\widetilde{C^k}\to C^k$ (resp. $\mathfrak{n}_{\infty}:\widetilde{C^{\overline{K}}}\to C^{\overline{K}}$) with the base change morphism $\mathfrak{e}_k:C^k\to C$ (resp. $\mathfrak{e}_{\infty}:C^{\overline{K}}\to C$). 

Suppose that $n$ is chosen in such a way that the points of  $\widetilde{C^n}$, lying over the non-smooth points of $C$, are rational. The existence of such $n$ is guaranteed by Lemma 2.1 pg. 313 of \cite{Bedoya}. We define the divisors $D'=\sum_{P\in C}c(C,P)\widetilde{P^n}$ and $D''=\sum_{P\in C}(d(C,P)-1)\widetilde{P^n}$ of $\widetilde{C^n}$ and, according to Theorem \ref{cota para o grau do ponto}, we define the divisor  $D=\sum_{P\in C} a_PP$ of $C$,  where $a_P:=\frac{c(C,P)+d(C,P)-1}{[\textbf{k}(C,P):K]}$. Notice that the support of $D$ consists only at non-smooth points of $C$ and $D'+D''=\mathfrak{b}_n^{\ast}D$.

\begin{thm}\label{Lema a cerca de igualdad de divisores}
Let $\mathcal{L}_1$ and $\mathcal{L}_2$ be two invertible sheaves of $C$. If $n$ is as above and  $\mathfrak{b}_n^{\ast}\mathcal{L}_1\simeq\mathfrak{b}_n^{\ast}\mathcal{L}_2$,  then $\mathcal{L}_1\simeq \mathcal{L}_2$.
\end{thm}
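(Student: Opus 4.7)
The plan is to reduce to the case $\mathcal{L}_2 = \Ou_C$ by setting $\mathcal{M} := \mathcal{L}_1 \otimes \mathcal{L}_2^{-1}$, so the hypothesis becomes $\mathfrak{b}_n^*\mathcal{M} \simeq \Ou_{\widetilde{C^n}}$ and the goal is to prove $\mathcal{M} \simeq \Ou_C$. Writing $\mathcal{M} = \Ou_C(E)$ and picking a trivializing rational function $g \in K^{1/p^n}(C) = K(\widetilde{C^n})$ with $\Div_{\widetilde{C^n}}(g) = \mathfrak{b}_n^*E$, the task becomes producing $f \in K(C)$ with $\Div_C(f) = E$. Each closed point $P$ of $C$ pulls back to $\mathfrak{b}_n^* P = [\textbf{k}(C,P):K]\cdot\widetilde{P^n}$ by the proof of Lemma 1.5 of \cite{Bedoya} (using the rationality of $\widetilde{P^n}$), so $\mathfrak{b}_n^{\ast}$ is injective on Weil divisors and checking $\Div_C(f) = E$ amounts to checking the equality after pulling back.

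As a first observation, $g^{p^n} \in K(C)$ since $(K^{1/p^n})^{p^n} = K$ and Frobenius is a ring map, which yields $p^n E = \Div_C(g^{p^n})$ and hence $\mathcal{M}^{\otimes p^n} \simeq \Ou_C$. This shows $\mathcal{M}$ is $p^n$-torsion in $\operatorname{Pic}(C)$ but is not sufficient; to kill the remaining torsion, I would invoke the Cartier--Katz descent of Subsection \ref{section invariant sheaves} along the factorization $\mathfrak{b}_n = \mathfrak{e}_n \circ \mathfrak{n}_n$, where $\mathfrak{e}_n \colon C^n \to C$ is the purely inseparable base extension of Example \ref{pi1 is not smooth}. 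By Theorem \ref{funtor igualdad de vuelta}, producing $\mathcal{M} \simeq \Ou_C$ amounts to exhibiting a flat trivialization of $\mathfrak{e}_n^*\mathcal{M}$ over $C^n$, i.e.\ a nowhere-vanishing section in $(\mathfrak{e}_n^*\mathcal{M})^{\nabla}$ for the canonical $C$-invariant connection of Proposition \ref{definicion del funtor}.

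I would construct such a trivialization locally. Over the smooth locus of $C$, $\mathfrak{n}_n$ is an isomorphism, so the given $g$ defines a trivialization of $\mathfrak{e}_n^*\mathcal{M}$ directly, and the compatibility with $\nabla$ follows from the general descent correspondence. At a non-smooth point $P$, the task is to rescale $g$ by a constant $c \in (K^{1/p^n})^{\ast}$ so that the resulting section is simultaneously a unit in $\Ou_{C^n,P^n}$ (rather than only in $\Ou_{\widetilde{C^n},\widetilde{P^n}}$) and flat for $\nabla$. The key arithmetic input is the divisibility $[\textbf{k}(C,P):K] \mid c(C,P) + d(C,P) - 1$ from Theorem \ref{cota para o grau do ponto}, together with the identity $\mathfrak{b}_n^*D = D' + D''$ and the local control of the conductor ideal of $\mathfrak{n}_n$ via the invariants of Subsection \ref{section 3.2}; these ingredients together ensure that the $p^n$-torsion ambiguity above reduces to the trivial class.

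The main obstacle is precisely this local descent from $\widetilde{C^n}$ down to $C^n$ at the non-smooth points: the morphism $\mathfrak{n}_n$ can a priori enlarge the Picard group, and the reason no obstruction appears here is that the divisor $D = \sum_P a_P P$ has been tailored so that the coefficients of $\mathfrak{b}_n^*D$ are divisible by exactly the conductor data needed to clear the singular locus. This divisibility is the numerical content of Theorem \ref{cota para o grau do ponto}, and it is what makes the final passage from a trivialization on $\widetilde{C^n}$ to one on $C$ possible.
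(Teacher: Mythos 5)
Your reduction to $\mathcal{M}=\mathcal{L}_1\otimes\mathcal{L}_2^{\otimes(-1)}$ and the observation that $\mathcal{M}^{\otimes p^n}\simeq\Ou_C$ are both correct, but the heart of your argument --- passing from a trivialization of $\mathfrak{b}_n^{\ast}\mathcal{M}=\mathfrak{n}_n^{\ast}(\mathfrak{e}_n^{\ast}\mathcal{M})$ on the normalization $\widetilde{C^n}$ to a trivialization of $\mathfrak{e}_n^{\ast}\mathcal{M}$ on the singular curve $C^n$ --- has a genuine gap. The kernel of $\operatorname{Pic}(C^n)\to\operatorname{Pic}(\widetilde{C^n})$ is controlled by the unit quotients $\Ou_{\widetilde{C^n},\widetilde{P^n}}^{\ast}/\Ou_{C^n,P^n}^{\ast}$ at the singular points, and these are nontrivial precisely because $C^n$ is non-regular; this is the obstruction you must kill, and rescaling the trivializing function $g$ by a single constant $c\in (K^{1/p^n})^{\ast}$ cannot do it. Concretely, if $g=ue$ near $\widetilde{P^n}$ with $e$ a local generator of $(\mathfrak{e}_n^{\ast}\mathcal{M})_{P^n}$ and $u=1+z$ for a uniformizer $z$ of $\Ou_{\widetilde{C^n},\widetilde{P^n}}$ not lying in $\Ou_{C^n,P^n}$, then no constant multiple $cu$ is a unit of $\Ou_{C^n,P^n}$; and even if one point could be repaired, a single global constant cannot handle both non-smooth points at once. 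The divisibility $[\textbf{k}(C,P):K]\mid c(C,P)+d(C,P)-1$ of Theorem \ref{cota para o grau do ponto} is a statement about value semigroups and says nothing about these unit groups, so invoking it does not close the gap; in effect you are assuming that $\mathfrak{e}_n^{\ast}(\operatorname{Pic}(C))$ meets the kernel of $\operatorname{Pic}(C^n)\to\operatorname{Pic}(\widetilde{C^n})$ trivially, which is essentially the content of the theorem.

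The paper's proof goes around exactly this difficulty. It first tensors $\mathcal{L}_1,\mathcal{L}_2$ by an ample sheaf so that $\dim_K H^0(C,\mathcal{L}_i)$ exceeds $\sum_{P}c(C,P)/2$, then uses the exact sequence $0\to H^0(C^n,\mathfrak{e}_n^{\ast}\mathcal{L}_i)\to H^0(\widetilde{C^n},\mathfrak{b}_n^{\ast}\mathcal{L}_i)\to H^0\bigl(C^n,\Ou_{\widetilde{C^n}}/\Ou_{C^n}\bigr)$ and a codimension count against the conductor to find a \emph{common} section lying in both $H^0(C^n,\mathfrak{e}_n^{\ast}\mathcal{L}_1)$ and $H^0(C^n,\mathfrak{e}_n^{\ast}\mathcal{L}_2)$ inside $H^0(\widetilde{C^n},\mathfrak{b}_n^{\ast}\mathcal{L}_1)$; this replaces your attempted descent through the normalization and is where the conductor actually enters. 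Only after concluding $\mathfrak{e}_n^{\ast}(\mathcal{L}_1\otimes\mathcal{L}_2^{\otimes(-1)})\simeq\Ou_{C^n}$ does the purely inseparable descent begin, and it is performed one Frobenius step at a time via Proposition \ref{Prefacio a eigualdad clave} (note that the machinery of Subsection \ref{section invariant sheaves} applies to a single factorization of Frobenius, so Theorem \ref{funtor igualdad de vuelta} cannot be applied to $\mathfrak{e}_n$ in one stroke for $n>1$), yielding a nonzero --- not yet nowhere-vanishing --- section of $\mathcal{L}_1\otimes\mathcal{L}_2^{\otimes(-1)}$ and, symmetrically, of $\mathcal{L}_2\otimes\mathcal{L}_1^{\otimes(-1)}$; the two resulting effective divisors must then both vanish. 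To salvage your route you would need an actual argument that the trivialization on $\widetilde{C^n}$ can be corrected to one on $C^n$, and that is the step your proposal leaves unproved.
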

\begin{proof}
After tensorizing by an ample and invertible sheaf we can assume that $H^{0}(C,\mathcal{L}_i)$ has a dimension greater than $\sum_{P\in C} c(C,P)/2$. Notice  we have the following exact sequence.
\[0\to H^{0}(C^n,\mathfrak{e}_n^{\ast}(\mathcal{L}_i))\to H^{0}(\widetilde{C^n},\mathfrak{b}_n^{\ast}(\mathcal{L}_i))\to H^0\left(C^n,\frac{\Ou_{\widetilde{C^n}}}{\Ou_{C^n}}\right)\]
Therefore the codimiension of $H^{0}(C^n,\mathfrak{e}_n^{\ast}(\mathcal{L}_i))$ in $H^{0}(\widetilde{C^n},\mathfrak{b}_n^{\ast}(\mathcal{L}_i))$ is at most $\sum_{P\in C} c(C,P)/2$ and, from the assumption in the beginning, there is $s\in \bigcap_{i=1,2} H^{0}(C^n,\mathfrak{e}_n^{\ast}(\mathcal{L}_i))$. If $R$ is the Weil divisor induced by $s$, then $\mathfrak{e}_n^{\ast}(\mathcal{L}_1)=\Ou_{C^n}(R)=\mathfrak{e}_n^{\ast}(\mathcal{L}_1)$ and, therefore, $\mathfrak{e}_n^{\ast}(\mathcal{L}_1\otimes \mathcal{L}_2^{\otimes (-1)})\simeq \Ou_{C^n}$. Proposition \ref{Prefacio a eigualdad clave} applied to  $\mathfrak{e}_n^{\ast}(\mathcal{L}_1\otimes \mathcal{L}_2^{\otimes (-1)})$ says that $\mathfrak{e}^{\ast}_{n-1}(\mathcal{L}_1\otimes \mathcal{L}_2^{\otimes (-1)})$ has a non-vanishing global section. After applying recursively the above argument we obtain a non-vanishing global section for $\mathcal{L}_1\otimes \mathcal{L}_2^{\otimes (-1)}$ and, analogously, for $\mathcal{L}_2\otimes \mathcal{L}_1^{\otimes (-1)}$. So $\mathcal{L}_1\otimes \mathcal{L}_2^{\otimes (-1)}=\Ou_C(S)$ and $\mathcal{L}_2\otimes \mathcal{L}_1^{\otimes (-1)}=\Ou_C(T)$, where $S$ and $T$ are positive divisors of $C$ associated to the respective global sections. Since, $\Ou_C(S)\otimes \Ou_C(T) \simeq \Ou_C$, with $S,T \geq 0$, we have $S=T=0$ and $\mathcal{L}_1\simeq \mathcal{L}_2$.
\end{proof}

We denote the canonical sheaf of a curve $E$ over $K$ by $\omega_{E/K}$. If $L|K$ is an algebraic field extension, then, from \cite{Liu} Theorem 4.9 pg. 252, we have that  $\omega_{E^L/L}$ is the pullback of $\omega_{E/K}$ by the base extension map $E^L \to E$.

\begin{prop}\label{lema para la inclusion de haces canonicos}
    If $n$ is as above, then there is an inclusion of sheaves:  \[(\mathfrak{n}_n)_{\ast}(\omega_{\widetilde{C^n}/K^{p^{-n}}})\hookrightarrow \omega_{C^n/K^{p^{-n}}}.\]
\end{prop}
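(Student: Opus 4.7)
The plan is to exhibit this inclusion as a manifestation of Grothendieck duality for the finite birational morphism $\mathfrak{n}_n\colon \widetilde{C^n}\to C^n$. Both $C^n$ and $\widetilde{C^n}$ are reduced, proper, equidimensional curves over $K^{p^{-n}}$, hence Cohen--Macaulay, so the (relative) dualizing sheaves $\omega_{C^n/K^{p^{-n}}}$ and $\omega_{\widetilde{C^n}/K^{p^{-n}}}$ exist as coherent, generically invertible, torsion free sheaves. First I would invoke duality for the finite morphism $\mathfrak{n}_n$ to identify
\[(\mathfrak{n}_n)_{\ast}\,\omega_{\widetilde{C^n}/K^{p^{-n}}}\;\simeq\;\mathcal{H}om_{\Ou_{C^n}}\!\bigl((\mathfrak{n}_n)_{\ast}\Ou_{\widetilde{C^n}},\,\omega_{C^n/K^{p^{-n}}}\bigr),\]
which is the standard Hom-description of $\mathfrak{n}_n^{!}\,\omega_{C^n/K^{p^{-n}}}$ for a finite morphism.

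Next, the natural inclusion of $\Ou_{C^n}$-algebras $\iota\colon \Ou_{C^n}\hookrightarrow (\mathfrak{n}_n)_{\ast}\Ou_{\widetilde{C^n}}$ (the normalization is integral over $\Ou_{C^n}$) induces, by pre-composition, a morphism
\[\rho\colon \mathcal{H}om_{\Ou_{C^n}}\!\bigl((\mathfrak{n}_n)_{\ast}\Ou_{\widetilde{C^n}},\,\omega_{C^n/K^{p^{-n}}}\bigr)\;\longrightarrow\;\mathcal{H}om_{\Ou_{C^n}}\!\bigl(\Ou_{C^n},\,\omega_{C^n/K^{p^{-n}}}\bigr)=\omega_{C^n/K^{p^{-n}}},\]
sending $\varphi\mapsto \varphi\circ \iota$. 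Composing with the previous isomorphism produces the candidate map $(\mathfrak{n}_n)_{\ast}\omega_{\widetilde{C^n}/K^{p^{-n}}}\to \omega_{C^n/K^{p^{-n}}}$, and everything reduces to showing that $\rho$ is injective. For this, note that $\mathfrak{n}_n$ is birational, so the cokernel $(\mathfrak{n}_n)_{\ast}\Ou_{\widetilde{C^n}}/\Ou_{C^n}$ is a torsion $\Ou_{C^n}$-module supported on the finite set of non-smooth points. Any $\varphi$ with $\varphi\circ\iota=0$ would therefore factor through this torsion cokernel and land in the torsion subsheaf of $\omega_{C^n/K^{p^{-n}}}$; since $C^n$ is reduced, the dualizing sheaf is torsion free, so $\varphi=0$ and $\rho$ is injective.

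The step I expect to be the main technical subtlety is the invocation of the Grothendieck duality identification $\mathfrak{n}_n^{!}\omega_{C^n/K^{p^{-n}}}\simeq \omega_{\widetilde{C^n}/K^{p^{-n}}}$ in the present setting, because $C^n$ need not be smooth over $K^{p^{-n}}$. The identification follows from the transitivity of the $(-)^{!}$ formalism under the structural maps to $\spec K^{p^{-n}}$ combined with the Hom-formula for $(-)^{!}$ along a finite morphism; alternatively, one can verify the isomorphism stalk by stalk at the singular points of $C^n$, describing $\omega_{C^n/K^{p^{-n}}}$ locally as the module of differentials on $\widetilde{C^n}$ satisfying the residue relations prescribed by the conductor of the normalization. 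Either route yields the required inclusion.
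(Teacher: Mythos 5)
Your argument is correct, but it reaches the inclusion by a genuinely different route than the paper. You build the map canonically: relative duality for the finite morphism $\mathfrak{n}_n$ identifies $(\mathfrak{n}_n)_{\ast}\omega_{\widetilde{C^n}/K^{p^{-n}}}$ with $\mathcal{H}om_{\Ou_{C^n}}\bigl((\mathfrak{n}_n)_{\ast}\Ou_{\widetilde{C^n}},\,\omega_{C^n/K^{p^{-n}}}\bigr)$ --- legitimate here since $\widetilde{C^n}$ is regular and $C^n$ is Gorenstein (being a base change of the regular curve $C$), so both are Cohen--Macaulay and $\mathfrak{n}_n^{!}$ of the dualizing sheaf is concentrated in a single degree --- and then you evaluate at $1$; injectivity holds because a homomorphism killing $\Ou_{C^n}$ factors through the torsion cokernel of the normalization, while $\omega_{C^n/K^{p^{-n}}}$ is torsion free. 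The paper instead only needs the existence of \emph{some} nonzero morphism: it observes that $H^1(\widetilde{C^n},\omega_{\widetilde{C^n}/K^{p^{-n}}})\neq 0$, that cohomology is preserved under pushforward along the affine morphism $\mathfrak{n}_n$, and then applies duality on the Gorenstein curve $C^n$ to get $\Hom\bigl((\mathfrak{n}_n)_{\ast}\omega_{\widetilde{C^n}/K^{p^{-n}}},\,\omega_{C^n/K^{p^{-n}}}\bigr)\neq 0$; any nonzero such morphism is injective for essentially the same torsion-freeness reason you invoke. Your version buys a canonical map (the trace) and, if pushed further, an explicit description of the image via the conductor; the paper's version is softer, avoiding the $(-)^{!}$ formalism on the possibly singular $C^n$ at the cost of producing an a priori non-canonical inclusion, which is all that is needed for its later use in Theorem \ref{segundo lema de divisores canonicos}, where the discrepancy divisor is pinned down by a degree computation.
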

\begin{proof}
    From \cite{Liu}, Remark 3.28 in pg. 282, $H^1(\widetilde{C^{n}},\omega_{\widetilde{C^n}/K^{p^{-n}}})\neq 0$. On the other hand, since  $\mathfrak{n}_n$ is an affine morphism and $\widetilde{C^{n}}$ is separated, then we may use Exercise 2.3, pg. 191, of \cite{Liu} to conclude that $H^1(C^{n},(\mathfrak{n}_n)_{\ast}(\omega_{\widetilde{C^{n}}/K^{p^{-n}}}))\neq 0$. Moreover Grothendieck's duality says that $$H^0(C^{n}, \omega_{C^n/K^{p^{-n}}}\otimes(\mathfrak{n}_n)_{\ast}(\omega_{\widetilde{C^{n}}/K^{p^{-n}}})^{\vee}) = H^1(C^{n},(\mathfrak{n}_n)_{\ast}(\omega_{\widetilde{C^{n}}/K^{p^{-n}}})) \neq 0,$$  implying the existence of a morphism
$(\mathfrak{n}_n)_{\ast}(\omega_{\widetilde{C^n}/K^{p^{-n}}})\to \omega_{C^n/K^{p^{-n}}}.$

It is an inclusion because otherwise, the image of $(\mathfrak{n}_n)_{\ast}(\omega_{\widetilde{C^{n}}/K^{p^{-n}}})$ would produce a torsion subsheaf of $\omega_{C^n/K^{p^{-n}}}$, which is impossible because $C^n$ is a base change of a regular curve $C$, hence it is Gorenstein (see \cite{Sthor2}, Theorem 1.1 or \cite{watanabe1969tensor}). 
\end{proof}

\begin{thm}\label{segundo lema de divisores canonicos}

If $n$ is as above, then the sheaf $\Omega_{C/C_1}$ is invertible and we have the following isomorphism of sheaves.
   
   \begin{itemize}
    \item $\Omega_{C/C_1}\otimes\Ou_C(D)\simeq\omega_{C/K}$.
    \item $\mathfrak{b}_n^{\ast}(\Omega_{C/C_1})\otimes \Ou_{\widetilde{C^{n}}}(D'')\simeq \omega_{\widetilde{C^{n}}/K^{p^{-n}}}$.
    \item $\omega_{\widetilde{C^{n}}/K^{p^{-n}}}\otimes\Ou_{\widetilde{C^{n}}}(D')\simeq \mathfrak{b}_n^{\ast}(\omega_{C/K})$.
\end{itemize}
   
\end{thm}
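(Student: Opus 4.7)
The plan is to prove the invertibility of $\Omega_{C/C_1}$ and the second and third isomorphisms directly, then descend the first one using Theorem \ref{Lema a cerca de igualdad de divisores}. For invertibility: at any $P \in C$, any $x_0 \in \Ou_{C,P}$ with $\Ou_{C,P} = \Ou_{C_1,P_1}[x_0]$ satisfies $d(x_0^p) = p x_0^{p-1}dx_0 = 0$ in characteristic $p$, so $\Omega_{C/C_1}$ is locally freely generated by $dx_0$ and hence invertible.

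For the third isomorphism I invoke that $C^n$ is Gorenstein since $C$ is regular (as used in the proof of Proposition \ref{lema para la inclusion de haces canonicos}) and $\widetilde{C^n}$ is its normalization. The classical conductor relation for dualizing sheaves of Gorenstein curves under normalization then gives $\mathfrak{n}_n^*\omega_{C^n/K^{p^{-n}}} \simeq \omega_{\widetilde{C^n}/K^{p^{-n}}}(D')$, since at each rational point $\widetilde{P^n}$ of $\widetilde{C^n}$ lying over a singularity of $C^n$ the conductor exponent equals $c(C,P) = 2\delta_{P^{\overline{K}}}$. Composing with the identification $\mathfrak{e}_n^*\omega_{C/K} \simeq \omega_{C^n/K^{p^{-n}}}$ from \cite{Liu}, Theorem 4.9, yields the third isomorphism.

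For the second isomorphism, the choice of $n$ makes $\widetilde{C^n}$ smooth over $K^{p^{-n}}$ (its points over non-smooth points of $C$ are rational and regular, hence smooth), so $\omega_{\widetilde{C^n}/K^{p^{-n}}} = \Omega_{\widetilde{C^n}/K^{p^{-n}}}$. There is a natural morphism $\mathfrak{b}_n^*\Omega_{C/C_1} \to \Omega_{\widetilde{C^n}/K^{p^{-n}}}$ sending the local generator $dx_0$ to itself, well-defined because elements pulled from $\Ou_{C_1}$ become $p$-th powers of functions on $\widetilde{C^n}$ and hence have vanishing differential. Proposition \ref{conexion con el trabajo de Bedoya} provides that the vanishing order of $dx_0$ at $\widetilde{P^n}$ equals $d(C,P)-1$, so the image of the morphism is $\omega_{\widetilde{C^n}/K^{p^{-n}}}(-D'')$; twisting by $D''$ produces the second isomorphism.

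To conclude: combining the second and third isomorphisms with the identity $\mathfrak{b}_n^*D = D' + D''$ gives $\mathfrak{b}_n^*(\Omega_{C/C_1} \otimes \Ou_C(D)) \simeq \mathfrak{b}_n^*\Omega_{C/C_1}(D''+D') \simeq \omega_{\widetilde{C^n}/K^{p^{-n}}}(D') \simeq \mathfrak{b}_n^*\omega_{C/K}$, and Theorem \ref{Lema a cerca de igualdad de divisores} descends this to the first isomorphism on $C$. The main obstacle is the local step in the second isomorphism: matching the vanishing order of the pulled-back differential $\mathfrak{b}_n^*(dx_0)$ on $\widetilde{C^n}$ with the value $d(C,P)-1$ computed on $C_n$ in Proposition \ref{conexion con el trabajo de Bedoya} requires carefully tracking differentials through the identification $\widetilde{C^n} \simeq C_n$ provided by the absolute Frobenius, and checking that $\nu_n$ and the corresponding valuation on $\widetilde{C^n}$ agree on the pulled-back sections.
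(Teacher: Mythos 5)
Your proposal is correct and, for most of the statement, takes the same route as the paper: invertibility of $\Omega_{C/C_1}$ from the presentation $\Ou_{C,P}=\Ou_{C_1,P_1}[x_0]$ with the relation $T^p-x_0^p$ having zero differential; the second isomorphism by mapping $dx_0$ into $\Omega_{\widetilde{C^n}/K^{p^{-n}}}$ and reading off its vanishing order $d(C,P)-1$ from Proposition \ref{conexion con el trabajo de Bedoya}; and the first isomorphism by pulling back, using $\mathfrak{b}_n^{\ast}D=D'+D''$, and descending with Theorem \ref{Lema a cerca de igualdad de divisores}. The step you flag as the main obstacle is handled in the paper exactly as you anticipate: the rationality of $\widetilde{P^n}$ forces $\nu_0$ to restrict to $\nu_{\widetilde{P^n}}$ on $K^{p^{-n}}K(C)$ with ramification index one, so the order computed on $\widetilde{C^{\overline{K}}}$ transports to $\widetilde{C^n}$. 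The one genuine divergence is the third isomorphism: you invoke the classical conductor formula $\mathfrak{n}_n^{\ast}\omega_{C^n/K^{p^{-n}}}\simeq\omega_{\widetilde{C^n}/K^{p^{-n}}}\otimes\Ou_{\widetilde{C^n}}(\mathfrak{c})$ for the normalization of a Gorenstein curve, together with the identification of the conductor exponent at each rational unibranch point as $c(C,P)=2\delta_{P^{\overline{K}}}$, which immediately gives the twist $D'$. The paper instead only extracts from duality the softer statement that there is an inclusion $(\mathfrak{n}_n)_{\ast}\omega_{\widetilde{C^n}/K^{p^{-n}}}\hookrightarrow\omega_{C^n/K^{p^{-n}}}$ (Proposition \ref{lema para la inclusion de haces canonicos}, via $H^1\neq 0$ and Grothendieck duality), deduces locally that the twisting divisor $R$ satisfies $R\geq D'$ from the definition of the conductor, and then pins down $R=D'$ by comparing degrees ($2p_a(C)-2-\deg D'=2p_g(C)-2=\deg\omega_{\widetilde{C^n}/K^{p^{-n}}}$). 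Your version is shorter but leans on the full strength of the duality formalism for the conductor; the paper's version needs only the inequality plus a degree count. Both are valid.
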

\begin{proof}
Firstly we prove that $\Omega_{C/C_1}$ is an invertible sheaf. Indeed we only need to prove that $\Omega_{C/C_1,P}$ is generated by only one element over each non-smooth point $P$ of $C$. From Proposition \ref{conexion con el trabajo de Bedoya}, $\Ou_{C,P}=\Ou_{C_1,P_1}[x_0]$ for some $x_0$ such that $\nu_{0}(dx_0)=d(C,P)-1$ and then  $\Omega_{C/C_1,P}=\Ou_{C,P}dx_0$, which is invertible. 

Let's prove now the second isomorphism. Since $\widetilde{C^{n}}$ is smooth, we have that $\Omega_{\widetilde{C^n}/K^{p^{-n}}} \simeq \omega_{\widetilde{C^n}/K^{p^{-n}}}$. Moreover, if $\nu_{\widetilde{P^n}}$ is the valuation of $K^{p^{-n}}K(C)|K^{p^{-n}}$ with valuation ring $\Ou_{\widetilde{C^n},\widetilde{P^n}}$, then $\nu_{\widetilde{P^n}}(dx_0) = \nu_{0}(dx_0)=d(C,P)-1$, because $\widetilde{P^n}$ is rational. Therefore, the inclusion of sheaves
$\mathfrak{b}_n^{\ast}(\Omega_{C/C_1})\otimes \Ou_{\widetilde{C^n}}(D'') \hookrightarrow\omega_{\widetilde{C^n}/K^{p^{-n}}}$ 
is locally an isomorphism, because the image consists of $1$-forms having non-negative value under $\nu_{\widetilde{P^n}}$. Hence it is an isomorphism.

Now, to prove the third isomorphism, 
we notice that Proposition \ref{lema para la inclusion de haces canonicos} says that 
$\omega_{\widetilde{C^n}/K^{p^{-n}},\widetilde{P^n}}\subset \omega_{C^{n}/K^{p^{-n}},P^n}$.
Since $\omega_{C^n/K^{p^{-n}},P^n}$ and $\omega_{\widetilde{C^n}/K^{p^{-n}},\widetilde{P^n}}$ are invertible sheaves, there are $f,g\in \omega_{C^n/K^{p^{-n}},P^n}$ such that 
$\omega_{\widetilde{C^n}/K^{p^{-n}},\widetilde{P^n}}=\Ou_{\widetilde{C^{n}},\widetilde{P^n}}f \mbox{ and } \omega_{C^{n}/K^{p^{-n}},P^n}=\Ou_{C^{n},P^{n}}g.$ In particular $fg^{-1}\Ou_{\widetilde{C^n},\widetilde{P^{n}}}\subset \Ou_{C^{n},P^{n}}$ and, from the definition of the conductor and the rationality of $P^{n}$, we have $\nu_{\widetilde{P^n}}(fg^{-1})\geq c(C,P)$. Using the same ``local x global" argument, to conclude the second isomorphism, we get
$\omega_{\widetilde{C^n}/K^{p^{-n}}}\simeq\mathfrak{b}_n^{\ast}(\omega_{C/K})\otimes\Ou_{\widetilde{C^n}}(-R)$
where $R\geq D'$ is a divisor, locally (at $\widetilde{P^n}$) given by the ideal generated by $fg^{-1}$. Since $\widetilde{C^n}$ is smooth, we obtain  \[\begin{array}{lllll}
    \deg(\mathfrak{b}_n^{\ast}(\omega_{C/K})\otimes\Ou_{\widetilde{C^n}}(-D'))&=&2p_a(C)-2-\deg(D') &=&2p_g(C)-2 \\ &=&\deg(\omega_{\widetilde{C^n}/K^{p^{-n}}})&=&2p_a(C)-2-\deg(R)
\end{array}\] and, consequently, $R=D$.

To prove the first isomorphism we use the identity $\mathfrak{b}_n(D)=D'+D''$ together with the second and third isomorphisms to obtain $\mathfrak{b}_n^{\ast}(\Omega_{C/C_1}\otimes\Ou_C(D))\simeq\mathfrak{b}_n^{\ast}(\omega_{C/K})$. Now we just need to use Theorem \ref{Lema a cerca de igualdad de divisores} to conclude the proof.
\end{proof}

    


\section{Non $\Py^2_K$-invariant rational points in $\Py^2_{K^{1/3}}$}\label{Sectionn 5}
In this section we will suppose that $K$ is a field of characteristic three.
We will denote the projective planes  $\Py^2_K$ and $\Py^2_{K^{1/3}}$ by $X$ and $Y$, respectively, and we will also denote the natural base change by $\pi:Y\to X$. Let $\Inv(Y/X)$ be the category considered in Definition \ref{definicion MIC}. Remember that there is a  natural connection $\nabla(n):\Ou_{Y}(n)\to \Omega_{Y/X}\otimes_{\Ou_{Y}}\Ou_{Y}(n)$,  which was obtained in Proposition \ref{definicion del funtor}. In this section we will use such connections to study some geometric properties of $K^{1/3}$-rational points of $Y$, namely the ``$1$-type" introduced in Definition \ref{definicion de type} bellow, to build normal forms of non-smooth regular curves in next chapter. For this purpose, suppose that $P$ is a point of $X$ and $P^1$ is the point of $Y$ lying over $P$. 

\begin{defn}\label{definicion de type}

We say that a closed subscheme $Z$ of $Y$ is of $m$-type $k$ when $k$ is the smallest degree of an $X$-invariant $m$-dimensional subvariety of $Y$, whose ideal sheaf is contained in the ideal sheaf of $Z$.
\end{defn} 
\begin{defn}
    We say that two sets of $X$-invariant subschemes of $Y$, namely $\{V_1,\ldots, V_n\}$ and $\{W_1,\ldots, W_n\}$, are $X$-equivalent if there is an automorphism $T$ in $X$ such that $T\otimes_X 1_Y(V_i)=W_i$.

\end{defn}
\begin{prop}\label{auto lines send}
   Each subset of three different $X$-invariant lines $\{M_1,M_2,M_3\}$ of $Y$ is $X$-equivalent to $\{x,y,z\}$, where $x,y,z$ is a basis of $H^0(Y,\Ou_{Y}(1))^{\nabla(1)}=H^0(X,\Ou_{X}(1))$.
\end{prop}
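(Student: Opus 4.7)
My plan is to reduce the statement to the standard fact that $\PGL_3(K)=\Aut(X)$ acts transitively on triangles of lines in $\Py^2_K$, the reduction being furnished by the descent machinery of Section~\ref{chapter2}.

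First, I would show that every $X$-invariant line $M_i$ of $Y$ is the pullback $\pi^{-1}(L_i)$ of a unique line $L_i$ of $X$. By Proposition~\ref{i categoria Mic} applied to the $X$-invariant subvariety $M_i$, there is a subvariety $L_i$ of $X$ with $M_i\simeq L_i\times_X Y$. Since $\pi$ is a flat, finite base change, degrees are preserved, so $L_i$ is a line of $X$, cut out by a nonzero linear form $\ell_i\in H^0(X,\Ou_X(1))$ unique up to scalar. Under the identification $H^0(Y,\Ou_Y(1))^{\nabla(1)}=H^0(X,\Ou_X(1))$ of Theorem~\ref{Funtor igualdad de ida}, the section $\ell_i$ is exactly the element of $H^0(Y,\Ou_Y(1))^{\nabla(1)}$ that cuts out $M_i$.

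Second, the three $X$-invariant lines being distinct amounts to the three linear forms $\ell_1,\ell_2,\ell_3$ being pairwise non-proportional, and the relevant non-degenerate case (matching the conclusion, which asserts that the target $\{x,y,z\}$ is a basis of $H^0(X,\Ou_X(1))$) is that $\ell_1,\ell_2,\ell_3$ are linearly independent, i.e., the lines $L_1,L_2,L_3$ form a triangle in $X$. Standard linear algebra then produces an automorphism $T\in\PGL_3(K)=\Aut(X)$ sending $L_i$ to the vanishing locus of the $i$-th element of a prescribed basis $x,y,z$ of $H^0(X,\Ou_X(1))$.

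Third, I would lift $T$ to the base-changed automorphism $T\otimes_X 1_Y\in\Aut(Y)$. Because pullback commutes with base change, $(T\otimes_X 1_Y)(M_i)=\pi^{-1}(T(L_i))$, which is precisely the zero locus in $Y$ of $x,y,z$ viewed inside $H^0(Y,\Ou_Y(1))^{\nabla(1)}$ via the isomorphism of Theorem~\ref{Funtor igualdad de ida}; this yields the $X$-equivalence claimed. The only real subtlety I foresee is the dichotomy between three concurrent lines and three lines in general position: since $\PGL_3$ preserves incidence, the concurrent case cannot land at the coordinate triangle, so I read the statement as implicitly excluding that case (which is consistent with the conclusion that $x,y,z$ be a \emph{basis}). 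Once that is clarified, the proof is nothing more than the combination of the three steps above.
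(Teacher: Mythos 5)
Your proposal is correct and follows essentially the same route as the paper: descend each $X$-invariant line $M_i$ to a line $N_i$ of $X$, apply the transitivity of $\Aut(\Py^2_K)$ on triples of lines in general position, and base-change the resulting automorphism to $Y$. Your explicit remark that three concurrent lines cannot be sent to the coordinate triangle is a legitimate caveat that the paper's one-line proof also silently assumes away, so it does not constitute a divergence from the paper's argument.
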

\begin{proof}
There exist a line $N_i$ in $X$ such that $N_i^{K^{1/3}}=M_i$. So take the automorphism $T$ of $X$ sending $N_1$ to $x$, $N_2$ to $y$ and $N_3$ to $z$. Then $T\otimes_X 1_Y$ is the required automorphism.
\end{proof}

\begin{prop}\label{tipo 1 tipo 2 prueba}
If $P$ is a point of $X$ such that its residue field  $\textbf{k}(X,P)$ is a degree three extension of $K$, then $P^1$ is of $1$-type $1$ or of $1$-type $2$.  
\end{prop}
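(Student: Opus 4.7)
The plan is to translate the statement into a classical question about plane curves in $\Py^2_K$ and then settle it by a dimension count. By Proposition \ref{i categoria Mic}, every $X$-invariant one-dimensional subvariety of $Y$ is of the form $Z_1 \times_X Y$ for a one-dimensional subvariety $Z_1 \subset X$, and since $Z_1 \times_X Y$ is cut out inside $\Py^2_{K^{1/3}}$ by the same polynomial that cuts out $Z_1 \subset \Py^2_K$, the degree is preserved. Moreover $Z_1 \times_X Y$ contains $P^1$ if and only if $Z_1$ contains $P$, since $\pi(P^1) = P$. Hence the $1$-type of $P^1$ equals the smallest degree of an integral plane curve in $\Py^2_K$ passing through $P$.

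Let $(\alpha : \beta : \gamma) \in \textbf{k}(X,P)^3$ be homogeneous coordinates for $P$. A $K$-line $ax+by+cz$ passes through $P$ if and only if $a\alpha + b\beta + c\gamma = 0$ in $\textbf{k}(X,P)$. So if $\alpha, \beta, \gamma$ are $K$-linearly dependent, this equation has a nonzero $K$-solution, producing a line through $P$ and showing that the $1$-type of $P^1$ equals $1$.

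Suppose instead that $\alpha, \beta, \gamma$ are $K$-linearly independent in $\textbf{k}(X,P)$. Since $[\textbf{k}(X,P):K] = 3$, they form a $K$-basis. The space of ternary quadratic forms over $K$ has dimension $6$, and the condition that a conic $F$ vanish at $P$ is the single equation $F(\alpha, \beta, \gamma) = 0$ in $\textbf{k}(X,P)$, which is $K$-linear in the coefficients of $F$. As the six monomials $\alpha^2, \beta^2, \gamma^2, \alpha\beta, \alpha\gamma, \beta\gamma$ all lie in the three-dimensional space $\textbf{k}(X,P)$, the evaluation map from the six-dimensional space of conics to $\textbf{k}(X,P)$ has rank at most $3$; its kernel therefore has dimension at least $3$, producing a nonzero $F \in K[x,y,z]_2$ vanishing at $P$.

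To conclude I need $F$ to be irreducible over $K$. If $F = GH$ with linear $G, H \in K[x,y,z]$, then $P$ would lie on one of the $K$-lines $V(G)$ or $V(H)$, contradicting the $K$-linear independence of $\alpha, \beta, \gamma$. Hence $V(F)$ is an integral plane conic through $P$, and its pullback $V(F) \times_X Y$ is an $X$-invariant curve of degree $2$ in $Y$ containing $P^1$, so the $1$-type is at most $2$. Combined with the first case, the $1$-type of $P^1$ is $1$ or $2$. The only step that is not a straightforward observation is the integrality of $F$, which follows from the very hypothesis that put us outside the linearly dependent case.
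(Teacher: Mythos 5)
Your proof is correct and follows essentially the same route as the paper's: both exploit $[\textbf{k}(X,P):K]=3$ to produce a $K$-linear relation among the values at $P$ of the monomials of degree at most two, yielding a line or a conic defined over $K$ through $P$ (the paper writes $P^1=(a:b:1)$ with $\textbf{k}(X,P)=K[a]$ and $b=f(a)$ for some $f$ of degree at most two, while you run the equivalent rank count on the evaluation map from the space of conics to $\textbf{k}(X,P)$). Your verification that the conic $F$ is irreducible over $K$ is a welcome addition that the paper omits; the only residual imprecision, shared with the paper, is that one should also observe that such an irreducible conic stays integral after base change to $K^{1/3}$ (over a separably closed $K$ it cannot split into two lines or degenerate to a double line without already doing so over $K$), so that $V(F)\times_X Y$ is genuinely a subvariety in the sense of Definition \ref{definicion de type}.
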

\begin{proof}
Let us consider a basis $x,y,z$ of $H^0(Y,\Ou_{Y}(1))^{\nabla(1)}=H^0(X,\Ou_{X}(1))$. We may assume that, over such basis, $P^1$ is represented by the point  $(a:b:1)$ with $a, b \in K^{1/3}$. Indeed, from the hypothesis, $\textbf{k}(X,P)=K[\alpha ]$ with $\alpha \in K^{1/3}\setminus K$. On the other hand, $\textbf{k}(Y,P^1)$ is the quotient of $\textbf{k}(X,P)\otimes_K K^{1/3}$ by its maximal ideal, which is nothing else than $K^{1/3}[\alpha]=K^{1/3}$.

Using that $P^1$ is non $K$-invariant we can assume $a\in K^{1/3}\setminus K$. Moreover, since  $\overline{\frac{x}{z}}(P)=a$ and $\overline{\frac{y}{z}}(P)=b$, as elements in $\textbf{k}(X,P) \subset \textbf{k}(Y,P^1)$, we obtain that $\textbf{k}(X,P)=K[a]$. Hence $b=f(a)$ for some polynomial $f(T)\in K[T]$ of degree one or two. If $f(T)=c_0+c_1T \in K[T]$ (respectively, $f(T)=c_0+c_1T+c_2T^2 \in K[T]$), then the curve $y-c_0z-c_1x$ (respectively, $yz-c_0z^2-c_1xz-c_2x^2$) contains $P^1$. 
\end{proof}

\begin{Remark}
    Notice that the point $P^1\in Y$, lying over $P \in X$, may not be $X$-invariant. Indeed, the base extension of the ideal defining $P$ may not be reduced and, consequently, may not be the ideal defining $P^1$. In this way, the next proposition gives a numerical condition to obtain the $X$-invariance at $P^1$.
\end{Remark}

\begin{prop}\label{interseccion de curvas invariantes}
Suppose that  $C_1$ and $C_2$ are two $X$-invariant curves in $Y$. If the intersection number between $C_1^{\overline{K}}$ and $C_2^{\overline{K}}$ at $P^{\overline{K}}$ is smaller than three, then $P$ is a $K$-rational point of $X$. In particular, $P^1$ is an $X$-invariant point of $Y$.
\end{prop}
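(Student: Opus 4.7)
My plan is to reduce the statement to a computation of a length of an Artinian local $K$-algebra, using the two main facts available: every $X$-invariant curve in $Y$ is pulled back from a curve in $X$ (Proposition \ref{i categoria Mic}), and $K$ is separably closed of characteristic $3$, so every algebraic extension of $K$ is purely inseparable of degree a power of $3$.

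The first step is to fix local equations. By Proposition \ref{i categoria Mic}, write $C_i = C_i'\times_X Y$ for curves $C_i'\subset X$ ($i=1,2$), and pick generators $g_1,g_2\in\mathcal{O}_{X,P}$ of the ideals $\mathcal{I}_{C_1',P},\mathcal{I}_{C_2',P}$. The key observation is then that, because $\mathbf{k}(X,P)/K$ is purely inseparable, there is a unique geometric point $P^{\overline{K}}$ of $X^{\overline{K}}=Y^{\overline{K}}$ lying above $P$, and flat base change gives an isomorphism $\mathcal{O}_{X,P}\otimes_K\overline{K}\simeq \mathcal{O}_{X^{\overline{K}},P^{\overline{K}}}$. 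Hence $g_1,g_2$ are also local equations for $C_1^{\overline{K}},C_2^{\overline{K}}$ at $P^{\overline{K}}$, and the local intersection number is
\[
i(C_1^{\overline{K}}\cdot C_2^{\overline{K}};P^{\overline{K}})=\dim_{\overline{K}}\bigl(\mathcal{O}_{X,P}/(g_1,g_2)\bigr)\otimes_K\overline{K}=\dim_K\mathcal{O}_{X,P}/(g_1,g_2).
\]

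The second step is the numerical estimate. Set $A:=\mathcal{O}_{X,P}/(g_1,g_2)$, which is a local Artinian $K$-algebra with residue field $\mathbf{k}(X,P)$. A surjection $A\twoheadrightarrow\mathbf{k}(X,P)$ then yields $\dim_K A\geq[\mathbf{k}(X,P):K]$. On the other hand, $[\mathbf{k}(X,P):K]$ is a power of $3$, say $3^n$. Combining with the previous display, the assumption that the intersection multiplicity is less than $3$ forces $3^n<3$, hence $n=0$ and $\mathbf{k}(X,P)=K$, i.e.\ $P$ is $K$-rational.

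For the ``in particular'' clause, once $P$ is $K$-rational, $\mathfrak{m}_{X,P}$ is generated (locally) by two linear forms with coefficients in $K$; these same forms generate the maximal ideal of $\mathcal{O}_{Y,P^1}$ (the quotient being $K\otimes_K K^{1/3}=K^{1/3}=\mathbf{k}(Y,P^1)$). Thus $\mathcal{I}_{P^1}=\pi^{*}\mathcal{I}_P$, which by Proposition \ref{Ideais Invarantes} says precisely that $P^1$ is $X$-invariant. I do not expect a serious obstacle; the only potentially delicate point is the flat base change identification $\mathcal{O}_{X,P}\otimes_K\overline{K}\simeq\mathcal{O}_{X^{\overline{K}},P^{\overline{K}}}$, which relies crucially on the uniqueness of the geometric point over $P$ — a consequence of $K$ being separably closed.
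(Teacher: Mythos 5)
Your proposal is correct and follows essentially the same route as the paper: both identify the local intersection number with $\dim_K \Ou_{X,P}/(g_1,g_2)$ via the (unique) geometric point over $P$, bound $[\textbf{k}(X,P):K]$ by that dimension, and conclude rationality of $P$ from $K$ being separably closed (the paper phrases this as ``degree $\le 2$ forces the residue extension to be separable, hence trivial,'' while you phrase it as ``the degree is a power of $3$ and less than $3$, hence $1$'' --- the same observation). The ``in particular'' clause is also handled identically, by base-extending the two local generators of $\m_{X,P}$.
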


\begin{proof}
Suppose that $x_i$ is a local equation in $\Ou_{X,P}$ inducing $C_i$ for $i=1,2$. The hypothesis says that  \[ \dim_K \Ou_{X,P}/\langle x_1,x_2 \rangle=\dim_{\overline{K}} \Ou_{X^{\overline{K}},P^{\overline{K}}}/\langle x_1,x_2 \rangle\leq 2.\] In particular $\dim_K \Ou_{X,P}/\m_{X,P}\leq 2$. Thus $ \Ou_{X,P}/\m_{X,P}$ is a separable extension of $K$ and, since $K$ is separably closed, we conclude that $\Ou_{X,P}/\m_{X,P}=K$. Hence, if the ideal sheaf at $P$ is locally given by $<\tilde{x}_1-a_1,\tilde{x}_2-a_2>$ where $\tilde{x}_1,\tilde{x}_2$ are local equations of $X$ at $P$ and $a_1,a_2 \in K$, then the ideal sheaf at $P^1$ is locally given by the base extension of $<\tilde{x}_1-a_1,\tilde{x}_2-a_2>$. 
\end{proof}
\begin{cor}\label{no existen lineas pasando por puntos tipo 2}
If a $K^{1/3}$-rational and non $X$-invariant point $P^1$ belongs to an irreducible $X$-invariant conic $C$, then there is no $X$-invariant line passing through $P^1$. 
\end{cor}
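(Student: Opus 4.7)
The plan is to proceed by contradiction, leveraging Proposition \ref{interseccion de curvas invariantes} on intersection numbers of $X$-invariant curves.

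Suppose there exists an $X$-invariant line $L$ passing through $P^1$. Since $C$ is an irreducible $X$-invariant conic and $L$ is a line, they share no common component (the only irreducible subvarieties of $C$ are $C$ itself and points), so their scheme-theoretic intersection is zero-dimensional. By Bezout's theorem applied to $L^{\overline{K}}$ and $C^{\overline{K}}$ inside $\Py^2_{\overline{K}}$, the total intersection number is $\deg(L)\cdot \deg(C) = 1\cdot 2 = 2$.

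Since $P^{\overline{K}}$ lies on both $L^{\overline{K}}$ and $C^{\overline{K}}$, and all local intersection multiplicities are non-negative, the local intersection number at $P^{\overline{K}}$ is at most $2$, which is strictly smaller than three. By Proposition \ref{interseccion de curvas invariantes}, this forces $P$ to be a $K$-rational point of $X$, and in particular $P^1$ to be $X$-invariant. This contradicts the hypothesis that $P^1$ is non $X$-invariant, so no such line $L$ can exist.

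The argument is essentially a direct application of the preceding proposition, and I expect no substantive obstacle: the only thing to check carefully is that $L$ and $C$ intersect properly (so that Bezout gives a finite total intersection number), which is immediate from the irreducibility of $C$ together with $\deg L < \deg C$.
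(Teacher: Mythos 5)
Your proof is correct and takes essentially the same route as the paper: both derive a contradiction between Bezout's total intersection number $1\cdot 2 = 2$ for the line and the conic and Proposition \ref{interseccion de curvas invariantes}, the only cosmetic difference being that you apply the proposition directly (local multiplicity $\leq 2$ forces $P^1$ to be $X$-invariant) while the paper invokes its contrapositive (non-invariance of $P^1$ forces local multiplicity at least three, contradicting Bezout).
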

\begin{proof}
Suppose that $P^1\in C\cap L$ where $C$ is an $X$-invariant and irreducible conic and $L$ is an $X$-invariant line. Proposition \ref{interseccion de curvas invariantes} says that the intersection number between $C^{\overline{K}}$ and $L^{\overline{K}}$ at $P^{\overline{K}}$ is greater than three, contradicting  Bezout's theorem.   
\end{proof}

\begin{defn} \label{definision del X cociente}
Let $L$ be a line in $Y$ defined over $K^{1/3}$ and let $L^3$ be the $X$-invariant curve generated by the cubic of $L$. We take $L_1$ as the quotient of $L^3$ by the foliation of base change restricted to $L^3$, or equivalently, the cubic of $X$ whose base change is $L^3$. We call $L_1$ by the $X$-quotient of $L^3$.
\end{defn}
\begin{prop}\label{regular points of the Xquotient}
Suppose that $L$ is a non $X$-invariant line of $Y$ defined over $K^{1/3}$. If $L_1$ is the $X$-quotient of $L^3$, then $L_1$ is an irreducible curve of $X$ having at most one $K$-rational point. Moreover, if $P$ is a non $K$-rational point of $L_1$ then $\Ou_{L_1,P}$  is a regular ring. 
\end{prop}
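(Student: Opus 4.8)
The plan is to handle the three assertions in turn. Write $\ell=ax+by+cz$ with $a,b,c\in K^{1/3}$ for the form defining $L$; since $\car K=3$ one has $\ell^{3}=a^{3}x^{3}+b^{3}y^{3}+c^{3}z^{3}\in K[x,y,z]$, so $F:=\ell^{3}\in K[x,y,z]$ and $L_{1}=V(F)\subset X$. Non-$X$-invariance of $L$ means precisely that $(a:b:c)\notin\Py^{2}(K)$, i.e. $\dim_{K}\langle a,b,c\rangle_{K}\ge 2$. For irreducibility I would argue that a nontrivial factorization over $K$ produces an irreducible factor $G\in K[x,y,z]$ of degree $1$ or $2$ dividing $\ell^{3}$ in the UFD $K^{1/3}[x,y,z]$; as $\ell$ is a prime (linear) element there, $G$ is a $K^{1/3}$-multiple of a power of $\ell$, whence $V(G)=L$ would be defined over $K$, a contradiction. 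Thus $F$ is irreducible and $L_{1}$ is integral. For the rational points, any $P\in L_{1}(K)$ satisfies $\ell(P)^{3}=0$, hence $\ell(P)=0$ because $K$ is reduced, so $P$ is a $K$-point lying on $L$; two distinct such points would span a line defined over $K$ and equal to $L$, contradicting non-$X$-invariance. Hence $L_{1}$ has at most one $K$-rational point.

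The heart of the proof is regularity at a non-rational $P$, and I would first isolate the clean case. After a $K$-linear change of coordinates (which alters $F$ only by a $K$-automorphism of $X$) the span $\langle a,b,c\rangle_{K}$ may be normalized; when it is two-dimensional one may take $c=0$, and then in the chart $y\neq 0$ the affine coordinate ring of $L_{1}$ is $\kappa_{0}[t]$ with $\kappa_{0}=K[x]/(\theta_{1}^{3}x^{3}+\theta_{2}^{3})$ a degree-$3$ field. This is a polynomial ring over a field, hence regular, and all of its points are non-rational; the only point it omits is the vertex $(0:0:1)$, which is the unique $K$-rational point. In general I would project from a coordinate point to present $\Ou_{L_{1}}$ locally as $R=K[x][y]/(y^{3}-g(x))$ with $g=h^{3}$, $h=-(ax+c)/b\in K^{1/3}[x]$ linear; over an irreducible $q(x)\in K[x]$ with residue field $\kappa_{1}=K[x]/(q)$ the fibre is $\kappa_{1}[y]/(y^{3}-\bar g)$. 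Whenever $\bar g$ is not a cube in $\kappa_{1}$ this fibre is a field, the maximal ideal of $R$ at the point equals $(q)$, and $R$ is there a discrete valuation ring, so the point is regular. This disposes of all non-rational points except those lying over an irreducible $q$ for which $\bar g=\bar h^{3}$ is a cube in $\kappa_{1}$.

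For these remaining points I would substitute $Y=y-h_{0}$, where $h_{0}\in K[x]$ lifts $\bar h$, obtaining $Y^{3}=g-h_{0}^{3}=(h-h_{0})^{3}\in(q)$; a direct cotangent-space computation then shows that $R$ is regular at $P$ exactly when $q^{2}\nmid(h-h_{0})^{3}$, while it is singular when $q^{2}\mid(h-h_{0})^{3}$. I must stress that the ordinary Jacobian criterion is useless here, because every partial derivative of $F$ vanishes in characteristic $3$; regularity is a genuinely residue-field phenomenon and must be certified through the paper's own tools, namely the semigroup criterion of Proposition~\ref{semigrupo de un punto regular como se ve} (equivalently Corollary~\ref{Criterio para la regularidad de puntos}). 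Concretely, I would compute $[\textbf{k}(L_{1},P):K]$ from $\kappa_{1}$ and exhibit an explicit element of $\Ou_{L_{1},P}$ — built from $q(x)$ and the coordinate functions — whose value under $\nu_{0}$ equals $[\textbf{k}(L_{1},P):K]$, so that $[\textbf{k}(L_{1},P):K]\in\Gamma_{L_{1},P}$ and Proposition~\ref{semigrupo de un punto regular como se ve} forces regularity.

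The hard part, therefore, is precisely to show that for a non-rational $P$ the delicate inequality $q^{2}\nmid(h-h_{0})^{3}$ holds, equivalently that the required function of valuation $[\textbf{k}(L_{1},P):K]$ exists. This is where the hypothesis must be used decisively: the $K$-linear independence coming from $\dim_{K}\langle a,b,c\rangle_{K}\ge 2$ should be converted, via the derivations of $K^{1/3}$ over $K$ extended as in Example~\ref{pi1 is not smooth}, into the statement that a double vanishing of $h-h_{0}$ along $q$ is incompatible with $\kappa_{1}\neq K$; one may also bring in Proposition~\ref{interseccion de curvas invariantes} to control intersection multiplicities of the $X$-invariant curve $L_{1}$ with auxiliary $K$-lines through $P$. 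I expect this derivation-theoretic bookkeeping with the inseparable residue field $\kappa_{1}$ — rather than any of the preceding steps — to be the main obstacle, and its resolution is exactly what reduces the singular locus of $L_{1}$ to the single $K$-rational point already located in the clean two-dimensional normal form.
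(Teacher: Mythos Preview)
Your treatment of irreducibility and of the bound on $K$-rational points is correct and matches the paper. The regularity argument, however, has a genuine gap and an unnecessary detour. You explicitly flag a ``hard part'' (the inequality $q^{2}\nmid(h-h_{0})^{3}$, equivalently the existence of an element of valuation $[\textbf{k}(L_{1},P):K]$) and then do not prove it, saying only that you \emph{expect} derivation-theoretic bookkeeping to resolve it. Moreover, the tool you reach for---Proposition~\ref{semigrupo de un punto regular como se ve} and Corollary~\ref{Criterio para la regularidad de puntos}---is stated for \emph{geometrically integral} curves, and $L_{1}$ is not geometrically integral: over $\overline{K}$ the defining cubic becomes $\ell^{3}$, so $L_{1}^{\overline{K}}$ is the non-reduced triple line. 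All of the Section~\ref{local invariants at non-smooth points} machinery (the valuation $\nu_{0}$, the semigroup $\Gamma_{C^{\overline{K}},P^{\overline{K}}}$, the normalization $\widetilde{C^{\overline{K}}}$) is set up under that hypothesis, and you would have to rebuild it for a non-reduced base change before invoking it here.

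The paper avoids all of this with a one-paragraph direct computation in an affine chart. Writing $L^{3}$ as $a^{3}x^{3}+b^{3}y^{3}+c^{3}$ and using that $P^{\overline{K}}=(a_{1},b_{1})$ lies on $L$, one rewrites the equation as $a^{3}(x-a_{1})^{3}+b^{3}(y-b_{1})^{3}$. Non-rationality of $P$ forces $a_{1}\notin K$ or $b_{1}\notin K$; say $b_{1}\notin K$, and take $j$ least with $b_{1}^{3^{j}}\in K$. Then $(y-b_{1})^{3^{j}}\in K[x,y]$ already lies in $\m_{L_{1},P}$, and the relation between $(x-a_{1})^{3}$ and $(y-b_{1})^{3}$ in $\Ou_{L_{1},P}$ lets one read off that $\m_{L_{1},P}$ is principal. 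The Frobenius-power trick of passing to $(y-b_{1})^{3^{j}}$ is exactly what replaces the Jacobian criterion you correctly observe to be useless; no projections, semigroups, or derivations of $K^{1/3}$ over $K$ are needed.
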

\begin{proof}
    Let $U$ be an affine open subset of $Y$, isomorphic to $\spec(K^{1/3}[x,y])$. Suppose that $L^3$ is defined by $a^3x^3+b^3y^3+c^3$ with $a,b,c\in K^{1/3}$. Since $L$ is non-invariant, $a$, $b$ or $c$ do not belong to $K$ and, therefore, the uniqueness of factorization in  $K^{1/3}[x,y]$ implies the irreducibility of $a^3x^3+b^3y^3+c^3$ in $K[x,y]$. On the other hand, it holds for every affine open subset of $Y$ as above, implying that $L_1$ is irreducible and hence $a\neq 0$ or $b\neq 0$. Now, let us suppose that there are two $K$-rational points of $L_1$. In this case, $L$ is the base extension of the line through these $K$-rational points, because it is uniquely determined by the $K^{1/3}$-rational points lying over the first ones. So $L$ should be invariant, contradicting the hypothesis.
        
    Now we will prove that $L_1$ is regular at $P$, provided that $P$ is a non $K$-rational point. Suppose that $U$ is an affine open subset of $Y$, isomorphic to $\spec(K^{1/3}[x,y])$, such that $P\in U$ and $L$ is defined by $a^3x^3+b^3y^3+c^3$. Moreover, suppose that $P^{\overline{K}}$ is locally defined by $(a_1,b_1)$ with $a_1,b_1\in \overline{K}$ satisfying, without loss of generality, that $b_1\not \in K$.  So we obtain $L^3=a^3(x-a_1)^3+b^3(y-b_1)^3$. If $a=0$ then $b\neq 0$ and $\m_{L_1,P}$ is generated by $(y-b_1)^{3^j}$, where $j$ is the minimum integer such that $(y-b_1)^{3^j}\in K[x,y]$.  If $a$ is different from zero, then $\m_{L_1,P}$ is generated by $(y-b_1)^{3^j}$, if $b\neq 0$, or $(x-a_1)^{3^i}$, if $b=0$ (implying that $a_1\not\in K$), where $i$ and $j$ are the minimum integers such that $(x-a_1)^{3^i},(y-b_1)^{3^j}\in K[x,y]$. In all of these cases $\m_{L_1,P}$ is generated by one element and, hence, $P$ is regular. 
    \end{proof}
\begin{prop}\label{como se ven los divisores de L_1}
    If $L$ is a non $X$-invariant line of $Y$ defined over $K^{1/3}$, $L_1$ is the $X$-quotient of $L^3$ and $P$ is a $K$-rational point of $L_1$, then the following three sentences hold.
    \begin{enumerate}
        \item The ring $\Ou_{L_1,Q}$ is regular for every $Q \neq P$.
        \item The ideal sheaf $\mathcal{I}_{L_1,Q}$ of $Q$, as a point of $L_1$, is invertible for every $Q \neq P$.
        \item If $D$ and $D'$ are two Cartier divisors of $L_1$ such that $[D]=[D']$ (where $[D]$ is the Weil divisor associated to $D$) and such that $P$ does not belong to the support of $D-D'$,  then $D =D'$. 
    \end{enumerate} 
    \end{prop}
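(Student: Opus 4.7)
The plan for items (1) and (2) is to read them off almost directly from Proposition \ref{regular points of the Xquotient}. That proposition already establishes that $\Ou_{L_1,Q}$ is regular whenever $Q$ is a non $K$-rational point of $L_1$, and that $L_1$ has at most one $K$-rational point. Since $P$ is $K$-rational by hypothesis, every $Q \neq P$ must be non $K$-rational, so item (1) follows immediately. For item (2), I would then use the fact that $\Ou_{L_1,Q}$ is a one-dimensional regular local ring, hence a discrete valuation ring, so the stalk of $\mathcal{I}_{L_1,Q}$ at $Q$ is principal; at any other point the stalk is the whole local ring of $L_1$. Both conditions amount to local freeness of rank one in a neighborhood of $Q$, giving invertibility.

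The heart of the argument is (3), and the plan is to reduce it to the Cartier-to-Weil correspondence on the regular locus. Set $E = D - D'$, a Cartier divisor with $[E] = 0$ and $P \notin \Supp(E)$ by hypothesis. The support condition provides an open neighborhood $V$ of $P$ on which $E|_V = 0$ as a Cartier divisor. On the complementary open $W = L_1 \setminus \{P\}$, part (1) guarantees that every local ring is a DVR, so the canonical homomorphism from Cartier divisors to Weil divisors is injective; since $[E]|_W = 0$, this forces $E|_W = 0$. Gluing the two vanishings on $V \cup W = L_1$ yields $E = 0$, whence $D = D'$.

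The subtlety I expect to be the main obstacle is precisely the presence of the possibly singular point $P$: without the hypothesis $P \notin \Supp(D-D')$, a non-trivial Cartier divisor supported at $P$ could in principle have trivial associated Weil divisor, so injectivity of the Cartier-to-Weil map can fail there. The support hypothesis is what localizes the argument to the regular locus, where the classical bijection is available, and lets one avoid any direct analysis of the local structure of $L_1$ at $P$.
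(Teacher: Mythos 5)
Your proposal is correct and follows essentially the same route as the paper: items (1) and (2) are read off from Proposition \ref{regular points of the Xquotient} exactly as in the paper's proof, and for item (3) the paper likewise splits $L_1$ into a neighborhood of $P$ (where the support hypothesis trivializes $D-D'$) and the complement (where regularity of all local rings forces a Cartier divisor with vanishing Weil class to be trivial), then glues. Your added remark on why injectivity of the Cartier-to-Weil map could fail at the possibly singular point $P$ is a correct reading of why the support hypothesis is needed, but the argument itself matches the paper's.
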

\begin{proof}
    We start proving the first two sentences. If $Q\in L_1$ is different from $P$ and $Q$ is $K$-rational, then $L$ is $X$-invariant, which is an absurd. Therefore $Q$ is not $K$-rational and $\Ou_{L_1,Q}$ is regular by Proposition \ref{regular points of the Xquotient}. In particular, $\mathcal{I}_{L_1,Q}$ is generated by a unique element in a neighborhood of $Q$ and, consequently, it is invertible. 
    
    To prove the last sentence, if $[D-D']=0$ then Proposition \ref{regular points of the Xquotient} says that $(D-D')\mid_{U}=1$ for every neighborhood not containing $P$. Since $P$ does not belong to the support of $D-D'$, there exists an open subset $V$ containing $P$, such that  $(D-D')\mid_{V}=1$. Therefore, $D=D'$.
\end{proof}

\begin{defn}
 Suppose that $L$ is a non $X$-invariant line defined over $K^{1/3}$ and $L_1$ is the $X$-quotient of $L^3$. If we take a set of non $K$-rational points $P_1,\ldots,P_s$ of $L_1$, then we denote by $\Ou_{L_1}(n_1P_1+\ldots+n_sP_s)$ the invertible sheaf $\mathcal{I}_{L_1,P_1}^{\otimes -n_1}\otimes \ldots \otimes \mathcal{I}_{L_1,P_s}^{\otimes -n_s}$ 
 where $n_1,\ldots,n_s$ are integers and $\mathcal{I}_{L_1,P_j}$ is the ideal sheaf at $P_j$ as a point of $L_1$. Such sheaf is invertible by Proposition \ref{como se ven los divisores de L_1}.
 \end{defn}

\begin{prop}\label{segundo isomorfismo loco}
 Let $L$ be a non $X$-invariant line of $Y$ defined over $K^{1/3}$ and $L_1$ be the $X$-quotient of $L^3$. Let us assume that there are two different non $K$-rational points $P$ and $Q$ of $L_1$ whose base extensions $P^1$ and $Q^1$ are $K^{1/3}$-rational points and $P^1$ or $Q^1$ is of $1$-type $2$. If $\Ou_{L_1}(P+Q)\simeq \Ou_{L_1}(2)$ then $\{P^1,Q^1\}$ is $X$-equivalent to $\{(t: t^2:1),(u: u^2:1)\}$ with $t,u\in K^{1/3}$. 
\end{prop}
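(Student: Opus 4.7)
The plan is to extract an irreducible $X$-invariant conic through both $P^1$ and $Q^1$ from the given isomorphism, and then conjugate it by an $X$-automorphism to the standard conic $\{x^2-yz=0\}$. First, I would lift the isomorphism $\Ou_{L_1}(P+Q)\simeq\Ou_{L_1}(2)$ to a conic: since $H^1(\Py^2_K,\Ou(-1))=0$, the restriction map $H^0(X,\Ou_X(2))\to H^0(L_1,\Ou_{L_1}(2))$ is surjective, so a global section of $\Ou_{L_1}(2)$ with divisor $P+Q$ (the points $P,Q$ being Cartier on $L_1$ by Proposition \ref{como se ven los divisores de L_1}, as they are non-$K$-rational) lifts to a conic $F\in H^0(X,\Ou_X(2))$ with $F|_{L_1}=P+Q$.

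Next, I would establish that $F$ is irreducible. Since $\textbf{k}(X,P)=\textbf{k}(X,Q)=K^{1/3}$, each of $P$ and $Q$ is a prime divisor of $L_1$ of $K$-degree three, and $P+Q$ has exactly this prime decomposition. If $F=M_1M_2$ factored into two $K$-lines, each $M_i|_{L_1}$ would be effective of $K$-degree three by B\'ezout; the only effective sub-divisors of $P+Q$ of degree three are $P$ and $Q$, so (up to relabeling) $M_1|_{L_1}=P$, and hence $M_1^{K^{1/3}}$ would be an $X$-invariant line through $P^1$, contradicting the $1$-type $2$ hypothesis on $P^1$ (assumed without loss of generality). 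A double line $F=M^2$ is excluded since $2(M|_{L_1})$ cannot equal $P+Q$ with $P\neq Q$. Therefore $F$ is irreducible.

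Since $\car K=3\neq 2$ and $K$ is separably closed, the irreducible plane conic $F$ is smooth and its associated non-degenerate ternary quadratic form is $K$-equivalent to $x^2-yz$. Fix $\varphi\in\Aut(X)=\PGL_3(K)$ mapping $F$ onto $\{x^2-yz=0\}$, and set $\widetilde\varphi=\varphi\otimes_X 1_Y$. Then $\widetilde\varphi(P^1)$ and $\widetilde\varphi(Q^1)$ are $K^{1/3}$-rational points of $\{x^2-yz=0\}\subset Y$, whose $K^{1/3}$-rational locus equals $\{(t:t^2:1):t\in K^{1/3}\}\cup\{(0:1:0)\}$. The points $P^1,Q^1$ are not $X$-invariant (their $X$-images have residue field $K^{1/3}\neq K$), the automorphism $\widetilde\varphi$ preserves $X$-invariance, and $(0:1:0)$ is $X$-invariant as the base extension of a $K$-rational point of $X$; hence $\widetilde\varphi(P^1),\widetilde\varphi(Q^1)\in\{(t:t^2:1):t\in K^{1/3}\}$, yielding the required $X$-equivalence.

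The main obstacle is the irreducibility step for $F$: it requires a careful comparison of the prime decomposition of $P+Q$ on $L_1$ against the possible restrictions of a linear factor of $F$, and crucially invokes the $1$-type $2$ hypothesis to exclude an $X$-invariant line through $P^1$ (or $Q^1$). The subsequent normalization step is routine once one knows that the smooth conic over the separably closed field $K$ has a unique $K$-projective model.
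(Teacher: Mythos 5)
Your proof is correct and follows the same overall strategy as the paper's: use surjectivity of $H^0(X,\Ou_X(2))\to H^0(L_1,\Ou_{L_1}(2))$ to lift a section with divisor $P+Q$ to a conic $F$ containing $P$ and $Q$, deduce irreducibility of $F$ from the $1$-type $2$ hypothesis, normalize $F$ to $x^2-yz$, and read off the coordinates of $P^1,Q^1$. You differ in two sub-steps, both to your advantage. First, you justify surjectivity via $H^1(\Py^2_K,\Ou(-1))=0$ (the ideal-sheaf sequence of the cubic $L_1$), which is the correct mechanism, whereas the paper only invokes very ampleness of $\Ou_X(2)$; and you spell out the irreducibility of $F$ (a linear factor would give an $X$-invariant line through whichever of $P^1,Q^1$ has $1$-type $2$), which the paper asserts in one line. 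Second, for the normalization you invoke the classification of smooth conics over a separably closed field of characteristic $\neq 2$, while the paper constructs two $X$-invariant tangent lines at $X$-invariant points of $F^{K^{1/3}}$ and uses the pencil $F=L_1L_2-\lambda L_3^2$ to build the automorphism explicitly; both are standard and valid. You also explicitly rule out the image $(0:1:0)$ by observing that $P^1,Q^1$ are not $X$-invariant, a point the paper leaves implicit. One small imprecision: you assert $\textbf{k}(X,P)=K^{1/3}$, which need not hold verbatim if $[K^{1/3}:K]>3$; but the fact you actually use, namely $[\textbf{k}(X,P):K]=[\textbf{k}(X,Q):K]=3$, follows from comparing degrees in $\deg(P+Q)=\deg \Ou_{L_1}(2)=6$, so the argument stands.
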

\begin{proof}
    Notice that $\Ou_X(2)$ is very ample and then the morphism $H^0(X,\Ou_X(2))\to H^0(L_1,\Ou_{L_1}(2))$ is an epimorphism. Therefore the hypothesis $\Ou_{L_1}(P+Q)\simeq \Ou_{L_1}(2)$ says that there exist a section $s\in H^0(X,\Ou_X(2))$ with $P$ and $Q$ belonging to the support of $\Div(s)$. If $F$ is the conic of $X$ defined by $s$ then $P$ and $Q$ belong to $F$. We have that $F$ is irreducible because $P^1$ or $Q^1$ is of $1$-type $2$.
    
    Suppose that $L_1$ and $L_2$ are two $X$-invariant tangents lines to $F^{K^{1/3}}$ through two $X$-invariant and different points $R_1$ and $R_2$ of $F^{K^{1/3}}$, respectively. If $L_3$ is the $X$-invariant line  containing $R_1$ and $R_2$, then $L_3^2$ and $F^{K^{1/3}}$ belong to the pencil of conics that are tangent to $L_1$ and $L_2$ at $R_1$ and $R_2$ respectively. So we can find a $\lambda\in K$ such that $F=L_1L_2-\lambda L_3^2$. Let $T\in \Aut(X)$  sending $L_1$ to $Z$, $L_2$ to $Y$ and $L_3$ to $X$. Therefore we can assume that $T$ sends $F$ to $YZ-X^2$. Since $P^1$ and $Q^1$ belong to $F^{K^{1/3}}$ then there exist $t,u\in K^{1/3}$ such that $T\otimes_X 1_Y$ sends $\{P^1,Q^1\}$ to $\{(t:t^2:1),(u: u^2:1)\}$ .
\end{proof}

\begin{prop}\label{primer isomorfismo loco}
Suppose that $L$ is a non $X$-invariant line of $Y$ defined over $K^{1/3}$ and let $L_1$ be the $X$-quotient of $L^3$. If $E$ is a one-dimensional degree $d$ subscheme of $X$ with intersection cycle  $$E^{\overline{K}} \boldsymbol{\cdot} L ^{\overline{K}}=\frac{n_1[\textbf{k}(E,P_1):K]}{3}P^{\overline{K}}_1+\ldots + \frac{n_s[\textbf{k}(E,P_s):K]}{3}P^{\overline{K}}_s$$ where $P_i$ is a non $K$-rational point for $i=1,\ldots,s$, then $\Ou_{L_1}(n_1P_1+\ldots+n_sP_s)\simeq \Ou_{L_1}(d)$.
\end{prop}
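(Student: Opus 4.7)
The strategy is to cut out $E$ by a global section $s\in H^0(X,\Ou_X(d))$, restrict to $L_1$, and identify the Cartier divisor $\Div(s|_{L_1})$ on $L_1$ with the effective Cartier divisor $n_1P_1+\cdots+n_sP_s$ (well-defined by Proposition \ref{como se ven los divisores de L_1}(2), since each $P_i$ is non-$K$-rational). Once this is known, the isomorphism $\Ou_{L_1}(d)\simeq \Ou_{L_1}(\Div(s|_{L_1}))$ induced by $s|_{L_1}$ yields the conclusion. By Proposition \ref{como se ven los divisores de L_1}(3) it suffices to check (i) the corresponding equality of Weil divisors, and (ii) that $\Div(s|_{L_1})$ has no component at the (possibly) unique $K$-rational point $P_0$ of $L_1$.

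\textbf{Computing the Weil divisor.} Choose a local linear equation $f\in \Ou_Y$ for $L$, so that $\mathcal{I}_{L_1}\otimes_{\Ou_X}\Ou_Y=(f^3)$. Since the hypothesis forces $E^{\overline{K}}\cap L^{\overline{K}}$ to be zero-dimensional, $L\not\subset E^{K^{1/3}}$ and $f$ is a non-zerodivisor modulo $\mathcal{I}_{E^{K^{1/3}}}$. The filtration by powers of $f$ together with flatness of the base changes $\Ou_X\to \Ou_Y\to \Ou_{Y^{\overline{K}}}$ gives, for every closed point $P\in X$,
\[
\dim_K \Ou_{X,P}/(\mathcal{I}_E+\mathcal{I}_{L_1}) \;=\; 3\cdot i_{P^{\overline{K}}}(E^{\overline{K}},L^{\overline{K}}),
\]
where we use that $P^{\overline{K}}$ has residue field $\overline{K}$. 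The hypothesis gives $i_{P_i^{\overline{K}}}(E^{\overline{K}},L^{\overline{K}})=n_i[\textbf{k}(E,P_i):K]/3$ and vanishing at every other closed point of $L_1^{\overline{K}}$; since $[\textbf{k}(E,P_i):K]=[\textbf{k}(X,P_i):K]$, dividing through by the residue degree shows that the local intersection multiplicity of $E$ with $L_1$ on $X$ equals $n_i$ at each $P_i$ and vanishes at every other point of $L_1$, including $P_0$. Hence $[\Div(s|_{L_1})]=\sum n_i P_i$ and $P_0\notin \Supp \Div(s|_{L_1})$, which also matches Bezout: $3d=\sum n_i[\textbf{k}(X,P_i):K]$.

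\textbf{Conclusion and main obstacle.} Applying Proposition \ref{como se ven los divisores de L_1}(3) to $\Div(s|_{L_1})$ and $\sum n_i P_i$ yields the equality of Cartier divisors, whence the claimed isomorphism. The core difficulty is that $L_1$ is typically singular precisely at $P_0$, and there Weil divisors do not determine Cartier divisors; the remedy combines the structural results of Propositions \ref{regular points of the Xquotient} and \ref{como se ven los divisores de L_1}, which confine pathology to $P_0$, with the Bezout-type count above that forces $E\cap L_1$ to avoid $P_0$, placing both Cartier divisors in the regular locus where the Weil divisor identity suffices.
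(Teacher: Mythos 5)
Your proposal is correct and follows essentially the same route as the paper's (much terser) proof: cut $E$ by a global section of $\Ou_X(d)$, restrict it to $L_1$, identify the resulting divisor with $n_1P_1+\cdots+n_sP_s$, and read off the isomorphism. The only difference is that you spell out the two steps the paper leaves implicit — the factor-of-three length computation relating the intersection with $L_1$ to the intersection cycle with $L^{\overline{K}}$, and the passage from Weil to Cartier divisors via Proposition \ref{como se ven los divisores de L_1}, which indeed requires checking that the support avoids the possible $K$-rational point of $L_1$.
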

\begin{proof}
The hypothesis says that there exists a section $s\in H^0(L_1,\Ou_{L_1}(d))$ of $X$ defined by $E$. So the Weil divisor defined by $s$ in $L_1$ is  $n_1P_1+\ldots+n_sP_s$. Since $\Ou_{L_1}(d)$ is equal to $s\Ou_{L_1}(n_1P_1+\ldots+n_sP_s)$ then $\Ou_{L_1}(d)\simeq \Ou_{L_1}(n_1P_1+\ldots+n_sP_s)$.
\end{proof}

\begin{prop}\label{proposicion que ahora si resuelve el problema de los puntos}
If the quartic $E$ in $X$ is non-smooth at the points $P$ and $Q$ and if $P^1$ and $Q^1$ are $K^{1/3}$-rational points of $1$-type $1$ or $2$, then there are $u,t\in K^{1/3}$ such that:
\begin{itemize}
    \item The set $\{P^1,Q^1\}$ is $X$-equivalent to $\{(t:t^2:1),(u: u^2:1)\}$ if $P^1$ or $Q^1$ have $1$-type $2$.
    \item The set $\{P^1,Q^1\}$ is $X$-equivalent to $\{(t:0:1),(0:u:1)\}$ otherwise.
\end{itemize}   
\end{prop}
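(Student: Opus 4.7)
The plan is to split the proof according to whether the points have $1$-type $1$ or $2$, combining Bezout's theorem with the local multiplicity structure from Corollary \ref{estudio del grado de singularidad} and the sheaf-theoretic machinery of Propositions \ref{primer isomorfismo loco} and \ref{segundo isomorfismo loco}.

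First I would establish the multiplicity and residue-field data at the singular points. In the setting of interest (regular plane quartic of arithmetic genus $3$ with geometric genus $1$), the genus drop $p_a(E)-p_g(E^{\overline{K}})=2$ is divided between the only two singular points $P$ and $Q$, forcing $\delta_{P^{\overline{K}}}=\delta_{Q^{\overline{K}}}=1<(p-1)^2/2$. Corollary \ref{estudio del grado de singularidad} then gives $[\textbf{k}(E,P):K]=[\textbf{k}(E,Q):K]=3$, and multiplicity $2$ for $E^{\overline{K}}$ at each of $P^{\overline{K}},Q^{\overline{K}}$. An immediate Bezout consequence is that no $X$-invariant line passes through both $P$ and $Q$, since such a line would contribute at least $3+3=6$ to $L\cdot E=4$.

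For the ``otherwise'' case (both $P^1, Q^1$ of $1$-type $1$), I would pick $X$-invariant lines $L_P,L_Q$ through them, which are distinct by the previous observation. Applying Proposition \ref{auto lines send} to $\{L_P,L_Q,L'\}$ with $L'$ a generic third $X$-invariant line yields an $X$-automorphism $T$ sending $L_P$ to $\{y=0\}$ and $L_Q$ to $\{x=0\}$. Then $(T\otimes_X 1_Y)(P^1)$ lies on $\{y=0\}$ and is $K^{1/3}$-rational yet non-$X$-invariant, so it has the normalized form $(t:0:1)$; analogously $(T\otimes_X 1_Y)(Q^1)=(0:u:1)$, as required.

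In the case $P^1$ or $Q^1$ of $1$-type $2$, say $P^1$, the line $L$ in $Y$ through $P^1$ and $Q^1$ is non-$X$-invariant, and its $X$-quotient $L_1$ (Proposition \ref{regular points of the Xquotient}) is an irreducible cubic through $P$ and $Q$. Bezout combined with the multiplicity forces $E^{\overline{K}}\cdot L^{\overline{K}}=2P^{\overline{K}}+2Q^{\overline{K}}$, so Proposition \ref{primer isomorfismo loco} gives $\Ou_{L_1}(2P+2Q)\simeq \Ou_{L_1}(4)$. To upgrade this to $\Ou_{L_1}(P+Q)\simeq \Ou_{L_1}(2)$, the hypothesis needed to invoke Proposition \ref{segundo isomorfismo loco}, I would exploit the $1$-type $2$ structure: within the $3$-parameter family of irreducible $X$-invariant conics through $P^1$, one $F_P$ can be chosen tangent to $L^{\overline{K}}$ at $P^{\overline{K}}$, so that $F_P^{\overline{K}}\cdot L^{\overline{K}}=2P^{\overline{K}}$ and Proposition \ref{primer isomorfismo loco} applied with $d=2$ yields $\Ou_{L_1}(2P)\simeq \Ou_{L_1}(2)$; the symmetric argument at $Q$ gives $\Ou_{L_1}(2Q)\simeq \Ou_{L_1}(2)$, and combining reduces the desired isomorphism to showing that a specific $2$-torsion element of $\operatorname{Pic}^0(L_1)$ is trivial. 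Proposition \ref{segundo isomorfismo loco} then furnishes the $X$-equivalence to $\{(t:t^2:1),(u:u^2:1)\}$.

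The hard part will be the final case, and it has two facets. First, the symmetric argument at $Q$ presupposes that $Q^1$ is itself of $1$-type $2$, so I must rule out the mixed scenario: if $Q^1$ lay on an $X$-invariant line $L_Q$, then by Bezout $L_Q^{\overline{K}}$ would be tangent with intersection multiplicity $3$ to a single branch of $E^{\overline{K}}$ at $Q^{\overline{K}}$, and I expect this to be incompatible with the conic structure at $P$ via the Picard-class identity on $L_1$. Second, confirming the triviality of the $2$-torsion element requires analyzing $\operatorname{Pic}^0(L_1)$ for a geometrically non-reduced cubic: since $L_1^{K^{1/3}}=L^3$ is a triple line and $L_1$ has at most one $K$-rational (cusp-type) singular point, I expect $\operatorname{Pic}^0(L_1)$ to be unipotent in characteristic $3$ (a product of copies of $\mathbb{G}_a$), and hence free of $2$-torsion.
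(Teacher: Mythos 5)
Your first case (both points of $1$-type $1$) and your derivation of $E^{\overline{K}}\boldsymbol{\cdot}L^{\overline{K}}=2P^{\overline{K}}+2Q^{\overline{K}}$, hence $\Ou_{L_1}(2P+2Q)\simeq\Ou_{L_1}(4)$, agree with the paper. The fatal problem is the step you use to upgrade this to $\Ou_{L_1}(P+Q)\simeq\Ou_{L_1}(2)$: you assert that inside the family of $X$-invariant conics through $P^1$ one can find $F_P$ with $F_P^{\overline{K}}\boldsymbol{\cdot}L^{\overline{K}}=2P^{\overline{K}}$. Such a conic does not exist in general. The tangency condition is a single linear condition over $K^{1/3}$, which on the $K$-vector space of conics amounts to up to three $K$-linear conditions, and a direct computation kills the whole space: take $t=a^{1/3}$, $u=b^{1/3}$ with $a,b\in K$ algebraically independent over the prime field, $P^1=(t:t^2:1)$, $Q^1=(u:u^2:1)$ (both of $1$-type $2$, lying on $yz-x^2$), so that $L$ is $y=(t+u)x-tuz$. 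Substituting into $G=\alpha x^2+\beta xy+\gamma y^2+\delta xz+\epsilon yz+\zeta z^2$ with $\alpha,\dots,\zeta\in K$ and requiring $G|_L=c(x-tz)^2$, the comparison of the $z^2$-coefficients alone, expanded over the $K$-basis $\{t^iu^j\}_{0\le i,j\le 2}$ of $K[t,u]$, forces $\alpha=\beta=\gamma=\epsilon=\zeta=0$ and then $\delta=0$. So the construction you base the whole $1$-type-$2$ case on is vacuous; consequently the isomorphisms $\Ou_{L_1}(2P)\simeq\Ou_{L_1}(2)$ and $\Ou_{L_1}(2Q)\simeq\Ou_{L_1}(2)$, the $2$-torsion reduction, and the treatment of the mixed-type scenario all collapse.

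The paper sidesteps this entirely: instead of a tangent conic at each point, it takes arbitrary $K^{1/3}$-lines $M_1,M_2$ with $M_1\cap L=P^1$ and $M_2\cap L=Q^1$ and uses the $X$-invariant sextic $F=M_1^3M_2^3$, whose intersection cycle with $L^{\overline{K}}$ is exactly $3P^{\overline{K}}+3Q^{\overline{K}}$ by construction. Proposition \ref{primer isomorfismo loco} then gives $\Ou_{L_1}(3P+3Q)\simeq\Ou_{L_1}(6)$, and subtracting $\Ou_{L_1}(2P+2Q)\simeq\Ou_{L_1}(4)$ yields $\Ou_{L_1}(P+Q)\simeq\Ou_{L_1}(2)$ directly, with no torsion ambiguity and no need for both points to have $1$-type $2$. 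Two smaller remarks: (i) the paper obtains $[\textbf{k}(E,P):K]=3$ from the $1$-type hypothesis together with Proposition \ref{pre grado emb} and the inclusion $\textbf{k}(E,P)\subset K^{1/3}$, whereas your route through Corollary \ref{estudio del grado de singularidad} imports regularity and $p_g(E^{\overline{K}})=1$, which are not hypotheses of this proposition; (ii) the lower bound $m_i\ge 2$ needs only that $E^{\overline{K}}$ is singular at $P^{\overline{K}}$ and $Q^{\overline{K}}$, not the $\delta$-invariant computation.
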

\begin{proof}
If $P^1$ and $Q^1$ have $1$-type $1$ there are $X$-invariant distinct lines $M_1,M_2$ in $Y$ containing $P^1$ and $Q^1$ respectively. If $M_3$ is any other $X$-invariant line ($M_3 \neq \overline{P^1Q^1}$), then by Proposition \ref{auto lines send} the set $\{M_1,M_2,M_3\}$ is $X$-equivalent to $\{x,y,z\}$. So $\{P^1,Q^1\}$ is $X$-equivalente to $\{(0:u:1),(t:0:1)\}$ with $u,t\in K^{1/3}$.

For the other case, let $L$ be the line through $P^1$ and $Q^1$. Since $P^1$ or $Q^1$ have $1$-type two then $L$ is not $X$-invariant. If $M_1$,$M_2$ are the lines in $Y$ defined over $K^{1/3}$, such that $M_1\cap L= P^1$ and $M_2\cap L= Q^1$, then we have that $L^{\overline{K}} \boldsymbol{\cdot} (F)^{\overline{K}}= 3P^{\overline{K}}+3Q^{\overline{K}}$, where $F=M_1^3M_2^3$.
 
Notice that $[\textbf{k}(F,P):K]=[\textbf{k}(F,Q):K]=3$. Indeed since $P^1$ have $1$-type one or two then there is an $X$-invariant conic $C_1$, which is regular at $P^1$. So Proposition \ref{pre grado emb} for $Z=C_1$ implies that $\textbf{k}(F,P)$ is a simple extension of $K$, because $\textbf{k}(F,P)=\textbf{k}(X,P)=\textbf{k}(E,P)=\textbf{k}(C_1,P)$. On the other hand, from the inclusion $\textbf{k}(E,P)\subset \textbf{k}(E^{K^{1/3}},P^1)=K^{1/3}$ we get that $[\textbf{k}(F,P):K]=3$. Therefore, Proposition \ref{primer isomorfismo loco} applied to $F$ says that $\Ou_{L_1}(6)\simeq \Ou_{L_1}(3P+3Q)$. 
 
 On the other hand $E^{\overline{K}} \boldsymbol{\cdot} L^{\overline{K}}=m_1P^{\overline{K}}+m_2Q^{\overline{K}}$ where $m_1,m_2\geq 2$ because $E^{\overline{K}}$ is singular at  $P^{\overline{K}}$ and $Q^{\overline{K}}$. Using Bézuot's Theorem we have $m_1+m_2=4$ and, therefore, $E^{\overline{K}} \boldsymbol{\cdot} L^{\overline{K}}=2P^{\overline{K}}+2Q^{\overline{K}}$.  Hence, Proposition \ref{primer isomorfismo loco} applied to $E$ provides that $\Ou_{L_1}(4)\simeq  \Ou_{L_1}(2P+2Q)$. As a consequence of the two sheaf isomorphisms above we have that $\Ou_{L_1}(2)\simeq \Ou_{L_1}(P+Q)$ and we conclude the proof from Proposition \ref{segundo isomorfismo loco}.
\end{proof}

\section{The structure of arithmetic genus three regular curves with two non-smooth points} \label{section 6}

In this section we will consider $C$ as a regular, non-smooth, geometrically integral and complete curve of arithmetic genus $3$ over a separably closed field $K$ of characteristic $3$. 

Notice that the arithmetic genus $p_a(C^L)$ of $C^L$ is also equal to $3$ since arithmetic genus is invariant under base change. However the geometric genus $p_g(C^L)$ may be smaller than $3$ (because $C^L$ may be non-regular) and, in this chapter, we will classify such phenomenon when
\[
p_g(C^{\overline{K}})=1.\]
In this case, we say that $C$ is an  \emph{geometrically elliptic} curve of arithmetic genus $3$. 
Notice also that we always have $p_a(\widetilde{C^{\overline{K}}})=p_g(\widetilde{C^{\overline{K}}})=p_g(C^{\overline{K}}).$

We remember that the curve $C$, with the additional hypothesis $p_g(C^{\overline{K}})=1$,  admits exactly two non-smooth points of singularity degree one (see \cite{Salomon} Remark 4.1). If $P$ is a non-smooth point of $C$, then Corollary  \ref{estudio del grado de singularidad} together with the fact that the singularity degree is the number of gaps of $\Gamma_{C^{\overline{K}}, P^{\overline{K}}}$ (see \cite{Bedoya} Proposition 1.1) implies that $P$ admits degree $3$, $\Gamma_{C^{\overline{K}}, P^{\overline{K}}}$ is generated by $2$ and $3$ and $P$ has differential degree $2$. Moreover, it also follows from the same corollary that the points of $C_1$ lying under the non-smooth points of $C$ are rational and, hence, $C_1$ is smooth.

\begin{Remark}\label{escritura de curvas de genero 3 como cuarticas}
If $C^{\overline{K}}$ is a non-hyperelliptic curve then there exists a morphism from $C$ to $\Py^2_{K}$ such that $C$ is isomorphic to a quartic defined over $K$. Such a
 morphism from $C$ to $\Py^2_{K}$ will be called a canonical embedding of $C$.

Indeed, since $C$ is regular, then it is Gorenstein and, consequently, $C^{\overline{K}}$ is also Gorenstein because this notion is invariant under base change. Therefore, Theorem 3.3, pg. 124, in \cite{Stohr4} implies that the morphism induced by the canonical sheaf defines an immersion from $C^{\overline{K}}$ onto a quartic curve in $\Py^2_{\overline{K}}$. Thus $\omega_{C^{\overline{K}}/\overline{K}}$ is very ample and, using exercise 1.30, pg 177, in \cite{Liu}, we conclude the same to $\omega_{C/K}$. To complete the proof we just need to observe that the same exercise shows that the embedding of $C^{\overline{K}}$ is obtained from the embedding of $C$. 
\end{Remark}

\subsection{Extrinsic geometry of geometrically elliptic, regular curve of arithmetic genus three} \label{subsection 61}

This section will study the relations between the sheaves $\Omega_{C/C_1}$, $\omega_{C/K}$ and $\Ou_{C}(P+Q)$ where $P$ and $Q$ are the non-smooth points of $C$. Consequently, we will identify certain uniquely determined divisors of $C$ to reach its embedding in $\mathbb{P}^2_K$.  

\begin{prop}\label{lemam acerca de los divisores canonicos}
We have the following isomorphisms.
\begin{itemize}
    \item $\Omega_{C/C_1}^{\otimes (-2)}\simeq \omega_{C/K}$
    \item $\Omega_{C/C_1}^{\otimes (-1)}\otimes \Ou_C(P+Q)\simeq \omega_{C/K}^{\otimes 2}$
    \item $\Ou_C(2P+2Q)\simeq \omega_{C/K}^{\otimes 3}$
\end{itemize}
\end{prop}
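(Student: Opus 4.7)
The three isomorphisms all reduce to a single identity. Indeed, for the non-smooth points $P$ and $Q$, the invariants recalled at the beginning of the section (degree $3$, differential degree $2$, singularity degree $1$, whence $c(C,P)=c(C,Q)=2$ by Corollary 1.4 of \cite{Bedoya}) give $a_P=a_Q=\tfrac{2+2-1}{3}=1$, so in the notation of Section~\ref{Sectionn 4} one has $D=P+Q$, $D''=\widetilde{P^n}+\widetilde{Q^n}$, and $D'=2\widetilde{P^n}+2\widetilde{Q^n}$. The first item of Theorem \ref{segundo lema de divisores canonicos} therefore yields
\[ \Omega_{C/C_1}\otimes \Ou_C(P+Q)\simeq \omega_{C/K}. \]
Granted this, the first claim $\Omega_{C/C_1}^{\otimes(-2)}\simeq \omega_{C/K}$ is equivalent to the identity $\Ou_C(P+Q)\simeq \Omega_{C/C_1}^{\otimes(-3)}$, from which the second and third claims follow by tensoring with $\Omega_{C/C_1}^{\otimes(-1)}$ and with $\Ou_C(P+Q)$, respectively.

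My plan for the identity $\Ou_C(P+Q)\simeq \Omega_{C/C_1}^{\otimes(-3)}$ is to pull both sides back via $\mathfrak{b}_n$ and invoke Theorem \ref{Lema a cerca de igualdad de divisores}. Since a uniformizer of $\Ou_{C,P}$ has $\nu_0$-valuation $[\textbf{k}(C,P):K]=3$ (Lemma 1.5 of \cite{Bedoya}), and this coincides with its $\nu_n$-valuation once $\widetilde{P^n}$ is $K^{p^{-n}}$-rational, one has $\mathfrak{b}_n^{\ast}(P+Q)=3\widetilde{P^n}+3\widetilde{Q^n}=3D''$. Combining this with the second item of Theorem \ref{segundo lema de divisores canonicos} gives
\[ \mathfrak{b}_n^{\ast}\bigl(\Omega_{C/C_1}^{\otimes(-3)}\bigr) \simeq \omega_{\widetilde{C^n}/K^{p^{-n}}}^{\otimes(-3)} \otimes \Ou_{\widetilde{C^n}}(3D''), \qquad \mathfrak{b}_n^{\ast}\Ou_C(P+Q)\simeq \Ou_{\widetilde{C^n}}(3D''). \]

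The remaining step is the triviality $\omega_{\widetilde{C^n}/K^{p^{-n}}}\simeq \Ou_{\widetilde{C^n}}$. Since $\widetilde{C^n}$ is smooth of arithmetic genus $p_g(C^{\overline{K}})=1$, the canonical sheaf has degree $0$ and $h^0=1$ by Riemann-Roch; any non-zero global section has effective divisor of degree $0$, thus no zeros, so the canonical sheaf is trivial. This yields $\mathfrak{b}_n^{\ast}(\Omega_{C/C_1}^{\otimes(-3)})\simeq \mathfrak{b}_n^{\ast}\Ou_C(P+Q)$, and Theorem \ref{Lema a cerca de igualdad de divisores} descends this to the desired isomorphism on $C$.

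The main obstacle I anticipate is the precise verification that $\mathfrak{b}_n^{\ast}(P+Q)=3D''$, which requires tracking a uniformizer of $\Ou_{C,P}$ through both the purely inseparable base change and the normalization; the triviality of $\omega$ on the smooth genus-one curve $\widetilde{C^n}$ is, by contrast, standard.
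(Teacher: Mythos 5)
Your proof is correct and follows essentially the same route as the paper: both arguments rest on computing $D=P+Q$, $D'=2\widetilde{P^n}+2\widetilde{Q^n}$, $D''=\widetilde{P^n}+\widetilde{Q^n}$ and $\mathfrak{b}_n^{\ast}(P+Q)=3\widetilde{P^n}+3\widetilde{Q^n}$ from the invariants $d(C,P)=2$, $c(C,P)=2$, $\deg P=3$, then combining Theorem \ref{segundo lema de divisores canonicos} with the triviality of the canonical sheaf of the smooth genus-one curve $\widetilde{C^n}$ and descending via Theorem \ref{Lema a cerca de igualdad de divisores}. The only (harmless) difference is bookkeeping: the paper reads all three isomorphisms off the pullbacks $\mathfrak{b}_1^{\ast}(\Omega_{C/C_1})\simeq\Ou(-\widetilde{P^1}-\widetilde{Q^1})$, $\mathfrak{b}_1^{\ast}(\omega_{C/K})\simeq\Ou(2\widetilde{P^1}+2\widetilde{Q^1})$, $\mathfrak{b}_1^{\ast}(\Ou_C(P+Q))\simeq\Ou(3\widetilde{P^1}+3\widetilde{Q^1})$, whereas you first reduce to the single identity $\Ou_C(P+Q)\simeq\Omega_{C/C_1}^{\otimes(-3)}$ using item one of Theorem \ref{segundo lema de divisores canonicos}.
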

\begin{proof}
    Since $p_a(\widetilde{C^1})=1$ implies the isomorphism   $\omega_{\widetilde{C^1}/K^{1/3}}\simeq \Ou_{\widetilde{C^1}}$, then Theorem \ref{segundo lema de divisores canonicos} says that $\mathfrak{b}_1^{\ast}(\Omega_{C/C_1})\simeq\Ou_{\widetilde{C^1}}(-\widetilde{P^1}-\widetilde{Q^1})$ and $\mathfrak{b}_1^{\ast}(\omega_{C/K})\simeq\Ou_{\widetilde{C^1}}(2\widetilde{P^1}+2\widetilde{Q^1})$. Therefore, we conclude the proof by using the isomorphism $\mathfrak{b}_1^{\ast}(\Ou_C(P+Q))\simeq\Ou_{\widetilde{C^1}}(3\widetilde{P^1}+3\widetilde{Q^1})$ together with Theorem \ref{Lema a cerca de igualdad de divisores}.
\end{proof}

\begin{prop}\label{Lema auxiliar para generar la seccion clave}
    $\dim_KH^{0}(C,\Omega_{C/C_1}^{\otimes(-1)})=1$
\end{prop}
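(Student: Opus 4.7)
The plan is to show $h^0(C, \Omega_{C/C_1}^{\otimes(-1)}) = 1$ via matching upper and lower bounds. From $\deg\Omega_{C/C_1}^{\otimes(-1)} = 2$ (Proposition~\ref{lemam acerca de los divisores canonicos}, since $\omega_{C/K} \simeq \Omega_{C/C_1}^{\otimes(-2)}$ has degree $4$) and $p_a(C) = 3$, Riemann--Roch gives $\chi(\Omega_{C/C_1}^{\otimes(-1)}) = 0$, and Serre duality on the Gorenstein curve $C$ combined with the same isomorphism yields $h^1(\Omega_{C/C_1}^{\otimes(-1)}) = h^0(\omega_{C/K} \otimes \Omega_{C/C_1}) = h^0(\Omega_{C/C_1}^{\otimes(-1)})$. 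Hence it suffices to compute $h^0$.

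For the upper bound I would combine flat base change
\[
H^0(C, \Omega_{C/C_1}^{\otimes(-1)}) \otimes_K K^{1/3} = H^0(C^1, \mathfrak{e}_1^* \Omega_{C/C_1}^{\otimes(-1)})
\]
with the injection into $H^0(\widetilde{C^1}, \mathfrak{b}_1^* \Omega_{C/C_1}^{\otimes(-1)}) = H^0(\widetilde{C^1}, \Ou(\widetilde{P^1} + \widetilde{Q^1}))$ supplied by the second bullet of Theorem~\ref{segundo lema de divisores canonicos}; Riemann--Roch on the elliptic curve $\widetilde{C^1}$ gives target dimension $2$ over $K^{1/3}$, so $h^0 \leq 2$. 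A further base change to $\overline{K}$ excludes $h^0 = 2$, since a two-dimensional degree-two linear system on the Gorenstein curve $C^{\overline{K}}$ would furnish a $g^1_2$ and make $C^{\overline{K}}$ hyperelliptic, contradicting the standing hypothesis.

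For the lower bound $h^0 \geq 1$ I would feed the normalization short exact sequence
\[
0 \to \mathfrak{e}_1^* \Omega_{C/C_1}^{\otimes(-1)} \to \mathfrak{n}_{1*}\mathfrak{b}_1^* \Omega_{C/C_1}^{\otimes(-1)} \to \mathcal{T} \to 0
\]
on $C^1$ into cohomology, where $\mathcal{T}$ is supported at $\{P^1, Q^1\}$ of total $K^{1/3}$-length $\delta_{P^{\overline{K}}} + \delta_{Q^{\overline{K}}} = 2$. Combined with the vanishing $H^1(\widetilde{C^1}, \Ou(\widetilde{P^1} + \widetilde{Q^1})) = 0$, this reduces $h^0$ to $2 - \mathrm{rk}(\varphi)$, where $\varphi\colon (K^{1/3})^2 \to (K^{1/3})^2$ is the obstruction-to-descent map. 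The canonical section of $\Ou(\widetilde{P^1} + \widetilde{Q^1})$ does not lift to $C^1$, because its local factor at $\widetilde{P^1}$ is a uniformizer whose valuation $1$ is a gap of the cuspidal semigroup $\langle 2, 3 \rangle$ from Corollary~\ref{estudio del grado de singularidad}; this already forces $\mathrm{rk}(\varphi) \geq 1$ and reconfirms $h^0 \leq 1$.

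The main obstacle is the reverse estimate $\mathrm{rk}(\varphi) \leq 1$, i.e., producing at least one section of $\Ou(\widetilde{P^1} + \widetilde{Q^1})$ that does descend. My intended route is to exhibit an effective degree-two Cartier divisor $D$ on $C$ with $\mathfrak{b}_1^* D \sim \widetilde{P^1} + \widetilde{Q^1}$ on $\widetilde{C^1}$: Theorem~\ref{Lema a cerca de igualdad de divisores} would then force $\Ou_C(D) \simeq \Omega_{C/C_1}^{\otimes(-1)}$ and its canonical section is the one we seek. Because the closed points of $C$ have $K$-degrees in $\{1, 3, 9, \ldots\}$, such a $D$ necessarily has the form $2R$ or $R_1 + R_2$ with smooth $K$-rational $R, R_1, R_2$. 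Securing such a $D$---by combining the additional identity $\omega_{C/K}^{\otimes 3} \simeq \Ou_C(2P + 2Q)$ of Proposition~\ref{lemam acerca de los divisores canonicos} (which constrains which half-classes of $\widetilde{P^1} + \widetilde{Q^1}$ can descend from the elliptic curve) with the Picard-injectivity statement of Theorem~\ref{Lema a cerca de igualdad de divisores}, so as to pin down a bitangent-like line in the canonical $\mathbb{P}^2_K$-embedding of $C$---is the technically most delicate step I foresee.
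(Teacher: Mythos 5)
Your upper bound is exactly the paper's: flat base change, the injection of $H^0(C^1,\mathfrak{e}_1^{\ast}\Omega_{C/C_1}^{\otimes(-1)})$ into $H^0(\widetilde{C^1},\mathfrak{b}_1^{\ast}\Omega_{C/C_1}^{\otimes(-1)})\simeq H^0(\widetilde{C^1},\Ou(\widetilde{P^1}+\widetilde{Q^1}))$, Riemann--Roch on the genus-one curve to get $h^0\le 2$, and exclusion of $h^0=2$ because it would make $C^{\overline{K}}$ hyperelliptic. That part is fine (the Euler-characteristic/Serre-duality preamble is correct but never used). The problem is the lower bound, and you have located the gap yourself: everything reduces to exhibiting one section of $\Ou_{\widetilde{C^1}}(\widetilde{P^1}+\widetilde{Q^1})$ that descends, equivalently an effective degree-two divisor $D$ on $C$ with $\Ou_C(D)\simeq\Omega_{C/C_1}^{\otimes(-1)}$, and your argument stops exactly there, calling it ``the technically most delicate step.'' It is in fact the entire content of the proposition. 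Neither Proposition \ref{lemam acerca de los divisores canonicos} nor Theorem \ref{Lema a cerca de igualdad de divisores} produces such a $D$: the latter only converts an isomorphism of pullbacks into an isomorphism on $C$ once both sheaves are in hand, and the divisor you want is precisely the divisor $R$ of Lemma \ref{lemma introductorio para la construccion de nuestras conicas}, whose existence the paper deduces \emph{from} this proposition (via Proposition \ref{proposicion de la existencia de la seccion especial}), so appealing to anything downstream would be circular. The side remark that the canonical section of $\Ou(\widetilde{P^1}+\widetilde{Q^1})$ fails to descend is redundant (the hyperellipticity argument already gives $h^0\le 1$) and, as phrased, depends on a choice of local trivialization that you have not pinned down.

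The paper fills this gap with the projective geometry of Section \ref{Sectionn 5}, which is the genuinely new input. Since $\deg P=\deg Q=3$, the residue fields at $P$ and $Q$ are simple degree-three extensions of $K$; Proposition \ref{tipo 1 tipo 2 prueba} then shows that $P^1$ and $Q^1$ have $1$-type one or two, and Proposition \ref{proposicion que ahora si resuelve el problema de los puntos}, applied to the canonical quartic model of Remark \ref{escritura de curvas de genero 3 como cuarticas}, produces a $\Py^2_K$-invariant conic through both points. Restricting that conic to $C^{K^{1/3}}$ gives a $\nabla(2)$-horizontal section of $\omega_{C^{K^{1/3}}/K^{1/3}}^{\otimes 2}\otimes\mathcal{I}_{C^{K^{1/3}},P^1}\otimes\mathcal{I}_{C^{K^{1/3}},Q^1}$, which by the descent formalism of Subsection \ref{section invariant sheaves} is a nonzero element of $H^0(C,\omega_{C/K}^{\otimes 2}\otimes\Ou_C(-P-Q))=H^0(C,\Omega_{C/C_1}^{\otimes(-1)})$. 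Until you supply an argument of comparable strength for the existence of your divisor $D$ (or an equivalent descending section), your proof establishes only $\dim_K H^0(C,\Omega_{C/C_1}^{\otimes(-1)})\le 1$, not the stated equality.
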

\begin{proof} 

Since the non-smooth points of $C$ have degree three, then their residue fields are simple extensions of $K$ and we may use Proposition \ref{tipo 1 tipo 2 prueba} and Proposition \ref{proposicion que ahora si resuelve el problema de los puntos}  to conclude the existence of an invariant conic through $P^1$ and $Q^1$. In particular there exists a section \[s\in H^0\left(C^{K^{1/3}},\omega_{C^{K^{1/3}}/K^{1/3}}^{\otimes 2}\otimes \mathcal{I}_{C^{K^{1/3}},P^1}\otimes \mathcal{I}_{C^{K^{1/3}},Q^1}\right)^{\nabla(2)}=H^0\left(C,\omega_{C/K}^{\otimes 2}\otimes \Ou_{C}(-P-Q)\right)\] and we conclude, by Proposition \ref{lemam acerca de los divisores canonicos}, that $\dim_KH^{0}(C,\Omega_{C/C_1}^{\otimes (-1)})\geq 1$.

Notice that $\Omega_{C/C_1}^{\otimes(-1)}$ has degree two, from Proposition \ref{lemam acerca de los divisores canonicos}, and hence  $\mathfrak{b}_1^{\ast}(\Omega_{C/C_1}^{ \otimes (-1)})$ has also degree two. On the other hand, Riemann-Roch theorem implies that $\dim_{K^{1/3}} H^{0}(\widetilde{C^1},\mathfrak{b}_1^{\ast}(\Omega_{C/C_1}^{\otimes (-1)}))=2$ and, consequently $$\dim_{K^{1/3}} H^{0}(C^1,\mathfrak{e}_1^{\ast}(\Omega_{C/C_1}^{\otimes(-1)}))\leq 2.$$ If $\dim_{K^{1/3}} H^{0}(C^1,\mathfrak{e}_1^{\ast}(\Omega_{C/C_1}^{\otimes(-1)}))= 2$ then $\mathfrak{e}_{\infty}^{\ast}(\Omega_{C/C_1}^{\otimes(-1)})$ would induce a degree two mophism from $C^{\overline{K}}$ to $\Py^1_{\overline{K}}$, showing the hyperellipticity of $C^{\overline{K}}$, and so, contradicting our hypothesis. Therefore $$\dim_KH^{0}(C,\Omega_{C/C_1}^{\otimes(-1)})=\dim_{K^{1/3}} H^{0}(C^1,\mathfrak{e}_1^{\ast}(\Omega_{C/C_1}^{\otimes (-1)}))= 1.$$
\end{proof}

\begin{Remark}
\label{Existencia de seccion canonica para divisores efectivos}
We remember the following fact about algebraic geometry because we will often need it. Given an effective divisor $D$ of $C$, there is a section $s\in H^0(C,\Ou_C(D))$ such that $\Div(s)=D$.

Indeed, we may write $D=(f_i,U_i)_{i\in I}$ with $f_i\in \Ou_C(U_i)$ since $D$ is effective. Then $1\in f_i^{-1}\Ou_C(U_i)$ for every $i\in I$ and so $1\in H^0(C,\Ou_C(D))$. From the definition (see \cite{Liu}[Exercise 1.13 pg. 266]) we have immediately that $\Div(1)=D$.
\end{Remark}

\begin{prop}\label{proposicion de la existencia de la seccion especial}
 There exists a section $s\in H^0(C,\omega_{C/K}^{\otimes 2})$ satisfying $\Div(s)=R+P+Q$ where $R$ is a divisor of degree $2$.
\end{prop}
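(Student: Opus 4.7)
The plan is to combine the three results already established in this subsection. By Proposition \ref{lemam acerca de los divisores canonicos}, we have the isomorphism
\[
\Omega_{C/C_1}^{\otimes(-1)}\otimes\Ou_C(P+Q)\simeq\omega_{C/K}^{\otimes 2},
\]
so it suffices to exhibit a section of the left-hand side whose associated divisor is $R+P+Q$ with $R$ effective of degree $2$.

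First I would use Proposition \ref{Lema auxiliar para generar la seccion clave}, which gives $\dim_K H^0(C,\Omega_{C/C_1}^{\otimes(-1)})=1$, so there exists a non-zero global section $t\in H^0(C,\Omega_{C/C_1}^{\otimes(-1)})$. Setting $R:=\Div(t)$, I would note that $R$ is effective (since $t$ is a genuine global section of an invertible sheaf on a complete curve) and $\deg R=\deg\Omega_{C/C_1}^{\otimes(-1)}=2$, the last equality being forced by Proposition \ref{lemam acerca de los divisores canonicos} together with $\deg\omega_{C/K}=2p_a(C)-2=4$.

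Next, since $P+Q$ is an effective divisor on $C$, Remark \ref{Existencia de seccion canonica para divisores efectivos} furnishes a canonical section $s_{PQ}\in H^0(C,\Ou_C(P+Q))$ with $\Div(s_{PQ})=P+Q$. Then I would set $s:=t\otimes s_{PQ}$, viewed through the isomorphism of Proposition \ref{lemam acerca de los divisores canonicos} as an element of $H^0(C,\omega_{C/K}^{\otimes 2})$. Since divisors of sections of tensor products of invertible sheaves add, we obtain
\[
\Div(s)=\Div(t)+\Div(s_{PQ})=R+P+Q,
\]
which is exactly the conclusion sought.

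There is no serious obstacle: the argument is essentially bookkeeping with the isomorphism of Proposition \ref{lemam acerca de los divisores canonicos} and the one-dimensional space of sections produced in Proposition \ref{Lema auxiliar para generar la seccion clave}. The only minor point to be careful about is that $R$ genuinely has degree $2$ and is effective, which both follow immediately from $t$ being a non-zero global section of a degree-$2$ invertible sheaf.
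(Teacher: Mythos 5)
Your proposal is correct and follows essentially the same route as the paper's own proof: the paper likewise takes the section of $\Ou_C(P+Q)$ with divisor $P+Q$, uses Propositions \ref{lemam acerca de los divisores canonicos} and \ref{Lema auxiliar para generar la seccion clave} to produce a non-trivial section $t_1$ of $\omega_{C/K}^{\otimes 2}\otimes\Ou_C(-P-Q)$ (your $t$, viewed through the isomorphism), sets $R=\Div(t_1)$, and multiplies the two sections. The only difference is whether the isomorphism of Proposition \ref{lemam acerca de los divisores canonicos} is invoked before or after taking the section, which is immaterial.
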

\begin{proof}
Let $t\in H^0(C,\Ou_C(P+Q))$ be a global section such that $\Div(t)=P+Q$. From the last two propositions there is a non-trivial section $t_1\in H^0(C,\omega_{C/K}^{\otimes 2}\otimes \Ou_C(-P-Q))$ and we will  denote the  effective divisor $\Div(t_1)$ by $R$. Using again \cite{Liu}[Exercise 1.13 pg. 266] we get $\Ou_C(R)\simeq \omega_{C/K}^{\otimes 2}\otimes \Ou_C(-P-Q)$, which has degree two, and then $R>0$.  Therefore $s=t_1t\in H^0(C,\omega_{C/K}^{\otimes 2})$ satisfies $\Div(s)=R+P+Q$. 
\end{proof}
\begin{lemm}\label{lemma introductorio para la construccion de nuestras conicas}
There exists a unique degree two effective divisor $R$ of $C$ such that $\Ou_C(R+P+Q)\simeq \omega_{C/K}^{\otimes 2}$ and $\Ou_C(2R)\simeq \omega_{C/K}$. In particular, $P,Q \not\in \Supp(R)$, which contains only rational points.
\end{lemm}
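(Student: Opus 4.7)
The plan is to read off $R$ directly from the section provided by Proposition~\ref{proposicion de la existencia de la seccion especial}, deduce the second isomorphism from the first via the chain of identifications in Proposition~\ref{lemam acerca de los divisores canonicos}, and obtain uniqueness from the one-dimensionality asserted in Proposition~\ref{Lema auxiliar para generar la seccion clave}. The support statement will follow from a clean degree count using that $K$ is separably closed of characteristic three.

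For existence, I will take $s \in H^0(C, \omega_{C/K}^{\otimes 2})$ with $\Div(s) = R + P + Q$ as in Proposition~\ref{proposicion de la existencia de la seccion especial}, which immediately gives $\Ou_C(R+P+Q) \simeq \omega_{C/K}^{\otimes 2}$. Combining this with the isomorphism $\Omega_{C/C_1}^{\otimes(-1)} \otimes \Ou_C(P+Q) \simeq \omega_{C/K}^{\otimes 2}$ from Proposition~\ref{lemam acerca de los divisores canonicos} forces $\Ou_C(R) \simeq \Omega_{C/C_1}^{\otimes(-1)}$; squaring and invoking the other isomorphism $\Omega_{C/C_1}^{\otimes(-2)} \simeq \omega_{C/K}$ of the same proposition yields $\Ou_C(2R) \simeq \omega_{C/K}$, so the second condition is automatic.

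For uniqueness, any $R'$ satisfying the first condition must, by the same identification, satisfy $\Ou_C(R') \simeq \Omega_{C/C_1}^{\otimes(-1)} \simeq \Ou_C(R)$; so $R$ and $R'$ are cut out as zero divisors of non-trivial sections of $\Omega_{C/C_1}^{\otimes(-1)}$. Since $\dim_K H^0(C, \Omega_{C/C_1}^{\otimes(-1)}) = 1$ by Proposition~\ref{Lema auxiliar para generar la seccion clave}, those two sections are proportional scalars, and their divisors coincide, giving $R = R'$.

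Finally, for the statement on the support, the key observation is that over a separably closed field $K$ of characteristic three every finite extension of $K$ is purely inseparable, hence of degree a power of three; consequently every closed point of $C$ has degree in $\{1,3,9,\ldots\}$. Since $\deg(R) = 2$, no point of $\Supp(R)$ can have degree $\geq 3$, so the non-smooth points $P$ and $Q$ (which have degree three by Corollary~\ref{estudio del grado de singularidad}) cannot appear in $\Supp(R)$, and every point in $\Supp(R)$ must be $K$-rational. I do not foresee any serious obstacle in executing this plan; the argument is essentially a short chain of sheaf isomorphisms fed by Propositions~\ref{lemam acerca de los divisores canonicos}, \ref{Lema auxiliar para generar la seccion clave} and~\ref{proposicion de la existencia de la seccion especial}, topped with an elementary degree count.
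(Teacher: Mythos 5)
Your proof is correct. The existence step coincides with the paper's: $R$ comes from the section of Proposition \ref{proposicion de la existencia de la seccion especial}, and the second isomorphism is bookkeeping with Proposition \ref{lemam acerca de los divisores canonicos} (you cancel $\Ou_C(P+Q)$ against the first two isomorphisms there, while the paper uses the third one, $\Ou_C(2P+2Q)\simeq\omega_{C/K}^{\otimes 3}$, to compute $\Ou_C(2R)\simeq\omega_{C/K}^{\otimes 4}\otimes\omega_{C/K}^{\otimes(-3)}$; both are equivalent). Where you genuinely diverge is uniqueness. The paper writes $R=R'+\Div(f)$, uses $\Ou_C(2R')\simeq\omega_{C/K}$ and base-point-freeness of the canonical system to get $\dim_K H^0(C,\Ou_C(R'))<3$, and then rules out $f\notin K$ because $K\oplus fK\subseteq H^0(C,\Ou_C(R'))$ would yield a degree-two pencil and make $C^{\overline K}$ hyperelliptic. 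You instead note that the first condition alone forces $\Ou_C(R')\simeq\Omega_{C/C_1}^{\otimes(-1)}$ and appeal to $\dim_K H^0(C,\Omega_{C/C_1}^{\otimes(-1)})=1$ from Proposition \ref{Lema auxiliar para generar la seccion clave}: since effective divisors with a fixed associated invertible sheaf correspond to nonzero global sections modulo $H^0(C,\Ou_C)^{\ast}=K^{\ast}$, a one-dimensional space of sections pins $R$ down. This is shorter, uses only one of the two defining conditions, and delegates the non-hyperellipticity input to a result already proved, whereas the paper re-runs that argument in place. You also supply an actual proof of the final support claim (over a separably closed field of characteristic three every closed point has degree a power of three, so a degree-two effective divisor is supported on rational points and cannot contain the degree-three points $P$ and $Q$), which the paper states without justification; this is a worthwhile addition and matches the degree computations of Corollary \ref{estudio del grado de singularidad}.
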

\begin{proof}
Proposition \ref{proposicion de la existencia de la seccion especial} says that there exists a section $s\in H^0(C,\omega_{C/K}^{\otimes 2})$  such that $\Div(s)=R+P+Q$ where $R$ is a degree two effective divisor of $C$. Using \cite{Liu}[Exercise 1.13 pg 266] we have $\omega_{C/K}^{\otimes 2}\simeq \Ou_C(R+P+Q)$. 
Proposition \ref{lemam acerca de los divisores canonicos} says $\Ou_C(2P+2Q)\simeq \omega_{C/K}^{\otimes 3}$, that is,  $\Ou_C(2R)\simeq \omega_{C/K}^{\otimes 4}\otimes \omega_{C/K}^{\otimes -3}\simeq \omega_{C/K}$. 

Now if $R'$ is a degree two divisor satisfying $\Ou_C(R'+P+Q)\simeq \omega_{C/K}^{\otimes 2}$ and $\Ou_C(2R')\simeq \omega_{C/K}$ then $R$ is linearly equivalent to $R'$ and we can write $R=R'+\Div(f)$ with $f\in K(C)$. Since $C^{\overline{K}}$ is non-hyperelliptic, we have that $H^0(C,\omega_{C/K})=H^0(C,\Ou_C(2R'))$ is a base point free linear system and hence $H^0(C,\Ou_C(R')) \subsetneq H^0(C,\Ou_C(2R'))$, that is, \[\dim_K H^0(C,\Ou_C(R'))< \dim_K H^0(C,\omega_{C/K})=3.\]
On the other hand, if $f\not \in K$ then the inclusion 
\[K \oplus fK \subseteq H^0(C,\Ou_C(R'))\]
would provides that $\dim_K H^0(C,\Ou_C(R'))=2$. However, since $R'$ is a degree two divisor, it would imply that $C$ and  $C^{\overline{K}}$ are hyperelliptic curves, which is a contradiction. Therefore, $f\in K$ and $R=R'$.
\end{proof}

\begin{prop}\label{Ecuacion de C en genero 1}
There are $F\in H^0(C,\omega_{C/K}^{\otimes 2})$, $L_1\in H^0(C,\omega_{C/K})$ and $L \in H^0(C^1,\mathfrak{e}_1^{\ast}(\omega_{C/K})) \setminus H^0(C,\omega_{C/K})$ satisfying $\Div(F)=R+P+Q$ (where $R$ is the divisor constructed in Lemma \ref{lemma introductorio para la construccion de nuestras conicas}), $\Div(L_1)=2R$ and $\Div(L^3)=2P+2Q$. Moreover, $C$ is defined by  $L_1L^3-F^2$ as a curve embedded in $\Py^2_{K}$.
\end{prop}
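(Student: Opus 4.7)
The plan is to produce the three sections and then check the relation $L_1 L^3 = F^2$ on $C$, from which the equation defining $C$ in $\Py^2_K$ follows.

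\textbf{Construction of $F$ and $L_1$.} By Lemma \ref{lemma introductorio para la construccion de nuestras conicas}, $\Ou_C(R+P+Q) \simeq \omega_{C/K}^{\otimes 2}$ and $\Ou_C(2R) \simeq \omega_{C/K}$, and both divisors are effective; Remark \ref{Existencia de seccion canonica para divisores efectivos} immediately produces $F \in H^0(C, \omega_{C/K}^{\otimes 2})$ with $\Div(F)=R+P+Q$ and $L_1 \in H^0(C, \omega_{C/K})$ with $\Div(L_1)=2R$.

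\textbf{Construction of $L$.} Embed $C \hookrightarrow \Py^2_K$ canonically (Remark \ref{escritura de curvas de genero 3 como cuarticas}). By Corollary \ref{estudio del grado de singularidad} the non-smooth points $P,Q$ have residue field a simple degree $3$ extension of $K$, so Proposition \ref{tipo 1 tipo 2 prueba} applies and Proposition \ref{proposicion que ahora si resuelve el problema de los puntos} puts $\{P^1,Q^1\}$, up to $X$-equivalence, into one of two standard forms. In either form a direct computation (using that every element of $K^{\ast}$ is a square, since $K$ is separably closed of characteristic $3$) shows the unique line $\ell \subset \Py^2_{K^{1/3}}$ through $P^1$ and $Q^1$ is defined over $K^{1/3}$ but not over $K$. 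Corollary \ref{estudio del grado de singularidad}(4) says $C^1$ has a double point at each of $P^1,Q^1$, so $\ell$ meets $C^1$ with multiplicity at least $2$ at each; Bezout's theorem $1 \cdot 4 = 4$ forces equality, hence $\ell \cdot C^1 = 2P^1+2Q^1$. Let $L \in H^0(C^1, \mathfrak{e}_1^{\ast}(\omega_{C/K})) \setminus H^0(C, \omega_{C/K})$ be the linear form defining $\ell$. The Freshman's dream in characteristic $3$ places $L^3$ in $H^0(C, \omega_{C/K}^{\otimes 3})$, and since $\mathfrak{e}_1^{\ast}(P)=3P^1$ (degree preservation under flat base change together with $[\textbf{k}(C^1,P^1):K^{1/3}]=1$), we get $\Div(L^3)=6P^1+6Q^1=\mathfrak{e}_1^{\ast}(2P+2Q)$ on $C^1$, i.e.\ $\Div(L^3)=2P+2Q$ on $C$.

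\textbf{The equation.} Both $L_1 L^3$ and $F^2$ are sections of $\omega_{C/K}^{\otimes 4}$ with divisor $2R+2P+2Q$, so they differ by a scalar in $K^{\ast}$ that can be absorbed into $F$ (using once more that $K$ has square roots). Hence $L_1 L^3 = F^2$ on $C$; viewed as quartics in the canonical coordinates, $L_1 L^3 - F^2$ vanishes on $C \subset \Py^2_K$. To see this polynomial is not identically zero, note that $L_1$ and $L$ are non-proportional irreducible linear forms in the UFD $K^{1/3}[x,y,z]$, since their divisors on $C$ and $C^1$ have disjoint supports ($2R$ avoids $P,Q$ by Lemma \ref{lemma introductorio para la construccion de nuestras conicas}). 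An identity $L_1 L^3 = F^2$ would force $L \mid F$, so $F = L G$ for a linear $G$, and then $L_1 L = G^2$, contradicting coprimality. Hence $L_1 L^3 - F^2$ is a nonzero quartic in $K[x,y,z]$ vanishing on the irreducible quartic $C$, and must define $C$ in $\Py^2_K$.

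The hard step is the construction of $L$: combining the classification of $\{P^1,Q^1\}$ from Proposition \ref{proposicion que ahora si resuelve el problema de los puntos} with the local double-point geometry of $C^1$ (to obtain multiplicity $2$ intersections), verifying that the resulting line is genuinely not defined over $K$ in both cases of the classification, and correctly descending $\Div(L^3)$ from $C^1$ to $C$ via the ramification identity $\mathfrak{e}_1^{\ast}(P)=3P^1$.
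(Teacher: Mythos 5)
Your proposal is correct and follows the same overall architecture as the paper's proof: $F$ and $L_1$ come from Lemma \ref{lemma introductorio para la construccion de nuestras conicas} together with Remark \ref{Existencia de seccion canonica para divisores efectivos}, $L$ is the line through $P^1$ and $Q^1$, and the equation is obtained by comparing the divisors of $L_1L^3$ and $F^2$ in $H^0(C,\omega_{C/K}^{\otimes 4})$ and then ruling out proportionality by unique factorization. Where you diverge is in the two middle verifications. For $\Div(L^3)=2P+2Q$ the paper does not compute on the singular curve $C^1$: it pulls everything back to the smooth normalization $\widetilde{C^1}$, observes $\mathfrak{n}_1^{\ast}\Div(L^3)=6\widetilde{P^1}+6\widetilde{Q^1}=\mathfrak{b}_1^{\ast}\Div(L')$ where $L'\in H^0(C,\omega_{C/K}^{\otimes 3})$ with $\Div(L')=2P+2Q$ is supplied by Proposition \ref{lemam acerca de los divisores canonicos}, and concludes $L'=aL^3$; your route instead runs Bezout against the multiplicity-two cusps of $C^1$ and descends along $\mathfrak{e}_1$ using $\mathfrak{e}_1^{\ast}P=3P^1$ --- this works, but note that the multiplicity $2$ you need is really the content of item (5) of Corollary \ref{estudio del grado de singularidad} (the semigroup $\langle 2,3\rangle$), not item (4), and working with Weil-type multiplicities at the singular points $P^1,Q^1$ is exactly what the detour through $\widetilde{C^1}$ avoids. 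For $L\notin H^0(C,\omega_{C/K})$ you invoke the explicit classification of $\{P^1,Q^1\}$ from Proposition \ref{proposicion que ahora si resuelve el problema de los puntos} and check the line in each normal form; the paper gets this in one line from the degree count $\deg\Div(L)\geq\deg P+\deg Q=6>4=\deg\omega_{C/K}$, which is both shorter and independent of the classification. Finally, the scalar $c$ with $L_1L^3=cF^2$ is absorbed in the paper by replacing $L_1$ with $c^{-1}L_1$, which spares you the (correct but unnecessary) appeal to square roots in $K$.
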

\begin{proof}The existence of $F$ and $L_1$ are obtained by the last two results and by the last remark. If we take the section $L$ in $H^0(C^1,\mathfrak{e}_1^{\ast}(\omega_{C/K}))$ passing through the $K^{1/3}$-rational points $P^1$ and $Q^1$, then $\mathfrak{n}_1^{\ast}\Div(L^3)=6\widetilde{P^1}+6\widetilde{Q^1}$. Proposition \ref{lemam acerca de los divisores canonicos} says that there is $L'\in H^{0}(C,\omega_{C/K}^{\otimes 3})$ such that $\Div(L')=2P+2Q$. Since $\Div((\mathfrak{n}_{1})^{\ast}(L^3))=(\mathfrak{b}_{1})^{\ast}(\Div(L'))$ then $L'=aL^3$, for some $a\in K^{1/3}$, and $\Div(L^3)=\Div(L')$ as divisors of $C$. We also have that $L \not\in H^0(C,\omega_{C/K})$ because, otherwise, we would have $P, Q \in L$, which in turn implies the  inequality $4=\deg(\omega_{C/K})=\deg(\Div (L))\geq \deg(P)+\deg(Q) =6$.

Notice that $L_1L^3$ and $F^2$ define the same divisor of $C$. Hence, we conclude that there exist $c\in K\setminus\{0\}$ such that the residual class of $L_1L^3-cF^2$ vanishes in $H^0(C,\omega^{\otimes 4}_{C/K})$. Moreover, in $H^0(C,\omega_{C/K})^{\otimes 4}$, we have $L_1L^3\neq bF^2$ for every $b\in K\setminus\{0\}$. Indeed if we would have $L_1L^3= bF^2$ then $L_1=b'L$ for some $b'\in K\setminus\{0\}$ showing that $\Div(L)= \Div(L_1)$ as divisors of $C^1$. However this is a contradiction because $P,Q \not\in \Supp(R)$. Therefore $L_1L^3-cF^2$ is the equation defining $C$ in $\Py^2_{K}$. After multiplying $L_1$ by $c^{-1}$ we can assume $c=1$.
\end{proof}

\subsection{The parameter space of geometrically elliptic regular curves of genus three}\label{subsection 5.2}

Now we will construct the parameter space for geometrically elliptic regular curves of arithmetic genus three and, to do this, we keep using the notation of the last subsection. In this way, we will use Proposition \ref{Ecuacion de C en genero 1} to give an $\mathbb{P}^2_K$-equivalent equation for $C$ and, indeed, we also show that this equation induces a regular, geometrically elliptic curve of genus three. After that, we will construct the parameter space.

\begin{thm}\label{Escritura generica de las familias de cuarticas que nos interesan}
If $x,y,z$ is a basis of $H^0(\Py^2_K,\Ou_{\Py^2_K}(1))$ as a $K$-vector space,  then there exists a non $\mathbb{P}^2_K$-invariant line $L$ in $H^0(\Py^2_{K^{1/3}},\Ou_{\Py^2_{K^{1/3}}}(1))$ such that:
\begin{itemize}
    \item The curve $C$ is $\Py^2_K$-equivalent to a quartic defined by the polynomial $xL^3-(x^2-yz)^2$ if $R$ is reduced and $P^1$ and $Q^1$ have $1$-type $2$.
\item The curve $C$ is $\Py^2_K$-equivalent to a quartic defined by the polynomial $zL^3-(x^2-yz)^2$ if $R$  is not reduced and $P^1$ and $Q^1$ have $1$-type $2$.
\item The curve $C$ is $\Py^2_K$-equivalent to a quartic defined by the polynomial $zL^3-(xy)^2$ if $P^1$ and $Q^1$ have $1$-type $1$.
\end{itemize}
where $R$ is the divisor defined in Lemma \ref{lemma introductorio para la construccion de nuestras conicas}. Moreover $P^1$ and $Q^1$ are the two points of intersection between $L$ and $F$ where $F=x^2-yz$ in the first two cases and $F=xy$ in the last one.

\end{thm}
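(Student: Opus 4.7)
The plan is to begin from Proposition \ref{Ecuacion de C en genero 1}, which gives $C$ as $L_1 L^3 - F^2 = 0$ inside $\Py^2_K$, then use Proposition \ref{proposicion que ahora si resuelve el problema de los puntos} to normalize $\{P^1, Q^1\}$ and the invariant conic through them, then use the residual automorphism subgroup to normalize $L_1$, and finally absorb remaining scalar coefficients into $L$ via a cube root in $K^{1/3}$.

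For the $1$-type $2$ case, Proposition \ref{proposicion que ahora si resuelve el problema de los puntos} puts $\{P^1, Q^1\}$ in the form $\{(t:t^2:1), (u:u^2:1)\}$ with $t \neq u$ in $K^{1/3} \setminus K$. I then compute the dimension of the space of $\Py^2_K$-invariant conics through $P^1$ and $Q^1$: expanding the vanishing conditions and using the $K$-linear independence of $\{1,t,t^2\}$ in $K(t)$ (and analogously for $u$), I find a one-dimensional space spanned by $x^2 - yz$; hence $F = c(x^2 - yz)$. In the sub-case of reduced $R = R_1 + R_2$, both points are $K$-rational on the conic, and the $\PGL_2(K)$-stabilizer of $F$ acts transitively on pairs of distinct rational points of $F$, allowing me to move $R_1, R_2$ to $(0:0:1), (0:1:0)$, so that $L_1$, the line through them, is $x$. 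In the sub-case $R = 2R_0$ non-reduced, I move $R_0$ to $(0:1:0)$, and extract $L_1$ from the contact condition $\Div(L_1|_C) = 4R_0$: writing $L_1 = ax + cz$ (the $y$-coefficient must vanish at $R_0$), the restriction of the defining polynomial to the line $L_1 = 0$ in the affine chart $y = 1$ becomes proportional to $-x^2(x + a/c)^2$ after substituting $z = -(a/c)x$, which vanishes to order $4$ at the origin exactly when $a = 0$; thus $L_1 = z$.

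For the $1$-type $1$ case, Proposition \ref{proposicion que ahora si resuelve el problema de los puntos} gives $\{P^1, Q^1\} = \{(0:u:1), (t:0:1)\}$, and the same kind of linear-independence argument (now using $\{1, u, u^2\}$ in $K(u)$ and $\{1, t, t^2\}$ in $K(t)$) shows that vanishing at $P^1$ forces the $y^2, yz, z^2$ coefficients of an invariant conic to vanish, and vanishing at $Q^1$ forces the $x^2, xz, z^2$ coefficients to vanish, leaving only the $xy$-term, so that $F = c \cdot xy$. For the location of $R$, I write $\Div(x|_C) = P + D_x$ and $\Div(y|_C) = Q + D_y$ with $D_x, D_y$ rational points on $\{x = 0\}$ and $\{y = 0\}$ respectively, since the bound $v_P(x) \cdot 3 \leq \deg \Div(x|_C) = 4$ forces $v_P(x) = 1$ and leaves residual degree $1$ on $\{x = 0\}$ as a single rational point, and symmetrically for $y$; hence $R = D_x + D_y$. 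Since $D_x \neq D_y$ in the situation covered by the theorem (the alternative $D_x = D_y = (0:0:1)$ would make $R$ non-reduced), the $K$-rational line $M_3$ through $D_x, D_y$ is a third invariant line distinct from $\{x = 0\}, \{y = 0\}$; by Proposition \ref{auto lines send} I change coordinates to send $(M_1, M_2, M_3)$ to $(\{x = 0\}, \{y = 0\}, \{z = 0\})$, placing $D_x$ at $(0:1:0)$ and $D_y$ at $(1:0:0)$ and making $L_1 = z$.

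In all three cases, the resulting equation is $\alpha L_1 L^3 - \beta F^2 = 0$ with $\alpha, \beta \in K^\ast$; dividing the equation by $\alpha$ and replacing $L$ with $L/\gamma$, where $\gamma \in K^{1/3}$ is the unique cube root of $\beta/\alpha$ (which exists because Frobenius restricts to a bijection from $K^{1/3}$ onto $K$), normalizes the equation to the prescribed forms. The line $L$ remains non-invariant since multiplication by $\gamma \in K^{1/3}$ preserves the property of having a coefficient outside $K$, and the identification $\{P^1, Q^1\} = L \cap F$ follows because $L$ is by construction the line through $P^1, Q^1$ and these points lie on $F$. The principal obstacle I anticipate is the identification of $L_1$ in the non-reduced sub-case of $1$-type $2$: $L_1$ is not the line through any visible pair of support points, but is instead characterized only by a fourth-order contact condition with $C$ at $R_0$; the argument requires restricting the quartic to a candidate line through $R_0$ and exploiting that $F^2$ vanishes to order at least $2$ there, so that the residual $F^2$-term after imposing $L_1 = 0$ alone controls the order of vanishing and pins down the coefficients of $L_1$.
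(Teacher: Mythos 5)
Your proposal is correct in substance and rests on the same skeleton as the paper's proof: both start from Proposition \ref{Ecuacion de C en genero 1} and produce the normal forms by an automorphism of $\Py^2_K$ that simultaneously normalizes the invariant conic $F$ and the invariant line $L_1$, using their divisorial characterizations $\Div(F|_C)=R+P+Q$ and $\Div(L_1|_C)=2R$. The execution differs: the paper splits on the irreducibility of $F$ over $K$ (deducing the $1$-type from Corollary \ref{no existen lineas pasando por puntos tipo 2}), sends $F$ to $x^2-yz$ (resp.\ $L_1,L_2,L_3$ to $z,x,y$) and asserts that $L_1$ can simultaneously be sent to $x$ or $z$; you instead normalize $\{P^1,Q^1\}$ first via Proposition \ref{proposicion que ahora si resuelve el problema de los puntos}, recover $F$ up to scalar by the one-dimensionality of the space of invariant conics through the normalized points (a clean linear-algebra argument using the $K$-independence of $1,t,t^2$), and then pin down $L_1$ with the residual stabilizer. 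Your treatment is more explicit than the paper's in two places where the paper is terse: the identification of $L_1$ as the tangent to the conic at $R_0$ in the non-reduced sub-case (your order-of-contact computation along $L_1$ is exactly the right justification), and the absorption of the leftover scalar into $L$ via a cube root in $K^{1/3}$, which the paper leaves implicit.

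The one step that needs patching is your parenthetical dismissal of $D_x=D_y$ in the $1$-type $1$ case. The third bullet of the theorem does not exclude non-reduced $R$, so ``the alternative would make $R$ non-reduced'' is not by itself a contradiction; if $D_x=D_y=L_2\cap L_3$ then $L_1,L_2,L_3$ are concurrent and your (and the paper's) normalization to $z,x,y$ breaks down. This case is in fact impossible, but for a reason neither you nor the paper records: a curve $(ax+by)L^3-(xy)^2$ with $a,b\neq 0$ acquires a third singular point of $C^{\overline{K}}$ on the line $bY=aX$ away from $P^{\overline{K}}$ and $Q^{\overline{K}}$, which is incompatible with the fact (recalled at the start of Section \ref{section 6}) that a geometrically elliptic regular curve of arithmetic genus three has exactly two non-smooth points, each of singularity degree one. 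Adding one sentence to this effect closes the gap; otherwise your argument goes through.
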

\begin{proof}
Let $F$ and $L_1$ be the $\Py^2_K$-invariant conic and $\Py^2_K$-invariant line defined in Proposition \ref{Ecuacion de C en genero 1} respectively. 

Suppose that $F$ is irreducible over $K$. Since $F$ contains $P^1$ and $Q^1$ we may conclude from Corollary \ref{no existen lineas pasando por puntos tipo 2} that there is no  $\Py^2_K$-invariant line passing through $P^1$ and $Q^1$, that is, both points have $1$-type $2$. Since $K$ is separably closed and $R$ has degree two, then the support of $R$ has a $K$-rational point and, in particular, the genus zero curve induced by $F$ and the genus zero curve induced by $x^2-yz$ are $\Py^2_K$-equivalent.  Moreover if $R$ is reduced (respectively if $R$ is non-reduced) we can choose a $\Py^2_K$-invariant automorphism $\psi$ (meaning an automorphism of $\Py^2_{K^{1/3}}$ with coefficients in $K$) sending $F$ to $x^2-yz$ such that $\psi(L_1)$ is $x$ (respectively $z$). Therefore $\psi$ is the $\Py^2_{K^{1/3}}$-automorphism that proves this theorem for the case where $F$ is irreducible.

Finally if $F$ is reducible over $K$ then $P^1$ and $Q^1$ have $1$-type $1$. Since $F$ is a $\Py^2_K$-invariant conic passing through $P^1$ and $Q^1$ then $F=L_2L_3$ where $L_2$ and $L_3$ are the $\Py^2_K$-invariant lines. We notice that we can not have both points $P^1$ and $Q^1$  belonging to $L_i$ ($i=2$ or $3$) because, otherwise, $4=\deg(\Div(L_i))\geq \deg(P + Q) = 6$. In this way we can assume that $P^1 \in L_2$ and $Q^1\in L_3$. Notice also that $L_1$ is different from $L_2$ and $L_3$, because $C$ is irreducible, and $L_2\neq L_3$, because the curve given by $L_1L^3-L_2^4$ admits $L\cap L_2$ as its unique non-smooth point. Now we just need to take $\psi$ as the $\Py^2_K$-invariant automorphism sending $L_1,L_2,L_3$ to $z,x,y$, respectively, to finish the proof.  \end{proof}

Now we prove a kind of converse of the above theorem, by showing that a curve in $\Py^2_{K^{1/3}}$ whose equation is given by one of those three possibilities, is a base extension of a non-smooth regular curve in $\Py^2_{K}$ of genus $3$ with two non-smooth points.

\begin{prop}\label{regularidad}
Let $E$ be a geometrically integral curve in $\Py^2_{K}$ defined by the polynomial $L^3L_1-F^2$ where $F \in K[x,y,z]$ is a conic and $L  \in K^{1/3}[x,y,z]$ and $L_1 \in K[x,y,z]$ are lines. Suppose that the intersection between $F$ and $L$ in $\Py^2_{K^{1/3}}$ is given by two distinct points $P^1$ and  $Q^1$ and suppose that $P, Q \in \Py^2_{K}$ are the points lying under $P^1$ and $Q^1$ , respectively. If the unique non-smooth points of $E$ are $P$ and $Q$ and if $P^1$ and $Q^1$ are non $\Py^2_K$-invariant points then $E$ is regular.
\end{prop}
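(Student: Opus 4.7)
Since the only non-smooth points of $E$ are $P$ and $Q$ by hypothesis, regularity is automatic elsewhere, and by symmetry it suffices to prove regularity at $P$. The strategy will be to apply Corollary~\ref{Criterio para la regularidad de puntos} with the invertible ideal sheaf $\mathcal{I}$ on $E$ locally generated by the conic $F$ (i.e.\ by $F/s$ for a local trivialization $s$ of $\Ou_{\Py^2_K}(2)$); this sheaf is invertible because $E$ is not contained in the zero locus of $F$. The corollary reduces the problem to verifying the two equalities
\[
[\textbf{k}(E,P):K]=3\quad\text{and}\quad \dim_{\overline{K}}\Ou_{E^{\overline{K}},P^{\overline{K}}}/\Ou_{E^{\overline{K}},P^{\overline{K}}}\mathcal{I}_P=3.
\]
The degree computation is immediate: because $P^1$ is $K^{1/3}$-rational, the residue field $\textbf{k}(E,P)=\textbf{k}(\Py^2_K,P)$ embeds in $K^{1/3}$, forcing $[\textbf{k}(E,P):K]\in\{1,3\}$; a degree of $1$ would make $P$ a $K$-rational point whose reduced ideal pulls back to the reduced ideal of $P^1$, so $P^1$ would be $\Py^2_K$-invariant, contradicting the hypothesis.

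The central step is to verify that $L_1(P^{\overline{K}})\neq 0$, which I would prove by a case analysis on $F$. If $F$ is irreducible over $K$ then $F$ is an $\Py^2_K$-invariant irreducible conic through $P^1$, and Corollary~\ref{no existen lineas pasando por puntos tipo 2} forbids any $\Py^2_K$-invariant line through $P^1$, so in particular $L_1(P^1)\neq 0$. Otherwise $F$ factors as $F=L_2L_3$ with $L_2,L_3$ $\Py^2_K$-invariant lines; these must be distinct, because otherwise $F$ would be a double line and $F\cap L$ would be a single point, contradicting the hypothesis. After relabeling, $P^1\in L_2$. Moreover $L_1\neq L_2$, for otherwise $E$ would contain the factor $L_2$ of $L^3L_1-F^2=L_2(L^3-L_2L_3^2)$, contradicting the geometric integrality of $E$. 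Consequently, if also $L_1(P^1)=0$, then $P^1$ would lie in the transverse intersection of the distinct $\Py^2_K$-invariant lines $L_1$ and $L_2$, which by Proposition~\ref{interseccion de curvas invariantes} is forced to be $\Py^2_K$-invariant, a contradiction.

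With $L_1(P^{\overline{K}})\neq 0$ in hand, the local computation is a routine cusp calculation. B\'ezout combined with the hypothesis $\#(F\cap L)=2$ gives that $F$ and $L$ meet transversally at $P^1$, and hence at $P^{\overline{K}}$, so they form a regular system of parameters for $\Py^2_{\overline{K}}$ at $P^{\overline{K}}$. The completion of $\Ou_{E^{\overline{K}},P^{\overline{K}}}$ is therefore $\overline{K}[[F,L]]/(L^3L_1-F^2)$ with $L_1$ a unit; substituting $F':=F\cdot L_1^{-1/2}$ (which is defined in the completion since $\overline{K}$ is algebraically closed and $2$ is invertible in characteristic $3$) identifies this ring with the standard cusp $\overline{K}[[F',L]]/(L^3-F'^2)$, whose normalization is $\overline{K}[[t]]$ with $L=t^2$ and $F'=t^3$, giving the numerical semigroup $\langle 2,3\rangle=\{0,2,3,4,5,\dots\}$. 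Since $F$ has valuation $3$ in this normalization, the complement in $\langle 2,3\rangle$ of $3+\langle 2,3\rangle=\{3,5,6,7,\dots\}$ is exactly $\{0,2,4\}$, whence $\dim_{\overline{K}}\Ou_{E^{\overline{K}},P^{\overline{K}}}/(F)=3$. The main obstacle is the case split ruling out $L_1(P^1)=0$; once past it, Corollary~\ref{Criterio para la regularidad de puntos} immediately yields the regularity of $E$ at $P$.
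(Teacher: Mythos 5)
Your overall strategy coincides with the paper's: both proofs reduce to Corollary \ref{Criterio para la regularidad de puntos} applied to the ideal sheaf generated by $F$, and both hinge on the two facts $[\textbf{k}(E,P):K]=3$ and $\dim_{\overline{K}}\Ou_{E^{\overline{K}},P^{\overline{K}}}/\langle F\rangle=3$. Your verification that $L_1(P^{\overline{K}})\neq 0$ by a case split on the irreducibility of $F$ is correct but roundabout: the paper gets it in one line by applying Proposition \ref{interseccion de curvas invariantes} directly to the pair of $\Py^2_K$-invariant curves $L_1$ and $F$ (B\'ezout bounds their intersection number at $P^{\overline{K}}$ by $2<3$, so $P$ would otherwise be $K$-rational). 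Your completed-local-ring cusp computation of $\dim_{\overline{K}}\Ou_{E^{\overline{K}},P^{\overline{K}}}/\langle F\rangle$ is also fine, though the paper obtains the same number more cheaply as the order of vanishing of $L^3L_1$ restricted to $F$ at $P^{\overline{K}}$.

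There is, however, one genuine flaw: the degree computation. From the $K^{1/3}$-rationality of $P^1$ you conclude that $\textbf{k}(E,P)$ embeds in $K^{1/3}$ and hence that $[\textbf{k}(E,P):K]\in\{1,3\}$. That last inference is false in general: a finite intermediate field of $K^{1/3}|K$ need not be simple, and its degree over $K$ can be any power of $3$ once the imperfection degree of $K$ exceeds one (for instance $K(s^{1/3},t^{1/3})$ has degree $9$ when $s^{1/3},t^{1/3}$ are $3$-independent over $K$). Since the paper only assumes $K$ non-perfect and separably closed, you cannot rule out $[\textbf{k}(E,P):K]=9$ this way, and the equality demanded by Corollary \ref{Criterio para la regularidad de puntos} would then remain unverified. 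The repair is already contained in your own proof: since $F\in\m_{E,P}$, the ring $\Ou_{E^{\overline{K}},P^{\overline{K}}}/\langle F\rangle$ surjects onto $\Ou_{E^{\overline{K}},P^{\overline{K}}}/\m_{E,P}\Ou_{E^{\overline{K}},P^{\overline{K}}}$, whose $\overline{K}$-dimension is at least $[\textbf{k}(E,P):K]$; your computation $\dim_{\overline{K}}\Ou_{E^{\overline{K}},P^{\overline{K}}}/\langle F\rangle=3$ therefore gives $[\textbf{k}(E,P):K]\leq 3$, and the non-invariance of $P^1$ together with the fact that this degree is a power of $3$ (as $K$ is separably closed) forces it to equal $3$. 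This is exactly how the paper argues.
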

\begin{proof}
From Proposition \ref{interseccion de curvas invariantes}, $P^1$ and $Q^1$ do not lie in the intersection between $L_1$ and $F$. Hence  the intersection indexes between $E^{\overline{K}}$ and $F^{\overline{K}}$ at $P^{\overline{K}}$ and $Q^{\overline{K}}$ are equal to $3$. On the other hand, $F \in \m_{E,P}$ and $P^1$ non $\Py^2_K$-invariant imply, respectively, that $\deg P \leq \dim_{\overline{K}}\Ou_{E^{\overline{K}},P^{\overline{K}}}/<F> =3$ and $\deg P>1$, that is, $\deg P=3$. Analogously $\deg Q=3.$ Using Corollary \ref{Criterio para la regularidad de puntos} for the ideal sheaf $\mathcal{I}_{F}$, we have that $E$ is a regular curve.  
\end{proof}

\begin{defn}
Fixing a basis $x,y,z$ of $H^0(\Py^2_K,\Ou_{\Py^2_K}(1))$ we define the following three families of quartics.
\begin{itemize}
\item[{\bf$\mathcal{C}_0 \, :$}] $a(x^2-yz)^2+((t_1^2-t_2^2)x+(t_2-t_1)y+(t_1t_2^2-t_2t_1^2)z)^3x$ where $t_1\neq t_2\in K^{1/3}\setminus K$ and $a\in K\setminus\{-(t_1^2-t_2^2)^3\}$.
\item[{\bf$\mathcal{C}_1 \, :$}] $a(x^2-yz)^2+((t_1^2-t_2^2)x+(t_2-t_1)y+(t_1t_2^2-t_2t_1^2)z)^3z$ where $t_1\neq t_2\in K^{1/3}\setminus K$ and $a\in K$.
\item[{\bf$\mathcal{C}_2 \, :$}]
$a(xy)^2+(t_1x+t_2y-t_1t_2z)^3z$ where $t_1\neq t_2\in K^{1/3}\setminus K$  and $a\in K$.
\end{itemize}
We say that $t_1,t_2$ and $a$ are the parameters of $C$.

\end{defn}

\begin{thm}\label{tercer Teorema central de este trabajo}
If $C\in \mathcal{C}_0\cup \mathcal{C}_1\cup \mathcal{C}_2$, then $C$ is a regular curve of arithmetic genus three such that $C^{\overline{K}}$ is a non-hyperelliptic and integral curve with normalization having genus one.
\end{thm}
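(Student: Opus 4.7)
The plan is to reduce the statement to Proposition~\ref{regularidad} for regularity, and to a local $\delta$-invariant computation for the genus of the normalization. First, $p_a(C)=3$ is immediate because $C$ is a plane quartic. To invoke Proposition~\ref{regularidad}, I will rewrite each family's defining polynomial in the form $L^3L_1-F^2$: since $K$ is separably closed of characteristic three, every element of $K$ is a square, so for $\mathcal{C}_0$ one chooses $b\in K$ with $a=-b^2$ and rewrites $a(x^2-yz)^2+L^3x$ as $L^3x-(b(x^2-yz))^2$, setting $L_1=x$ and $F=b(x^2-yz)$. The same device handles $\mathcal{C}_1$ (with $L_1=z$) and $\mathcal{C}_2$ (with $F=xy$, $L_1=z$), where one tacitly assumes $a\neq 0$, since otherwise the defining polynomial factors and the conclusion is vacuous.

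Next I verify the hypotheses of Proposition~\ref{regularidad}. Solving $L=F=0$ directly in $\Py^2_{K^{1/3}}$ produces the two intersection points $(t_1:t_1^2:1)$ and $(t_2:t_2^2:1)$ for $\mathcal{C}_0,\mathcal{C}_1$, and $(0:t_1:1)$, $(t_2:0:1)$ for $\mathcal{C}_2$; in every case these are distinct $K^{1/3}$-rational points whose coordinates involve $t_1$ or $t_2\in K^{1/3}\setminus K$, so they are non-$\Py^2_K$-invariant. I then apply the Jacobian criterion over $\overline{K}$, using the characteristic-three simplification $\partial_v(L^3L_1)=L^3\partial_vL_1$, to conclude that the singular locus of $C^{\overline{K}}$ consists exactly of the base-extensions $P^{\overline{K}}$ and $Q^{\overline{K}}$; the exclusion $a\neq -(t_1^2-t_2^2)^3$ in family $\mathcal{C}_0$ is precisely what eliminates a would-be additional singularity at $(1:0:0)$. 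Geometric integrality follows because the defining polynomial is visibly square-free (having a finite singular locus) and because a plane quartic with exactly two singular points of $\delta$-invariant at most one cannot decompose as $(1,3)$ or $(2,2)$ over $\overline{K}$: by Bezout any such factorization would contribute to the singular scheme an intersection of degree at least three, forcing the sum of $\delta$-invariants to exceed two. Proposition~\ref{regularidad} now yields the regularity of $C$.

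For the assertions about $C^{\overline{K}}$, I compute the $\delta$-invariant at $P^{\overline{K}}$ as follows. Since $L_1$ does not vanish at $P^{\overline{K}}$ (a direct check in each family), the local equation is $aF^2+L^3u=0$ with $u$ a unit in the local ring, and $F,L$ form a system of local parameters because their differentials at $P^{\overline{K}}$ are linearly independent. Setting $s=F/L$ in the fraction field, the relation forces $L=-as^2/u$ and $F=-as^3/u$, so the normalization is isomorphic to $\overline{K}[[s]]$, while the local ring of $C^{\overline{K}}$ at $P^{\overline{K}}$ embeds as $\overline{K}+s^2\overline{K}[[s]]$; hence $\delta_{P^{\overline{K}}}=1$, and by symmetry $\delta_{Q^{\overline{K}}}=1$. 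Therefore $p_g(\widetilde{C^{\overline{K}}})=p_a(C^{\overline{K}})-\delta_{P^{\overline{K}}}-\delta_{Q^{\overline{K}}}=3-2=1$. Non-hyperellipticity of $C^{\overline{K}}$ then follows from the adjunction isomorphism $\omega_{C^{\overline{K}}/\overline{K}}\simeq\Ou_{\Py^2_{\overline{K}}}(1)|_{C^{\overline{K}}}$, which identifies the canonical morphism with the plane embedding, exactly the situation characterizing the non-hyperelliptic case in Remark~\ref{escritura de curvas de genero 3 como cuarticas}.

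The most delicate step is the geometric integrality argument: one must exclude every factorization pattern of a plane quartic using only the cardinality and the $\delta$-data of the singular locus, and care is required in mixed cases where one hypothetical component would itself be singular at $P^{\overline{K}}$ or $Q^{\overline{K}}$. The remaining steps are essentially direct verifications once the local equation has been brought into the canonical form $aF^2+L^3u$.
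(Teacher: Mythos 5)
Your proof is correct and follows essentially the same route as the paper: the Jacobian criterion over $\overline{K}$ to pin down the singular locus (with the exclusion $a\neq -(t_1^2-t_2^2)^3$ playing exactly the role you identify), Proposition \ref{regularidad} for regularity, and the plane-quartic adjunction for genus three and non-hyperellipticity; where the paper asserts irreducibility in one line from the singularities being cuspidal and cites \cite{Salomon} for $p_g(\widetilde{C^{\overline{K}}})=1$, you supply self-contained substitutes (the local computation $\Ou\simeq\overline{K}+s^2\overline{K}[[s]]$ giving $\delta=1$ at each point, plus the Bezout count). Your remark that $a=0$ must be excluded in $\mathcal{C}_1$ and $\mathcal{C}_2$ is a genuine, if minor, correction to the definition of the families that the paper's own proof silently passes over.
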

\begin{proof}
Let $C\in \mathcal{C}_0\cup\mathcal{C}_1\cup\mathcal{C}_2$ and take the parameters $t_1,t_2,a$ of $C$. We define  $P^{1}=(t_1:t_1^2:1)$ and $Q^1=(t_2:t_2^2:1)$  if $C\in \mathcal{C}_0\cup\mathcal{C}_1$ or $P^{1}= (0:t_1:1)$ and $Q^1=(t_2:0:1)$ if $C \in \mathcal{C}_2$. Let $P$ and $Q$ be the points of $C$ lying under $P^1$ and $Q^1$ respectively. After applying the Jacobian criterion for the curve $C^{\overline{K}}$ we obtain that the non-smooth points of $C$, in both cases, are  $P$ and $Q$. 

Notice that the singular points of $C^{\overline{K}}$ are  cuspidal points and, therefore, $C^{\overline{K}}$ is irreducible. Since $P^1$ and $Q^1$ are non $\Py^2_K$-invariant points, Proposition \ref{regularidad} says that $C$ is regular. Moreover since $C^{\overline{K}}$ is a plane quartic curve then $C^{\overline{K}}$ is non-hyperelliptic of genus three. Finally $\widetilde{C^{\overline{K}}}$ has genus one from \cite{Salomon} Remark 4.1. 
\end{proof}

\begin{thm}\label{Primer Teorema central de este trabajo}
If $C$ is a regular curve over $K$ of arithmetic genus three such that $C^{\overline{K}}$ is a non-hyperelliptic and integral curve with normalization having genus one, then there is an embedding of $C$ in $\Py^2_K$ such that the image of $C$ is a curve in $ \mathcal{C}_0\cup\mathcal{C}_1\cup\mathcal{C}_2$.
\end{thm}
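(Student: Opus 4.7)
The plan is to combine the canonical embedding from Remark \ref{escritura de curvas de genero 3 como cuarticas} with Theorem \ref{Escritura generica de las familias de cuarticas que nos interesan}. The hypotheses on $C$ force it to be realized, via its canonical sheaf, as a plane quartic in $\Py^2_K$; and the discussion at the beginning of Section \ref{section 6} together with Proposition \ref{tipo 1 tipo 2 prueba} guarantees that $C$ has exactly two non-smooth points $P,Q$, each of degree three, lying below $K^{1/3}$-rational, non-$\Py^2_K$-invariant points $P^1,Q^1$ of $1$-type $1$ or $2$. Applying Theorem \ref{Escritura generica de las familias de cuarticas que nos interesan} then furnishes a $\Py^2_K$-automorphism after which the defining polynomial of $C$ takes one of the three shapes $xL^3-(x^2-yz)^2$, $zL^3-(x^2-yz)^2$ or $zL^3-(xy)^2$, where $L$ is a non-$\Py^2_K$-invariant $K^{1/3}$-line whose intersection with the corresponding conic $F$ is exactly $\{P^1,Q^1\}$.

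Next I would make $L$ explicit case by case by parametrizing $P^1$ and $Q^1$ on the conic. For cases (i)--(ii), every $K^{1/3}$-point of $x^2=yz$ other than the $\Py^2_K$-invariant point $(0{:}1{:}0)$ has the form $(t{:}t^2{:}1)$, so I can write $P^1=(t_1{:}t_1^2{:}1)$ and $Q^1=(t_2{:}t_2^2{:}1)$ with distinct $t_1,t_2\in K^{1/3}\setminus K$; the standard determinant formula then gives $L=\lambda L_0$, where
\[
L_0 \;=\; (t_1^2-t_2^2)x \,+\, (t_2-t_1)y \,+\, (t_1t_2^2-t_2t_1^2)z
\]
and $\lambda\in (K^{1/3})^*$. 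For case (iii), the two components of the degenerate conic $xy=0$ are themselves $\Py^2_K$-invariant lines, so $P^1$ and $Q^1$ must distribute one on each, giving $P^1=(0{:}t_1{:}1)$ and $Q^1=(t_2{:}0{:}1)$ with $t_1,t_2\in K^{1/3}\setminus K$ distinct, and $L$ proportional to $t_1 x+t_2 y-t_1t_2 z$.

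Finally I would absorb the scalar $\lambda$ into a $K$-parameter. Writing $L_1\in\{x,z\}$ for the $\Py^2_K$-invariant linear factor, the equation $L_1 L^3-F^2=\lambda^3 L_1 L_0^3-F^2$ has coefficients in $K$, which forces $\lambda^3\in K^*$; dividing through by $\lambda^3$ rewrites $C$ as $L_1 L_0^3 + aF^2=0$ with $a:=-\lambda^{-3}\in K^*$, placing $C$ inside $\mathcal{C}_0$, $\mathcal{C}_1$ or $\mathcal{C}_2$. The one step with genuine content is verifying that the excluded value $a=-(t_1^2-t_2^2)^3$ in $\mathcal{C}_0$ cannot arise: at this value the coefficient of $x^4$ in $aF^2+xL_0^3$ vanishes, and I expect a direct char-three factorization (exploiting $(u+v)^3=u^3+v^3$) to show that the resulting quartic becomes reducible, contradicting the geometric integrality of $C$. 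This reducibility check is the main obstacle; once it is in place, the embedding of $C$ into the appropriate family follows from the bookkeeping outlined above.
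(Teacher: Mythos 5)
Your overall route coincides with the paper's: canonical embedding via Remark \ref{escritura de curvas de genero 3 como cuarticas}, the two degree-three non-smooth points with $1$-type $1$ or $2$ from Corollary \ref{estudio del grado de singularidad} and Proposition \ref{tipo 1 tipo 2 prueba}, then Theorem \ref{Escritura generica de las familias de cuarticas que nos interesan}, followed by the explicit parametrization of $P^1,Q^1$ on the conic and the absorption of the scalar $\lambda$ into $a=-\lambda^{-3}\in K\setminus\{0\}$. The paper does exactly this, only more tersely (the scalar bookkeeping is left implicit), so up to that point your proposal is fine.

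The genuine gap is your treatment of the excluded value $a=-(t_1^2-t_2^2)^3$ in $\mathcal{C}_0$: the mechanism is not reducibility. For $G=a(x^2-yz)^2+L_0^3x$ one has, in characteristic $3$ and using that $L_0^3$ is a polynomial in $x^3,y^3,z^3$, the partials $G_x=ax(x^2-yz)+L_0^3$, $G_y=az(x^2-yz)$, $G_z=ay(x^2-yz)$; at the $K$-rational point $(1{:}0{:}0)$ these and $G$ itself all reduce to $a+(t_1^2-t_2^2)^3$ or to $0$, so $a=-(t_1^2-t_2^2)^3$ is precisely the condition for $(1{:}0{:}0)$ to be a singular point of the quartic. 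That is what excludes this value: a singular point of $C^{\overline{K}}$ at a $K$-rational point contradicts the regularity of $C$ there (equivalently, it would be a third singular point, forcing $p_g(C^{\overline{K}})=0$ instead of $1$). By contrast, the quartic at this value of $a$ is generically \emph{irreducible}: writing $(x^2-yz)^2-x^4=yz(x^2+yz)$, it becomes $a\,yz(x^2+yz)+(t_2-t_1)^3x(y+t_1t_2z)^3$, whose restrictions to $x=0$, $y=0$, $z=0$ are the nonzero monomials $a(yz)^2$, $(t_1t_2(t_2-t_1))^3xz^3$, $(t_2-t_1)^3xy^3$ (assuming $t_1+t_2\neq 0$, i.e.\ $a\neq 0$); this rules out any linear factor, and a comparison of coefficients shows that a factorization into two conics forces $(t_1+t_2)^6=(t_1t_2)^3$, i.e.\ $(t_1-t_2)^2=0$ in characteristic $3$, which is impossible. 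So the contradiction you need must come from regularity (or the genus count), not from geometric integrality; with that single replacement the rest of your argument goes through.
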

\begin{proof}
Corollary \ref{estudio del grado de singularidad} and Proposition \ref{tipo 1 tipo 2 prueba} say that the singular points $P^1$ and $Q^1$ of $C^1$, lying over the non-smooth points  $P$ and $Q$ of 
$C$, respectively, are $K^{1/3}$-rational points of $1$-type one or two. On the other hand, after apply the canonical embedding of $C$, Theorem \ref{Escritura generica de las familias de cuarticas que nos interesan} implies that:
\begin{itemize}
    \item $C$ is $\Py^2_K$-equivalent to a quartic defined by the polynomial $xL^3-(x^2-yz)^2$ if $R$ is reduced and $P^1$ and $Q^1$ have $1$-type $2$.
\item  $C$ is $\Py^2_K$-equivalent to a quartic defined by the polynomial $zL^3-(x^2-yz)^2$ if $R$  is not reduced and $P^1$ and $Q^1$ have $1$-type $2$.
\item  $C$ is $\Py^2_K$-equivalent to a quartic defined by the polynomial $zL^3-(xy)^2$ if $P^1$ and $Q^1$ have $1$-type $1$.
\end{itemize}
Moreover $P^1$ and $Q^1$ are the two points of intersection between $L$ and $F$ where $F=x^2-yz$ in the first two cases and $F=xy$ in the last one.

We notice that in every case the non $\Py^2_K$-invariance at $P^1$ and $Q^1$ implies that they do not belong to the line given by $z$. Hence, in the first two cases $P^1=(t_1:t_1^2:1)$ and $Q^1=(t_2:t_2^2:1)$  and, in the third one, $P^1=(0:t_1:1)$ and $Q^1=(t_2:0:1)$, with $t_1\neq t_2\in K^{1/3}\setminus K$. Therefore, 
\begin{itemize}
    \item $C$ is $\Py^2_K$-equivalent to a quartic defined by the polynomial $a(x^2-yz)^2+((t_1^2-t_2^2)x+(t_2-t_1)y+(t_1t_2^2-t_2t_1^2)z)^3x$ if  $R$ is reduced and $P^1,Q^1$ have $1$-type $2$.
\item $C$ is $\Py^2_K$-equivalent to a quartic defined by the polynomial $a(x^2-yz)^2+((t_1^2-t_2^2)x+(t_2-t_1)y+(t_1t_2^2-t_2t_1^2)z)^3z$ if $R$  is not reduced and $P^1,Q^1$ have $1$-type $2$.
\item $C$ is $\Py^2_K$-equivalent to a quartic defined by the polynomial $(xy)^2+(t_1x+t_2y+t_1t_2z^3)z$ if $P^{1},Q^{1}$ have $1$-type $1$.
\end{itemize}
where $a\in  K\setminus\{-(t_1^2-t_2^2)^3\}$  in the first case and $a\in K$ in the last two cases.
\end{proof}
\begin{Remark}\label{remmark que dice las relaciones y a}
From the proof of the above theorem, every curve in $\mathcal{C}_0$ (respectively, in $\mathcal{C}_1$) can be written by the equation $(x^2-yz)^2+(Ax+By+Cz)^3x$ (respectively, by  $(x^2-yz)^2+(Ax+By+Cz)^3z$)  where $A=(t_1^2-t_2^2)/a^{1/3}$, $B=(t_2-t_1)/a^{1/3}$ and $C=(t_1t_2^2-t_2t_1^2)/a^{1/3}$. Moreover, every curve in $\mathcal{C}_2$ can be written by the equation $(xy)^2+(Ax+By+Cz)^3z$ where $A=t_1/a^{1/3},B=t_2/a^{1/3}$ and $C=-t_1t_2/a^{1/3}$.
\end{Remark}

\begin{thm}\label{tabla de isomorfismos}
There is no curve in $\mathcal{C}_i$ which is $\Py^2_K$-equivalent to a curve in $\mathcal{C}_j$ for $i\neq j$. Moreover we have that:

\begin{enumerate}
    \item If $C,C'\in \mathcal{C}_0$ are $\Py^2_K$-equivalent and $C$ is defined by the equation  
    $(x^2-yz)^2+(Ax+By+Cz)^3x$ where $A,B,C\in K^{1/3}$
    then $C'$ is defined by one of the equations 
\[(x^2-yz)^2+( Ax+\beta\lambda By+\beta\lambda^{-1}Cz)^3x 
\text{ or } 
(x^2-yz)^2+(Ax+\lambda\beta Cy+\lambda^{-1}\beta Bz)^3x\]
 where $\lambda\in K\setminus\{0 \}$ and $\beta\in \{1,-1\}$. The automorphism $T$ of $\Py^2_K$ sending $C$ to $C'$ is defined by  $T(x:y:z)=(\beta x: \lambda y: \lambda^{-1}z)$ or $T(x:y:z)=(\beta x: \lambda^{-1} z: \lambda y)$ 
respectively.

\item If $C,C'\in \mathcal{C}_1$  are $\Py^2_K$-equivalent and $C$ is defined by the equation  
    $(x^2-yz)^2+(Ax+By+Cz)^3z$ where $A,B,C\in K^{1/3}$,
    then $C'$ is defined by the equation:
\[(x^2-yz)^2+\left( \left(\frac{\beta A+B\lambda_1}{\lambda_2^{1/3}}\right)x+\lambda_2^{2/3}B y+
\left( \frac{
B\lambda_1^2-\beta A\lambda_1+C}{\lambda_2^{4/3}}
\right)z\right)^3z\]   where $\lambda_1\in K$, $\lambda_2\in K\setminus\{0\}$ and $\beta\in \{1,-1\}$.  The automorphism $T$ of $\Py^2_K$ sending $C$ to $C'$ is defined by $T(x:y:z)=(\beta x - \beta(\lambda_1/\lambda_2)z: \lambda_1 x+ \lambda_2 y+ (\lambda_1^2/\lambda_2)z: \lambda_2^{-1} z)$. 

\item  If $C,C'\in \mathcal{C}_2$  are $\Py^2_K$-equivalent and $C$ is defined by the equation  
$(xy)^2+(Ax+By+Cz)^3z$ where $A,B,C\in K^{1/3}$,
    then $C'$ is defined by one of the equations:
\[(xy)^2+(\lambda_2^{1/3}\lambda_1Ax+\lambda_2^{1/3}\lambda_1^{-1}By+\lambda_2^{4/3}Cz)^3z  
\text{ or }
(xy)^2+(\lambda_2^{1/3}\lambda_1Ay+\lambda_2^{1/3}\lambda_1^{-1}Bx+\lambda_2^{4/3}Cz)^3z\]
where $\lambda_1,\lambda_2\in K\setminus\{0\}$. The automorphism $T$ of $\Py^2_K$ sending $C$ to $C'$ is defined by $T(x:y:z)=(\lambda_1 x: \lambda_1^{-1} y: \lambda_2 z)$ or $T(x:y:z)=(\lambda_1 y: \lambda_1^{-1} x: \lambda_2 z)$ respectively.
\end{enumerate}

\end{thm}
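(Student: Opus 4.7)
The plan is to exploit the uniqueness results of Subsection \ref{subsection 61}: for a curve $C$ of the given type, the $\Py^2_K$-invariant conic $F$, the $\Py^2_K$-invariant line $L_1$, and the line $L$ (defined over $K^{1/3}$) through $P^1$ and $Q^1$ are each determined up to a scalar by $C$, by Lemma \ref{lemma introductorio para la construccion de nuestras conicas} and Proposition \ref{Ecuacion de C en genero 1}. Hence any automorphism $T$ of $\Py^2_K$ carrying $C$ to $C'$ must pull back the triple $(F',L_1',L')$ to scalar multiples of $(F,L_1,L)$. After normalizing the coefficient $a$ as in Remark \ref{remmark que dice las relaciones y a}, the defining equation $F^2 + L_1 L^3$ forces a numerical compatibility between these three scalars, and the unique cube root in characteristic $3$ reduces the problem to an explicit rational expression for $L'$ in the parameters of $T$.

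For the first assertion (non-equivalence across the three families), I would extract two intrinsic invariants of $C$: the irreducibility of $F$, and the reducedness of the scheme-theoretic intersection $F \cap L_1$ (equivalently, of the divisor $R$ from Lemma \ref{lemma introductorio para la construccion de nuestras conicas}). In $\mathcal{C}_0$ the conic $x^2 - yz$ is irreducible and $\{x=0\}\cap \{x^2-yz=0\} = \{(0{:}1{:}0),(0{:}0{:}1)\}$ is reduced; in $\mathcal{C}_1$ the conic is again irreducible but $\{z=0\}\cap\{x^2-yz=0\} = 2\cdot(0{:}1{:}0)$ is non-reduced; in $\mathcal{C}_2$ the conic $xy$ is reducible. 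Because $F$ and $L_1$ are invariant under any $\Py^2_K$-equivalence up to scalar, both properties are preserved, so no curve in $\mathcal{C}_i$ can be equivalent to one in $\mathcal{C}_j$ for $i\neq j$.

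For the second assertion, the problem reduces, in each family, to computing the stabilizer of the pair $(F,L_1)$ in $\PGL_3(K)$ and then tracking the induced action on $L$. In $\mathcal{C}_0$, preservation of the line $\{x=0\}$ forces $T^{\ast}x\in K^{\ast}x$, and preservation of the smooth conic $x^2-yz$ restricts $T$ on the $\{y,z\}$-plane to either a diagonal action $(y,z)\mapsto(\lambda y,\lambda^{-1}z)$ or the swap $(y,z)\mapsto(\lambda^{-1}z,\lambda y)$, with $\beta\in\{1,-1\}$ recording the induced scalar on $x$; substituting this $T$ into $F^2 + xL^3$ and reading off the cube-root gives the two formulas listed. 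In $\mathcal{C}_1$, the tangency of $\{z=0\}$ to $x^2-yz$ at $(0{:}1{:}0)$ forces $T$ to preserve the flag $\{z=0\}\supset\{(0{:}1{:}0)\}$; imposing $T^{\ast}(x^2-yz)=x^2-yz$ and $T^{\ast}z = \lambda_2^{-1}z$ then parametrizes the stabilizer by $(\lambda_1,\lambda_2,\beta)$, and substituting into $F^2 + z L^3$ together with cube-rooting produces the displayed $L'$. In $\mathcal{C}_2$, preservation of the pair $(xy,z)$ forces a diagonal action $(\lambda_1 x,\lambda_1^{-1}y,\lambda_2 z)$ up to the swap $x\leftrightarrow y$, from which the formulas follow analogously.

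The main obstacle is the meticulous bookkeeping of the scalars in the equation $T^{\ast}(F^2 + L_1'(L')^3) = \nu(F^2 + L_1 L^3)$, particularly the sign $\beta\in\{1,-1\}$, which arises from the square-root ambiguity in $T^{\ast}F = \eta F$ with $\eta^2$ determined by the other scalars. A secondary subtlety is verifying exhaustivity: one must check that every element of the stabilizer of $(F,L_1)$ in $\PGL_3(K)$ arises from the described block form, which for $\mathcal{C}_1$ amounts to solving the over-determined linear system imposed by $T^{\ast}(x^2-yz) \in K^{\ast}(x^2-yz)$ together with $T^{\ast}z \in K^{\ast}z$, and for $\mathcal{C}_0$ and $\mathcal{C}_2$ follows from the fact that the automorphism group of a smooth conic fixing a pair of (possibly equal) points is generated by a one-parameter torus and, when the two points are swapped, an involution.
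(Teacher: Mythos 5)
Your proposal is correct and follows essentially the same route as the paper: both arguments reduce the first assertion to the invariance of the uniquely determined divisor $R$ (equivalently of $F$ and $L_1$, via Lemma \ref{lemma introductorio para la construccion de nuestras conicas} and preservation of the canonical class), distinguishing the families by reducedness of $R$ and reducibility of $F$ (which the paper phrases via the $1$-type of $P^1,Q^1$), and both obtain the explicit lists by computing the stabilizer of the pair $(F,L_1)$ in $\PGL_3(K)$ and substituting into $F^2+L_1L^3$. The only differences are cosmetic (your invariant ``$F$ irreducible'' versus the paper's ``$1$-type $2$'', which the paper itself shows are equivalent).
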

\begin{proof}
Suppose that two curves $C,C'$ in  $\mathcal{C}_0\cup \mathcal{C}_1\cup C_2$ are $\Py^2_K$-equivalent and let $P_C$, $Q_C$ and $P_{C'}$, $Q_{C'}$ be the non-smooth points of $C$ and $C'$ respectively. Moreover let $R_{C}$ and $R_{C'}$ be the divisors defined in Lemma \ref{lemma introductorio para la construccion de nuestras conicas} associated to $C$ and $C'$ respectively. From the proof of Theorem \ref{Primer Teorema central de este trabajo} we have that:
\begin{itemize}
    \item $C$ belongs to $\mathcal{C}_0$ if and only if $P_{C},Q_{C}$ have $1$-type $2$ and $R_C$ is reduced. 
    \item $C$ belongs to $\mathcal{C}_1$ if and only if $P_{C},Q_{C}$ have $1$-type $2$ and $R_C$ is not reduced.
    \item $C$ belongs to $\mathcal{C}_2$ if and only if $P_{C},Q_{C}$ have $1$-type $1$.
\end{itemize}
Since isomorphisms of curves preserve the linear system associated to the canonical sheaf, since the divisor $R_C$ is uniquely determined (cf. Lemma \ref{lemma introductorio para la construccion de nuestras conicas}) and since the definition of $1$-type and the reduceness is invariant under automorphisms of $\Py^2_K$,  there is no curve in $\mathcal{C}_i$ which is $\Py^2_K$-equivalent to a curve in $\mathcal{C}_j$ for $i\neq j$.

\begin{enumerate}

\item If $C\in \mathcal{C}_0$ is defined by the equation $(x^2-yz)^2+(Ax+By+Cz)^3x$ where $A,B,C\in K^{1/3}$ then $\Div(x^2-yz)=R_C+P_C+Q_C$ and $\Div(x)=2R_C$. Then the discussion above implies that the conic $x^2-yz$ and the line $x$ must be invariant under $T$.
Therefore $T$ is defined by $T(x:y:z)=(\beta x: \lambda y: \lambda^{-1}z)$ or $T(x:y:z)=(\beta x: \lambda^{-1} z: \lambda y)$ 
  where $\lambda\in K\setminus\{0\}$ and $\beta\in \{1,-1\}$. Then $C'$ is respectively defined by
\[(x^2-yz)^2+(Ax+\lambda\beta By+\lambda^{-1}\beta Cz)^3x  
\text{ or }
(x^2-yz)^2+(Ax+\lambda\beta Cy+\lambda^{-1}\beta Bz)^3x.\]

\item If $C\in \mathcal{C}_1$ is defined by the equation $(x^2-yz)^2+(Ax+B y+Cz)^3z$ where $A,B,C\in K^{1/3}$ then $\Div(x^2-yz)=R_C+P_C+Q_C$ and $\Div(z)=2R_C$. Then the above discussion implies that the conic $x^2-y,z$ and the line $z$ must be invariant under $T$. Therefore $T$ is defined by $T(x:y:z)=(\beta x - \beta(\lambda_1/\lambda_2)z: \lambda_1 x+ \lambda_2 y+ (\lambda_1^2/\lambda_2)z: \lambda_2^{-1} z)$
where $\lambda_1\in K$, $\lambda_2\in K\setminus\{0\}$ and $\beta\in \{1,-1\}$. Then $C'$ is defined by
\[(x^2-yz)^2+\left( \left(\frac{\beta A+B\lambda_1}{\lambda_2^{1/3}}\right)x+\lambda_2^{2/3}B y+
\left( \frac{
B\lambda_1^2-\beta A\lambda_1+C}{\lambda_2^{4/3}}
\right)z\right)^3z.\]

\item If $C\in \mathcal{C}_2$ is defined by the equation $(xy)^2+(Ax+By+Cz)^3z$ where $A,B,C\in K^{1/3}$ then $\Div(xy)=R_C+P_C+Q_C$ and $\Div(z)=2R_C$. Then the above discussion implies that the conic $xy$ and the line $z$ must be invariant under $T$.
Therefore $T$ is defined by  $T(x:y:z)=(\lambda_1 x: \lambda_1^{-1} y: \lambda_2 z)$ or $T(x:y:z)=(\lambda_1 y: \lambda_1^{-1} x: \lambda_2 z)$
where $\lambda_1,\lambda_2\in K\setminus\{0\}$. Then $C'$ is respectively defined by
\[(xy)^2+(\lambda_2^{1/3}\lambda_1Ax+\lambda_2^{1/3}\lambda_1^{-1}By+\lambda_2^{4/3}Cz)^3z
\text{ or }
(xy)^2+(\lambda_2^{1/3}\lambda_1Ay+\lambda_2^{1/3}\lambda_1^{-1}Bx+\lambda_2^{4/3}Cz)^3z.\]
\end{enumerate}
\end{proof}

\bibliography{referencias}

\noindent{\scriptsize\sc Universidade Federal Fluminense, Instituto de Matem\'atica e Estat\'istica.\\
Rua Alexandre Moura 8, S\~ao Domingos, 24210-200 Niter\'oi RJ,
Brazil.}

\vskip0.1cm

{\scriptsize\sl E-mail address: \small\verb?gborrelli@id.uff.br?}

{\scriptsize\sl E-mail address: \small\verb?cammat1985@gmail.com?}

{\scriptsize\sl E-mail address: \small\verb?rsalomao@id.uff.br?}

\end{document}